\documentclass[10pt]{article}
\usepackage[utf8]{inputenc} 
\usepackage[T1]{fontenc}
\usepackage{lmodern}

\usepackage{amsthm,amsmath,amsfonts,amssymb,stmaryrd,bbm,dsfont}
\usepackage{mathtools}
\usepackage{enumitem}
\usepackage{mathrsfs} % for mathscr font
\usepackage{comment}

\usepackage{subcaption} % for captioning subfigures

\usepackage{graphicx}
\usepackage[dvipsnames]{xcolor}
\usepackage{caption,tikz}

\usepackage[english]{babel}
\usepackage[colorlinks=true, linkcolor=blue, urlcolor=black, citecolor=blue,pdfstartview=FitH]{hyperref}

\usepackage{bm} % for bold gamma
\usepackage{mathtools} % for vcent colon

\synctex=1 % for clicking between source code and PDF in TeXShop

\setlength{\textwidth}{7in}
\setlength{\oddsidemargin}{0in}
\setlength{\evensidemargin}{0in}
\setlength{\topmargin}{0in} 
\setlength{\textheight}{8.8in}
\setlength{\footskip}{0.6in}
\setlength{\headsep}{0in}
\setlength{\marginparwidth}{0in}
\setlength{\marginparsep}{0in}
\setlength{\hoffset}{-0.25in}

%%%%%%%% Numbering equations per section %%%%%%%%%%%%%%%%%%%%%%%%%%%%%%%%%%
\numberwithin{equation}{section}

%%%%%%%% No additional spacing before \left and after \right %%%%%%%%%%%%%%
\let\originalleft\left
\let\originalright\right
\renewcommand{\left}{\mathopen{}\mathclose\bgroup\originalleft}
\renewcommand{\right}{\aftergroup\egroup\originalright}

%%%%%%%% Reduce interline space between references %%%%%%%%%%%%%%%%%%%%%%%%
\newlength{\bibitemsep}
\setlength{\bibitemsep}{.2\baselineskip plus .05\baselineskip minus .05\baselineskip}
\newlength{\bibparskip}\setlength{\bibparskip}{0pt}
\let\oldthebibliography\thebibliography
\renewcommand\thebibliography[1]{\oldthebibliography{#1}
  \setlength{\parskip}{\bibitemsep}
  \setlength{\itemsep}{\bibparskip}}

%%%%%%%% Commands %%%%%%%%%%%%%%%%%%%%%%%%%%%%%%%%%%%%%%%%%%%%%%%%%%%%%%%%%

\DeclareMathOperator{\im}{Im}
\DeclareMathOperator{\Tr}{Tr}

\DeclareMathOperator{\OO}{O}
\DeclareMathOperator{\oo}{o}

\DeclareMathOperator{\Id}{Id}
\DeclareMathOperator{\diag}{diag}

\DeclareMathOperator{\rank}{rank}

\newcommand{\tr}{\mathrm{tr}}

\newcommand{\mc}[1]{\mathcal{#1}}
\newcommand{\mf}[1]{\mathfrak{#1}}

\newcommand{\ii}{\mathrm{i}}

\newcommand{\defeq}{\vcentcolon=}
\newcommand{\eqdef}{=\vcentcolon}

\renewcommand{\epsilon}{\varepsilon}

\renewcommand{\leq}{\leqslant}
\renewcommand{\geq}{\geqslant}
\renewcommand{\le}{\leq}
\renewcommand{\ge}{\geq}

%%%%%%%%%%%%%%%%%%%  fancy letters   %%%%%%%%%%%%%%%%%%%%%%%
\renewcommand{\P}{\mathbb{P}}
\newcommand{\E}{\mathbb{E}}
\newcommand{\R}{\mathbb{R}}
\newcommand{\C}{\mathbb{C}}
\newcommand{\N}{\mathbb{N}}
\newcommand{\Z}{\mathbb{Z}}

%%%%%%%%%%%%%%%%%%%%%%%   Parentheses   %%%%%%%%%%%%%%%%%%%%%

\newcommand{\abs}[1]{\left\lvert #1 \right\rvert}
\newcommand{\norm}[1]{\lVert #1 \rVert}

\newcommand{\vertiii}[1]{{\left\vert\kern-0.25ex\left\vert\kern-0.25ex\left\vert #1 
    \right\vert\kern-0.25ex\right\vert\kern-0.25ex\right\vert}}
\newcommand{\ip}[1]{\left\langle #1 \right\rangle}
\newcommand{\diff}{\mathop{}\!\mathrm{d}}

%%%%%%%%%%%%%%%%%%%%%%%%%%%%%%%%%%%%%%%%%%%%%%%%%%%%%%%%%%%%%%%%%%%%%%%%%
\theoremstyle{plain} %plain, definition, remark
\newtheorem{theorem}{Theorem}[section]

\newtheorem{lemma}[theorem]{Lemma}

\newtheorem{prop}[theorem]{Proposition}
\newtheorem{assn}{Assumption}

\newtheorem{defn}[theorem]{Definition}

\newtheorem{rem}[theorem]{Remark}

%%%%%%%%%%%%%%%%%%%%%%%%%%%%%%%%%%%%%%%%%%%%%%%%%%%%%%%%%%%%%%%%%%%%%%%%%%%

% colorblind friendly palette from Paul Tol: https://personal.sron.nl/~pault/
\definecolor{color1}{RGB}{119,170,221} % light blue
\definecolor{color2}{RGB}{153,221,255} % light cyan
\definecolor{color3}{RGB}{68, 187, 153} % mint
\definecolor{color4}{RGB}{187,204,51} % pear
\definecolor{color5}{RGB}{170,170,0} % olive
\definecolor{color6}{RGB}{238,221,136} % light yellow
\definecolor{color7}{RGB}{238,136,102} % orange
\definecolor{color8}{RGB}{255,170,187} % pink
\definecolor{color9}{RGB}{221,221,221} % pale grey
\definecolor{color10}{RGB}{136,34,85} % wine

%%%%%%%%%%%%%%%%%%%%%%%%%%%%%%%%%%%%%%%%%%%%%%%%%%%%%%%%%%%%%%%%%%%%%%%%%%%
\setlength{\unitlength}{1cm}

\makeatletter
\renewcommand{\section}{\@startsection
{section}%                   % the name
{1}%                         % the level
{0mm}%                       % the indent
{-2\baselineskip}%            % the before skip
{1\baselineskip}%          % the after skip
{\normalfont\large\scshape\centering}} % the style
\makeatother

\makeatletter
\renewcommand{\subsection}{\@startsection
{subsection}%                   % the name
{2}%                         % the level
{0mm}%                       % the indent
{-\baselineskip}%            % the before skip
{0 \baselineskip}%          % the after skip
{\normalfont\bf\itshape}} % the style
\makeatother

\makeatletter
\renewcommand{\subsubsection}{\@startsection
{subsubsection}%                   % the name
{3}%                         % the level
{0mm}%                       % the indent
{-\baselineskip}%            % the before skip
{0 \baselineskip}%          % the after skip
{\normalfont\bf\itshape}} % the style
\makeatother

\def\author#1{\par
    {\centering{\authorfont#1}\par\vspace*{0.05in}}
}

\def\titlefont{\fontsize{13}{15}\bfseries\boldmath\selectfont\centering{}}
\def\authorfont{\fontsize{13}{15}}

\let\affiliationfont\rhfont

\def\address#1{\par
    {\centering{\affiliationfont#1\par}}\par\vspace*{11pt}
}

\def\title#1{
    \thispagestyle{plain}
    \vspace*{-14pt}
    \vskip 79pt
    {\centering{\titlefont #1\par}}%
    \vskip 1em
}

% To hide subsections and below in table of contents:
\setcounter{tocdepth}{1}

%%%%%%%%%%%%%%%%%%%%%%%%%%%%%%%%%%%%%%%%%%%%%%%%%%%%%%%%%%%%%%%%%%%%%%%%%%%

%%%%%%%%%%%%%%%%%%%%%%%%%%%%%%%%%%%%%%%%%%%%%%%%%%%%%%%%%%%%%%%%%%%%%%%%%%%
\begin{document}
%%%%%%%%%%%%%%%%%%%%%%%%%%%%%%%%%%%%%%%%%%%%%%%%%%%%%%%%%%%%%%%%%%%%%%%%%%%	

~\vspace{-1.4cm}

\title{Universality for the global spectrum of random inner-product\\ kernel matrices in the polynomial regime}

\vspace{1cm}
\noindent
\begin{minipage}[c]{0.50\textwidth}
\author{Sofiia Dubova}%
\address{Harvard University \\
    Department of Mathematics \\
   E-mail: sdubova@math.harvard.edu}%
 \end{minipage}%
\begin{minipage}[c]{0.50\textwidth}
\author{Yue M. Lu}%
\address{Harvard University \\
    School of Engineering and Applied Sciences \\
   E-mail: yuelu@seas.harvard.edu}%
 \end{minipage}%

\vspace*{1cm}

\noindent
\begin{minipage}[c]{0.50\textwidth}
\author{Benjamin McKenna}%
\address{Harvard University \\
    Center of Mathematical Sciences and Applications \\
   E-mail: bmckenna@fas.harvard.edu}%
\end{minipage}%
\begin{minipage}[c]{0.50\textwidth}
\author{Horng-Tzer Yau}%
\address{Harvard University \\
    Department of Mathematics \\
   E-mail: htyau@math.harvard.edu}%
\end{minipage}%

\vspace*{1cm}

\begin{abstract}
We consider certain large random matrices, called \emph{random inner-product kernel matrices}, which are essentially given by a nonlinear function $f$ applied \emph{entrywise} to a sample-covariance matrix, $f(X^TX)$, where $X \in \R^{d \times N}$ is random and normalized in such a way that $f$ typically has order-one arguments. We work in the \emph{polynomial regime}, where $N \asymp d^\ell$ for some $\ell > 0$, not just the \emph{linear regime} where $\ell = 1$. Earlier work by various authors showed that, when the columns of $X$ are either uniform on the sphere or standard Gaussian vectors, and when $\ell$ is an integer (the linear regime $\ell = 1$ is particularly well-studied), the bulk eigenvalues of such matrices behave in a simple way: They are asymptotically given by the free convolution of the semicircular and Mar\v{c}enko--Pastur distributions, with relative weights given by expanding $f$ in the Hermite basis. In this paper, we show that this phenomenon is universal, holding as soon as $X$ has i.i.d. entries with all finite moments. In the case of non-integer $\ell$, the Mar\v{c}enko--Pastur term disappears (its weight in the free convolution vanishes), and the spectrum is just semicircular.
\end{abstract}

\vspace*{0.05in}

\noindent \emph{Date:} October 27, 2023

\vspace*{0.05in}

\noindent \hangindent=0.2in \emph{Keywords and phrases:} random inner-product kernel matrices, nonlinear random matrices, free convolution, orthogonal polynomials, polynomial regime

\vspace*{0.05in}

\noindent \emph{2020 Mathematics Subject Classification:} 60B20, 15B52

{
	\hypersetup{linkcolor=black}
	\tableofcontents
}

%%%%%%%%%%%%%%%%%%%%%%%%%%%%%%%%%%%%%%%%%%%%%%%%%%%%%%%%%%%%
%%%%%%%%%%%%%%%%%%%%%%%%%%%%%%%%%%%%%%%%%%%%%%%%%%%%%%%%%%%%
%%%%%%%%
%%%%%%%%             Section: Introduction
%%%%%%%%
%%%%%%%%%%%%%%%%%%%%%%%%%%%%%%%%%%%%%%%%%%%%%%%%%%%%%%%%%%%%
%%%%%%%%%%%%%%%%%%%%%%%%%%%%%%%%%%%%%%%%%%%%%%%%%%%%%%%%%%%%

\section{Introduction}

%%%%%%%%%%%%%%%%%%%%%%%%%%%%%%%%%%%%%%%%%%%%%%%%%%%%%%%%%%%%
%%%%%%%%             Subsection: Our results
%%%%%%%%%%%%%%%%%%%%%%%%%%%%%%%%%%%%%%%%%%%%%%%%%%%%%%%%%%%%

\subsection{Our results}\

In this paper, we give a common \emph{global law} for the spectra of two related families of real-symmetric random matrices which are in some sense \emph{nonlinear}. Our matrices $A = A_N \in \R^{N \times N}$
and $\widetilde{A} = \widetilde{A_N} \in \R^{N \times N}$, called \emph{random inner-product kernel matrices}, have the entrywise form
\begin{align}
    A_{ij} &= \begin{cases} \frac{1}{\sqrt{N}} f\left( \frac{\ip{X_i,X_j}}{\sqrt{d}} \right) & \text{if } i \neq j, \\ 0 & \text{if } i = j, \end{cases} \label{eqn:intro_model_def} \\
    \widetilde{A}_{ij} &= \begin{cases} \frac{1}{\sqrt{N}} f\left( \frac{\sqrt{d}\ip{X_i,X_j}}{\|X_i\|\|X_j\|}\right) = \frac{1}{\sqrt{N}} f\left( \frac{\ip{X_i,X_j}}{\sqrt{d}}\frac{\sqrt{d}}{\|X_i\|} \frac{\sqrt{d}}{\|X_j\|} \right) & \text{if } i \neq j \text{ and } \|X_i\| \neq 0 \neq \|X_j\|, \\ 0 & \text{otherwise}, \end{cases} \label{eqn:intro_model_def_normalized}
\end{align}
and ``give a global law'' means that we find a deterministic measure $\rho$ that is the almost-sure weak limit of the empirical spectral measures 
\[
    \rho_N = \frac{1}{N} \sum_{i=1}^N \delta_{\lambda_i(A)}, \qquad \widetilde{\rho}_N = \frac{1}{N} \sum_{i=1}^N \delta_{\lambda_i(\widetilde{A})},
\]
where $(\lambda_i(A))_{i=1}^N$ (resp., $(\lambda_i(\widetilde{A}))_{i=1}^N$) are the eigenvalues of $A$ (resp., of $\widetilde{A}$). Here $f : \R \to \R$ is some fixed function, such as ReLU, representing the nonlinearity; $d$ and $N$ are parameters tending simultaneously to infinity in the so-called \emph{polynomial regime} where $d^\ell \asymp N$ for some $\ell > 0$; and the i.i.d. vectors $(X_i)_{i=1}^N$ have i.i.d. components drawn from some fixed $\mu$, which is a centered probability measure on $\R$ with unit variance.

Informally speaking, this normalization implies that $\ip{X_i,X_j}/\sqrt{d}$ is order-one, and $\sqrt{d}/\|X_i\| \approx 1 + \oo(1)$, so that $A$ and $\widetilde{A}$ are entrywise quite close. Indeed, we show that they have the same global law (i.e., $\rho_N$ and $\widetilde{\rho}_N$ tend to the same $\rho$). We include them both since, while $A$ may seem more natural at first, $\widetilde{A}$ has some better theoretical properties, namely it appears to have fewer outliers than $A$ (although we do not prove this in the current work, to keep this paper at a manageable length).

Our main result extends that of \cite{LuYau}, which studies this model when $\mu$ is Gaussian measure, so that $A$ deals with Gaussian vectors and $\widetilde{A}$ deals with vectors which are uniform on the sphere. In the current work, we show that their result is in fact \emph{universal} in $\mu$, holding as soon as $\mu$ has all finite moments.

If the function $f$ happens to be linear, then $A$ is just a sample covariance matrix with the diagonal set to zero. (Zeroing the diagonal keeps the spectrum of $A$ from translating off to infinity; see Remark \ref{rem:diagonal}.) For general $f$, then, the matrix $A$ is an \emph{entrywise} nonlinear function of a sample covariance matrix, scaled so that the spectrum is order one, and so that the nonlinearity $f$ typically has order-one arguments. Furthermore, since $f$ is applied entrywise, any expansion $f(x) = \sum_k c_k h_k(x)$ induces a corresponding expansion 
\begin{equation}\label{eqn:A_Ak}
A = \sum_k c_k A_k.
\end{equation}
The fundamental observation of Cheng and Singer \cite{CheSin2013} is that one should take $(h_k)_{k=1}^\infty$ to be an appropriate sequence of orthogonal polynomials, usually the Hermite polynomials. In this language, the main result of, say, \cite{LuYau} is essentially a rigorous version of the following heuristics (more precisely, the version just for integer $\ell$): Once placed in this basis,
\begin{enumerate}
\item the matrices $A_k$ in \eqref{eqn:A_Ak} are approximately independent;
\item the low-degree matrices $(A_k)_{k=0}^{\lceil \ell \rceil - 1}$ are essentially low-rank, so do not affect the global law;
\item if $\ell$ is an integer, the matrix $A_\ell$ is essentially a sample-covariance matrix, so its global law is given by the Mar\v{c}enko-Pastur distribution; but if $\ell$ is not an integer then there is no matrix $A_\ell$;
\item each of the high-degree matrices $(A_k)_{k=\ell_c}^\infty$, where $\ell_c$ is the least integer \emph{strictly} bigger than $\ell$, has an asymptotically semicircular distribution, because they are essentially degenerate sample-covariance matrices, in the parameter limit in which the Mar\v{c}enko-Pastur distribution degenerates to the semicircle law.
\end{enumerate}
As a consequence, the limiting measure $\rho$ is the \emph{free (additive) convolution} of the Mar\v{c}enko-Pastur and semicircular distributions, with weights given by the coefficients of $f$ in its Hermite expansion, when $\ell$ is an integer; and just a semicircular distribution without a Mar\v{c}enko-Pastur part, still with Hermite weights, when $\ell$ is not an integer.

In the work \cite{LuYau}, the authors intially consider vectors $X_i$ which are uniform on the sphere, for which certain classical algebraic identities simplify the problem. Roughly speaking, the model is linearized by spherical harmonics, making it easier to see the structure of the four-step heuristic above. Then they extend from spherical to Gaussian vectors by comparison. 

For general $\mu$, these algebraic identities are not available. Instead, our proof relies essentially on a ``pre-processing'' step, identifying by hand which parts of $A$ (resp. $\widetilde{A}$) are errors, replacing $A$ (resp. $\widetilde{A}$) by an error-free matrix $B$, then showing that the Stieltjes transform of $B$ approximately satisfies a self-consistent equation. The exact solution to this self-consistent equation describes the free convolution mentioned above, so we can conclude with perturbation theory.

%%%%%%%%%%%%%%%%%%%%%%%%%%%%%%%%%%%%%%%%%%%%%%%%%%%%%%%%%%%%
%%%%%%%%             Subsection: Related work
%%%%%%%%%%%%%%%%%%%%%%%%%%%%%%%%%%%%%%%%%%%%%%%%%%%%%%%%%%%%

\subsection{Related work}\

An extensive history of this problem was given by the recent paper \cite{LuYau}, so we only give a brief overview. Kernel matrices were first introduced in the classical scaling ($d$ fixed and $N \to \infty$) by Koltchinskii and Gin\'e \cite{KolGin2000}. El Karoui \cite{ElK2010} studied random kernel matrices in the high-dimensional linear scaling $\ell = 1$ (i.e., $d \asymp N$), but in a different normalization where the arguments of the nonlinearity $f$ are typically $\oo(1)$, so that the only surviving feature of $f$ is its behavior at zero. Our scaling (where $f$ typically has order-one arguments) was first studied by Cheng and Singer \cite{CheSin2013}, still in the linear regime $\ell = 1$, and when $\mu$ is Gaussian. As previously mentioned, Cheng and Singer introduced the idea of writing the nonlinearity $f$ in a good basis of orthogonal polynomials, which is fundamental in later works, including ours. Later, by comparing to \cite{CheSin2013} with the Lindeberg exchange method, Do and Vu \cite{DoVu2013} were able to allow for non-Gaussian data in the linear regime. More recently, Fan and Montanari \cite{FanMon2019} found sufficient conditions on $f$ so that, in the linear Gaussian model, the top eigenvalue sticks to the edge of the limiting distribution. As previously mentioned, two of the present authors \cite{LuYau} considered the (integer) polynomial case $\ell = 1, 2, 3 \ldots$ for Gaussian and spherical data; simultaneous and independent work by Misiakiewicz \cite{Mis2022} considered spherical or Bernoulli data for special nonlinearities $f$. With the exception of spherical data, the previous works all focus on the unnormalized model \eqref{eqn:intro_model_def}; we are not aware of previous results on the normalized model \eqref{eqn:intro_model_def_normalized} beyond the spherical case.  

Our proof shows that the matrices $A$ and $\widetilde{A}$ are well-approximated by a \emph{generalized sample covariance matrix} $B$, which we write (up to subtracting the diagonal) as $U^\ast TU$. In dealing with the $B$ matrix, one technical complication is that the entries of $T$ have different scales from one another; another is that the entries of $U$ are uncorrelated but not independent; and a third is that, because of our polynomial scaling, $U \in \R^{M \times N}$ but only with the weaker $\log M \asymp \log N$ rather than $M \asymp N$ (roughly, $M \approx d^L$, where $L$ is the degree of the largest nonzero Hermite coefficient of $f$). Sample covariance matrices with various combinations of these technical difficulties have previously been studied in \cite{BloErdKnoYauYin2014} and \cite{BloKnoYauYin2014}. We also use several ideas from \cite{KnoYin2017}: For example we embed our matrices of interest in a larger $2 \times 2$ block matrix, inspired by \cite[(3.2)]{KnoYin2017}, and ideas like their Lemma 4.6 appear here under the name of ``partial Ward inequalities'' in Section \ref{sec:ward}.

We also mention several related models: Instead of taking a nonlinearity of a sample covariance matrix (informally $f(X^TX)$), one can take a sample covariance of a nonlinearity (informally $f(X)^Tf(X)$). This changes the limiting spectrum; for details we direct readers to works of Pennington and Worah \cite{PenWor2019}, Benigni and P\'ech\'e \cite{BenPec2021, BenPec2022}, and Piccolo and Schr\"{o}der \cite{PicSch2021}. The \emph{non-Hermitian} (possibly rectangular) version $A_{ij} = \frac{\delta_{i \neq j}}{\sqrt{N}} f\left(\frac{\ip{X_i,Y_j}}{\sqrt{d}}\right)$ is closely related to the random-feature model (see, e.g., \cite{RahRec2007, LouLiaCou2018, HasMonRosTib2022, PenWor2019} for this model in general, and \cite{MeiMon2022, GerLouKrzMezZde2021, GolLouReeKrzMezZde2021, LouGerCuiGolKrzMezZde2022, HuLu2023} for corresponding random-matrix results, all in the linear regime $d \asymp N$).

%%%%%%%%%%%%%%%%%%%%%%%%%%%%%%%%%%%%%%%%%%%%%%%%%%%%%%%%%%%%
%%%%%%%%             Subsection: Notation
%%%%%%%%%%%%%%%%%%%%%%%%%%%%%%%%%%%%%%%%%%%%%%%%%%%%%%%%%%%%

\subsection{Organization}\

The structure of the paper is as follows: Our main results, Theorem \ref{thm:main_polynomial} (for polynomial nonlinearities $f$) and Theorem \ref{thm:main_general} (for general nonlinearities $f$), are given in Section \ref{sec:main}. One main step in the proof is to replace the matrices $A$ and $\widetilde{A}$ with a simpler matrix $B$, thus separating the ``main-term analysis'' of the matrix $B$, which is the bulk of the paper and constitutes Sections \ref{sec:main_term_sketch} through \ref{sec:tilde_m_self_consistent}, and the ``error analysis'' of the matrices $A-B$ and $\widetilde{A} - B$, which is Section \ref{sec:error-terms}. In Section \ref{sec:main_term_sketch} we give an overview of the main-term analysis, introducing several fundamental resolvent and resolvent-like quantities and stating that they approximately solve various self-consistent equations. In order to prove these claims, we first spend three sections establishing basic tools for our analysis: Various resolvent identities, in Section \ref{sec:resolvent_identities}; variants of the Ward identity that we call ``full Ward inequalities'' and ``partial Ward inequalities,'' in Section \ref{sec:ward}; and a collection of preliminary bounds, in Section \ref{sec:preliminary_bounds}. We then use these tools to prove these approximate self-consistent equations in Sections \ref{sec:concentration} and \ref{sec:tilde_m_self_consistent}. These sections complete the proof when the nonlinearity $f$ is a polynomial; in Appendix \ref{appx:general_nonlinearities} we explain how to prove the general case, by approximating general nonlinearities by polynomials.

%%%%%%%%%%%%%%%%%%%%%%%%%%%%%%%%%%%%%%%%%%%%%%%%%%%%%%%%%%%%
%%%%%%%%             Subsection: Notation
%%%%%%%%%%%%%%%%%%%%%%%%%%%%%%%%%%%%%%%%%%%%%%%%%%%%%%%%%%%%

\subsection{Notation}\

\emph{Stochastic domination:} We will use the following notation of high-probability boundedness up to small polynomial factors, introduced in \cite{ErdKnoYauYin2013local}. If $X = X_N(u)$ and $Y = Y_N(u)$ are two families of real random variables, indexed by $N$ and by $u$ in some set $U$, we write 
\[
    X \prec Y
\]
when, for every $\epsilon , D > 0$, there exist $C_{\epsilon,D}$ and $N_0(\epsilon,D)$ such that
\[
    \sup_{u \in U} \P(X_N(u) \geq N^\epsilon Y_N(u)) \leq C_{\epsilon,D} N^{-D} \quad \text{for all } N \geq N_0(\epsilon,D).
\]
If $X$ is complex, then $X \prec Y$ is defined as $\abs{X} \prec Y$. It will be convenient to write $\OO_\prec$, where for example $X \leq Y + \OO_\prec(Z)$ is defined as $X - Y \prec Z$.

We remark that our problem has two parameters tending to infinity simultaneously, namely $N$ and $d$. The definition given here is in terms of $N$, but one could equally write a definition in terms of $d$ (with $d^\epsilon$, $d \geq d_0$, and so on), and it is easy to check that this definition would produce the same result. We will sometimes switch between the two for convenience.

\emph{Stieltjes transforms:} If $T \in \R^{N \times N}$ is symmetric, we write its Stieltjes transform with the sign convention
\[
    s_T(z) = \frac{1}{N} \tr((T-z\Id)^{-1}).
\]

\emph{Summation conventions:} We frequently consider sums over multiple indices, but include only the terms where these indices are all distinct. We indicate this with an asterisk on top of the summation notation. For example, $\sum_{a,b}^{N,\ast} f_{a,b}$ is defined as $\sum_{a,b=1 : a \neq b}^N f_{a,b}$; the notation $\sum_{a,b,c}^{N,\ast} f_{a,b,c}$ means that $a,b,c$ should \emph{all} be distinct (i.e., $a=b\neq c$ is also excluded from the sum), and so on. Additionally, we use standard exclusion notation like $\sum_\nu^{(\mu)}$ to indicate, in this case, the sum over all $\nu$ except for $\nu = \mu$. 

\emph{Floors and ceilings:} If $\ell > 0$, then $\lceil \ell \rceil$ is the smallest integer \emph{at least} $\ell$ as usual, but we will also need 
\[
    \ell_c \defeq \begin{cases} \ell + 1 & \text{if } \ell \in \N, \\ \lceil \ell \rceil & \text{otherwise}, \end{cases}
\]
for the smallest integer \emph{strictly bigger than} $\ell$, and $\{\ell\}$ for the fractional part, i.e. $\{2.4\} = 0.4$ and $\{2\} = 0$. 

\emph{Other notation:} We write $\mathbb{H}$ for the complex upper half-plane $\mathbb{H} = \{z \in \C : \im(z) > 0\}$, and write $\llbracket a, b \rrbracket$ for the consecutive integers $[a,b] \cap \Z = \{a,a+1,\ldots,b\}$. We sometimes abuse notation by dropping ``$\Id$'' for constant matrices; for example, if $A$ is a matrix and $z$ is a constant, then we write $A - z$ for $A - z\Id$. 

\subsection{Acknowledgements}\

The work of H.-T. Y. is partially supported by the NSF grant DMS-2153335, and by a Simons Investigator award. B. M. is partially supported by NSF grant DMS-1760471. The work of Y. M. L. is partially supported by NSF grant CCF-1910410, and by the Harvard FAS Dean's Fund for Promising Scholarship.

%%%%%%%%%%%%%%%%%%%%%%%%%%%%%%%%%%%%%%%%%%%%%%%%%%%%%%%%%%%%
%%%%%%%%%%%%%%%%%%%%%%%%%%%%%%%%%%%%%%%%%%%%%%%%%%%%%%%%%%%%
%%%%%%%%
%%%%%%%%             Section: Main results
%%%%%%%%
%%%%%%%%%%%%%%%%%%%%%%%%%%%%%%%%%%%%%%%%%%%%%%%%%%%%%%%%%%%%
%%%%%%%%%%%%%%%%%%%%%%%%%%%%%%%%%%%%%%%%%%%%%%%%%%%%%%%%%%%%

\section{Main results}
\label{sec:main}

\subsection{Polynomial nonlinearities}\

In this first set of results, we take the nonlinearity $f : \R \to \R$ to be a \emph{fixed} polynomial (not depending on $d$ or $N$), expressible in the Hermite basis as 
\begin{equation}
\label{eqn:def_f_poly_form}
    f(x) = \sum_{k=0}^L c_k h_k(x)
\end{equation}
for some $L$ and some constants $(c_k)_{k=0}^L$, where the $h_k$ are the normalized (non-monic) Hermite polynomials given as
\[
    \E_{Z \sim \mc{N}(0,1)}[h_i(Z)h_j(Z)] = \delta_{ij},
\]
with the first several given by
\[
    h_0(x) = 1, \quad h_1(x) = x, \quad h_2(x) = \frac{1}{\sqrt{2}}(x^2-1), \quad h_3(x) = \frac{1}{\sqrt{6}}(x^3-3x).
\]

We will always work in the rectangular domain of the complex upper half plane defined by
\[
    \mathbf{D}_\tau \defeq \{z = E + \ii \eta : \tau \leq \eta \leq \tau^{-1}, \abs{E} \leq \tau^{-1}\}
\]
for arbitrary $\tau > 0$. Given $\ell > 0$, we will also need the parameter
\begin{equation}
\label{eqn:def_p_ell}
    p_\ell = \begin{cases} 1/2 & \text{if } \ell \in \N, \\ \min(\{\ell\},1-\{\ell\})/2 & \text{otherwise}, \end{cases}
\end{equation}
where we recall that $\{\ell\}$ is the fractional part of $\ell$. We remark that $p_\ell \in [0,1/4]$, unless $\ell$ is an integer, in which case $p_\ell = 1/2$.

\begin{theorem}
\label{thm:main_polynomial}
\textbf{(Main theorem, polynomial nonlinearities)}
Suppose that $\mu$ has all moments finite. Fix $\kappa, \tau, \ell > 0$, a positive integer $L$, and a polynomial $f$ of the form \eqref{eqn:def_f_poly_form}. Suppose that 
\begin{equation}
\label{eqn:assn:N/d^ell->kappa_weakly}
    \abs{\frac{N}{d^\ell} - \kappa} = \begin{cases} \oo(1) & \text{if $\ell$ is not an integer,} \\ \OO(d^{-\epsilon_0}) & \text{for some fixed $\epsilon_0 > 0$, if $\ell$ is an integer.} \end{cases}
\end{equation}
Then, for each fixed $z \in \mathbf{D}_\tau$, we have
\begin{align*}
    s_A(z) &\overset{d \to \infty}{\to} \mf{m}(z) \quad \text{almost surely}, \\
    s_{\widetilde{A}}(z) &\overset{d \to \infty}{\to} \mf{m}(z) \quad \text{almost surely},
\end{align*}
with the effective bounds
\begin{align*}
    \abs{s_A(z) - \mf{m}(z)} &\prec \begin{cases} \frac{1}{d^{p_\ell}} & \text{if $\ell$ is not an integer,} \\ \frac{1}{d^{p_\ell}} + \frac{1}{d^{\epsilon_0}} & \text{if $\ell$ is an integer,} \end{cases} \\
    \abs{s_{\widetilde{A}}(z) - \mf{m}(z)} &\prec \begin{cases} \frac{1}{d^{p_\ell}} & \text{if $\ell$ is not an integer,} \\ \frac{1}{d^{p_\ell}} + \frac{1}{d^{\epsilon_0}} & \text{if $\ell$ is an integer,} \end{cases},
\end{align*}
where $\mf{m}(z)$ is the unique solution in $\mathbb{H}$
to the equation
\begin{equation}
\label{eqn:frak_m_self_consistent}
    \mf{m}(z) \left(z + \frac{\gamma_a \mf{m}(z)}{1+\gamma_b \mf{m}(z)} + \gamma_c \mf{m}(z)\right) + 1 = 0
\end{equation}
with the $f$-dependent constants
\begin{equation}
\label{eqn:def_gammas}
    \gamma_a \defeq \begin{cases} c_\ell^2 & \text{if } \ell \in \N, \\ 0 & \text{otherwise}, \end{cases} \qquad \gamma_b \defeq \begin{cases} c_\ell \sqrt{\ell! \kappa} & \text{if } \ell \in \N, \\ 0 & \text{otherwise}, \end{cases} \qquad \gamma_c \defeq \sum_{k = \ell_c}^L c_k^2.
\end{equation}
\end{theorem}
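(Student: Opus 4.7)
The overall approach is to follow the four-step heuristic sketched in the introduction and implement it via the resolvent method. First I would Hermite-expand $f = \sum_k c_k h_k$ to obtain $A = \sum_k c_k A_k$, and then argue that each $A_k$ is well-approximated by a structured matrix $B_k$ of sample-covariance type. Using the explicit monomial expansion of $h_k$ together with a combinatorial rearrangement, the entry $h_k(\ip{X_i,X_j}/\sqrt{d})$ should decompose into a leading-order piece $\frac{1}{\sqrt{k!\, d^k}} \sum_{\boldsymbol{\alpha}}^\ast X_i(\alpha_1)\cdots X_i(\alpha_k)\, X_j(\alpha_1)\cdots X_j(\alpha_k)$, summed over multi-indices with all $\alpha_r$ distinct, plus a correction from contracted indices. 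Writing $U_k \in \R^{M_k \times N}$ for the matrix with rows indexed by the $M_k \asymp d^k$ such multi-indices and entries the corresponding normalized monomials, this would give $A_k \approx U_k^\ast U_k$ modulo error terms. Stacking the $U_k$ into a single $U \in \R^{M \times N}$ (so $M \asymp d^L$) and letting $T$ be block-diagonal with coefficients $c_k^2$ on the degree-$k$ block, the error-free surrogate for $A$ is $B = U^\ast T U$ less its diagonal.

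With this reduction in hand, the bulk of the work is to run the resolvent method for $B$, following the road map of Sections \ref{sec:main_term_sketch}--\ref{sec:tilde_m_self_consistent}. I would embed the resolvent problem in a larger $2\times 2$ block matrix (in the spirit of \cite{KnoYin2017}) and derive, via Schur-complement and other resolvent identities, a pair of approximate self-consistent equations for $\mf{m}$ (the Stieltjes transform of $B$) and an auxiliary $\widetilde{\mf{m}}$. Concentration of the relevant resolvent entries around their deterministic counterparts would be handled by the full and partial Ward inequalities of Section \ref{sec:ward} together with the moment bounds of Section \ref{sec:preliminary_bounds}. Eliminating $\widetilde{\mf{m}}$ from the pair should produce the cubic \eqref{eqn:frak_m_self_consistent}, and the weights $\gamma_a,\gamma_b,\gamma_c$ emerge naturally: the $k=\ell$ block (if $\ell\in\N$) has aspect ratio $(\ell!\kappa)^{-1}$ and coefficient $c_\ell^2$, giving the Mar\v{c}enko--Pastur term; all blocks with $k>\ell$ have $M_k \gg N$, so their contributions collapse additively to a semicircular piece with weight $\sum_{k>\ell}c_k^2 = \gamma_c$; this matches the $R$-transform calculation for the free convolution. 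Finally, comparing $s_A(z)$ and $s_{\widetilde{A}}(z)$ with $s_B(z)$ is the content of Section \ref{sec:error-terms}: the low-degree blocks $A_k$ with $k<\ell$ have rank at most $M_k \ll N$ and so perturb the empirical distribution by $\OO(d^{k-\ell})$, controlled by the standard rank inequality for Stieltjes transforms; the diagonal and contracted corrections distinguishing $A_k$ from $B_k$, and the $\norm{X_i}$ renormalization in $\widetilde{A}$, can be bounded using that $\mu$ has all finite moments.

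I expect the main obstacle to be the resolvent analysis of $B$ itself, because two features force a genuine generalization of the classical sample-covariance local law. First, the entries of $U$ are uncorrelated but not independent, being monomials in the shared i.i.d.\ coordinates of the $X_i$'s, so one cannot close resolvent identities by black-boxing sample-covariance expectations; one must carry out careful combinatorial bookkeeping of which products of $U$-entries have non-vanishing expectation, and it is precisely here that the partial Ward inequalities of Section \ref{sec:ward} replace the full Ward identity. Second, one only has $\log M \asymp \log N$ rather than $M \asymp N$, and $T$ has entries on widely separated scales (roughly $\asymp d^{\ell - k}$ on the $k$-th block), so classical anisotropic local laws for sample covariance do not apply off the shelf. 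The concentration arguments of Sections \ref{sec:concentration} and \ref{sec:tilde_m_self_consistent} must therefore be redone keeping track of the badly scaled rows of $T$ and the degenerate aspect ratio; the exponent $p_\ell$ in \eqref{eqn:def_p_ell} should arise at this step as a quantitative ``distance to criticality'' between $d^k$ and $d^\ell$ for the integer $k$ nearest to $\ell$, while the extra $d^{-\epsilon_0}$ in the integer case merely reflects the assumption \eqref{eqn:assn:N/d^ell->kappa_weakly}.
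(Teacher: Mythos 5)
Your proposal follows the paper's strategy essentially step for step: Hermite expansion $A = \sum_k c_k A_k$, reduction to a generalized sample-covariance surrogate $B = U^\ast T U - D$, the $2\times2$ block embedding of $(B-z)^{-1}$ in the spirit of \cite{KnoYin2017}, a pair of approximate self-consistent equations for $s$ and the auxiliary $\phi\widetilde{s}$ closed via full and partial Ward inequalities, and finally the Frobenius/rank comparison of $s_A, s_{\widetilde{A}}$ with $s_B$. The only issues are normalization slips that you would catch when matching $B$ to $A$ entrywise: the degree-$k$ block of $T$ is $c_k\sqrt{k!}\sqrt{N/d^k}\,I_{M_k}$, i.e.\ \emph{linear} in $c_k$ (the square $c_k^2$ enters only as the weight $\gamma_a$ or $\gamma_c$ in the limiting free convolution, not as an entry of $T$), and its scale is $d^{(\ell-k)/2}$ rather than $d^{\ell-k}$; otherwise the roadmap you describe is the one the paper follows.
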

 
\begin{rem}
As explained in the proof of \cite[Proposition 9]{LuYau}, it is easy to check that \eqref{eqn:frak_m_self_consistent} has a unique solution in the upper half plane. Indeed, as mentioned above and in the previous literature, if $\ell$ is an integer it is the Stieltjes transform of the free (additive) convolution of the semicircle law and the Mar\v{c}enko-Pastur law, scaled according to $\gamma_a$, $\gamma_b$, and $\gamma_c$; otherwise it is the Stieltjes transform of the semicircle law, rescaled according to $\gamma_c$.
\end{rem}

The first major step of the proof is to show that, for the purposes of a global law, the matrices $A$ and $\widetilde{A}$ are each well-approximated by the matrix
\begin{equation}
\label{eqn:main_results_definition_b}
\begin{split}
    B_{ij} &= \frac{\delta_{i\neq j}}{\sqrt{N}} \sum_{k = \lceil \ell \rceil}^L \frac{c_k\sqrt{k!}}{d^{k/2}}\sum_{\substack{a_1, \ldots, a_k=1 \\ a_1 < a_2 < \ldots < a_k}}^d X_{a_1 i} \ldots X_{a_k i} X_{a_1 j} \ldots X_{a_k j} \\
    &= \frac{\delta_{i\neq j}}{\sqrt{N}} \sum_{k = \lceil \ell \rceil}^L \frac{c_k}{d^{k/2}\sqrt{k!}}\sum_{a_1, \ldots, a_k=1}^{d,\ast} X_{a_1 i} \ldots X_{a_k i} X_{a_1 j} \ldots X_{a_k j},
\end{split}
\end{equation}
which we think of as storing the ``main terms'' present in $A$ and $\widetilde{A}$. (We recall that the notation $\sum_{a_1,\ldots,a_k=1}^{d,\ast}$ means that the $a_1,\ldots,a_k$ are all distinct, but not necessarily ordered. Each of the formulations $\sum_{a_1<\cdots<a_k}^d$ and $\sum_{a_1,\ldots,a_k}^{d,\ast}$ will be more convenient at some point of the proof.) 

\begin{rem}
\label{rem:emptysums}
We remind the reader of the usual convention that sums like $\sum_{k=\ell}^L$ are considered empty if, in this case, $L < \ell$. For example, if $L < \lceil \ell \rceil + 1$, then $\gamma_c = 0$; if $L < \lceil \ell \rceil$, then $B = 0$ as a matrix. In fact, if $L < \ell$, then $\gamma_a = \gamma_b = \gamma_c = 0$, and Theorem \ref{thm:main_polynomial} says that $A$ and $\widetilde{A}$ have bulk spectra tending to a delta mass at zero.
\end{rem}

We split the proof of Theorem \ref{thm:main_polynomial} into the following two propositions. In the statements, we need the parameters
\begin{align*}
    q_\ell &= \min(\ell,1,\ell_c-\ell)/2, \\
    r_\ell &= (1+\ell-\lceil \ell \rceil)/2,
\end{align*}
which satisfy $0 \leq q_\ell, r_\ell \leq 1/2$ for all $\ell > 0$. Together these imply the result, since one can easily compute
\[
    p_\ell = \min(q_\ell,r_\ell).
\]

\begin{prop}
\label{prop:main_stieltjes_conclusion}
Under the assumptions above, we have $s_B(z) \to \mf{m}(z)$, almost surely as $d \to \infty$, with 
\begin{equation}
\label{eqn:sB-mfm}
    \abs{s_B(z) - \mf{m}(z)} \prec \begin{cases} \frac{1}{d^{q_\ell}} & \text{if $\ell$ is not an integer}, \\ \frac{1}{d^{q_\ell}} + \abs{ \frac{N}{d^\ell} - \kappa} & \text{if $\ell$ is an integer}. \end{cases} % . %\max\left\{ \frac{1}{\sqrt{d}}, \frac{1}{N} \right\}.
\end{equation}
\end{prop}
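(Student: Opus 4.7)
The plan is to view $B$ as (the off-diagonal part of) a \emph{generalized sample-covariance matrix}, and apply a Schur-complement-based self-consistent-equation strategy on a linearized $2\times 2$ block model. For each $k \in \llbracket \lceil \ell \rceil, L \rrbracket$ introduce a rectangular matrix $U^{(k)}$ with rows indexed by $k$-subsets $\alpha \subset \llbracket 1, d \rrbracket$ and columns indexed by $i \in \llbracket 1,N \rrbracket$, whose entries are appropriately normalized monomials $X_{a_1 i} \cdots X_{a_k i}$. Stacking across $k$ gives a single tall matrix $U$ with $M \asymp d^L$ rows, and there is a block-diagonal $T$ with weights proportional to $c_k$ such that $B$ equals $U^T T U$, up to its diagonal and a scalar. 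This realization exhibits the three complications noted in the introduction: the entries of $T$ live on different scales indexed by $k$; different rows of $U$ are uncorrelated but not independent, since they share underlying $X$'s; and $M$ only satisfies $\log M \asymp \log N$, so the model is degenerate in the sense $N/M \to 0$.

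Following \cite{KnoYin2017}, the next step is to linearize by embedding $B$ in a $2\times 2$ block matrix
\[
H(z) = \begin{pmatrix} -z\,\mathrm{Id}_N & U^T T^{1/2} \\ T^{1/2} U & -\mathrm{Id}_M \end{pmatrix},
\]
whose resolvent $\mathcal G(z) = H(z)^{-1}$ has upper-left block equal to $(U^T T U - z)^{-1}$ by Schur complement. I would then track two averaged traces: $m(z) = N^{-1}\sum_{i=1}^N \mathcal G_{ii}(z)$, which agrees with $s_B(z)$ up to negligible diagonal corrections, and an auxiliary $\widetilde m(z)$ defined as a suitably weighted trace of the lower block of $\mathcal G$ restricted to the $k = \ell$ sector (only present when $\ell \in \mathbb N$). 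Applying the Schur-complement identity at row $i$ yields
\[
\mathcal G_{ii}(z) = -\Big(z + \sum_{(k,\alpha),(k',\alpha')} T^{1/2}_{(k,\alpha)}\,U^{(k)}_{\alpha,i}\,\mathcal G^{(i)}_{(k,\alpha),(k',\alpha')}\,T^{1/2}_{(k',\alpha')}\,U^{(k')}_{\alpha',i}\Big)^{-1},
\]
and the goal is to show that, after averaging in $i$, the quadratic form in the denominator concentrates to $\gamma_c\, m + \gamma_a\, \widetilde m$, while simultaneously $\widetilde m(1 + \gamma_b\, m) \approx m$. Eliminating $\widetilde m$ reproduces exactly \eqref{eqn:frak_m_self_consistent}.

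The principal obstacle, in my view, is controlling this quadratic form given the lack of row-independence of $U$. Three ingredients enter. First, an expectation computation: cross-terms with $k \neq k'$ or $\alpha \neq \alpha'$ as multisets vanish by centeredness of $X$, so the mean reduces to a diagonal sum in which the counting $\binom{d}{k}\,k!/d^k \to 1$ and $N/d^\ell \to \kappa$ produce the constants $\gamma_a,\gamma_b,\gamma_c$ of \eqref{eqn:def_gammas}. Second, concentration around this mean, for which the partial Ward identities of Section \ref{sec:ward} are essential: since different rows of $U$ share underlying $X$'s, the naive Ward identity $\sum_\alpha |\mathcal G_{i\alpha}|^2 = \mathrm{Im}\,\mathcal G_{ii}/\eta$ cannot be applied globally, but partial analogues restricted to compatible multi-index configurations still yield the required fluctuation bounds. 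Third, a combinatorial estimate showing that each ``partial coincidence'' between $\alpha$ and $\alpha'$ beyond full matching costs a factor $d^{-1}$; together these produce the rate $d^{-q_\ell}$ with $q_\ell = \min(\ell, 1, \ell_c - \ell)/2$, the three terms corresponding to near-degeneracy of the low-degree sector when $\ell < 1$, generic central-limit-type fluctuations, and near-degeneracy of the leading semicircle sector $k = \ell_c$ when $\ell_c - \ell$ is small.

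Finally, once the approximate self-consistent equation holds with error of the asserted size, standard stability analysis on $\mathbf D_\tau$, where $\mathrm{Im}(z) \geq \tau$ keeps $z$ away from the spectrum and provides quantitative monotonicity, upgrades approximate solutions of \eqref{eqn:frak_m_self_consistent} to actual closeness to $\mathfrak m(z)$, giving \eqref{eqn:sB-mfm}. When $\ell \in \mathbb N$, the additional term $|N/d^\ell - \kappa|$ in the error bound arises from the dependence $\gamma_b = c_\ell\sqrt{\ell!\kappa}$: the derivation produces $N/d^\ell$ naturally, which must be replaced by its limit $\kappa$ at a cost of exactly this order.
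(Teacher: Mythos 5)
Your high-level architecture matches the paper's: realize $B$ as (the zero-diagonal part of) $U^{\ast}TU$, linearize via a $2\times 2$ block matrix, apply Schur complements, and derive coupled self-consistent equations closed with the partial Ward inequalities. However, there is a genuine gap in how you handle the diagonal. You take $H(z)$ with block $-z\,\mathrm{Id}_N$, so the upper-left block of $\mathcal G$ is $(U^{T}TU - z)^{-1}$, and you assert that $m(z) = N^{-1}\sum_i \mathcal G_{ii}$ ``agrees with $s_B(z)$ up to negligible diagonal corrections.'' This is false: $B = U^{\ast}TU - D$ with $D_{ii} = U_i^{\ast}TU_i$, and
\[
\E D_{ii} \;=\; \frac{1}{N}\sum_{k=\lceil\ell\rceil}^{L} T_k M_k \;\asymp\; \sum_{k}\frac{c_k}{\sqrt{k!}}\sqrt{\frac{d^{k}}{N}} ,
\]
which diverges as $d\to\infty$ because $d^{k}/N\to\infty$ for every $k>\ell$. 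Thus the spectrum of $U^{T}TU$ is translated off to infinity relative to that of $B$, your $m(z)$ tends to $0$ rather than to $\mathfrak m(z)$, and the ``quadratic form'' in your Schur complement (whose partial expectation is $\tfrac{1}{N}\mathrm{tr}\,G_M^{(i)}$, not the paper's order-one $\tfrac{1}{N}\mathrm{tr}(G_M^{(i)}+T)$) diverges as well. The paper avoids this by placing $-z-D$ in the $N\times N$ block of $H$ and by tracking $\widetilde s = \tfrac{1}{M}\mathrm{tr}(G_M + T)$ rather than $\tfrac{1}{M}\mathrm{tr}\,G_M$; the added $T$ supplies precisely the cancellation of the divergent constant.

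Two smaller points. First, $T^{1/2}$ is not well defined as a real matrix when some $c_k<0$ (so $T_k<0$); you would need a signed square root and a sign matrix in the $M\times M$ block, or simply the paper's parameterization with $-T^{-1}$. Second, your auxiliary $\widetilde m$ restricted to the $k=\ell$ sector only exists when $\ell\in\mathbb N$; for $\ell\notin\mathbb N$ the two-equation system you describe has nothing to eliminate, and you would need a separate (simpler) argument, whereas the paper's $\phi\widetilde s$ is defined uniformly across cases. With $D$ incorporated and the auxiliary quantity shifted to include $T$, your route reduces to essentially the paper's, so the error is localized but genuinely fatal as written.
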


\begin{prop}
\label{prop:error_stieltjes_conclusion}
Under the assumptions above, we have $s_A(z) - s_B(z) \to 0$ and $s_{\widetilde{A}}(z) - s_B(z) \to 0$, almost surely as $d \to \infty$, with 
\begin{align*}
    \abs{s_A(z) - s_B(z)} &\prec \frac{1}{d^{r_\ell}}, \\
    \abs{s_{\widetilde{A}}(z) - s_B(z)} &\prec \frac{1}{d^{r_\ell}},
\end{align*}
\end{prop}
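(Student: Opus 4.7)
The plan is to bound $|s_A - s_B|$ and $|s_{\widetilde A} - s_B|$ separately via the triangle inequality, using two standard perturbation estimates for Stieltjes transforms of Hermitian matrices: the rank bound $|s_{M+R}(z) - s_M(z)| \leq C \rank(R)/(N \im z)$ and the Hilbert--Schmidt bound $|s_{M+H}(z) - s_M(z)| \leq \|H\|_{HS}/((\im z)^2 \sqrt N)$. The task is to split each difference into pieces to which one of these two bounds applies cleanly; the assumption that $\mu$ has all moments will handle all remaining concentration issues.

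For $|s_A - s_B|$, I would introduce the ``completed'' matrix $\bar A^{(\mathrm{low})}_{ij} := N^{-1/2} \sum_{k=0}^{\lceil \ell \rceil - 1} c_k h_k(\langle X_i, X_j\rangle/\sqrt d)$, which includes the diagonal $i = j$, and analogously $\bar A^{(\mathrm{high})}$ and $\bar B$, so that $A - B = \bar A^{(\mathrm{low})} + (\bar A^{(\mathrm{high})} - \bar B)$ up to a diagonal correction. The matrix $\bar A^{(\mathrm{low})}$ has rank at most $O(d^{\lceil \ell \rceil - 1})$: expanding $h_k(\langle X_i, X_j\rangle / \sqrt d)$ as a polynomial of degree $k$ in $\langle X_i, X_j\rangle$, it is a linear combination of matrices of the form $(Y^{(m)})^T Y^{(m)}$ with $Y^{(m)} \in \mathbb{R}^{d^m \times N}$ and columns $(X_{a_1 i} \cdots X_{a_m i})_{a_1,\ldots,a_m}$, for $m \leq k$. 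The rank bound then gives a contribution $O(d^{\lceil \ell \rceil - 1 - \ell})/\eta \prec d^{-r_\ell}$ in both the integer and non-integer cases. The removed diagonal $\diag(\bar A^{(\mathrm{low})})$ has entries $\prec d^{(\lceil \ell \rceil - 1)/2}/\sqrt N$, since $\|X_i\|^2 \prec d$ implies $|h_k(\|X_i\|^2/\sqrt d)| \prec d^{k/2}$; its squared HS norm is thus $\prec d^{\lceil \ell \rceil - 1}$, and the HS bound gives a contribution of exactly $d^{-r_\ell}/\eta^2$, which is the tight piece that sets the $r_\ell$ rate.

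The third and most substantial piece is the high-degree collision matrix $\bar A^{(\mathrm{high})} - \bar B$, whose $(i,j)$ entry is $N^{-1/2} \sum_{k \geq \lceil \ell \rceil} c_k [h_k(\langle X_i, X_j\rangle / \sqrt d) - T_k(X_i,X_j)]$, with $T_k(X_i,X_j) := (k!)^{-1/2} d^{-k/2} \sum^{d,\ast}_{a_1,\ldots,a_k} X_{a_1 i} \cdots X_{a_k i} X_{a_1 j} \cdots X_{a_k j}$ being the ``distinct-indices'' symbol in the definition of $B$. The key lemma, and the main obstacle, is the per-entry $L^2$ bound
\begin{equation*}
    \E\bigl[\bigl(h_k(\langle X_i, X_j\rangle/\sqrt d) - T_k(X_i, X_j)\bigr)^2\bigr] \leq C_k/d \qquad (i \neq j,\ k \geq \lceil \ell \rceil).
\end{equation*}
I would prove this by expanding $\langle X_i, X_j\rangle^{k-2m} = \sum_\pi \sum^{d,\ast}_{a_B} \prod_B X_{a_B i}^{|B|} X_{a_B j}^{|B|}$ over set partitions $\pi$ of $\{1,\ldots,k-2m\}$, for each $0 \leq m \leq k/2$: the all-singletons partition of $[k]$ at $m=0$ contributes exactly $T_k$; for Gaussian $\mu$, the Hermite coefficients of $h_k$ are designed precisely so that all other contributions cancel up to an $L^2$-residue of size $O(d^{-1/2})$ (this being the classical Wiener chaos decomposition); for general $\mu$, the residual collision sums necessarily involve strictly fewer than $k$ distinct $a$-indices and so obey the same $d^{-1/2}$ scaling, with the non-Gaussian corrections controlled by higher moments of $\mu$, all of which exist by assumption. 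Upgrading to the high-probability statement $\|\bar A^{(\mathrm{high})} - \bar B\|_{HS}^2 \prec N/d$ is then standard via higher-moment bookkeeping, yielding a Stieltjes contribution $\prec d^{-1/2} \leq d^{-r_\ell}$.

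Finally, for $|s_{\widetilde A} - s_A|$, I would Taylor-expand in the normalization factor. Writing $\theta_i := \sqrt d / \|X_i\|$, the concentration $\|X_i\|^2 = d + \OO_\prec(\sqrt d)$ gives $\theta_i \theta_j - 1 \prec d^{-1/2}$, and since $f$ is a fixed polynomial and $\langle X_i, X_j\rangle / \sqrt d \prec 1$ for $i \neq j$, we obtain $\widetilde A_{ij} - A_{ij} = N^{-1/2} \OO_\prec(d^{-1/2})$ entrywise. Hence $\|\widetilde A - A\|_{HS}^2 \prec N/d$, and the HS bound yields $|s_{\widetilde A} - s_A| \prec d^{-1/2} \leq d^{-r_\ell}$. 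A triangle inequality then combines this with the bound on $|s_A - s_B|$ just established to complete the proof of both statements.
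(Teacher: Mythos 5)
Your decomposition is a valid route to the same destination, differing from the paper's mainly in where it places the passage to normalized vectors. The paper compares along the chain $A \to \widetilde{A} \to \widetilde{B}^{\textup{full}} \to B^{\textup{full}} \to B$, performing the Hermite-to-distinct-index replacement (your ``collision matrix'' step) for the normalized vectors $\widetilde{X}_i = (\sqrt{d}/\|X_i\|)X_i$, for which $\sum_a(\widetilde{X}_{ai}^2-1)=0$ exactly; this makes the cancellations in the recursion-based induction of Lemma~\ref{lem:error_entrywise_tildea_tildeb} clean, and only afterwards undoes the normalization on the distinct-index symbols (Lemma~\ref{lem:error_entrywise_tildeb_b}). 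You instead compare $A$ to $B$ directly for unnormalized vectors and treat $\widetilde{A}\leftrightarrow A$ separately; the rate $d^{-r_\ell}$ is set by the same bottleneck in both approaches, namely the removed diagonal of the low-degree part. Yours is arguably more direct, while the paper's ordering avoids the extra drift terms $\sum_a(X_{ai}^2-1)=\|X_i\|^2-d=\OO_\prec(\sqrt d)$ that would otherwise appear in the bookkeeping for the Hermite cancellation.

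Two points in the sketch need sharpening. First, the justification of the per-entry bound $\E[(h_k(\langle X_i,X_j\rangle/\sqrt d)-T_k(X_i,X_j))^2]\leq C_k/d$ is not carried by the phrase ``the residual collision sums involve strictly fewer than $k$ distinct $a$-indices and so obey the same $d^{-1/2}$ scaling.'' The all-singletons contribution coming from the $x^{k-2}$ coefficient of $h_k$ has $L^2$ norm of order one; it becomes small only \emph{after} matching against the single-pair contribution from the $x^k$ coefficient. That matching is exactly what the three-term Hermite recursion encodes and is the content of the paper's inductive argument; for general $\mu$ the leftover after matching is $\OO_\prec(d^{-1/2})$ rather than zero, but this needs to be argued rather than inferred from the index count alone. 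Second, you should state explicitly that, since both $A$ and $B$ have zero diagonal, $A-B$ equals the sum of the \emph{off-diagonal} parts of $\bar A^{(\textup{low})}$ and $\bar A^{(\textup{high})}-\bar B$, and apply the Hilbert--Schmidt bound only to those off-diagonal parts. As written, $\|\bar A^{(\textup{high})}-\bar B\|_{\textup{HS}}^2\prec N/d$ is false if the diagonal is included: $(\bar A^{(\textup{high})})_{ii}$ and $(\bar B)_{ii}$ are each of order $d^{L/2}/\sqrt N$, and their difference is not $\OO_\prec((Nd)^{-1/2})$.
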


As mentioned before, the proof of Proposition \ref{prop:main_stieltjes_conclusion} takes up the bulk of the paper, namely Sections \ref{sec:main_term_sketch} through \ref{sec:tilde_m_self_consistent}; the proof of Proposition \ref{prop:error_stieltjes_conclusion} is much shorter, and is given in Section \ref{sec:error-terms}.
\begin{rem}
\label{rem:diagonal}

We now explain why, in the definitions \eqref{eqn:intro_model_def} and \eqref{eqn:intro_model_def_normalized} of $A$ and $\widetilde{A}$, we set the diagonal entries to zero. Consider the diagonal matrices $K, \widetilde{K} \in \R^{N \times N}$ given entrywise by
\[
    K_{ii} = \frac{1}{\sqrt{N}} f\left(\frac{\|X_i\|^2}{\sqrt{d}}\right), \qquad \widetilde{K}_{ii} = \frac{1}{\sqrt{N}} f(\sqrt{d})
\]
i.e., $K$ contains the ``missing diagonal'' elements of $A$ (resp. $\widetilde{K}$ of $\widetilde{A}$), and at first glance the reader may find the matrices $A + K$ and $\widetilde{A} + \widetilde{K}$ with ``restored diagonal'' elements to be more natural.

Since $\widetilde{K}$ is a deterministic constant times identity, its role is easy to understand: In this case, the matrix $\widetilde{A}$ has an order-one limiting spectral measure, and $\widetilde{A}+\widetilde{K}$ simply translates this measure along the real line. According to the growth of $f$ at infinity and the power $\ell$ in $d^\ell \asymp N$, this shift may be asymptotically negligible, asymptotically constant, or, in the worst case, asymptotically infinity. Proving theorems directly about $\widetilde{A}$ avoids the need to spell out these cases; the reader interested in $\widetilde{A} + \widetilde{K}$ can simply add back the shift.

Since $K$ is genuinely random, its role is more nuanced. In this case we may decompose
\[
    K = \widetilde{K} + K_{\textup{fluct}} \eqdef \frac{1}{\sqrt{N}}f(\sqrt{d}) \Id + \diag \left(\frac{1}{\sqrt{N}} \left[ f\left(\frac{\|X_i\|^2}{\sqrt{d}}\right) - f(\sqrt{d}) \right] \right)_{i=1}^N.
\]
Of course $\widetilde{K}$ plays the same translation role as before, but the role of $K_{\textup{fluct}}$ is new: Roughly speaking, from the CLT we expect $\|X_i\|^2 \approx d + Z_i\sqrt{d}$, with $Z_i$ i.i.d. standard normal, so
\[
    (K_{\textup{fluct}})_{ii} \approx \frac{f(\sqrt{d}+Z_i) - f(\sqrt{d})}{\sqrt{N}} \approx \frac{f'(\sqrt{d})}{\sqrt{N}}Z_i.
\]
If $f'(\sqrt{d}) \ll \sqrt{N}$, this suggests that $\|K_{\textup{fluct}}\|_{\textup{op}}$ is asymptotically negligible, so that $K$ is asymptotically just a simple translation as before. But if $f'(\sqrt{d}) \gg \sqrt{N}$, then --- even discarding the translation $\widetilde{K}$ --- the bulk spectra of $A$ and $A+K_{\textup{fluct}}$ may be substantially different. This is potentially interesting, but to keep the current work to a manageable length, we consider only the zero-diagonal matrices $A$ and $\widetilde{A}$.
\end{rem}

\subsection{General nonlinearities}\

In these more general results, we allow nonlinearities $f : \R \to \R$, which should still be \emph{fixed} (not depending on $d$ or $N$), as long as they are in some sense well-approximable by polynomials. Our conditions on $f$ are the same as in \cite[Lemma 1]{LuYau}, and our (short) proof that Theorem \ref{thm:main_polynomial} on polynomial nonlinearities lifts to Theorem \ref{thm:main_general} through this approximation scheme essentially mimics theirs.

\begin{assn}
\label{assn:poly_approximable}
We assume that our nonlinear function $f(x)$ is piecewise continuous with a polynomial growth rate. Precisely, there exists a positive integer $K$, a finite subdivision $-\infty = \alpha_0 < \alpha_1 < \alpha_2 < \cdots < \alpha_K < \alpha_{K+1} = \infty$, and a finite positive constant $C$ such that
\begin{enumerate}
\item For every $i \in \{0, \ldots, K\}$, the function $f(x)$ is continuous and bounded on the open interval $(\alpha_i, \alpha_{i+1})$.
\item $\abs{f(x)} \leq C\abs{x}^C$ when $x < \alpha_1$ or $x > \alpha_K$.
\end{enumerate}
\end{assn}

Under these assumptions, it is easy to show that the sequence $(c_k)_{k=0}^\infty$ defined by
\begin{equation}
\label{eqn:def_ck_poly_approximable}
    c_k = \E_{Z \sim \mc{N}(0,1)}[f(Z)h_k(Z)]
\end{equation}
exists and is square-summable:
\begin{equation}
\label{eqn:def_sig2_poly_approximable}
    \sigma^2 \defeq \sum_{k=0}^\infty c_k^2 = \E_{Z \sim \mc{N}(0,1)}[f^2(Z)] < \infty.
\end{equation}

\begin{theorem}
\label{thm:main_general}
\textbf{(Main theorem, general nonlinearities)}
Suppose that $\mu$ has all moments finite. Fix $\kappa, \tau, \ell > 0$, and a function $f$ satisfying Assumption \ref{assn:poly_approximable} with corresponding constants $(c_k)_{k=0}^\infty$ and $\sigma^2$ from \eqref{eqn:def_ck_poly_approximable} and \eqref{eqn:def_sig2_poly_approximable}, respectively. Suppose that
\[
    \frac{N}{d^\ell} = \kappa + \OO(d^{-1/2}).
\]
Then, if $\mf{m}(z)$ is the unique solution in $\mathbb{H}$ to the equation
\[
    \mf{m}(z) \left(z + \frac{\gamma_a \mf{m}(z)}{1+\gamma_b \mf{m}(z)} + \widehat{\gamma_c} \mf{m}(z)\right) + 1 = 0
\]
with the $f$-dependent constants
\begin{equation}
\label{eqn:def_gammas_approximable}
    \gamma_a \defeq \begin{cases} c_\ell^2 & \text{if } \ell \in \N, \\ 0 & \text{otherwise}, \end{cases} \qquad \gamma_b \defeq \begin{cases} c_\ell \sqrt{\ell! \kappa} & \text{if } \ell \in \N, \\ 0 & \text{otherwise}, \end{cases} \qquad \widehat{\gamma_c} \defeq \sum_{k=\ell_c}^{\infty} c_k^2,
\end{equation}
then for each fixed $z \in \mathbf{D}_\tau$ we have
\begin{align*}
    s_A(z) &\overset{d \to \infty}{\to} \mf{m}(z) \quad \text{almost surely}, \\
    s_{\widetilde{A}}(z) &\overset{d \to \infty}{\to} \mf{m}(z) \quad \text{almost surely}.
\end{align*}
\end{theorem}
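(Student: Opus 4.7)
The plan is to deduce Theorem \ref{thm:main_general} from Theorem \ref{thm:main_polynomial} by Hermite truncation, following the approach of \cite[Lemma 1]{LuYau}. For each integer $L \geq 1$, set $f_L(x) \defeq \sum_{k=0}^L c_k h_k(x)$ and denote by $A^{(L)}, \widetilde{A}^{(L)}$ the matrices from \eqref{eqn:intro_model_def}, \eqref{eqn:intro_model_def_normalized} built with $f_L$ in place of $f$. Applying Theorem \ref{thm:main_polynomial} to $f_L$ yields $s_{A^{(L)}}(z), s_{\widetilde{A}^{(L)}}(z) \to \mf{m}_L(z)$ almost surely as $d \to \infty$, where $\mf{m}_L$ solves the self-consistent equation with $\widehat{\gamma_c}$ replaced by $\gamma_c^{(L)} \defeq \sum_{k=\ell_c}^L c_k^2$. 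Because $\gamma_c^{(L)} \to \widehat{\gamma_c}$ by \eqref{eqn:def_sig2_poly_approximable} and the unique upper-half-plane solution of the equation depends continuously on its parameters on $\mathbf{D}_\tau$, we have $\mf{m}_L(z) \to \mf{m}(z)$ for each fixed $z$.

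It therefore suffices to show that, for every $\epsilon > 0$, one can pick $L$ so that $\limsup_{d \to \infty} |s_A(z) - s_{A^{(L)}}(z)| < \epsilon$ almost surely, and similarly for $\widetilde{A}$. Combining the Hoffman--Wielandt inequality with Cauchy--Schwarz gives the deterministic bound
\[
    \abs{s_A(z) - s_{A^{(L)}}(z)}^2 \leq \frac{1}{N\eta^4}\norm{A-A^{(L)}}_{\text{HS}}^2 = \frac{1}{N^2\eta^4}\sum_{i \neq j}(f-f_L)^2\pa{\frac{\ip{X_i,X_j}}{\sqrt{d}}},
\]
and analogously for $\widetilde{A}$. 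Since the right-hand side is a symmetric average of $\OO(N^2)$ weakly-dependent summands, standard second-moment/concentration arguments reduce the almost-sure statement to the expectation claim
\[
    \E\qa{(f-f_L)^2\pa{\frac{\ip{X_i,X_j}}{\sqrt{d}}}} \longrightarrow \sum_{k>L} c_k^2 \quad \text{as } d \to \infty,
\]
which in turn tends to $0$ as $L \to \infty$ by \eqref{eqn:def_sig2_poly_approximable}. Choosing $L=L(\epsilon)$ large and then letting $d\to\infty$ closes the argument.

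The main obstacle is justifying this moment convergence, since the Hermite basis $(h_k)$ is orthonormal under $\mc{N}(0,1)$ but not under the true law of $\ip{X_i,X_j}/\sqrt{d}$, so Parseval does not apply directly. The resolution has two ingredients. First, conditionally on $X_j$, the quantity $\ip{X_i,X_j}/\sqrt{d}$ is a normalized sum of i.i.d.\ mean-zero random variables $(X_{ai}X_{aj})_a$ with empirical variance $\norm{X_j}^2/d \to 1$; the classical CLT combined with the finite-moment hypothesis on $\mu$ gives convergence of \emph{all} polynomial moments of $\ip{X_i,X_j}/\sqrt{d}$ to their $\mc{N}(0,1)$ counterparts. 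Second, Assumption \ref{assn:poly_approximable} lets us decompose $(f-f_L)^2$ into a compactly supported continuous part, approximable uniformly by polynomials (Weierstrass), and a polynomial-growth tail controlled by the moments established in the first step. Combining these two pieces yields convergence of $\E[(f-f_L)^2(\ip{X_i,X_j}/\sqrt{d})]$ to the Gaussian value $\sum_{k>L}c_k^2$, completing the reduction. All the analytic heavy lifting has been done in the polynomial case, so the appendix devoted to this argument is short.
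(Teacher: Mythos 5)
Your proposal follows the same overall strategy as the paper: truncate $f$ in the Hermite basis, control $s_A - s_{A^{(L)}}$ via a Hoffman--Wielandt / Frobenius-norm bound plus concentration, and reduce to showing $\E\bigl[(f-f_L)^2(\ip{X_1,X_2}/\sqrt{d})\bigr]$ converges to the Gaussian value. The moment-convergence argument you sketch (weak convergence / CLT for bounded parts, moment bounds for the polynomial-growth tail) is the content of the paper's Lemma~\ref{lem:second_moment_growth_condition}, and the Frobenius step is the paper's Lemma~\ref{lem:lem_19_analogue}, which in turn imports Lemma~18/19 of \cite{LuYau}.

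The one genuine structural divergence is in the choice of truncation. You truncate to $f_L = \sum_{k=0}^L c_k h_k$, which yields a limit $\mf{m}_L$ solving the self-consistent equation with the truncated parameter $\gamma_c^{(L)}$, and then you still need a separate stability-in-$\gamma_c$ argument to conclude $\mf{m}_L \to \mf{m}$. This is doable but adds another lemma. The paper avoids it entirely by a small trick: it uses $f_{\textup{app}} = \sum_{k=0}^{L-1} c_k h_k + \widehat{c_L}\, h_L$ with $\widehat{c_L} = \bigl(\sigma^2 - \sum_{k<L} c_k^2\bigr)^{1/2}$, so that the $\gamma_c$ of $f_{\textup{app}}$ is \emph{exactly} $\widehat{\gamma_c}$ and the polynomial theorem already lands on $\mf{m}$ itself; no limiting argument in the parameters is needed. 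Relatedly, your Lemma would have to be proved anyway, whereas the paper gets to cite its stability lemma only for the $s(z)\to\mf{m}(z)$ step that is already done.

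Two smaller points to watch. First, $(f - f_L)^2$ is only piecewise continuous, so ``a compactly supported continuous part, approximable uniformly by polynomials'' is not literally available; you need the paper's extra smoothing step $h_\epsilon$ near the jump points $\alpha_i$ and then control the contribution of the $\epsilon$-neighborhoods using that both $\mu_G$ and (via weak convergence) $\mu_d$ assign them small mass. Second, for $\widetilde{A}$ you need a little more care: the argument of the nonlinearity involves $\sqrt{d}/\|X_i\|$, which is undefined with exponentially small probability, and \cite[Lemma~19]{LuYau} as stated doesn't directly cover the indicator inside $\widetilde{A}$; the paper handles this by a low-rank perturbation on the bad rows via the rank estimate in Lemma~\ref{lem:lem_18_replacement}. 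Your outline treats $A$ and $\widetilde{A}$ symmetrically and omits this.
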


We prove this result by approximating $f$ by polynomials and using the result for polynomial nonlinearities, Theorem \ref{thm:main_polynomial}, as a black box. This is essentially the same approach as \cite[Theorem 2]{LuYau}, although they can allow slightly more general nonlinearities $f$ because they have exact formulas for the distributions, which additionally have better tail decay. In Appendix \ref{appx:general_nonlinearities}, we give the needed modifications for completeness. 

%%%%%%%%%%%%%%%%%%%%%%%%%%%%%%%%%%%%%%%%%%%%%%%%%%%%%%%%%%%%
%%%%%%%%%%%%%%%%%%%%%%%%%%%%%%%%%%%%%%%%%%%%%%%%%%%%%%%%%%%%
%%%%%%%%
%%%%%%%%             Section: Main terms
%%%%%%%%
%%%%%%%%%%%%%%%%%%%%%%%%%%%%%%%%%%%%%%%%%%%%%%%%%%%%%%%%%%%%
%%%%%%%%%%%%%%%%%%%%%%%%%%%%%%%%%%%%%%%%%%%%%%%%%%%%%%%%%%%%

\section{Sketch proof of Proposition \ref{prop:main_stieltjes_conclusion}: Main terms as sample covariance matrices}
\label{sec:main_term_sketch}

The main idea of the proof is to rewrite $B$ in the form of a sample-covariance matrix $U^*TU$, where $U$ has random, centered real entries with unit variance, and where $T$ is a real deterministic diagonal matrix storing the prefactors in \eqref{eqn:main_results_definition_b}. (Actually, since the diagonal terms of $B$ are set to zero, we will need to write $B = U^\ast T U - D$ where $D$ stores the diagonal terms.) The main technical difficulty comes from the fact that, while the columns of $U$ in such a decomposition are independent, the entries in each column are only uncorrelated, not independent. A smaller technical difficulty comes from the fact that $U$ will be of size $M \times N$ for some $M \gg N$ (roughly speaking, $M \approx N^{L/\ell}$ -- and it suffices to restrict to the case $L > \ell$, see Remark \ref{rem:only_low_degree}), whereas the most-studied sample covariance matrices typically have $M \asymp N$.

To do this, we need the following notation. Fix once and for all some nonlinearity $f$, and recall that $c_k$ is the $k$th coefficient of $f$ in the Hermite basis, $f(x) = \sum_{k=0}^L c_k h_k(x)$. For every $k \in \llbracket \lceil \ell \rceil, L \rrbracket$ define 
\begin{equation}
\mathbf{M}_k = \begin{cases} \{(a_1,\ldots, a_k) \in [d]^k \text{ s.t. } a_1 < \ldots < a_k\} & \text{if } c_k \neq 0, \\ \emptyset & \text{if } c_k = 0, \end{cases}
\end{equation}
with size $M_k = \abs{\mathbf{M}_k}$. We will also need their union  $\mathbf{M} = \bigcup_{k=\lceil \ell \rceil}^L \mathbf{M}_k$, with total size $M = \sum_{k=\lceil \ell \rceil}^L M_k$.

For each $k$ with $M_k > 0$, we introduce the matrix $U^{[k]} \in \R^{M_k \times N}$ with entries 
\begin{equation}
    U^{[k]}_{\mu i} = \frac{1}{\sqrt{N}} X_{a_1 i} \ldots X_{a_k i},
\end{equation}
where $\mu$ is the tuple
\[
    \mu = (a_1,\ldots, a_k) \in \mathbf{M}_k.
\]
Given two tuples $\mu$ and $\nu$, we define their \emph{overlap} 
\[
    \ip{\mu,\nu} \defeq \abs{ \mu \cap \nu},
\]
where $\mu$ and $\nu$ are viewed as subsets of $\llbracket 1, d \rrbracket$. For example, $\ip{(2, 4, 5), (3, 5)} = 1$.

We also define the combined-degree $M \times N$ matrix $U$ such that $U_{\mu i} = U^{[k]}_{\mu i}$ for $\mu \in \mathbf{M}_k$ and all $k$. Note that, as claimed, $\E U_{\mu i} = 0$ and $\E U_{\mu i}^2 = \frac{1}{N}$ for all $\mu$ and $i$; the columns $U_i$ are independent for different $i$; but $U_{\mu i}$ and $U_{\nu i}$ are only uncorrelated for $\mu \neq \nu$, not necessarily independent, since $\ip{\mu,\nu}$ can be nonzero. The deterministic $M \times M$ matrix $T$ is defined blockwise
\begin{equation}
    T = \begin{pmatrix}
    \ddots & & 0 \\
     & c_k\sqrt{k!} \cdot \sqrt{\frac{N}{d^k}} I_{M_k} & \\
     0 & & \ddots
    \end{pmatrix},
\end{equation}
where we skip blocks with $c_k = 0$ (i.e., $I_0$ is a $0 \times 0$ matrix) by convention. The point of all these conventions is to define $T$ in an \emph{invertible} way, by omitting what would otherwise be zero blocks. For example, if $f = c_1h_1 + c_2h_2 + c_4h_4$ and $\ell \leq 1$, then
\[
    T = \begin{pmatrix}
    c_1 \sqrt{\frac{N}{d}} I_{M_1} & 0 & 0 \\
    0 & c_2 \sqrt{\frac{2!N}{d^k}} I_{M_2} & 0 \\
    0 & 0 & c_4 \sqrt{\frac{4!N}{d^k}} I_{M_4}
    \end{pmatrix}.
\]
If we change this example to keep the same $f$ but let $1 < \ell \leq 2$, say, then the first of the three blocks disappears, and $T$ becomes smaller. Since $T$ is diagonal, we will usually write $T_\mu$ instead of $T_{\mu\mu}$, and sometimes write $T_k$ instead of $T_\mu$ when $\mu \in \mathbf{M}_k$ (since the value of $T_\mu$ depends only on this $k$).

\begin{rem}
\label{rem:only_low_degree}
If $f$ has only low-degree terms, i.e. $f = \sum_{k=0}^{\lceil \ell \rceil - 1} c_k h_k(x)$, then by these conventions $T$ does not exist at all. But in this case, the matrix $B$ defined by \eqref{eqn:main_results_definition_b} is zero, so $s_B(z) = -\frac{1}{z} = \mf{m}(z)$, where $\mf{m}(z)$ is defined by \eqref{eqn:frak_m_self_consistent}, and Proposition \ref{prop:main_stieltjes_conclusion} is immediate. For such matrices, the main result -- namely, that both $A$ and $\widetilde{A}$ have bulk spectrum which is asymptotically a delta mass at zero -- follows from Proposition \ref{prop:error_stieltjes_conclusion}, whose proof does not use the matrix $T$. Thus in the following we will always assume that $f$ has some high-degree terms, so that $T$ is nontrivial.
\end{rem}

Notice that
\begin{equation}\label{eq:B_ij-sample-cov}
    B_{ij} = (U^*TU)_{ij} \delta_{i\neq j}.
\end{equation}
Denote the diagonal part removed in \eqref{eq:B_ij-sample-cov} by $D$, i.e. $D_{ii} = U_i^* T U_i$ and $D_{ij}=0$ for $i\neq j$. The fundamental observation is that
\begin{equation}
    B = U^* T U - D.
\end{equation}

Consider
\begin{equation}\label{def:H}
    H(z) = \begin{pmatrix}
    -T^{-1} & U \\
    U^* & -z - D
    \end{pmatrix}.
\end{equation}
Define the resolvent
\begin{equation}
    G(z) = H(z)^{-1}.
\end{equation}
From the definition of $H$ in \eqref{def:H} we can see that
\begin{equation}
    G(z) = \begin{pmatrix}
    G_M(z) & * \\
    * & G_N(z)
    \end{pmatrix},
\end{equation}
where the $*$'s are some block matrices that are irrelevant for our purposes, 
\begin{equation}
\label{eqn:gn_formula}
    G_N(z) = \frac{1}{U^* T U - z - D} = \frac{1}{B - z}
\end{equation}
is the resolvent we ultimately want to understand, and
\begin{equation}
\label{eqn:gm_formula}
    G_M(z) = \frac{1}{U(z+D)^{-1}U^* - T^{-1}}
\end{equation}
is an object we will understand as an intermediate step. 

Throughout this paper, Greek letter indices like $\mu$ and $\nu$ will refer to the first $M$ columns and rows of $G$, while letters like $i,j,k$ will refer to the last $N$ columns and rows of $G$, so that for example $G_{ij} = (G_N)_{ij}$ and $G_{\mu \nu} = (G_M)_{\mu \nu}$.

The fundamental quantities for the main-term analysis are
\begin{align*}
    s(z) &\defeq s_B(z) = \frac{1}{N} \tr G_N(z), \\
    \widetilde{s}(z) &= \frac{1}{M} \tr \left(G_M(z) + T \right), \\
    \phi &= \frac{M}{N},
\end{align*}
where we have dropped $B$ from the notation $s_B$ to save space. We stress that $s(z)$ and $\widetilde{s}(z)$ depend on $N$, although we suppress this from the notation. Later, we will show that $s(z)$ and $\phi\widetilde{s}(z)$ are order-one quantities for $z \in \mathbf{D}_\tau$. Since $z$ is fixed throughout the argument, we will often drop it from the notation, writing for example
\[
    s = s(z), \qquad \widetilde{s} = \widetilde{s}(z), \qquad G_N = G_N(z), \qquad G_M = G_M(z), \qquad \ldots
\]
Since $G_N$ is the resolvent of our matrix of interest, the eventual goal is to show that $s(z)$ approximately satisfies a self-consistent equation. To do this, we pass through the auxiliary matrix $G_M$, which is \emph{not} a resolvent, but which our analysis shows approximately behaves like one (for example, it approximately satisfies something like the Ward identity; see Lemmas \ref{lem:full-Ward} and \ref{lem:partial-Ward}). Precisely, we first show that $s(z)$ and $\phi\widetilde{s}(z)$ approximately determine each other through a \emph{joint} self-consistent equation (roughly, $s(z) \approx -(z+\phi\widetilde{s}(z))^{-1}$). Then we show show that $\phi\widetilde{s}(z)$ approximately satisfies its own self-consistent equation. From here we recover the self-consistent equation approximately satisfied by $s(z)$, which is exactly satisfied by $\mathfrak{m}(z)$, then use perturbation theory of that equation, already developed by \cite{LuYau}, to conclude. These steps are split into the following propositions, whose proofs constitute the bulk of the paper.

\begin{prop}
\label{prop:m_widetildem_self_consistent}
For any fixed $\tau > 0$ and any fixed $z \in \mathbf{D}_\tau$, we have
\[
    \abs{1+s(z)(z+\phi\widetilde{s}(z))} \prec \frac{1}{d^{\frac{1}{2}\min(1,\ell)}}.
\]
\end{prop}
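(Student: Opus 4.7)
The plan is to apply the Schur complement formula to the block matrix $H(z)$ of \eqref{def:H} in order to express $G_{ii}$ as a rational function of a quadratic form in $U_i$, and then to show this quadratic form concentrates around its conditional mean, which can be identified with $\phi\widetilde{s}(z)$.

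Concretely, isolating the $i$-th row and column (for $i \in \llbracket 1,N \rrbracket$) in $H(z)$, and observing that the only nonzero off-diagonal entries of that column lie in the $M$-block and equal $U_i$ (because $D$ is diagonal), Schur complement gives
\[
    G_{ii} = \frac{-1}{z + D_{ii} + U_i^* G_M^{(i)} U_i},
\]
where $G_M^{(i)}$ is the $M \times M$ block of the inverse of $H$ with the $i$-th $N$-index removed. Using $D_{ii} = U_i^* T U_i$, this rewrites as $G_{ii} = -1/(z + U_i^*(T + G_M^{(i)}) U_i)$. Since $G_M^{(i)}$ is independent of $U_i$ and $\E[U_{\mu i} U_{\nu i}] = \delta_{\mu\nu}/N$, the conditional expectation of this quadratic form given $\{X_j\}_{j \neq i}$ is $\frac{1}{N}\tr(T + G_M^{(i)})$. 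A rank-one perturbation argument, using $G_M^{-1} - (G_M^{(i)})^{-1} = (z+D_{ii})^{-1} U_i U_i^*$ together with Sherman--Morrison, then shows this equals $\phi\widetilde{s}(z) + O_\prec(N^{-1})$.

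The central task is thus the fluctuation bound
\[
    Z_i \defeq U_i^*(T + G_M^{(i)})U_i - \tfrac{1}{N}\tr(T + G_M^{(i)}) \prec d^{-\frac{1}{2}\min(1,\ell)},
\]
which I expect to be the main obstacle. The difficulty is that $\{U_{\mu i}\}_\mu$ is uncorrelated but \emph{not} independent: each $U_{\mu i}$ is a product over a $k$-subset $\mu \subset \llbracket 1,d \rrbracket$ of the independent components of $X_i$, so $U_{\mu i} U_{\nu i}$ depends nontrivially on the overlap $\ip{\mu, \nu}$, and the standard Hanson--Wright estimate does not directly apply. I plan to estimate $\E[\abs{Z_i}^{2p} \mid \{X_j\}_{j \neq i}]$ by expanding combinatorially in the overlap structures of tuples $(\mu,\nu,\alpha,\beta) \in \mathbf{M}^4$: the three Gaussian-type pairings produce the main contribution $\frac{2}{N^2} \sum_{\mu\nu} \abs{(T + G_M^{(i)})_{\mu\nu}}^2$, controlled via the partial Ward inequalities of Section~\ref{sec:ward} (which upgrade the trivial bound $\|G_M\|_F^2 \leq M \|G_M\|_\text{op}^2$ to something like $\sum_\nu \abs{(G_M)_{\mu\nu}}^2 \lesssim \im(G_M)_{\mu\mu}/\eta$, despite $G_M$ being only a \emph{partial} resolvent). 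Exotic overlap patterns of $(\mu,\nu,\alpha,\beta)$ give extra terms, each suppressed by factors of $1/d$ per unpaired coordinate; these cross terms are what dictate the rate $d^{-1/2}$ in the regime $\ell \geq 1$.

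With $\abs{Z_i} \prec d^{-\min(1,\ell)/2}$ in hand, and a stability bound $\abs{z + \phi\widetilde{s}(z)}^{-1} = O(1)$ (obtainable from $\abs{s(z)} \leq 1/\tau$ together with the identity being proved, by a short bootstrap), a first-order expansion gives $G_{ii} = -(z + \phi\widetilde{s}(z))^{-1}(1 + O_\prec(d^{-\min(1,\ell)/2}))$ for each $i$. Averaging over $i \in \llbracket 1, N \rrbracket$ and multiplying through by $z + \phi\widetilde{s}(z)$ completes the proof.
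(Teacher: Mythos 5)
Your overall strategy --- Schur complement to reduce $G_{ii}^{-1}$ to the quadratic form $U_i^*(T+G_M^{(i)})U_i$, concentration of this form around $\phi\widetilde{s}(z)$, then averaging over $i$ --- is the same as the paper's (Lemma~\ref{lem:m_main_error_analysis} plus the Schur formula~\eqref{eq:res-id-schur}). Your rank-one-update treatment of the $\frac1N\tr(G_M^{(i)}+T)\to\phi\widetilde{s}(z)$ exchange is a reasonable alternative to the paper's use of~\eqref{eq:res-id-off-diagonal} and~\eqref{eqn:gi-g}. However, there is a concrete gap in your concentration plan. You describe the ``partial Ward inequalities'' as the bound $\sum_\nu\abs{(G_M)_{\mu\nu}}^2\lesssim\im(G_M)_{\mu\mu}/\eta$, but that is the \emph{full} Ward inequality (Lemma~\ref{lem:full-Ward}); you have taken ``partial'' to refer to $G_M$ being only a block of a resolvent. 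The paper's partial Ward inequality (Lemma~\ref{lem:partial-Ward}) is the statement that restricting the $\nu$-sum to tuples with a \emph{fixed overlap} $t$ with $\mu$ buys the extra suppression $d^{\max\{-t,\ell-k_2\}}$. That refinement is essential, not decorative: for the off-diagonal contribution from pairs $\mu\in\mathbf{M}_{k_1}$, $\nu\in\mathbf{M}_{k_2}$ with $\ip{\mu,\nu}=s$, the paper's Cauchy--Schwarz step produces a factor $d^{s/2}$, and using only the full Ward one gets roughly $\frac{d^{s/2}}{N}\cdot N^{1/2}\asymp d^{(s-\ell)/2}$, which is \emph{not} $o(1)$ once $s\ge\ell+1$ (possible whenever $L\ge\ell+2$). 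The partial Ward factor is exactly what cancels this, yielding $d^{\max(0,s+\ell-k_2)/2}/\sqrt{N}\prec d^{-\min(1,\ell)/2}$ as in~\eqref{eqn:error_k1k2s}. You correctly identify the exotic-overlap configurations as the bottleneck, but ``suppressed by factors of $1/d$ per unpaired coordinate'' does not articulate this mechanism, and as stated your proposal would not recover the claimed rate.

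A secondary, more cosmetic point: your conclusion inverts $z+\phi\widetilde{s}(z)$, which requires the lower bound $\abs{z+\phi\widetilde{s}(z)}\succ 1$ \emph{before} Proposition~\ref{prop:m_widetildem_self_consistent} is established, hence the bootstrap you gesture at. The paper avoids this entirely: after writing $G_{ii}^{-1}=-z-\phi\widetilde{s}(z)+\OO_\prec(d^{-\frac12\min(1,\ell)})$, one multiplies by $G_{ii}$ (controlled deterministically by $\abs{G_{ii}}\le 1/\eta$), averages over $i$, and is immediately done; the bound $\abs{z+\phi\widetilde{s}(z)}\succ 1$ is only derived afterward (Lemma~\ref{lem:phi_m_tilde_order_one}) as a \emph{consequence} of this proposition together with $\abs{s(z)}\prec 1$.
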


\begin{prop}
\label{prop:widetildem_self_consistent}
Let $\gamma_a$, $\gamma_b$, $\gamma_c$ be as in \eqref{eqn:def_gammas}. For any fixed $\tau > 0$ and any fixed $z \in \mathbf{D}_\tau$, we have
\begin{equation}
\label{eqn:widetildem_self_consistent}
    \abs{\phi \widetilde{s}(z) - \frac{\gamma_a}{\gamma_b - z - \phi \widetilde{s}(z)} + \frac{\gamma_c}{z+\phi \widetilde{s}(z)}} \prec \begin{cases} \frac{1}{d^{q_\ell}} & \text{if $\ell$ is not an integer,} \\ \frac{1}{d^{q_\ell}} + \abs{\frac{N}{d^\ell} - \kappa} & \text{if $\ell$ is an integer.} \end{cases} %\max\left\{ \frac{1}{\sqrt{d}}, \frac{1}{N} \right\}.
\end{equation}
\end{prop}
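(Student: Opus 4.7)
The plan is to obtain an explicit Schur-complement representation of $(G_M)_{\mu\mu} + T_\mu$ and then recognize a self-consistent equation after averaging. Removing row and column $\mu \in \mathbf{M}$ from the block matrix $H$ of \eqref{def:H} and inverting blockwise gives
\[
    (G_M)_{\mu\mu} = \frac{1}{-T_\mu^{-1} - Z_\mu}, \qquad Z_\mu \defeq \sum_{i,j=1}^N U_{\mu i}(G_N^{(\mu)})_{ij} U_{\mu j},
\]
where $G_N^{(\mu)}$ is the $N$-block of $(H^{(\mu)})^{-1}$. A one-line manipulation then yields $(G_M)_{\mu\mu} + T_\mu = T_\mu^2 Z_\mu/(1 + T_\mu Z_\mu)$, and grouping by the blocks $\mathbf{M}_k$ gives
\[
    \phi\widetilde{s}(z) = \sum_{k=\lceil \ell \rceil}^L \frac{T_k^2}{N} \sum_{\mu \in \mathbf{M}_k} \frac{Z_\mu}{1 + T_k Z_\mu}.
\]

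The central probabilistic step, and the main obstacle, is the uniform concentration $Z_\mu = s(z) + \OO_\prec(d^{-q_\ell})$. It should follow from a minor-removal estimate $\frac{1}{N}\tr G_N^{(\mu)} = s(z) + \OO_\prec(1/N)$, obtained from the resolvent identities of Section \ref{sec:resolvent_identities}, together with a quadratic-form concentration of $\mathbf{u}_\mu^T G_N^{(\mu)} \mathbf{u}_\mu$ around $\frac{1}{N}\tr G_N^{(\mu)}$. The feature that makes the latter tractable is that the entries $(U_{\mu i})_{i=1}^N$ are independent across $i$ for fixed $\mu$, with mean $0$ and variance $1/N$, and the partial Ward inequalities of Section \ref{sec:ward} together with the preliminary bounds of Section \ref{sec:preliminary_bounds} should control the fluctuation. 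The genuine difficulty is that $\mathbf{u}_\mu$ and $G_N^{(\mu)}$ are not independent: different rows $\nu$ of $U$ share the underlying variables $X_{ai}$, $a \in \mu$, whenever $\ip{\mu,\nu} \geq 1$, and $G_N^{(\mu)}$ further depends on $\mathbf{u}_\mu$ through the diagonal matrix $D$. Both dependencies are mild --- perturbing $\mathbf{u}_\mu$ shifts $D$ by $O(T_\mu/N)$ and thus perturbs $G_N^{(\mu)}$ negligibly --- so a conditioning or martingale argument that successively reveals the $X_{ai}$ for $a \in \mu$ should decouple them enough to complete the quadratic-form estimate.

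Once $Z_\mu$ is replaced by $s(z)$, the remaining analysis is deterministic bookkeeping. The identities $M_k = \binom{d}{k}$ and $T_k = c_k \sqrt{k!}\,\sqrt{N/d^k}$ give $M_k T_k^2/N = c_k^2(1 + O(1/d))$. When $\ell$ is a positive integer, the $k = \ell$ block contributes $\gamma_a s(z)/(1 + \gamma_b s(z))$ with an error of size $\abs{N/d^\ell - \kappa}$ coming from $T_\ell \to \gamma_b$; for $k \geq \ell_c$, one has $T_k = O(d^{-(k-\ell)/2}) \to 0$, and Taylor-expanding $1/(1 + T_k Z_\mu)$ contributes $\gamma_c s(z)$, with the dominant rest term of size $d^{-q_\ell}$ arising from the $k = \ell_c$ block. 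Combining gives
\[
    \phi \widetilde{s}(z) = \frac{\gamma_a s(z)}{1 + \gamma_b s(z)} + \gamma_c s(z) + \OO_\prec\!\left(d^{-q_\ell} + \1_{\ell \in \N}\abs{\tfrac{N}{d^\ell} - \kappa}\right).
\]
Finally, using $s(z) = -1/(z + \phi\widetilde{s}(z)) + \OO_\prec(d^{-\min(1,\ell)/2})$ from Proposition \ref{prop:m_widetildem_self_consistent} to eliminate $s(z)$ and rearranging algebraically yields the claimed identity \eqref{eqn:widetildem_self_consistent}.
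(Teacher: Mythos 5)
Your Schur-complement decomposition in the $\mu$-direction, giving $(G_M)_{\mu\mu}+T_\mu = T_\mu^2 Z_\mu/(1+T_\mu Z_\mu)$, is the algebraic dual of the paper's identities \eqref{eq:res-id-schur} and \eqref{eq:res-id-G-mu-nu}, which Schur-complement only in the $i$-direction, and your closing deterministic bookkeeping together with the use of Proposition \ref{prop:m_widetildem_self_consistent} to eliminate $s(z)$ are both fine. The gap is the central concentration claim $Z_\mu = s(z) + \OO_\prec(d^{-q_\ell})$. For the quadratic form $\sum_{i,j}U_{\mu i}(G_N^{(\mu)})_{ij}U_{\mu j}$ to concentrate around $\frac{1}{N}\tr G_N^{(\mu)}$ you need the vector $(U_{\mu i})_i$ to be independent of $G_N^{(\mu)}$, or at least decoupled from it in a quantifiable way, and this is precisely what fails. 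You correctly list both sources of dependence --- through $D$, and through every row $U_\nu$ with $\ip{\mu,\nu}\geq 1$ --- but then assert both are ``mild'' while justifying only the first (which really is a small diagonal perturbation). The second is not addressed, and it is the essential difficulty of the model: for $\mu\in\mathbf{M}_k$ there are on the order of $d^{k'-1}$ rows $\nu\in\mathbf{M}_{k'}$ sharing an $X$ with $\mu$, and perturbing a single $X_{ai}$ with $a\in\mu$ moves an entire row and column of $B$ through all of them. The paper is built around this obstruction: it restricts minors to exclusion sets $E\subset\llbracket1,N\rrbracket$ and explicitly remarks that ``we will only ever need minors that remove some $i$ and $j$ indices, never those that remove some $\mu$ and $\nu$ indices,'' precisely because removing a column $i$ renders $G_M^{(i)}$ genuinely independent of the whole column $(U_{\nu i})_\nu$ (columns of $U$ are i.i.d.), whereas removing a row $\mu$ gives no such factorization (rows of $U$ are not independent).

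The tools you cite to repair the gap do not transfer as stated. The partial Ward inequalities of Section \ref{sec:ward} are formulated for $i$-removal and bound $\sum_\nu\abs{(G_M^{(E)}+T)_{\mu\nu}}^2$ for $E\subset[N]$; the paper's Lemma \ref{lem:m_main_error_analysis} uses them on $\sum_{\mu,\nu}U_{\mu i}(G_M^{(i)}+T)_{\mu\nu}U_{\nu i}$, a quadratic form in which the matrix is independent of the $i$-th column but the outer vector has \emph{correlated} entries across $\mu$. Your $Z_\mu$ is the transposed situation --- the outer vector has independent entries across $i$, but the matrix $G_N^{(\mu)}$ is correlated with it --- and those inequalities do not bear on it. The ``conditioning or martingale argument successively revealing the $X_{ai}$, $a\in\mu$'' would require quantitative control of the sensitivity of $G_N^{(\mu)}$ to these $\sim kN$ hidden variables, plus a bound on the resulting bias in $Z_\mu$ at the scale $d^{-q_\ell}$; that is a substantial new analysis and not a corollary of Sections \ref{sec:ward}--\ref{sec:preliminary_bounds}. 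The paper instead arrives at the same self-consistent equation by applying \eqref{eq:res-id-G-mu-nu} (which extracts a factor $G_{jj}$ and removes the index $j$, never $\mu$) and then carrying out the four-term error analysis of Proposition \ref{prop:main_error_term_phitildem}, followed by the deterministic Lemma \ref{lem:side_error_term_phitildem}.
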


\begin{prop}
\label{prop:m_self_consistent}
For any fixed $\tau > 0$ and any fixed $z \in \mathbf{D}_\tau$, we have
\begin{equation}
\label{eqn:m_self_consistent}
    \abs{\frac{1}{s(z)} + z + \frac{\gamma_a s(z)}{1+ \gamma_b s(z)} + \gamma_c s(z)} \prec \begin{cases} \frac{1}{d^{q_\ell}} & \text{if $\ell$ is not an integer,} \\ \frac{1}{d^{q_\ell}} + \abs{\frac{N}{d^\ell} - \kappa} & \text{if $\ell$ is an integer.} \end{cases} % \max\left\{ \frac{1}{\sqrt{d}}, \frac{1}{N} \right\}.
\end{equation}
\end{prop}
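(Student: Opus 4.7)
The plan is to combine Propositions \ref{prop:m_widetildem_self_consistent} and \ref{prop:widetildem_self_consistent} purely algebraically. The first proposition essentially expresses $\phi \widetilde{s}(z)$ in terms of $s(z)$, while the second is a closed approximate equation for $\phi \widetilde{s}(z)$ alone; substituting the former into the latter collapses everything to a closed equation for $s(z)$, which is \eqref{eqn:m_self_consistent}.

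Concretely, set
\[
    \epsilon_1 \defeq 1 + s(z)\bigl(z + \phi \widetilde{s}(z)\bigr), \qquad \epsilon_2 \defeq \phi \widetilde{s}(z) - \frac{\gamma_a}{\gamma_b - z - \phi \widetilde{s}(z)} + \frac{\gamma_c}{z + \phi \widetilde{s}(z)},
\]
so that $|\epsilon_1| \prec d^{-\frac{1}{2}\min(1,\ell)}$ and $|\epsilon_2|$ is bounded by the RHS of \eqref{eqn:widetildem_self_consistent}. Solving the definition of $\epsilon_1$ for $z + \phi \widetilde{s}(z)$ gives
\[
    z + \phi \widetilde{s}(z) = -\frac{1 - \epsilon_1}{s(z)}, \qquad \gamma_b - z - \phi \widetilde{s}(z) = \frac{1 + \gamma_b s(z) - \epsilon_1}{s(z)}.
\]
Substituting these into the definition of $\epsilon_2$ and clearing common factors yields
\[
    -z - \frac{1}{s(z)} + \frac{\epsilon_1}{s(z)} - \frac{\gamma_a s(z)}{1 + \gamma_b s(z) - \epsilon_1} - \frac{\gamma_c s(z)}{1 - \epsilon_1} = \epsilon_2.
\]
A first-order expansion of the last two rational functions of $\epsilon_1$ around $\epsilon_1 = 0$ then rearranges to
\[
    \frac{1}{s(z)} + z + \frac{\gamma_a s(z)}{1 + \gamma_b s(z)} + \gamma_c s(z) = -\epsilon_2 + \OO(|\epsilon_1|).
\]
This is \eqref{eqn:m_self_consistent}; since $\tfrac{1}{2}\min(1,\ell) \geq q_\ell$ for all $\ell > 0$ by a direct case check (split on $\ell \leq 1$ vs.\ $\ell > 1$), the combined error $|\epsilon_2| + C|\epsilon_1|$ is stochastically bounded by the RHS of \eqref{eqn:m_self_consistent}.

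The main obstacle is justifying the two geometric series expansions, which requires the denominators $|s(z)|$, $|1 + \gamma_b s(z)|$, and $|1 - \epsilon_1|$ to be bounded below by positive constants. The third is automatic since $\epsilon_1 \to 0$ stochastically. For the first, we rely on the ``order-one'' control of $s(z)$ on $\mathbf{D}_\tau$ established in earlier sections and referenced at the end of this Section \ref{sec:main_term_sketch}, which together with $s(z) \approx -(z + \phi \widetilde{s}(z))^{-1}$ from Proposition \ref{prop:m_widetildem_self_consistent} gives a constant lower bound $|s(z)| \geq c(\tau)$. For the second, when $\gamma_b > 0$ we use $|1 + \gamma_b s(z)| \geq \gamma_b \im s(z) \geq c(\tau) > 0$, where the lower bound on $\im s(z)$ follows from $\im(z + \phi \widetilde{s}(z)) \geq \tau$ together with the boundedness of $|z + \phi \widetilde{s}(z)|$; when $\gamma_b = 0$, i.e.\ $\ell \notin \N$, the entire $\gamma_a$-term vanishes identically so no such bound is required.
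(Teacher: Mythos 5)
Your proof is correct and takes essentially the same route as the paper's: combine Propositions \ref{prop:m_widetildem_self_consistent} and \ref{prop:widetildem_self_consistent} algebraically (the paper via a triangle-inequality telescope rather than your explicit substitution and first-order expansion, but these are equivalent), then control the denominators using the lower bounds $|s(z)|, \im s(z) \succ 1$ and $\im(z+\phi\widetilde{s}(z)) \succ 1$ from Lemmas \ref{lem:m_order_one} and \ref{lem:phi_m_tilde_order_one}. One small correction to your case split: since $\gamma_b = c_\ell\sqrt{\ell!\kappa}$ can be negative, the bound should read $|1+\gamma_b s(z)| \geq |\gamma_b|\,\im s(z)$ (or, as the paper does, $|\gamma_b + 1/s(z)| \geq |\im(1/s(z))|$, which is valid for any real $\gamma_b$), and $\gamma_b = 0$ can also occur with $\ell \in \N$ and $c_\ell = 0$, in which case $\gamma_a = c_\ell^2 = 0$ still makes that term vanish.
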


Modulo Propositions \ref{prop:m_widetildem_self_consistent}, \ref{prop:widetildem_self_consistent}, and \ref{prop:m_self_consistent}, the proof of Proposition \ref{prop:main_stieltjes_conclusion} is quite short. We re-write \eqref{eqn:frak_m_self_consistent}, $1+\mf{m}(z)(z+\gamma_a\mf{m}(z)/(1+\gamma_b\mf{m}(z)) + \gamma_c\mf{m}(z)) = 0$, as
\begin{equation}
\label{eqn:frak_m_self_consistent_flipped}
    \frac{1}{\mf{m}(z)} + z + \frac{\gamma_a \mf{m}(z)}{1+\gamma_b\mf{m}(z)} + \gamma_c \mf{m}(z) = 0,
\end{equation}
in order to recall the following stability analysis of this equation, due to \cite{LuYau}.

\begin{lemma}
\label{lem:luyau_prop_9}
\cite[Proposition 9]{LuYau} If deterministic $\mathfrak{s} = \mf{s}(z)$ approximately solves \eqref{eqn:frak_m_self_consistent_flipped} in the sense that
\[
    \frac{1}{\mf{s}} + z + \frac{\gamma_a \mf{s}}{1+\gamma_b \mf{s}} + \gamma_c \mf{s} = \omega
\]
with the error term $\omega$ satisfying
\[
    \abs{\omega} \leq \frac{\eta}{2}, \quad \eta = \im(z),
\]
and $\mf{m} = \mf{m}(z)$ exactly solves \eqref{eqn:frak_m_self_consistent_flipped}, then
\[
    \abs{\mf{s}-\mf{m}} \leq \frac{4\abs{\omega}}{\eta^2}.
\]
\end{lemma}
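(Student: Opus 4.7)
The plan is to reduce everything to a factorization of $F(\mf{s}) - F(\mf{m}) = \omega$ together with a lower bound on the resulting ``derivative factor.'' Define
\[
    F(w) \defeq \frac{1}{w} + z + \frac{\gamma_a w}{1+\gamma_b w} + \gamma_c w,
\]
so that $F(\mf{m}) = 0$ and $F(\mf{s}) = \omega$. A direct algebraic computation (combining fractions term by term) gives the exact factorization
\[
    \omega = F(\mf{s}) - F(\mf{m}) = (\mf{s}-\mf{m}) \cdot K, \qquad K \defeq -\frac{1}{\mf{s}\mf{m}} + \frac{\gamma_a}{(1+\gamma_b\mf{s})(1+\gamma_b\mf{m})} + \gamma_c.
\]
The desired conclusion $\abs{\mf{s}-\mf{m}} \leq 4\abs{\omega}/\eta^2$ then follows immediately from $\abs{\mf{s}-\mf{m}} = \abs{\omega}/\abs{K}$ provided one can show $\abs{K} \geq \eta^2/4$.

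The bound on $\abs{K}$ is the heart of the proof, and is where the hypothesis $\abs{\omega} \leq \eta/2$ is used (via a bootstrap to control $\mf{s}$). The first input is that $\mf{m}$ is the Stieltjes transform of an explicit probability measure (the scaled free convolution mentioned in the remark after Theorem \ref{thm:main_polynomial}), so $\abs{\mf{m}(z)} \leq 1/\eta$ and $\im(\mf{m}(z)) > 0$. Taking $\im$ of the identity $-1/\mf{m} = z + \gamma_a\mf{m}/(1+\gamma_b\mf{m}) + \gamma_c\mf{m}$ and noting that
\[
    \im\!\left(\frac{\mf{m}}{1+\gamma_b\mf{m}}\right) = \frac{\im(\mf{m})}{\abs{1+\gamma_b\mf{m}}^2} \geq 0
\]
(when $\gamma_b \neq 0$; trivially otherwise), together with $\gamma_c \geq 0$, one gets $\im(-1/\mf{m}) \geq \eta$, i.e. $\abs{\mf{m}}^2 \leq \im(\mf{m})/\eta$. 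An analogous argument applied to $\mf{s}$, using $F(\mf{s}) = \omega$ with $\abs{\omega} \leq \eta/2$, yields $\abs{\mf{s}}^2 \leq 2\im(\mf{s})/\eta$.

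From here, the plan is to bound $\abs{K}$ from below by exploiting these sign relations. The most promising route is to take the imaginary part of $K\cdot\mf{s}\mf{m}$, since after the factorization this product rearranges into a sum of terms each with controlled sign (the constant $-1$ contributes nothing to the imaginary part, while the contributions from the $\gamma_a$ and $\gamma_c$ terms are nonnegative multiples of $\im(\mf{s})$ and $\im(\mf{m})$), leaving a clean lower bound in terms of $\eta\abs{\mf{s}\mf{m}}$. Combined with the inequality $\abs{\mf{s}\mf{m}} \leq \sqrt{2}\,\sqrt{\im(\mf{s})\im(\mf{m})}/\eta$ from the previous step, this produces the $\eta^2/4$ bound.

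The main obstacle is the bookkeeping in the last step: one has to be careful about the sign of $\gamma_b$ (which is real but may be negative) when manipulating the factor $(1+\gamma_b\mf{s})(1+\gamma_b\mf{m})$, and about the precise numerical constant so that the final bound matches the $4\abs{\omega}/\eta^2$ form. As the statement is taken verbatim from \cite[Proposition 9]{LuYau}, I would ultimately cite their proof rather than redo these calculations here; the outline above explains why such a bound is available.
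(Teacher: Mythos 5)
The paper does not actually prove this lemma; it imports it wholesale from \cite[Proposition 9]{LuYau}, so there is no internal argument to compare against. Judging your outline on its own terms: the skeleton is the standard one and most of it is correct. The factorization $\omega = (\mf{s}-\mf{m})K$ with $K = -\tfrac{1}{\mf{s}\mf{m}} + \tfrac{\gamma_a}{(1+\gamma_b\mf{s})(1+\gamma_b\mf{m})} + \gamma_c$ is right (each of the three terms of $F$ differences cleanly), and the preliminary bounds $\abs{\mf{m}}^2 \le \im(\mf{m})/\eta$, $\abs{\mf{s}}^2 \le 2\im(\mf{s})/\eta$ follow correctly from taking imaginary parts of the exact and perturbed equations, using only $\gamma_a,\gamma_c \ge 0$ and $\gamma_b\in\R$. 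But the step that actually produces the lower bound on $\abs{K}$ is broken. Writing $K\mf{s}\mf{m} = -1 + P$ with $P = \gamma_a\mf{s}\mf{m}/((1+\gamma_b\mf{s})(1+\gamma_b\mf{m})) + \gamma_c\mf{s}\mf{m}$, you propose to use $\im(K\mf{s}\mf{m}) = \im(P)$ and argue that the $\gamma_a$ and $\gamma_c$ contributions are "nonnegative multiples of $\im(\mf{s})$ and $\im(\mf{m})$." They are not: $\im(\mf{s}\mf{m}) = \re(\mf{s})\im(\mf{m}) + \im(\mf{s})\re(\mf{m})$, and the real parts have no controlled sign, so $\im(\mf{s}\mf{m})$ can be negative or zero even with $\im(\mf{s}),\im(\mf{m})>0$. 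Already in the semicircular case $\gamma_a=\gamma_b=0$ one has $\im(K\mf{s}\mf{m}) = \gamma_c\im(\mf{s}\mf{m})$, which may vanish; the imaginary part of $K\mf{s}\mf{m}$ simply cannot lower-bound $\abs{K}$.

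A repair that keeps your preliminary estimates intact: use $\abs{K\mf{s}\mf{m}} = \abs{1-P} \ge 1 - \abs{P}$ and control $\abs{P}$ by Cauchy--Schwarz rather than by sign considerations. Rearranging the two self-consistent equations gives $\gamma_a\abs{\mf{m}}^2/\abs{1+\gamma_b\mf{m}}^2 + \gamma_c\abs{\mf{m}}^2 = 1 - \eta\abs{\mf{m}}^2/\im(\mf{m})$ and the $\mf{s}$-analogue with $\eta$ replaced by $\eta - \im(\omega) \ge \eta/2$; Cauchy--Schwarz on the vectors $\bigl(\sqrt{\gamma_a}\,\abs{\mf{s}}/\abs{1+\gamma_b\mf{s}},\ \sqrt{\gamma_c}\,\abs{\mf{s}}\bigr)$ and its $\mf{m}$-analogue then yields $\abs{P}^2 \le \bigl(1-\eta\abs{\mf{m}}^2/\im\mf{m}\bigr)\bigl(1-\tfrac{\eta}{2}\abs{\mf{s}}^2/\im\mf{s}\bigr)$, and AM--GM plus the a priori bounds $\im(\mf{m}) \le 1/\eta$, $\im(\mf{s})\le 2/\eta$ give $\abs{K} \ge \eta^2/2 \ge \eta^2/4$. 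Finally, note that both your argument and this fix require $\im(\mf{s}) > 0$, which the lemma as transcribed in this paper does not state. It holds in the one place the lemma is applied (there $\mf{s} = s_B(z)$ is a Stieltjes transform), and does not follow from $\abs{\omega}\le\eta/2$ alone; if you cite \cite{LuYau}, check whether their Proposition 9 carries this as an explicit hypothesis.
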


\begin{proof}[Proof of Proposition \ref{prop:main_stieltjes_conclusion}]
Write
\[
    \omega \defeq \frac{1}{s(z)} + z + \frac{\gamma_a s(z)}{1+\gamma_b s(z)} + \gamma_c s(z).
\]
Let $\delta_\ell = q_\ell$, if $\ell$ is not an integer, or $\delta_\ell = \min(q_\ell,\epsilon_0)$, if $\ell$ is an integer. For $\epsilon < \delta_\ell$, applying Lemma \ref{lem:luyau_prop_9}, we find 
\begin{align*}
    \P(\abs{s(z) - \mf{m}(z)} \geq d^{\epsilon-\delta_\ell}) &\leq \P(\abs{\omega} \geq \eta/2) + \P(\abs{s(z) - \mf{m}(z)} \geq d^{\epsilon-\delta_\ell}, \abs{\omega} \leq \eta/2) \\
    &\leq \P(\abs{\omega} \geq \eta/2) + \P(\abs{\omega} \geq (\eta^2/4) d^{\epsilon-\delta_\ell}) \\
    &\leq 2\P(\abs{\omega} \geq (\eta^2/4)d^{\epsilon-\delta_\ell}) \\
    &\leq C_{\epsilon,D} d^{-D},
\end{align*}
where the last inequality follows from Proposition \ref{prop:m_self_consistent}
for $d$ sufficiently large. This verifies \eqref{eqn:sB-mfm}, and the almost-sure convergence of $s(z) - \mf{m}(z)$ to zero follows from the Borel-Cantelli lemma. (This is why we require $\abs{\frac{N}{d^\ell} - \kappa} \leq d^{-\epsilon_0}$ when $\ell$ is an integer; if e.g. $\abs{\frac{N}{d^\ell} - \kappa} \sim \frac{1}{\log(d)}$, then this argument would give $\abs{s(z)-\mf{m}(z)} \leq \frac{d^\epsilon}{\log(d)}$ for all $d$ sufficiently large, which is of course insufficient for almost-sure convergence.)
\end{proof}

%%%%%%%%%%%%%%%%%%%%%%%%%%%%%%%%%%%%%%%%%%%%%%%%%%%%%%%%%%%%
%%%%%%%%%%%%%%%%%%%%%%%%%%%%%%%%%%%%%%%%%%%%%%%%%%%%%%%%%%%%
%%%%%%%%
%%%%%%%%             Section: Resolvent identities
%%%%%%%%
%%%%%%%%%%%%%%%%%%%%%%%%%%%%%%%%%%%%%%%%%%%%%%%%%%%%%%%%%%%%
%%%%%%%%%%%%%%%%%%%%%%%%%%%%%%%%%%%%%%%%%%%%%%%%%%%%%%%%%%%%

\section{Basic tools: Resolvent identities}
\label{sec:resolvent_identities}

The goal of this section is to prove several exact equalities relating $G_N$, $G_M$, and their corresponding minors, which will be used throughout the paper.

\begin{defn}[Minors]
The matrix $H(z)$ from \eqref{def:H} is $(M+N) \times (M+N)$. If $E \subset \llbracket 1, N \rrbracket$ is any so-called exclusion set, we will write $H^{(E)}(z)$ for the $(M+(N - \abs{E})) \times (M+(N - \abs{E}))$ matrix obtained from $H(z)$ given by erasing the rows and columns indicated by $E$. Most frequently we will use $E = \{i\}$ for some index $i$, in which case we abuse notation by writing $H^{(i)}$ instead of $H^{(\{i\})}$. We define the corresponding resolvent by
\[
    G^{(E)}(z) \defeq H^{(E)}(z)^{-1}.
\]
If $E = \emptyset$, by convention we set $H^{(E)} = H$ and $G^{(E)} = G$. Although $H^{(E)}(z)$ and $G^{(E)}(z)$ have fewer rows and columns than $H(z)$ and $G(z)$, we keep the original values of the matrix indices: For example, $G^{(i)}(z)$ has entries $G^{(i)}_{jk}$ for $j,k \in \{1, \ldots, i-1, i+1, \ldots, N\}$, not $\{1, \ldots, N-1\}$.
\end{defn}

Notice from the definition that we will only ever need minors that remove some $i$ and $j$ indices, never those that remove some $\mu$ and $\nu$ indices.

Recall that $G_N = (B - z)^{-1}$, where $B_{ij} = U_i^\ast T U_j \delta_{i\neq j}$. We introduce the notation $B_j$ for the $j$-th column of $B$ with diagonal element excluded, i.e. $B_j = \left(B_{1j},\ldots, B_{j-1,j}, B_{j+1,j}, \ldots, B_{Nj}\right)^T$.

\begin{lemma}[Resolvent identities]
For any $i$, any $\mu$, and any (possibly empty) exclusion set $E$, we have
\begin{align}
    G_{\mu\nu}^{(i)} &= G_{\mu\nu} - \frac{G_{\mu i}G_{i \nu}}{G_{ii}} \label{eq:res-id-off-diagonal} \\
    G_{i\mu} &= - G_{ii} \sum_{\alpha\in \mathbf{M}} U_{\alpha i} G_{\alpha\mu}^{(i)} \label{eq:res-id-G-mu-i} \\
    G_{ii} &= \left(-z - \sum_{\mu, \nu\in\mathbf{M}} U_{\mu i} \left(G^{(i)}_M+T\right)_{\mu \nu} U_{\nu i}\right)^{-1} = \left( -z-D_{ii} - \sum_{\mu,\nu \in \mathbf{M}} U_{\mu i} G^{(i)}_{\mu \nu} U_{\nu i} \right)^{-1} \label{eq:res-id-schur} \\
    G_M^{(E)}+T &= TU^{(E)}G_N^{(E)}(U^{(E)})^\ast T
    \label{eq:res-id-G+T} \\
    (G_M + T)_{\mu\nu} &= -T_\mu \sum_{j=1}^N G_{jj} \sum_{\alpha\in\mathbf{M}} U_{\mu j} U_{\alpha j} G_{\alpha\nu}^{(j)} \label{eq:res-id-G-mu-nu}
\end{align}
\end{lemma}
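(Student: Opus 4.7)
The plan is to derive all five identities from two basic tools: the $2 \times 2$ block matrix inversion formula (Schur complement) applied to $H(z)$ and its principal minors, together with direct block-by-block manipulation of the identities $HG = I$ and $GH = I$. None of the steps uses new ideas; the whole section is essentially Schur-complement bookkeeping tailored to our specific block structure of $H$, whose upper-left block is $-T^{-1}$ and whose lower-right block is $-z - D$.

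Identity \eqref{eq:res-id-off-diagonal} is the classical minor identity for resolvents: partitioning indices into $\{i\}$ (for some $i \in \llbracket 1, N \rrbracket$) and its complement and invoking Schur's formula identifies the corresponding block of $G = H^{-1}$ as $G^{(i)}$ plus a rank-one correction, yielding $G^{(i)}_{\mu\nu} = G_{\mu\nu} - G_{\mu i} G_{i\nu}/G_{ii}$. Identity \eqref{eq:res-id-G-mu-i} comes from the same block inversion, which gives $G_{i,\cdot} = -G_{ii} \, H_{i,\cdot} (H^{(i)})^{-1}$ on the complement of $\{i\}$; since the $i$-th row of $H$ restricted to that complement has entries $U_{\alpha i}$ for $\alpha \in \mathbf{M}$ and vanishes on $\llbracket 1, N \rrbracket \setminus \{i\}$ (because the lower-right block of $H$ is diagonal), this specializes to the stated sum over $\alpha$.

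For identity \eqref{eq:res-id-schur}, the Schur complement formula for the $(i,i)$ diagonal entry of $G$ reads
\[
    G_{ii}^{-1} = H_{ii} - \sum_{k, \ell \ne i} H_{ik} (H^{(i)})^{-1}_{k\ell} H_{\ell i};
\]
plugging in $H_{ii} = -z - D_{ii}$ and the description of $H_{i,\cdot}$ gives the second expression, and the first follows by absorbing $D_{ii} = U_i^\ast T U_i = \sum_{\mu \in \mathbf{M}} U_{\mu i}^2 T_\mu = \sum_{\mu,\nu} U_{\mu i} T_{\mu\nu} U_{\nu i}$ (valid since $T$ is diagonal) into the $G_M^{(i)}$ sum, producing the compact combination $G_M^{(i)} + T$. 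For identity \eqref{eq:res-id-G+T}, applying block inversion to $H^{(E)}$ with the $-T^{-1}$ block in the upper-left gives
\[
    G_M^{(E)} = -T + T U^{(E)} \bigl[ (U^{(E)})^\ast T U^{(E)} - z - D^{(E)} \bigr]^{-1} (U^{(E)})^\ast T.
\]
The key observation is that $(U^{(E)})^\ast T U^{(E)} = B^{(E)} + D^{(E)}$ (splitting diagonal and off-diagonal as in \eqref{eq:B_ij-sample-cov}), so the bracket equals $B^{(E)} - z$ and its inverse is $G_N^{(E)}$; rearranging gives the identity. Finally, identity \eqref{eq:res-id-G-mu-nu} is a composition: reading $(HG)_{\mu\nu} = \delta_{\mu\nu}$ block by block gives $(G_M + T)_{\mu\nu} = T_\mu \sum_j U_{\mu j} G_{j\nu}$, and substituting \eqref{eq:res-id-G-mu-i} into each $G_{j\nu}$ yields the stated formula.

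Nothing here is genuinely hard; the only real obstacle is careful bookkeeping — in particular, choosing between $HG = I$ and $GH = I$ as needed, and correctly handling the asymmetry between the $\mathbf{M}$-indexed block (with $-T^{-1}$) and the $\llbracket 1, N \rrbracket$-indexed block (with $-z - D$). The one subtle algebraic point worth emphasizing is the identity $D_{ii} = \sum_\mu T_\mu U_{\mu i}^2$, which is what lets the $G_M^{(i)}$ sum in \eqref{eq:res-id-schur} be repackaged against $G_M^{(i)} + T$, matching the natural form that appears later (e.g., in \eqref{eq:res-id-G+T}) and throughout the main-term analysis.
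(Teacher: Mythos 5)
Your proof is correct, and for \eqref{eq:res-id-G+T} and especially \eqref{eq:res-id-G-mu-nu} it takes a cleaner route than the paper's. For \eqref{eq:res-id-off-diagonal}--\eqref{eq:res-id-schur} your argument is the same standard Schur-complement bookkeeping the paper cites. For \eqref{eq:res-id-G+T} the paper proceeds by hand, starting from $\Id = G_N(U^\ast T U - D - z)$ and rearranging repeatedly until $G_M$ appears; you instead invoke the Woodbury form of the block-inversion formula for $H^{(E)}$, which lands directly on $G_M^{(E)} = -T + TU^{(E)}G_N^{(E)}(U^{(E)})^\ast T$ once one recognizes that the inner bracket $(U^{(E)})^\ast T U^{(E)} - z - D^{(E)} = B^{(E)} - z$. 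These are genuinely the same computation in two packagings, so neither has a real advantage. The more substantive divergence is in \eqref{eq:res-id-G-mu-nu}: the paper derives it via the Sherman--Morrison formula applied to the rank-one update $G_M^{-1} = (G_M^{(j)})^{-1} + (z+D_j)^{-1} U_j U_j^\ast$, combined with $(U(z+D)^{-1}U^\ast - T^{-1})G_M = \Id$ and the Schur formula \eqref{eq:res-id-schur} to identify the denominator with $-G_{jj}$. You bypass all of that by reading the $(\mu,\nu)$ entry of $HG = \Id$ directly, which immediately gives $(G_M + T)_{\mu\nu} = T_\mu \sum_j U_{\mu j} G_{j\nu}$, and then substituting the already-proved \eqref{eq:res-id-G-mu-i}. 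This reuses a previously established identity rather than re-deriving an equivalent relation from scratch, and it avoids the Sherman--Morrison detour entirely. Both proofs are correct; yours is shorter and makes the logical dependencies within the lemma more transparent.
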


\begin{proof}
The identities \eqref{eq:res-id-off-diagonal} and \eqref{eq:res-id-G-mu-i} are very standard in the local-law literature (see, e.g., \cite[Lemma 3.5]{BenKno2016} for a proof). The usual version of \eqref{eq:res-id-G-mu-i} simplifies to what we have written here since the $N \times N$ block of $H$ is diagonal, i.e. $H_{ik} = 0$ for $i \neq k$, so that
\[
    G_{i\mu} = -G_{ii} \left( \sum_{\alpha \in \mathbf{M}} U_{\alpha i} G_{\alpha \mu}^{(i)} + \sum_{k}^{(i)} H_{ik} G_{k\mu}^{(i)} \right) = -G_{ii} \sum_{\alpha \in \mathbf{M}} U_{\alpha i} G_{\alpha \mu}^{(i)}.
\]
The identity \eqref{eq:res-id-schur} is just the usual Schur complement formula, again summing only over $\mu$ and $\nu$ for the same reason.

In proving \eqref{eq:res-id-G+T}, by erasing rows and columns as necessary we may assume $E = \emptyset$; then it is a simple arithmetic consequence of the formulas \eqref{eqn:gn_formula} and \eqref{eqn:gm_formula} for $G_N$ and $G_M$, respectively. Namely, \eqref{eqn:gn_formula} gives
\[
    \Id = G_N(U^\ast T U - D - z) = G_N U^\ast T U - G_N(z+D),
\]
so 
\[
    (z+D)^{-1} - G_NU^\ast TU(z+D)^{-1}+G_N = (\Id - G_NU^\ast TU)(z+D)^{-1} + G_N = -G_N(z+D)(z+D)^{-1}+G_N = 0,
\]
so that
\begin{align*}
    \Id &= \Id -TU[(z+D)^{-1} - G_N U^\ast T U (z+D)^{-1} + G_N]U^\ast \\
    &= -TU(z+D)^{-1}U^\ast + \Id + TUG_NU^\ast TU(z+D)^{-1}U^\ast - TUG_NU^\ast TT^{-1} \\
    &= (-T+TUG_NU^\ast T)(U(z+D)^{-1}U^\ast - T^{-1}) \\
    &= (-T+TUG_NU^\ast T)G_M^{-1},
\end{align*}
where the last equality is \eqref{eqn:gm_formula}. This means that
\[
    G_M = -T + TUG_NU^\ast T,
\]
which can be rearranged to obtain \eqref{eq:res-id-G+T}. 

The proof of \eqref{eq:res-id-G-mu-nu} is the most involved. It starts with the Sherman--Morrison formula for the inverse of a rank-one update, which is usually formulated for a matrix $A$, column vector $q$, and scalar $\tau$ as 
\[
    (A+\tau qq^\ast)^{-1} = A^{-1} - \frac{\tau A^{-1} qq^\ast A^{-1}}{1+\tau q^\ast A^{-1}q}.
\]
Left-multiplying by $q^\ast$ and simplifying on the right-hand side, we obtain
\[
    q^\ast (A+\tau qq^\ast)^{-1} = \frac{(1+\tau q^\ast A^{-1} q)q^\ast A^{-1} - \tau q^\ast A^{-1} qq^\ast A^{-1}}{1+\tau q^\ast A^{-1}q} = \frac{1}{1+\tau q^\ast A^{-1}q} q^\ast A^{-1}.
\]
If we use this with $q = U_j$, $\tau = (z+D_j)^{-1}$, and $A = U(z+D)^{-1}U^\ast - U_j(z+D_j)^{-1}U_j^\ast - T^{-1} = (G_M^{(j)})^{-1}$, so that $A+\tau qq^\ast = (G_M)^{-1}$, we obtain an equation over vectors of length $M$; taking the $\nu$ entry of each side, we find
\begin{equation}
\label{eqn:sherman-morrison-consequence}
    \sum_\alpha U_{\alpha j} G_{\alpha \nu} = \frac{1}{1+(z+D_j)^{-1} U_j^\ast G_M^{(j)} U_j} \sum_\alpha U_{\alpha j} G^{(j)}_{\alpha \nu}.
\end{equation}
Store this for a moment; at the same time, rearrange \eqref{eqn:gm_formula} to obtain
\begin{equation}
    \left(U(z+D)^{-1} U^* - T^{-1}\right)G_M = \Id,
\end{equation}
then take the $(\mu\nu)$ entry of both sides to get
\begin{equation}\label{eq:mu-mu-entry}
    \left(\sum_j (z + D_{j})^{-1}\right) U_{\mu j} \left(\sum_\alpha U_{\alpha j} G_{\alpha \nu}\right) - T_\mu^{-1} G_{\mu\nu} = \delta_{\mu\nu}.
\end{equation}
Now we substitute \eqref{eqn:sherman-morrison-consequence} into the left-hand side, and multiply both sides by $T_\mu$; this yields
\[
    \sum_j \frac{T_\mu}{z+D_j+U_j^\ast G_M^{(j)} U_j} \sum_\alpha U_{\mu j} U_{\alpha j} G^{(j)}_{\alpha \nu} - G_{\mu\nu} = T_{\mu\nu}.
\]
Substituting $\frac{1}{z+D_j+U_j^\ast G_M^{(j)}U_j} = -G_{jj}$, from \eqref{eq:res-id-schur}, finishes the proof of \eqref{eq:res-id-G-mu-nu}.
\end{proof}

%%%%%%%%%%%%%%%%%%%%%%%%%%%%%%%%%%%%%%%%%%%%%%%%%%%%%%%%%%%%
%%%%%%%%%%%%%%%%%%%%%%%%%%%%%%%%%%%%%%%%%%%%%%%%%%%%%%%%%%%%
%%%%%%%%
%%%%%%%%             Section: Ward
%%%%%%%%
%%%%%%%%%%%%%%%%%%%%%%%%%%%%%%%%%%%%%%%%%%%%%%%%%%%%%%%%%%%%
%%%%%%%%%%%%%%%%%%%%%%%%%%%%%%%%%%%%%%%%%%%%%%%%%%%%%%%%%%%%

\section{Basic tools: Full and partial Ward inequalities}
\label{sec:ward}

Since $G_N = (B-z)^{-1}$ is actually a resolvent, it satisfies the usual Ward identity
\begin{equation}
    G_N G_N^* = \frac{\im G_N}{\eta},
\end{equation}
where $\eta = \im z$, and actually the extension to minors
\begin{equation}\label{eq:GN-Ward}
    G_N^{(E)} (G_N^{(E)})^\ast = \frac{\im G_N^{(E)}}{\eta}.
\end{equation}

Since $G_M+T$ is not a resolvent, it does not satisfy the Ward identity. However, the goal of this section is to show that it approximately satisfies an \emph{inequality} that looks like one direction of the Ward identity (roughly speaking, $(G_M +T) (G_M+T)^\ast \lesssim \im (G_M+T)/\eta$, or in coordinates $\sum_{\nu} \abs{(G_M+T)_{\mu \nu}}^2 \lesssim \im (G_M+T)_{\mu\mu}/\eta$; then $\im(T)$ disappears since $T$ is real). This says that, for each $\mu$, the sum of $\abs{(G_M+T)_{\mu \nu}}^2$ over $\nu$ is much smaller than a naive estimate would predict. Actually we show something better, which is crucial for our proof of Lemma \ref{prop:m_widetildem_self_consistent}: This sum is also smaller than expected if it is taken, not over \emph{all} tuples $\nu$, but just over some of them, namely those with a fixed overlap with $\mu$. (Recall that $\ip{\mu,\nu}$ denotes the overlap of the multi-indices $\mu$ and $\nu$, i.e., the number of $X$'s they have in common.) We call such estimates \emph{partial Ward inequalities}, in contrast with the original estimates, which we call \emph{full Ward inequalities}. 

\begin{lemma}[Full Ward inequality]
\label{lem:full-Ward}
For any $\mu$, and any (possibly empty) exclusion set $E \subset [N]$ with size $\OO(1)$, we have
\[
    \sum_{\nu \in \mathbf{M}} \abs{(G^{(E)}_M+T)_{\mu\nu}}^2 \prec \frac{\im G^{(E)}_{\mu\mu}}{\eta},
\]
uniformly over $\mu \in \mathbf{M}_k$.
\end{lemma}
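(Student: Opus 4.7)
The approach is to invoke the resolvent identity \eqref{eq:res-id-G+T}, which writes $G_M^{(E)}+T$ as a conjugation of the honest resolvent $G_N^{(E)}$, namely $G_M^{(E)}+T = TU^{(E)}G_N^{(E)}(U^{(E)})^\ast T$. Writing the left-hand side as the diagonal entry of $(G_M^{(E)}+T)(G_M^{(E)}+T)^\ast$ and multiplying out the conjugation yields
\[
\sum_{\nu \in \mathbf{M}} \abs{(G_M^{(E)}+T)_{\mu\nu}}^2 = T_\mu^2 \cdot (U_\mu^{(E)})^T G_N^{(E)} \bigl[(U^{(E)})^\ast T^2 U^{(E)}\bigr] (G_N^{(E)})^\ast U_\mu^{(E)},
\]
where $U_\mu^{(E)}$ denotes the $\mu$-th row of $U$ restricted to the columns indexed by $\llbracket 1, N\rrbracket \setminus E$, viewed as a column vector.

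The plan is then to peel off the middle sandwich by its operator norm and recognize what remains as $\im G_{\mu\mu}^{(E)}/\eta$. Bounding the positive semidefinite factor $(U^{(E)})^\ast T^2 U^{(E)}$ by its operator norm gives
\[
\sum_\nu \abs{(G_M^{(E)}+T)_{\mu\nu}}^2 \le T_\mu^2 \cdot \|T U^{(E)}\|_{\mathrm{op}}^2 \cdot \|(G_N^{(E)})^\ast U_\mu^{(E)}\|^2,
\]
and the honest Ward identity \eqref{eq:GN-Ward} applied to $G_N^{(E)}$ identifies the last factor as $\eta^{-1} (U_\mu^{(E)})^T \im G_N^{(E)}\, U_\mu^{(E)}$. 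Now apply \eqref{eq:res-id-G+T} in the reverse direction at the diagonal entry $(\mu,\mu)$ to recognize $T_\mu^2 (U_\mu^{(E)})^T G_N^{(E)} U_\mu^{(E)} = (G_M^{(E)}+T)_{\mu\mu} = G_{\mu\mu}^{(E)} + T_\mu$; taking imaginary parts and using $T_\mu \in \R$ gives $T_\mu^2 \cdot \eta^{-1} (U_\mu^{(E)})^T \im G_N^{(E)} U_\mu^{(E)} = \im G_{\mu\mu}^{(E)}/\eta$. The $T_\mu$ factors cancel, and I arrive at
\[
\sum_\nu \abs{(G_M^{(E)}+T)_{\mu\nu}}^2 \le \|T U^{(E)}\|_{\mathrm{op}}^2 \cdot \frac{\im G_{\mu\mu}^{(E)}}{\eta}.
\]

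The lemma thus reduces to the operator-norm bound $\|T U^{(E)}\|_{\mathrm{op}}^2 \prec 1$. Since removing the $|E| = \OO(1)$ columns cannot increase the operator norm, it suffices to prove $\|T U\|_{\mathrm{op}}^2 \prec 1$, i.e., to control the operator norm of the $N \times N$ sample-covariance-type matrix $U^\ast T^2 U$. I expect this to be the main obstacle of the argument: $\E[U^\ast T^2 U]$ is diagonal of size $\approx \sigma^2 I_N$, but concentration is non-trivial because the rows of $U$ within each column are only uncorrelated (not independent), and the aspect ratio $M/N$ is polynomially large in the regime of interest. The natural route is a moment estimate for $\E \Tr[(U^\ast T^2 U)^p]$ with large integer $p$, expanded combinatorially according to the overlap patterns of the multi-index tuples in $\mathbf{M}$ and using the finite-moment hypothesis on $\mu$. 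This operator-norm bound is naturally a preliminary estimate (Section \ref{sec:preliminary_bounds}), and here I would invoke it as a black box.
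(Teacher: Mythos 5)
Your algebraic reduction is correct: the identity \eqref{eq:res-id-G+T} rewrites $G_M^{(E)}+T$ as $TU^{(E)}G_N^{(E)}(U^{(E)})^\ast T$, and peeling off the middle PSD factor $(U^{(E)})^\ast T^2 U^{(E)}$ by its operator norm, applying the honest Ward identity for $G_N^{(E)}$, and recognizing $T_\mu^2 (U_\mu^{(E)})^T\!\im G_N^{(E)} U_\mu^{(E)} = \im G^{(E)}_{\mu\mu}$ does deliver
\[
\sum_\nu \abs{(G_M^{(E)}+T)_{\mu\nu}}^2 \le \norm{TU^{(E)}}_{\mathrm{op}}^2 \,\frac{\im G_{\mu\mu}^{(E)}}{\eta}.
\]
However, you have left $\norm{TU^{(E)}}_{\mathrm{op}}^2 \prec 1$ entirely unproved, and that is where all the difficulty of the lemma actually lives. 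The claim is true, but establishing it requires precisely the kind of sample-covariance concentration for the matrix $U$ (whose rows are only uncorrelated, not independent, within a column, and whose aspect ratio is polynomially large) that is the real content here.

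The paper takes a genuinely different and shorter route. It proves the \emph{partial} Ward inequality (Lemma \ref{lem:partial-Ward}) first, which controls $\sum_{\nu \in S_\mu^{k_2,t}} \abs{(G_M^{(E)}+T)_{\mu\nu}}^2$ with the overlap-dependent gain $d^{\max\{-t,\ell-k_2\}}$, via a truncated matrix-Bernstein argument for the restricted blocks $U|_{S_\mu^{k_2,t}}$. Since $k_2 \ge \lceil \ell \rceil \ge \ell$ and $t \ge 0$ force $\max\{-t, \ell-k_2\} \le 0$, the Full Ward inequality follows from the Partial one by summing over the finitely many $(k_2,t)$ pairs; the paper accordingly omits a separate proof. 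This is economical because the partial inequality with its $t$-dependence is exactly what is needed later in the proof of Lemma \ref{lem:m_main_error_analysis} (the estimate \eqref{eqn:error_k1k2s} degenerates without it), so nothing is wasted. Your reduction buys a cleaner statement of the key concentration input, but does not avoid the matrix-Bernstein work: indeed, $\norm{TU^{(E)}}_{\mathrm{op}}^2 \prec 1$ can be assembled from the bound \eqref{eq:partial-ward-norm-bound} inside the paper's proof of Lemma \ref{lem:partial-Ward}, by decomposing $\mathbf{M}_{k_2}$ into the sets $S_\mu^{k_2,t}$ and using $\norm{\binom{A}{B}}_{\mathrm{op}}^2 \le \norm{A}_{\mathrm{op}}^2 + \norm{B}_{\mathrm{op}}^2$ for vertically stacked blocks. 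As written, the black box is a real gap; if you want to complete this route you would either import that argument or reproduce the matrix-Bernstein estimate directly for $(TU)(TU)^\ast = \sum_i (TU_i)(TU_i)^\ast$, but you would also still need to prove the Partial Ward inequality separately for its later uses, so there is no net saving over the paper's order of operations.
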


Note that full Ward inequality follows directly from partial Ward inequalities, so we omit the proof.

\begin{lemma}[Partial Ward inequality]
\label{lem:partial-Ward}
For any $k_1, k_2 \in \llbracket \lceil \ell \rceil, L \rrbracket$, $t \in \llbracket 0, \min(k_1,k_2) \rrbracket$ and $\mu \in \mathbf{M}_{k_1}$, let 
\[
    S_\mu^{k_2, t} = \left\{\nu\in\mathbf{M}_{k_2} \mid \langle\mu,\nu\rangle = t\right\}.
\]
Then for any $\mu \in \mathbf{M}_{k_1}$ and any (possibly empty) exclusion set $E \subset [N]$ with size $\OO(1)$, we have
\begin{equation}\label{eq:partial-Ward}
    \sum_{\nu \in S_\mu^{k_2,t}} \left|(G_M^{(E)}+T)_{\mu \nu}\right|^2 \prec \frac{\im G^{(E)}_{\mu\mu}}{\eta} d^{\max\{-t, \ell-k_2\}},
\end{equation}
uniformly over $\mu \in \mathbf{M}_{k_1}$.
\end{lemma}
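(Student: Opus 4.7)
The plan is to reduce the claim to an operator-norm bound on a random PSD matrix built from $U^{(E)}$, and then invoke the Ward identity \eqref{eq:GN-Ward} for the genuine resolvent $G_N^{(E)}$. Using \eqref{eq:res-id-G+T} and writing $u_\mu \in \R^{N-|E|}$ for the row of $U^{(E)}$ indexed by $\mu$ (so $T_\mu = T_{k_1}$ for $\mu\in\mathbf{M}_{k_1}$), one has
\[
(G_M^{(E)}+T)_{\mu\nu} \;=\; T_{k_1}T_{k_2}\, u_\mu^T G_N^{(E)} u_\nu,\qquad \mu\in\mathbf{M}_{k_1},\ \nu\in\mathbf{M}_{k_2}.
\]
Introducing the PSD matrix $P \defeq \sum_{\nu\in S_\mu^{k_2,t}} u_\nu u_\nu^T$, the target sum equals $T_{k_1}^2 T_{k_2}^2\, u_\mu^T G_N^{(E)} P (G_N^{(E)})^{*} u_\mu$; bounding $P \le \|P\|_{\mathrm{op}} I$, applying Ward, and using the $(\mu,\mu)$-case $\im G^{(E)}_{\mu\mu} = T_{k_1}^2\, u_\mu^T \im G_N^{(E)} u_\mu$ of \eqref{eq:res-id-G+T} (valid since $T$ is real), I get
\[
\sum_{\nu\in S_\mu^{k_2,t}} |(G_M^{(E)}+T)_{\mu\nu}|^2 \;\leq\; T_{k_2}^2\, \|P\|_{\mathrm{op}}\, \frac{\im G^{(E)}_{\mu\mu}}{\eta}.
\]
Since $T_{k_2}^2 \asymp N/d^{k_2} \asymp d^{\ell-k_2}$, the claim reduces to showing $\|P\|_{\mathrm{op}} \prec \max\{1,\, d^{k_2-t-\ell}\}$.

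To get at this operator bound, I would decompose by intersection with $\mu$: for each $J \subset \mu$ with $|J|=t$, every $\nu$ with $\nu\cap\mu = J$ factors as $(u_\nu)_i = \tfrac{1}{\sqrt N}\, y_i^{(J)} w_i^{(\nu\setminus J)}$, where $y_i^{(J)} = \prod_{b\in J} X_{bi}$ and $w_i^{(\nu_e)} = \prod_{b\in\nu_e} X_{bi}$ with $\nu_e\subset[d]\setminus\mu$. This gives
\[
P \;=\; \sum_{J\subset\mu,\,|J|=t} \frac{1}{N}\, D_{y^{(J)}} R^{(J)} D_{y^{(J)}},\qquad R^{(J)} \;=\; \sum_{\substack{\nu_e\subset[d]\setminus\mu\\ |\nu_e|=k_2-t}} w^{(\nu_e)}(w^{(\nu_e)})^T.
\]
Since $\mu$ has all moments, $\|D_{y^{(J)}}\|_{\mathrm{op}} = \max_i|y_i^{(J)}| \prec 1$, and since there are only $\binom{k_1}{t} = \OO(1)$ choices of $J$, it suffices to prove $\|R^{(J)}\|_{\mathrm{op}} \prec \max(N,\, d^{k_2-t})$ for each fixed $J$.

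The main obstacle is this last estimate. $R^{(J)} = \widetilde W \widetilde W^T$ with $\widetilde W\in\R^{N\times\binom{d-k_1}{k_2-t}}$ whose columns are degree-$(k_2-t)$ monomials in i.i.d.\ entries of $X$; the target $\|\widetilde W\|_{\mathrm{op}}^2 \prec \max(\text{rows},\text{cols})$ is exactly the Bai--Yin/Wishart scaling, but different columns of $\widetilde W$ share $X$-variables, so they are only \emph{uncorrelated} rather than independent, and off-the-shelf Wishart bounds do not apply. I expect to prove it by the moment method: expand $\E\tr(R^{(J)})^{2k}$ for growing integer $k$ into a sum over closed walks on the bipartite graph with coordinates $[N]$ on one side and size-$(k_2-t)$ subsets of $[d]\setminus\mu$ on the other, use the full-moment assumption on $\mu$ to control monomial contributions, and show that the dominant contribution scales like $\max(N,\, d^{k_2-t})^{2k}$, from which a Markov bound gives the desired high-probability estimate. (The same estimate would also fit the general pattern of operator-norm bounds on submatrices of $U^{[k]}$ that one would expect among the preliminary bounds of Section \ref{sec:preliminary_bounds}, and could alternatively be invoked from there.)
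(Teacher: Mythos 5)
Your opening reduction matches the paper's almost exactly: from \eqref{eq:res-id-G+T} you get $(G_M^{(E)}+T)_{\mu\nu} = T_{k_1}T_{k_2}\,u_\mu^T G_N^{(E)} u_\nu$, you pull out $\|P\|_{\mathrm{op}}$ where $P=\sum_{\nu\in S_\mu^{k_2,t}} u_\nu u_\nu^T$ is a PSD quadratic form, you apply the genuine Ward identity for $G_N^{(E)}$, and you use $\im G^{(E)}_{\mu\mu} = T_{k_1}^2 u_\mu^T(\im G_N^{(E)})u_\mu$ to land on
\[
\sum_{\nu\in S_\mu^{k_2,t}} \bigl|(G_M^{(E)}+T)_{\mu\nu}\bigr|^2 \;\leq\; T_{k_2}^2\,\|P\|_{\mathrm{op}}\,\frac{\im G^{(E)}_{\mu\mu}}{\eta}.
\]
The paper writes this with $U^{(E)}\big|_{S_\mu^{k_2,t}}$ in place of $P$, but it is the same reduction, and both arrive at the same required estimate $\frac{N}{d^{k_2}}\|P\|_{\mathrm{op}}\prec d^{\max(-t,\ell-k_2)}$, equivalently $\|P\|_{\mathrm{op}}\prec\max(1,d^{k_2-t-\ell})$. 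Your further factorization $P=\sum_{J\subset\mu,|J|=t}\frac1N D_{y^{(J)}}R^{(J)}D_{y^{(J)}}$ is a valid (and tidy) refinement that the paper does not make; it cleanly isolates the $\mu$-dependence and leaves a Wishart-like matrix $R^{(J)}$ depending only on coordinates outside $\mu$, with the equivalent target $\|R^{(J)}\|_{\mathrm{op}}\prec\max(N,d^{k_2-t})$.

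The gap is that this operator-norm bound is never proved, and it is the technical heart of the lemma. You sketch a moment-method attack on $\E\tr(R^{(J)})^{2k}$ and correctly flag the obstruction --- the columns of $\widetilde W$ share $X$-variables, so the walk combinatorics must track even multiplicities of \emph{individual} $X_{bi}$'s across the walk, not just matchings of columns --- but nothing in the proposal resolves this, and it is a nontrivial combinatorial enterprise. Your parenthetical remark that the estimate ``could alternatively be invoked'' from the preliminary-bounds section is not available; no operator-norm bound on submatrices of $U^{[k]}$ appears there. The paper closes the gap differently and more cheaply, and the same device would finish your argument: decompose over the $i$-index rather than the $\nu_e$-index. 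The rows $r_i$ of $\widetilde W$ (equivalently, the columns of $U^{(E)}$) are genuinely independent, $\E[r_ir_i^T]=I$, and $\|r_i\|^2\prec d^{k_2-t}$, so $\widetilde W^T\widetilde W = \sum_i r_ir_i^T$ is a sum of independent rank-one PSD matrices with controlled norm and variance. After truncating and centering the $r_ir_i^T$, the matrix Bernstein inequality yields $\|R^{(J)}\|_{\mathrm{op}}\prec\max(N,d^{k_2-t})$ without any need to reckon with the dependence between columns. This is precisely what the paper does, working directly with $Z_i=\frac{N}{d^{k_2}}\bigl(U\big|_{S_\mu^{k_2,t}}\bigr)_i\bigl(U\big|_{S_\mu^{k_2,t}}\bigr)_i^*$ and dispensing with the $J$-decomposition; that step is what your proposal is missing.
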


\begin{proof}[Proof of Lemma \ref{lem:partial-Ward}]
For notational simplicity, let $S \defeq \abs{S_\mu^{k_2,t}}$ and $\tilde{N} \defeq N - \abs{E}$. Because of our convention that $\mathbf{M}_k = \emptyset$ when the $k$th Hermite coefficient of $f$ vanishes, we can have $S = 0$, but in this case \eqref{eq:partial-Ward} is trivial, so we can and will assume $S_\mu^{k_2,t} \neq \emptyset$. Consider the restrictions of $T$ and $U$ to this subset, denoted by $T\big|_{S_\mu^{k_2, t}}$ and $U\big|_{S_\mu^{k_2, t}}$; these are matrices of size $S \times S$ and $S \times \tilde{N}$, respectively, and $T\big|_{S_\mu^{k_2, t}} = c_{k_2} \sqrt{\frac{k_2! N}{d^{k_2}}}I$. We will write $(U^{(E)}\big|_{S_\mu^{k_2, t}})_i$ for the $i$th column of $U^{(E)}\big|_{S_\mu^{k_2, t}}$. Using the resolvent identity $G_M^{(E)} + T = TU^{(E)}G_N^{(E)}(U^{(E)})^\ast T$ from \eqref{eq:res-id-G+T}, we can view the left-hand side of \eqref{eq:partial-Ward} as
\begin{align}
    \sum_{\nu \in S^{k_2,t}_\mu} \left|(G_M^{(E)}+T)_{\mu \nu}\right|^2 &= \sum_{\nu \in S^{k_2,t}_\mu} \abs{(TU^{(E)}G_N^{(E)}(U^{(E)})^\ast T)_{\mu \nu}}^2 = \sum_{\nu \in S^{k_2,t}_\mu} \sum_{i,j,k,\ell=1}^{N,(E)} T_\mu U_{\mu i} G_{ij} U_{\nu j} T_\nu T_\nu U_{\nu k} (G^\ast)_{k \ell} U_{\mu \ell} T_\mu \\
    &= c_{k_2}^2 k_2! \frac{N}{d^{k_2}} \sum_{\nu \in S^{k_2,t}_\mu} \sum_{i,j,k,\ell=1}^{N,(E)} T_\mu U_{\mu i} G_{ij} U_{\nu j} U_{\nu k} (G^\ast)_{k \ell} U_{\mu \ell} T_\mu \\
    &= c_{k_2}^2 k_2! \frac{N}{d^{k_2}} \sum_{i,j,k,\ell=1}^{N,(E)} T_\mu U_{\mu i} G_{ij} \left( U\big|_{S_\mu^{k_2, t}}^*U\big|_{S_\mu^{k_2, t}} \right)_{jk} (G^\ast)_{k\ell} (U^\ast)_{\ell \mu} T_\mu \\
    &= c_{k_2}^2 k_2! \frac{N}{d^{k_2}} \left(T_\mu U^{(E)}G_N^{(E)} U^{(E)}\big|_{S_\mu^{k_2, t}}^*U^{(E)}\big|_{S_\mu^{k_2, t}}(G_N^{(E)})^*(U^{(E)})^* T_\mu\right)_{\mu\mu} \\
    &= c_{k_2}^2 k_2! \frac{N}{d^{k_2}} \ip{((G_N^{(E)})^\ast (U^{(E)})^\ast T e_\mu),\left( U^{(E)}\big|_{S_\mu^{k_2, t}}^*U^{(E)}\big|_{S_\mu^{k_2, t}} \right) ((G_N^{(E)})^\ast (U^{(E)}) ^\ast T e_\mu)}
\end{align}
(recalling that notations like $\sum_{i=1}^{N,(E)}$ mean summing over indices $i \in \llbracket 1, N \rrbracket \setminus E$), where $e_\mu$ is the standard basis vector of size $S \times 1$, so that $(G_N^{(E)})^\ast (U^{(E)})^\ast T e_\mu$ is a vector in $\C^{\tilde{N}}$. Bounding the quadratic form by the operator norm times the vector norm, then applying the (non-partial) Ward identity \eqref{eq:GN-Ward} for $G_N$, we find
\begin{align}
    \sum_{\nu \in S^t_\mu} \left|(G_M^{(E)}+T)_{\mu \nu}\right|^2 &\lesssim \frac{N}{d^{k_2}}\left\|U^{(E)}\big|_{S_\mu^{k_2, t}}^*U^{(E)}\big|_{S_\mu^{k_2, t}}\right\|_{\textup{op}} \left(TU^{(E)}G_N^{(E)} (G_N^{(E)})^*(U^{(E)})^*T\right)_{\mu\mu} \\
    &= \frac{N}{d^{k_2}} \left\|U^{(E)}\big|_{S_\mu^{k_2, t}}^*U^{(E)}\big|_{S_\mu^{k_2, t}}\right\|_{\textup{op}} \frac{1}{\eta} \left(TU^{(E)}\im G_N^{(E)} (U^{(E)})^*T\right)_{\mu\mu} \\
    &= \frac{N}{d^{k_2}} \left\|U^{(E)}\big|_{S_\mu^{k_2, t}}^*U^{(E)}\big|_{S_\mu^{k_2, t}}\right\|_{\textup{op}} \frac{\im G^{(E)}_{\mu\mu}}{\eta},
\end{align}
where in the last equality we use that $T$ and $U^{(E)}$ have real entries, so that 
\[
    TU^{(E)}\im G_N^{(E)} (U^{(E)})^\ast T = \im(TU^{(E)}G_N^{(E)} (U^{(E)})^\ast T),
\]
and once again the identity $G_M^{(E)}+T = TU^{(E)}G_N^{(E)}(U^{(E)})^\ast T$ from \eqref{eq:res-id-G+T}.

It remains to show that
\begin{equation}\label{eq:partial-ward-norm-bound}
    \frac{N}{d^{k_2}} \left\|U^{(E)}\big|_{S_\mu^{k_2, t}}^*U^{(E)}\big|_{S_\mu^{k_2, t}}\right\|_{\textup{op}} \prec d^{\max\{-t, \ell-k_2\}} \eqdef \Phi.
\end{equation}
Since the distribution of the left-hand side depends only on the length of $\mu$ and not which $X$'s it contains, the final result \eqref{eq:partial-Ward} will be uniform in $\mu \in \mathbf{M}_{k_1}$ as claimed. 

Recall that the columns $\left( U^{(E)}\big|_{S_\mu^{k_2, t}} \right)_i$ are independent; thus, since $\|AA^\ast\|_{\textup{op}} = \|A^\ast A\|_{\textup{op}}$, 
\begin{equation}
\begin{split}
    \frac{N}{d^{k_2}} \left\|U^{(E)}\big|_{S_\mu^{k_2, t}}^*U^{(E)}\big|_{S_\mu^{k_2, t}}\right\|_{\textup{op}} &= \frac{N}{d^{k_2}} \left\|U^{(E)}\big|_{S_\mu^{k_2, t}}U^{(E)}\big|_{S_\mu^{k_2, t}}^*\right\|_{\textup{op}} \\
    &= \left\|\sum_{i=1}^{N,(E)} \frac{N}{d^{k_2}} \left(U\big|_{S_\mu^{k_2, t}}\right)_i\left(U\big|_{S_\mu^{k_2, t}}\right)_i^*\right\|_{\textup{op}} \eqdef \left\| \sum_{i=1}^{N,(E)} Z_i \right\|_{\textup{op}}
\end{split}
\end{equation}
reduces the problem to bounding the operator norm a sum of independent and identically distributed matrices $Z = \sum_{i=1}^{N,(E)} Z_i$, each $Z_i \in \R^{S \times S}$, which we can do with the matrix Bernstein inequality (see, e.g., Theorem 5.4.1 of \cite{Ver2018}). Notice that matrices $Z_i$ are not centered and do not have bounded norm; thus we need to modify them to use this inequality. To simplify the notation, from now on, we assume without loss of generality that $E = \llbracket N-\abs{E}+1, \ldots, N \rrbracket$, so that $\sum_{i=1}^{N,(E)} = \sum_{i=1}^{\tilde{N}}$. We additionally abuse notation by writing $N$ instead of $\tilde{N}$; this is fine in asymptotics since $\abs{E} = \OO(1)$. 

To show \eqref{eq:partial-ward-norm-bound}, it suffices to show that for any $\epsilon, D > 0$ there is $N_0(\epsilon, D) \in \Z_{>0}$ such that for any $N \ge N_0(\epsilon, D)$ the following inequality holds:
\begin{equation}
    \P\left(\left\| \sum_{i=1}^{N} Z_i \right\|_{\textup{op}} > d^\epsilon \Phi\right) \le N^{-D}. 
\end{equation}
Fix $\epsilon > 0$ and $D > 0$. Pick $\delta \in (0, \epsilon/2)$. Define the centered matrices
\begin{equation}\label{eq:Ztilde-def}
    \tilde{Z}_i = Z_i \mathds{1}(\|Z_i\| \le d^{-t+\delta}) - \E\left[Z_i \mathds{1}(\|Z_i\| \le d^{-t+\delta})\right].
\end{equation}
Notice that 
\begin{equation}
    \|Z_i\| = \frac{N}{d^{k_2}} \left\|\left(U\big|_{S_\mu^{k_2, t}}\right)_i\right\|_2^2 = \frac{N}{d^{k_2}} \sum_{\alpha\in S_\mu^{k_2, t}} U_{\alpha i}^2 \prec \left|S_\mu^{k_2, t}\right| \frac{N}{d^{k_2}} \frac{1}{N} \prec d^{k_2 - t}d^{-k_2} = d^{-t},
\end{equation}
and this bound is uniform in $i \in [N]$, as the $Z_i$ are independent and identically distributed matrices. Then there exists $\tilde{N}_0$ such that for any $N \ge \tilde{N}_0$ we have 
\begin{equation}\label{eq:Zi-bound-probability}
    \P\left(\|Z_i\| > d^{-t+\delta}\right) \le N^{-D-2}
\end{equation}
for all $i\in [N]$. Consider the event 
\begin{equation}
    \Omega(N, \epsilon, D) = \left\{ \forall i\in [N] \vcentcolon  \|Z_i\| \le d^{-t+\delta}\right\}.
\end{equation}
We will use it later to upgrade the result of matrix Bernstein inequality from $\tilde{Z}_i$ to $Z_i$; for now we just store that, from \eqref{eq:Zi-bound-probability} and the union bound, we have
\begin{equation}\label{eq:Omega-prob-bound}
    \P\left(\Omega(N, \epsilon, D)\right) \ge 1 - N^{-D-1}.
\end{equation}

The matrix Bernstein inequality for the real-symmetric $S \times S$ matrices $(\tilde{Z}_i)_{i=1}^N$ reads
\begin{equation}
\label{eqn:matrix-Bernstein}
    \P\left(\left\|\sum_{i=1}^N \tilde{Z}_i\right\|_{\textup{op}} > t \right) \leq 2S\exp\left( - \frac{t^2/2}{\sigma^2 + Kt/3} \right)
\end{equation}
for any $t > 0$, where $\max_i \|\tilde{Z}_i\|_{\textup{op}} \leq K$ almost surely and $\|\sum_{i=1}^N \E[\tilde{Z}_i^2] \|_{\textup{op}} \leq \sigma^2$. Thus we need to give deterministic upper bounds for $\|\tilde{Z}_i\|$ and $\|\sum_{i=1}^N \E[\tilde{Z}_i^2]\|$. First, we bound their norm as
\[
    \left\|\tilde{Z}_i\right\| \le d^{-t+\delta} + \left\|\E\left[Z_i \mathds{1}(\|Z_i\| \le d^{-t+\delta})\right]\right\|,
\]
and since $Z_i$ is positive semidefinite we have
\[
    \left\|\E\left[Z_i \mathds{1}(\|Z_i\| \le d^{-t+\delta})\right]\right\| = \max_{\norm{v} = 1} \E \left[v^\ast Z_i v \mathds{1}(\|Z_i\| \le d^{-t+\delta})\right]\le \max_{\norm{v} = 1} \E \left[v^\ast Z_i v\right]= \left\|\E Z_i\right\|.
\]
Notice that $Z_i$ has the form $Z_i = \frac{N}{d^{k_2}} vv^\ast$ for a vector $v \in \R^S$ with uncorrelated coordinates $v_\nu = U_{\nu i}$; thus $\E Z_i = \frac{1}{N} \cdot \frac{N}{d^{k_2}}\Id = \frac{1}{d^{k_2}}\Id$. In the following we allow $C$ to change from line to line.  Since $t \le k_2$ and $\delta$ is small, we thus have
\[
    \|\widetilde{Z}_i\| \leq Cd^{-t+\delta}.
\]
Next, we have
\begin{align*}
    \|\E[\tilde{Z}_i^2]\| &= \|\E[Z_i^2 \mathds{1}(\|Z_i\| \le d^{-t+\delta})] - (\E[Z_i \mathds{1}(\|Z_i\| \le d^{-t+\delta})])^2\| \\
    &\leq \|\E[Z_i^2 \mathds{1}(\|Z_i\| \le d^{-t+\delta})]\| + \|\E[Z_i \mathds{1}(\|Z_i\| \le d^{-t+\delta})])\|^2.
\end{align*}
The second term on the right-hand side has been studied above. We only need to provide an estimate for the first term. Since $Z_i = \frac{N}{d^{k_2}} vv^\ast$, we also have $Z_i^2 = Z_i\|Z_i\|$; using this identity, we find
\begin{align*}
    \|\E[Z_i^2 \mathds{1}(\|Z_i\| \le d^{-t+\delta})]\| &= \max_{\norm{v} = 1} \E \left[v^\ast Z_i v \|Z_i\| \mathds{1}(\|Z_i\| \le d^{-t+\delta})\right] \leq d^{-t+\delta} \max_{\norm{v} = 1} \E \left[ v^\ast Z_i v \right] \\
    &= d^{-t+\delta} \|\E Z_i\| \leq Cd^{-t-k_2+\delta}.
\end{align*}
Since the $\tilde{Z}_i$ are i.i.d. over $i$, we thus have
\[
    \left\| \sum_{i=1}^N \E[\tilde{Z}_i^2]\right\| = N\left\|\E[\tilde{Z}_i^2]\right\| \leq Cd^\delta (Nd^{-t-k_2} + Nd^{-2k_2}) \leq Cd^\delta d^{\ell-t-k_2} \leq Cd^\delta d^{2\max(-t,\ell-k_2)} \leq Cd^\delta \Phi^2,
\]
since $t < k_2$ and since $a+b \leq 2\max(a,b)$ for real $a, b$.

Now we can plug these estimates into the matrix Bernstein inequality \eqref{eqn:matrix-Bernstein}, choosing $t = d^{\epsilon/2} \Phi$, $K = Cd^{-t+\delta}$, and $\sigma^2 = Cd^\delta \Phi^2$ to obtain
\[
    \P\left(\left\|\sum_{i=1}^N \tilde{Z}_i\right\|_{\textup{op}} > d^{\epsilon/2} \Phi \right) \leq 2S \exp\left( -\frac{d^{\epsilon} \Phi^2/2}{Cd^\delta \Phi^2 + Cd^{-t+\delta}d^{\epsilon/2}\Phi} \right).
\]
Since $\Phi = d^{\max\{-t,\ell-k_2\}}$, we have $Cd^\delta \Phi^2 + Cd^{-t+\delta}d^{\epsilon/2} \Phi \leq C\Phi^2d^{\delta+\epsilon/2}$. Thus the argument of the exponential is upper-bounded by $-Cd^{\epsilon/2-\delta}$; since $S$ grows polynomially in $d$ and $\delta < \epsilon/2$, this implies 
\begin{equation}
\label{eqn:tildeZ-sum-bound}
    \P\left(\left\|\sum_{i=1}^N \tilde{Z}_i\right\|_{\textup{op}} > d^{\epsilon/2} \Phi \right) \leq N^{-D-1}
\end{equation}
for sufficiently large $N$.

To upgrade this to an estimate on $\|\sum_{i=1}^N Z_i\|_{\textup{op}}$, we introduce the event 
\begin{equation}
    \tilde{\Omega}(N, \epsilon, D) = \Omega(N, \epsilon, D) \cap \left\{\left\| \sum_{i=1}^N \tilde{Z}_i \right\|_{\textup{op}} \le d^{\epsilon/2} \Phi\right\}.
\end{equation}
Combining \eqref{eqn:tildeZ-sum-bound} with \eqref{eq:Omega-prob-bound} gives us
\[
    \P(\tilde{\Omega}(N,\epsilon,D)^c) \leq N^{-D}
\]
for large enough $N$. Furthermore, on this event, we have $\tilde{Z}_i = Z_i - \E\left[Z_i \mathds{1}(\|Z_i\| \le d^{-t+\delta})\right]$, so that
\begin{align*}
    \left\|\sum_{i=1}^N Z_i\right\|_{\textup{op}} &= \left\|\sum_{i=1}^N \tilde{Z}_i + \sum_{i=1}^N \E\left[Z_i \mathds{1}(\|Z_i\| \le d^{-t+\delta})\right]\right\|_{\textup{op}} \\
    &\le \left\|\sum_{i=1}^N \tilde{Z}_i\right\|_{\textup{op}} + \sum_{i=1}^N \left\|\E\left[Z_i \mathds{1}(\|Z_i\| \le d^{-t+\delta})\right]\right\|_{\textup{op}} \leq d^{\epsilon/2} \Phi + CNd^{-k_2} \leq d^\epsilon\Phi.
\end{align*}
This shows that
\[
    \P\left(\left\| \sum_{i=1}^N Z_i \right\|_{\textup{op}} > d^\epsilon \Phi\right) \leq \P(\tilde{\Omega}(N,\epsilon,D)^c) \leq N^{-D},
\]
which finishes the proof. 
\end{proof}

%%%%%%%%%%%%%%%%%%%%%%%%%%%%%%%%%%%%%%%%%%%%%%%%%%%%%%%%%%%%
%%%%%%%%%%%%%%%%%%%%%%%%%%%%%%%%%%%%%%%%%%%%%%%%%%%%%%%%%%%%
%%%%%%%%
%%%%%%%%             Section: Preliminary bounds
%%%%%%%%
%%%%%%%%%%%%%%%%%%%%%%%%%%%%%%%%%%%%%%%%%%%%%%%%%%%%%%%%%%%%
%%%%%%%%%%%%%%%%%%%%%%%%%%%%%%%%%%%%%%%%%%%%%%%%%%%%%%%%%%%%

\section{Basic tools: Preliminary bounds}
\label{sec:preliminary_bounds}

The goal of this section is to prove several preliminary bounds on various quantities that we will use later. All of the estimates used outside of this section are summarized in the statements of Lemmas \ref{lem:m_order_one} and \ref{lem:streamlined_preliminary}.

\begin{lemma}
\label{lem:b_column_norm}
Let $B_j$ be the $j$th column of $B$ with the $(j,j)$th entry (which is zero) removed. Then 
\[
    \max_{j=1}^N \|B_j\| \prec 1.
\]
\end{lemma}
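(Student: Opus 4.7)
We condition on $X_j$ and apply moment-based concentration. Since the distribution of $\|B_j\|$ is the same for every $j$, and since the notation $\prec$ already absorbs a polynomial-in-$N$ union-bound factor, it suffices to show $\|B_j\|^2 \prec 1$ for a single fixed $j$. Fix $j$ and condition on $X_j$. Because the columns $X_i$ for $i \neq j$ are independent of one another and of $X_j$, the random variables $(B_{ij})_{i \neq j}$ are, upon conditioning, i.i.d.\ and mean-zero (each $B_{ij}$ is a linear combination of centered multilinear monomials in the $X_{a,i}$'s).

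Using orthogonality of distinct multilinear monomials in the centered, unit-variance, independent $X_{a,i}$'s (cross-degree terms vanish because any $X_{a,i}$ appearing with multiplicity one kills the expectation, and within a fixed degree $k$ the supports $a_1<\cdots<a_k$ must coincide), a direct calculation gives
\[
    \E[\|B_j\|^2 \mid X_j] \;=\; \frac{N-1}{N}\, U_j^{\ast} T^2 U_j \;=\; \frac{N-1}{N} \sum_{k=\lceil \ell \rceil}^L \frac{c_k^2 \, k!}{d^k} \sum_{a_1 < \cdots < a_k} X_{a_1 j}^2 \cdots X_{a_k j}^2.
\]
Bounding $\sum_{a_1 < \cdots < a_k} X_{a_1 j}^2 \cdots X_{a_k j}^2 \leq \|X_j\|^{2k}/k!$ yields $\E[\|B_j\|^2 \mid X_j] \leq \sum_{k} c_k^2 (\|X_j\|^2/d)^k$. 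Since $\|X_j\|^2$ is a sum of $d$ i.i.d.\ nonnegative random variables with unit mean and all moments finite, $\|X_j\|^2/d \prec 1$ by standard moment estimates, so $\E[\|B_j\|^2 \mid X_j] \prec 1$.

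For the fluctuation $\|B_j\|^2 - \E[\|B_j\|^2 \mid X_j]$, which is a sum of $N-1$ conditionally i.i.d.\ centered terms, Rosenthal's inequality gives, for any fixed $p \in \N$,
\[
    \E\big[|\|B_j\|^2 - \E[\|B_j\|^2 \mid X_j]|^{2p} \mid X_j\big] \leq C_p \big(N\, \E[|B_{ij}|^4 \mid X_j]\big)^p + C_p N\, \E[|B_{ij}|^{4p} \mid X_j].
\]
For fixed $X_j$, $B_{ij}$ is a multilinear polynomial of degree at most $L$ in the i.i.d.\ $(X_{a,i})_a$'s, so a hypercontractive-type principle for bounded-degree multilinear polynomials in random variables with all finite moments (or equivalently a direct combinatorial moment expansion) yields $\E[|B_{ij}|^{2q} \mid X_j] \leq C_q \E[|B_{ij}|^2 \mid X_j]^q \prec N^{-q}$ for every fixed $q$. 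Substituting $q=2$ and $q=2p$ shows that the fluctuation has $2p$-th conditional moment $\prec N^{-p}$; taking expectations and invoking Markov gives $\|B_j\|^2 - \E[\|B_j\|^2 \mid X_j] \prec N^{-1/2}$. Combining with the conditional-mean bound yields $\|B_j\|^2 \prec 1$, and the union bound concludes. The main obstacle is the hypercontractive moment bound on $B_{ij}$: although straightforward for Gaussian data, in the general case it is a Marcinkiewicz--Zygmund-type inequality for multilinear polynomials whose constants depend on $L$ and on the high moments of $\mu$, and it must be verified uniformly in $j$.
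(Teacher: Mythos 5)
Your argument is correct, but it is a genuinely different route from the paper's. The paper splits $B_j = \sum_{k=\lceil\ell\rceil}^L V_j^{(k)}$ and, for each fixed degree $k$, directly computes the unconditional $2p$-th moment $\E\bigl[\|V_j^{(k)}\|^{2p}\bigr]$ by expanding the sum over all indices $(i_1,\ldots,i_p)$ and $(\mu_1,\nu_1,\ldots,\mu_p,\nu_p)$ and counting pairings: the constraint that every $X_{a\,\cdot}$ appear at least twice reduces the number of surviving $(\mu_a,\nu_a)$-tuples from $d^{2kp}$ to $\OO(d^{kp})$, which exactly cancels $T_k^{2p}\,N^{-2p}\,N^p$, giving $\E[\|V_j^{(k)}\|^{2p}]\le C_{2p}$ and hence $\|V_j^{(k)}\|\prec 1$. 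Your approach instead conditions on $X_j$, splits $\|B_j\|^2$ into its conditional mean $\frac{N-1}{N}U_j^\ast T^2 U_j$ (which you correctly bound by $\sum_k c_k^2(\|X_j\|^2/d)^k \prec 1$) and a fluctuation, and controls the fluctuation via Rosenthal's inequality combined with a hypercontractive moment bound $\E[|B_{ij}|^{2q}\mid X_j]\le C_q\,\E[|B_{ij}|^2\mid X_j]^q$ for bounded-degree multilinear polynomials. Your route is more conceptual and isolates a reusable moment-comparison ingredient; the paper's is more elementary and self-contained, avoiding any appeal to hypercontractivity. The one point to be careful about in your argument: the hypercontractive inequality for degree-$\le L$ multilinear polynomials in non-Gaussian variables with merely all-finite-moments is not a one-line citation; it requires a verification (Latala-type bounds, or the same kind of combinatorial pairing count the paper performs directly), and the constant $C_q$ depends on the moments of $\mu$ and on $L$. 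Since $\mu$ and $L$ are fixed here, this is fine, but if you spell this out you effectively reproduce a version of the paper's pairing argument, so in this instance the conditioning step does not save much work — it mainly repackages it.
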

\begin{proof}
Since the distribution of $\|B_j\|$ does not depend on $j$, it suffices to prove $\|B_j\| \prec 1$. We also prefer to split
\[
    (U^\ast T U)_{ij} = \sum_\mu U_{\mu i} T_\mu U_{\mu j} = \sum_{k=\lceil \ell \rceil}^L \sum_{\mu \in \mathbf{M}_k} U_{\mu i} T_\mu U_{\mu j} \eqdef \sum_{k=\lceil \ell \rceil}^L V^{(k)}_{ij},
\]
which suggests that we decompose $B_j$ into a sum of vectors 
\[
    B_j = \sum_{k = \lceil \ell \rceil}^L V_j^{(k)},
\]
where $V_j^{(k)}$ is the vector whose $i$th entry is $V_{ij}^{(k)}$. Then
\[
    \|B_j\| \le \sum_{k = \lceil\ell\rceil}^L \|V_j^{(k)}\|.
\]
Since this sum has a constant number of terms, it suffices to show that $\|V_j^{(k)}\| \prec 1$ for any $k \in \llbracket\lceil\ell\rceil, L\rrbracket$. Now we compute high moments of $\|V_j^{(k)}\|$. For $p\in \N$,
\begin{align*}
    \E\left[\|V_j^{(k)}\|^{2p}\right] &= \sum_{i_1,\ldots,i_{p}}^{(j)} \E\left[ \prod_{a=1}^{p} \left(V_{i_aj}^{(k)}\right)^2 \right] \\
    &= T_k^{2p} \sum_{i_1,\ldots,i_{p}}^{(j)} \sum_{\mu_1,\nu_1,\ldots, \mu_{p}, \nu_{p} \in \mathbf{M}_k} \E\left[\prod_{a=1}^p U_{\mu_a i_a} U_{\nu_a i_a}\right] \E\left[\prod_{a=1}^p U_{\mu_a j} U_{\nu_a j}\right]
\end{align*}
where the last expectations factor since all the $i_a$'s are distinct from $j$. The expectations vanish unless the $X$'s pair, so $\sum_{\mu_1, \nu_1, \ldots, \mu_p \nu_p \in \mathbf{M}_{k}}$ has order $d^{kp}$ nonzero terms instead of the naive $d^{2kp}$; each product of expectations contributes order $N^{-2p}$; we have $T_k^{2p} = C_{2p} N^p d^{-kp}$, and $\sum_{i_1,\ldots,i_p}^{(j)}$ contributes $N^p$, so
\[
    \E[\|V_j^{(k)}\|^{2p}] \leq C_{2p},
\]
which concludes the proof.
\end{proof}

\begin{lemma}
\label{lem:m_order_one}
For any fixed $\tau > 0$ and any fixed $z \in \mathbf{D}_\tau$, we have
\begin{align}
    \min_{j=1}^N \abs{G_{jj}(z)} \succ 1, \label{eqn:min_gii} \\
    \min_{j=1}^N \im(G_{jj}(z)) \succ 1, \label{eqn:min_im_gii}
\end{align}
and therefore
\begin{align}
    1 \prec \abs{s(z)} \prec 1, \label{eqn:absmorderone} \\
    1 \prec \im(s(z)) \prec 1. \label{eqn:immorderone}
\end{align}
\end{lemma}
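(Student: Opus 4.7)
The plan is to combine the Schur complement identity with the column-norm bound of Lemma \ref{lem:b_column_norm}, which is already available. Since $B$ is Hermitian with vanishing diagonal, the standard Schur complement formula for the resolvent of $B$ reads
\[
    G_{jj}(z) = \frac{1}{-z - B_j^\ast G_N^{(j)}(z) B_j},
\]
which agrees with \eqref{eq:res-id-schur} via the identity \eqref{eq:res-id-G+T}. Combining the trivial deterministic bounds $\|G_N^{(j)}\|_{\textup{op}} \le 1/\eta$ and $|z| \le \sqrt{2}\tau^{-1}$ with Lemma \ref{lem:b_column_norm} gives
\[
    \max_{j} |G_{jj}^{-1}| \le |z| + \max_j \|B_j\|^2/\eta \prec 1,
\]
uniformly in $j$ (a union bound over the $N$ indices only costs a factor $N^\epsilon$, which is absorbed by stochastic domination). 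Inverting yields \eqref{eqn:min_gii}.

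For the lower bound on $\im G_{jj}$, the point is that $\im G_N^{(j)}(z) = \eta (G_N^{(j)})^\ast G_N^{(j)}$ is positive semidefinite, and $B_j$ is real, so $\im(B_j^\ast G_N^{(j)} B_j) = B_j^\ast \im(G_N^{(j)}) B_j \ge 0$. Hence
\[
    -\im (G_{jj}^{-1}) = \eta + \im (B_j^\ast G_N^{(j)} B_j) \ge \eta \ge \tau.
\]
Using the elementary identity $\im G_{jj} = -\im (G_{jj}^{-1})/|G_{jj}^{-1}|^2$ together with the upper bound on $\max_j |G_{jj}^{-1}|$ from the previous step, I get
\[
    \min_j \im G_{jj} \;\ge\; \frac{\tau}{\max_j |G_{jj}^{-1}|^2} \;\succ\; 1,
\]
which is \eqref{eqn:min_im_gii}.

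The bounds on $s(z)$ then follow by averaging, with the upper bounds in fact deterministic: $|s(z)|$ and $\im s(z)$ are both at most $\max_j |G_{jj}| \le \|G_N\|_{\textup{op}} \le 1/\eta \le \tau^{-1}$, yielding \eqref{eqn:absmorderone} and \eqref{eqn:immorderone} on the upper side. For the lower sides, $\im s(z) = N^{-1} \sum_j \im G_{jj} \ge \min_j \im G_{jj} \succ 1$, and $|s(z)| \ge \im s(z) \succ 1$.

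There is no serious obstacle here; the argument is a clean two-step reduction through Schur complement to the column-norm estimate already proved. The only points requiring care are (i) checking that the Hermiticity-based identity $\im G_N^{(j)} = \eta (G_N^{(j)})^\ast G_N^{(j)}$ applies to the minor $G_N^{(j)}$ (it does, as $B^{(j)}$ is still Hermitian), and (ii) verifying that the union bound over $j$ is absorbed by the definition of $\prec$, which is immediate since we have only $N$ indices.
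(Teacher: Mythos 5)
Your proof is correct and follows essentially the same route as the paper: the Schur complement formula combined with the column-norm bound of Lemma \ref{lem:b_column_norm} for \eqref{eqn:min_gii}, then positivity of $\im\bigl(B_j^\ast G_N^{(j)} B_j\bigr)$ for \eqref{eqn:min_im_gii}, and finally averaging for the bounds on $s(z)$. The only cosmetic difference is that you invoke the Ward identity $\im G_N^{(j)} = \eta\, G_N^{(j)} (G_N^{(j)})^\ast$ to show $\im G_N^{(j)}$ is positive semidefinite, whereas the paper spells out the same fact via the spectral decomposition of $B^{(j)}$; these are equivalent.
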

\begin{proof}
Since $G_{jj}$ is a diagonal element of the resolvent of $B$, which has $B_{jj} = 0$, the Schur complement formula gives us
\[
    G_{jj} = \frac{1}{-z-\ip{B_j,(B^{(j)}-z)^{-1}B_j}}
\]
where $B_j$ is the $j$th column of $B$ except for the $(j,j)$ element, and $B^{(j)}$ is the corresponding minor. Since $\abs{z}$ is order one, Lemma \ref{lem:b_column_norm} gives
\[
    \abs{\ip{B_j,(B^{(j)}-z)^{-1}B_j}} \leq \|(B^{(j)}-z)^{-1}\|_{\textup{op}} \|B_j\|^2 \leq \frac{1}{\eta} \|B_j\|^2 \prec 1
\]
which proves \eqref{eqn:min_gii}. For \eqref{eqn:min_im_gii}, we note that, if $(u_i)_{i=1}^{N-1}$ is a (real) orthonormal eigenbasis for $B^{(j)}$ with corresponding eigenvalues $(\lambda_i)_{i=1}^{N-1}$, then 
\[
    \im\left(\ip{B_j,(B^{(j)}-z)^{-1}B_j}\right) = \im\left( \sum_{i=1}^{N-1} \frac{1}{\lambda_i-z} \ip{B_j,u_i}^2 \right) = \sum_{i=1}^{N-1} \frac{\eta}{\abs{\lambda_i-z}^2} \ip{B_j,u_i}^2 \geq 0,
\]
so that
\begin{align*}
    \im G_{jj} &= \frac{\im(\overline{-z-\ip{B_j,(B^{(j)}-z)^{-1}B_j}})}{\abs{-z-\ip{B_j,(B^{(j)}-z)^{-1}B_j}}^2} = \frac{\im(z)+\im(\ip{B_j,(B^{(j)}-z)^{-1}B_j})}{\abs{-z-\ip{B_j,(B^{(j)}-z)^{-1}B_j}}^2} \\
    &\geq \frac{\im(z)}{\abs{-z-\ip{B_j,(B^{(j)}-z)^{-1}B_j}}^2} = \im(z)\abs{G_{jj}}^2.
\end{align*}
Now we put a minimum over $j$, apply \eqref{eqn:min_gii}, and use $\im(z) \succ 1$ to obtain \eqref{eqn:min_im_gii}.

The lower bound in \eqref{eqn:immorderone} follows immediately from \eqref{eqn:min_im_gii}. On the event $\{\im(s(z)) > 0\}$, we have $\abs{s(z)} \geq \im(s(z))$; this implies $1 \prec \abs{s(z)}$. Similarly, the upper bound in \eqref{eqn:immorderone} follows from the upper bound in \eqref{eqn:absmorderone}, which is immediate since $\abs{s(z)} \leq \frac{1}{\eta}$ deterministically as the trace of a resolvent.
\end{proof}

\begin{lemma}
\label{lem:streamlined_preliminary}
For any fixed $\tau > 0$ and any fixed $z \in \mathbf{D}_\tau$, we have
\begin{align}
    \max_{k=\lceil\ell\rceil}^L \max_{\mu \in \mathbf{M}_k} \frac{d^k}{N} \abs{(G_M(z)+T)_{\mu\mu}} &\prec 1, \label{eqn:gm+t} \\
    \max_{k=\lceil\ell\rceil}^L \max_{i=1}^N \max_{\mu \in \mathbf{M}_k} d^k \abs{(G_M^{(i)}(z)-G_M(z))_{\mu\mu}} &\prec 1, \label{eqn:gi-g} \\
    \max_{k=\lceil\ell\rceil}^L \max_{i=1}^N \max_{\mu \in \mathbf{M}_k} \frac{d^k}{N} \abs{(G^{(i)}_M(z)+T)_{\mu\mu}} &\prec 1. \label{eqn:gmi+t}
\end{align}
\end{lemma}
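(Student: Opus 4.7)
The three bounds will be established in order. Bounds \eqref{eqn:gm+t} and \eqref{eqn:gmi+t} follow from a single direct computation, and \eqref{eqn:gi-g} is deduced from them together with a concentration argument for a conditional quadratic form.

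\textbf{Step 1: bounds \eqref{eqn:gm+t} and \eqref{eqn:gmi+t}.} For any exclusion set $E \in \{\emptyset, \{i\}\}$ and $\mu \in \mathbf{M}_k$, the resolvent identity \eqref{eq:res-id-G+T} reads
\[
    (G_M^{(E)}+T)_{\mu\mu} = T_\mu^2 \ip{U_\mu^{(E)},G_N^{(E)} U_\mu^{(E)}},
\]
where $U_\mu^{(E)}$ is the $\mu$-th row of $U^{(E)}$. Bounding the quadratic form by the operator norm and using that $G_N^{(E)}$ is a resolvent with $\|G_N^{(E)}\|_{\textup{op}} \le 1/\eta$, one gets $|(G_M^{(E)}+T)_{\mu\mu}| \le T_\mu^2 \|U_\mu^{(E)}\|^2/\eta$. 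A moment computation parallel to that in Lemma \ref{lem:b_column_norm} shows $\|U_\mu^{(E)}\|^2 = \sum_j U_{\mu j}^2 \prec 1$ uniformly in $\mu$ (using independence of the columns of $U$ across $j$ and the fact that $\E U_{\mu j}^{2p} \le C_{L,p}/N^p$). Since $T_\mu^2 = c_k^2 k!\, N/d^k$, this yields $|(G_M^{(E)}+T)_{\mu\mu}| \prec N/d^k$; a union bound over the polynomially many $(k,\mu,E)$ finishes both \eqref{eqn:gm+t} and \eqref{eqn:gmi+t}.

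\textbf{Step 2: bound \eqref{eqn:gi-g}.} Specializing \eqref{eq:res-id-off-diagonal} to $\nu = \mu$ and using symmetry of $G$ gives
\[
    (G_M - G_M^{(i)})_{\mu\mu} = \frac{G_{\mu i}^2}{G_{ii}}.
\]
Since $|G_{ii}| \succ 1$ by Lemma \ref{lem:m_order_one}, it suffices to show $|G_{\mu i}|^2 \prec 1/d^k$. By \eqref{eq:res-id-G-mu-i},
\[
    G_{\mu i} = -G_{ii} Y, \qquad Y \defeq \sum_{\alpha \in \mathbf{M}} U_{\alpha i} G_{\alpha\mu}^{(i)}.
\]
Crucially, $G_M^{(i)}$ depends only on the columns $(X_{\cdot,j})_{j \neq i}$. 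Conditioning on these, the quantity $Y$ becomes a polynomial of bounded degree $L$ in the independent variables $(X_{a,i})_{a \in [d]}$, with deterministic coefficients $G_{\alpha\mu}^{(i)}$. Its conditional mean vanishes since $\E U_{\alpha i} = 0$, and its conditional variance is
\[
    \E_i|Y|^2 = \frac{1}{N}\sum_{\alpha} |G_{\alpha\mu}^{(i)}|^2 = \frac{1}{N}(G_M^{(i)}(G_M^{(i)})^*)_{\mu\mu},
\]
using $\E[U_{\alpha i} U_{\beta i}] = \delta_{\alpha\beta}/N$ (any mismatch between tuples leaves an unpaired factor $X_{a,i}$ of zero mean). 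Expanding $G_M^{(i)} = -T + (G_M^{(i)}+T)$, applying \eqref{eqn:gmi+t} to the cross term, and bounding $\sum_\nu|(G_M^{(i)}+T)_{\mu\nu}|^2 \prec \im(G_{\mu\mu}^{(i)})/\eta \prec N/d^k$ via the full Ward inequality (Lemma \ref{lem:full-Ward}) together with \eqref{eqn:gmi+t}, the leading contribution is $T_\mu^2$; hence $(G_M^{(i)}(G_M^{(i)})^*)_{\mu\mu} \prec T_\mu^2 \prec N/d^k$ and $\E_i|Y|^2 \prec 1/d^k$.

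Finally, a moment-comparison inequality for polynomials of bounded degree $L$ in independent random variables with all finite moments gives $\E_i|Y|^{2p} \le C_{L,p}(\E_i|Y|^2)^p$ for every $p \ge 1$. Conditional Markov plus a union bound over the polynomially many $(k,i,\mu)$ upgrades this to $|Y|^2 \prec \E_i|Y|^2 \prec 1/d^k$, so $|G_{\mu i}|^2 = |G_{ii}|^2|Y|^2 \prec 1/d^k$ as desired.

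\textbf{Main obstacle.} The subtlest step is the moment-comparison $\E_i|Y|^{2p} \le C_{L,p}(\E_i|Y|^2)^p$ in the general setting where the entry law $\mu$ is only assumed to have all finite moments (not Gaussian or sub-Gaussian). For Gaussian entries this is standard hypercontractivity; in general one must either invoke a distributional-free polynomial-chaos moment inequality or expand the $2p$-th moment explicitly, tracking the pairings of the distinct factors $X_{a,i}$ appearing in each $U_{\alpha i}$. Boundedness of $L$ is what keeps the combinatorics manageable.
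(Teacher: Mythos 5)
Your proof is correct, and Step~1 takes a genuinely more direct route than the paper's. Where the paper introduces the control parameters $\Lambda_c(z)$ and $\Lambda_d(z)$, derives the coupled self-consistent inequalities $\Lambda_c \prec \sqrt{(\Lambda_d+\Lambda_c^2+1)/N}$ and $\Lambda_d \prec \sqrt{\Lambda_d+\Lambda_c^2+1}$ from the resolvent identities \eqref{eq:res-id-G-mu-i} and \eqref{eq:res-id-G-mu-nu} together with the full Ward inequality, and then bootstraps, you write $(G_M^{(E)}+T)_{\mu\mu}=T_\mu^2\,\ip{U_\mu^{(E)},G_N^{(E)}U_\mu^{(E)}}$ directly from \eqref{eq:res-id-G+T} and bound the quadratic form by $\|G_N^{(E)}\|_{\mathrm{op}}\leq 1/\eta$ together with $\|U_\mu^{(E)}\|^2\prec 1$. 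This yields \eqref{eqn:gm+t} and \eqref{eqn:gmi+t} in one line, with no bootstrap and no need for $\Lambda_c$ at all; the paper's scheme buys nothing extra here since $\Lambda_c$ is never used again. Step~2 then matches the paper's strategy in spirit: both reduce $(G_M-G_M^{(i)})_{\mu\mu}=G_{\mu i}^2/G_{ii}$ to a large-deviations bound on the multilinear form $Y=\sum_\alpha U_{\alpha i}G_{\alpha\mu}^{(i)}$, with the conditional variance controlled by the full Ward inequality and \eqref{eqn:gmi+t} exactly as you do. Two small points. First, when you pass from $\sum_\alpha|G^{(i)}_{\alpha\mu}|^2\prec T_\mu^2+\im G^{(i)}_{\mu\mu}/\eta$ to $\prec N/d^k$, you are implicitly using $N/d^k\lesssim 1$ for $k\geq\lceil\ell\rceil$; this is true (with constant $\kappa$ when $k=\ell\in\N$), but worth stating. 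Second, your "main obstacle" — the non-Gaussian moment comparison $\E_i|Y|^{2p}\leq C_{L,p}(\E_i|Y|^2)^p$ — is exactly what the paper dispatches by citing \cite[Theorem~C.1]{ErdKnoYauYin2013local} at \eqref{eqn:linear-large-deviations}; note that the constant there necessarily depends not only on $L$ and $p$ but also on the (fixed, finite) moments of $\mu$, so you should write $C_{L,p,\mu}$. With that clarified, the argument is complete and a bit shorter than the paper's.
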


In order to prove this lemma, we need the control parameters
\begin{align*}
    \Lambda_c(z) &= \max_{k=\lceil\ell\rceil}^L \max_{\nu\in \mathbf{M}_{k}} \max_{j = 1}^N \frac{d^{k/2}}{\sqrt{N}} \left|(G_M(z))_{\nu j}\right|, \\
    \Lambda_d(z) &= \max_{k = \lceil\ell\rceil}^L \max_{\mu \in \mathbf{M}_k} \frac{d^k}{N} \abs{(G_M(z)+T)_{\mu \mu}},
    %\Lambda_d(z) &= \max_{k=\ell}^L \max_{\mu \in \mathbf{M}_k} \frac{d^k}{N} \left|G_{\mu\mu} + T_\mu - \frac{T_\mu^2}{T_\mu - z - \phi\hat{m}}\right|,
\end{align*}
as well as the following two lemmas:
\begin{lemma}
\label{lem:removing-i}
For any fixed $\tau > 0$ and any fixed $z \in \mathbf{D}_\tau$, we have
\begin{equation}
\label{eqn:gimunu_gmunu}
    \max_{k=\lceil\ell\rceil}^L \max_{i=1}^N \max_{\mu \in \mathbf{M}_k} d^k \abs{(G_M^{(i)}-G_M)_{\mu\mu}} \prec N\Lambda_c(z)^2.
\end{equation}
\end{lemma}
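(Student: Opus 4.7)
My plan is to deduce this from the off-diagonal resolvent identity \eqref{eq:res-id-off-diagonal} together with the lower bound on $|G_{ii}|$ from Lemma \ref{lem:m_order_one} and the very definition of $\Lambda_c(z)$. Specializing \eqref{eq:res-id-off-diagonal} to $\nu = \mu$ gives, for every $k \in \llbracket \lceil \ell \rceil, L \rrbracket$, every $\mu \in \mathbf{M}_k$, and every $i \in \llbracket 1, N \rrbracket$,
\[
    (G_M^{(i)} - G_M)_{\mu\mu} \;=\; G^{(i)}_{\mu\mu} - G_{\mu\mu} \;=\; -\,\frac{G_{\mu i}\,G_{i\mu}}{G_{ii}}.
\]
Note that $H$ is (complex) symmetric, so $G = G^T$ and therefore $G_{\mu i} = G_{i \mu}$; thus the numerator has modulus $|G_{\mu i}|^2$.

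Next I would use that $|G_{ii}|^{-1} \prec 1$ uniformly in $i$, which is exactly \eqref{eqn:min_gii} of Lemma \ref{lem:m_order_one}. For the numerator, the definition of the control parameter $\Lambda_c(z)$ gives, deterministically,
\[
    \abs{G_{\mu i}} \;\leq\; \frac{\sqrt{N}}{d^{k/2}}\,\Lambda_c(z) \qquad \text{for every } \mu \in \mathbf{M}_k \text{ and every } i \in \llbracket 1, N \rrbracket.
\]
Putting these pieces together,
\[
    d^k\,\abs{(G_M^{(i)} - G_M)_{\mu\mu}} \;=\; \frac{d^k\,\abs{G_{\mu i}}^2}{\abs{G_{ii}}} \;\prec\; d^k \cdot \frac{N}{d^k}\,\Lambda_c(z)^2 \;=\; N\,\Lambda_c(z)^2,
\]
which is the desired bound for each fixed $(k,i,\mu)$.

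Finally, the triple maximum over $k \in \llbracket \lceil\ell\rceil, L \rrbracket$ (a bounded set of integers), $i \in \llbracket 1, N \rrbracket$, and $\mu \in \mathbf{M}_k$ (of cardinality at most $d^L$) involves only polynomially-many-in-$N$ terms, so it is absorbed into the $\prec$ notation by a union bound together with an arbitrarily small inflation of the exponent $\epsilon$ in the definition of stochastic domination. I do not expect any real obstacle here: the whole argument is a one-line consequence of a resolvent identity, the a priori lower bound on $|G_{ii}|$, and the very definition of $\Lambda_c(z)$, so the only bookkeeping is keeping track of the $d^k$ vs.\ $\sqrt{N}/d^{k/2}$ prefactors so that the powers of $d^k$ cancel.
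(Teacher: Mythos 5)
Your proof is correct and is essentially the paper's argument: specialize \eqref{eq:res-id-off-diagonal} to $\nu=\mu$, bound the numerator $|G_{\mu i}|^2$ deterministically by $\frac{N}{d^k}\Lambda_c(z)^2$ via the definition of $\Lambda_c$, and use \eqref{eqn:min_gii} to control $|G_{ii}|^{-1}$. The only cosmetic difference is that you justify the triple maximum by a union bound while the paper invokes Lemma \ref{lem:stochastic_domination_combining}; in fact neither is strictly needed, since the bound on $|G_{\mu i}|$ is deterministic and \eqref{eqn:min_gii} already gives $\min_j|G_{jj}|\succ 1$, so the maximum comes for free.
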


\begin{lemma}
\label{lem:control_parameter_relationships}
For any fixed $\tau > 0$ and any fixed $z \in \mathbf{D}_\tau$, we have
\begin{align}
    \Lambda_c(z) \prec \sqrt{\frac{\Lambda_d(z)+\Lambda_c(z)^2+1}{N}}, \label{eqn:lambda_c} \\
    \Lambda_d(z) \prec \sqrt{\Lambda_d(z)+\Lambda_c(z)^2+1}. \label{eqn:lambda_e}
\end{align}
\end{lemma}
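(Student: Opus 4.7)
The plan is to prove the two inequalities separately, using different resolvent identities. The bound on $\Lambda_d$ will follow almost immediately from the closed-form identity \eqref{eq:res-id-G+T}; the bound on $\Lambda_c$ will require a large-deviation estimate combined with the full Ward inequality.

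For \eqref{eqn:lambda_e}, the plan is to apply \eqref{eq:res-id-G+T} with empty exclusion set, which, since $T$ is diagonal, yields
\[
(G_M + T)_{\mu\mu} = T_\mu^2\, (UG_NU^*)_{\mu\mu}.
\]
Bounding the quadratic form via $|(UG_NU^*)_{\mu\mu}| \le \|U_{\mu\cdot}\|^2 \|G_N\|_{\textup{op}} \le \|U_{\mu\cdot}\|^2/\eta$, and noting that $\|U_{\mu\cdot}\|^2 = N^{-1}\sum_i X_{a_1, i}^2\cdots X_{a_k, i}^2$ is an average of i.i.d.\ random variables of mean $1$ with all finite moments, I expect a standard high-moment estimate combined with the union bound over the polynomially many $\mu \in \mathbf{M}$ to give $\max_\mu \|U_{\mu\cdot}\|^2 \prec 1$. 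Multiplying by $d^k/N$ and using $(d^k/N) T_\mu^2 = c_k^2 k!$, this will yield the (stronger than claimed) bound $\Lambda_d \prec 1$.

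For \eqref{eqn:lambda_c}, we turn to the resolvent identity \eqref{eq:res-id-G-mu-i}:
\[
G_{\nu j} = G_{j\nu} = -G_{jj}\sum_\alpha U_{\alpha j}\, G^{(j)}_{\alpha \nu}.
\]
Since the minor $G^{(j)}$ is independent of $X_j$, the plan is to apply a large-deviation estimate to the inner sum to obtain $|\sum_\alpha U_{\alpha j} G^{(j)}_{\alpha \nu}|^2 \prec \frac{1}{N}\sum_\alpha |G^{(j)}_{\alpha\nu}|^2$. Writing $G^{(j)}_{\alpha\nu} = (G_M^{(j)}+T)_{\alpha\nu} - T_\nu \delta_{\alpha\nu}$ and applying the full Ward inequality (Lemma \ref{lem:full-Ward}) will give $\sum_\alpha |G^{(j)}_{\alpha\nu}|^2 \prec \im(G_M^{(j)})_{\nu\nu}/\eta + T_\nu^2$. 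Crucially, since $T$ is real, $\im(G_M^{(j)})_{\nu\nu} = \im(G_M^{(j)}+T)_{\nu\nu}$; combining this with Lemma \ref{lem:removing-i} and the definition of $\Lambda_d$ yields $\im(G_M^{(j)})_{\nu\nu} \le |(G_M^{(j)}+T)_{\nu\nu}| \prec (N/d^k)(\Lambda_d + \Lambda_c^2)$ (with no residual $T_\nu$ contribution). Combining with $|G_{jj}| \prec 1$ from Lemma \ref{lem:m_order_one}, the identity $(d^k/N)T_\nu^2 = c_k^2 k! \prec 1$, and multiplying through by $d^k/N$ should produce the desired $\frac{d^k}{N}|G_{\nu j}|^2 \prec (\Lambda_d + \Lambda_c^2 + 1)/N$.

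The main technical obstacle will be establishing the large-deviation estimate $|\sum_\alpha U_{\alpha j} a_\alpha|^2 \prec \frac{1}{N}\sum_\alpha |a_\alpha|^2$ for deterministic (or $X_j$-independent) $a = (a_\alpha)$. Because the $U_{\alpha j}$ are only uncorrelated (one checks $\E[U_{\alpha j}U_{\beta j}] = \delta_{\alpha\beta}/N$ from the combinatorics of centered moments of $X$) but not independent, standard large-deviation lemmas for sums of independent summands do not apply directly. The plan is to verify this estimate by a direct high-moment computation: in the expansion of $\E|\sum_\alpha U_{\alpha j} a_\alpha|^{2p}$, the expectation of products of $X_{\cdot, j}$-variables vanishes unless each index appears to an even power, so the nonzero contributions correspond to perfect pairings of the $U$'s, and the leading term factorizes to give $\left(\frac{1}{N}\sum_\alpha |a_\alpha|^2\right)^p$ up to combinatorial constants bounded uniformly for fixed $p$ as $d,N\to\infty$; Markov's inequality then yields the claim.
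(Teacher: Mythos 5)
Your proof of the second bound \eqref{eqn:lambda_e} takes a genuinely different and cleaner route than the paper. The paper expands $(G_M+T)_{\mu\mu}$ via the resolvent identity \eqref{eq:res-id-G-mu-nu}, applies a large-deviations bound and the full Ward inequality, and obtains only the self-consistent inequality $\Lambda_d \prec \sqrt{\Lambda_d + \Lambda_c^2 + 1}$, which it then bootstraps (in Lemma \ref{lem:streamlined_preliminary}) to $\Lambda_d \prec 1$. You instead read off $(G_M+T)_{\mu\mu} = T_\mu^2\,(UG_NU^*)_{\mu\mu}$ directly from \eqref{eq:res-id-G+T}, bound the quadratic form by $\|U_{\mu\cdot}\|^2/\eta$, and control $\|U_{\mu\cdot}\|^2 = N^{-1}\sum_i X_{a_1 i}^2\cdots X_{a_k i}^2 \prec 1$ by concentration for i.i.d.\ summands. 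This gives $\Lambda_d \prec 1$ in one shot, strictly stronger than the claimed inequality, and also makes part of the subsequent bootstrapping in the proof of Lemma \ref{lem:streamlined_preliminary} redundant. The argument is correct (one checks $\|G_N\|_{\mathrm{op}} \le 1/\eta$ since $G_N$ is a genuine resolvent, and uniformity over $\mu$ follows from exchangeability plus a union bound over the $\OO(d^L)$ tuples, or Lemma \ref{lem:stochastic_domination_combining}). Your proof of \eqref{eqn:lambda_c} is essentially identical to the paper's: same resolvent identity \eqref{eq:res-id-G-mu-i}, same large-deviations step, same decomposition $G^{(j)}_{\alpha\nu} = (G_M^{(j)}+T)_{\alpha\nu} - T_\nu\delta_{\alpha\nu}$, and same use of the full Ward inequality together with Lemma \ref{lem:removing-i} and $\Lambda_d$.

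One claim in your sketch needs correcting, though it does not invalidate the conclusion. You write that in $\E|\sum_\alpha U_{\alpha j} a_\alpha|^{2p}$ "the nonzero contributions correspond to perfect pairings of the $U$'s." This is false: because the $U_{\alpha j}$ are only uncorrelated, not independent, nonzero terms arise whenever every underlying coordinate $X_{aj}$ appears at least twice, which includes partial-overlap configurations (for example, cycles $\alpha_1 = \{1,2\}, \alpha_2 = \{2,3\}, \alpha_3 = \{3,4\}, \alpha_4 = \{4,1\}$ with $k=2$) and higher-order coincidences ($\alpha_1 = \alpha_2 = \alpha_3 = \alpha_4$ with $k=1$). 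Such terms are indeed subleading after one counts the effective number of free indices, but this needs to be argued, not asserted. The cleaner route, which is what the paper does in \eqref{eqn:linear-large-deviations}--\eqref{eqn:linear-large-deviations-k}, is to split $\sum_{\alpha \in \mathbf{M}}$ into $\sum_k \sum_{\alpha \in \mathbf{M}_k}$ (there are only $\OO(1)$ values of $k$), and then for each fixed $k$ recognize $\sum_{\alpha \in \mathbf{M}_k} U_{\alpha j} a_\alpha = N^{-1/2}\sum_{a_1<\cdots<a_k} X_{a_1 j}\cdots X_{a_k j}\, a_\alpha$ as a degree-$k$ multilinear form in the genuinely independent, centered variables $(X_{aj})_{a=1}^d$; the estimate then follows from the standard large-deviations lemma for such forms (\cite[Theorem C.1]{ErdKnoYauYin2013local}), applied after conditioning on $G^{(j)}$. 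If you prefer the direct high-moment computation, you should explicitly bound the non-pairing configurations rather than dismiss them.
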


From here the proof is straightforward:
\begin{proof}[Proof of Lemma \ref{lem:streamlined_preliminary}, modulo Lemmas \ref{lem:removing-i} and \ref{lem:control_parameter_relationships}]
From the definition of stochastic domination, it is an elementary exercise to verify, for $X_N$ and $Y_N$ positive and real, that $X_N \prec \frac{X_N}{N} + Y_N$ implies $X_N \prec Y_N$; thus one upgrades \eqref{eqn:lambda_c} into
\begin{equation}
\label{eqn:lambda_c_simplified}
    \Lambda_c(z) \prec \sqrt{\frac{\Lambda_d(z)+1}{N}}.
\end{equation}
Plugging this into \eqref{eqn:lambda_e}, one obtains similarly
\[
    \Lambda_d(z) \prec \sqrt{\Lambda_d(z) + \frac{\Lambda_d(z)+1}{N} + 1} \prec \sqrt{\Lambda_d(z)+1},
\]
from which it is another elementary exercise from the definition of stochastic domination to conclude $\Lambda_d(z) \prec 1$, which is exactly \eqref{eqn:gm+t}. Plugging this back into \eqref{eqn:lambda_c_simplified}, one obtains 
\[
    \Lambda_c(z) \prec \frac{1}{\sqrt{N}}.
\]
Combining this with Lemma \ref{lem:removing-i} yields \eqref{eqn:gi-g}. Finally, \eqref{eqn:gmi+t} is immediate from combining \eqref{eqn:gm+t} and \eqref{eqn:gi-g}.
\end{proof}

The proofs of Lemmas \ref{lem:removing-i} and \ref{lem:control_parameter_relationships} regularly use the following short lemma. The proof is a short exercise in the definition of stochastic domination, so we omit it.
\begin{lemma}
\label{lem:stochastic_domination_combining}
Suppose we have random variables $X_{N,k,i,\mu}$ depending on $k \in \llbracket \lceil\ell\rceil, L \rrbracket$, on $i \in \llbracket 1, N \rrbracket$, and on $\mu \in \mathbf{M}_k$, such that for some $Y_N$ we have
\[
    \abs{X_{N,k,i,\mu}} \prec Y_N
\]
for each $k$, $i$, and $\mu$. If, for each fixed $k$, the distribution of $X_{N,k,i,\mu}$ depends neither on $i$ nor on $\mu \in \mathbf{M}_k$, then 
\[
    \max_{k=\lceil\ell\rceil}^L \max_{i=1}^N \max_{\mu \in \mathbf{M}_k} \abs{X_{N,k,i,\mu}} \prec Y_N.
\]
\end{lemma}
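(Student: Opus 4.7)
The plan is to reduce the claim directly to the definition of stochastic domination via a union bound, exploiting the distributional uniformity hypothesis. Recall that $\abs{X_{N,k,i,\mu}} \prec Y_N$ means that for every $\epsilon, D > 0$ there exist constants $C_{\epsilon,D}$ and $N_0(\epsilon,D)$ such that
\[
    \P\pa{\abs{X_{N,k,i,\mu}} \geq N^\epsilon Y_N} \leq C_{\epsilon,D} N^{-D} \quad \text{for all } N \geq N_0(\epsilon,D).
\]
Because, for each fixed $k$, the distribution of $X_{N,k,i,\mu}$ is assumed not to depend on the choice of $i$ or $\mu \in \mathbf{M}_k$, the same constants $C_{\epsilon,D}$ and $N_0(\epsilon,D)$ work uniformly over $(i,\mu)$.

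Next I would fix $\epsilon, D > 0$ and apply a union bound over the three indices. For fixed $k$, the union bound over $i \in \llbracket 1, N \rrbracket$ and $\mu \in \mathbf{M}_k$ costs a factor of at most $N \cdot \abs{\mathbf{M}_k} \leq N \cdot d^L$, and then the max over the finitely many values of $k \in \llbracket \lceil \ell \rceil, L \rrbracket$ costs a further factor of at most $L$. Since $d \asymp N^{1/\ell}$, we have $Nd^L \leq C N^{1 + L/\ell}$, which is polynomial in $N$; call this polynomial bound $N^A$ for some constant $A = A(\ell, L)$. Consequently,
\[
    \P\pa{\max_{k=\lceil\ell\rceil}^L \max_{i=1}^N \max_{\mu \in \mathbf{M}_k} \abs{X_{N,k,i,\mu}} \geq N^\epsilon Y_N} \leq L \cdot N^A \cdot C_{\epsilon,D} N^{-D}.
\]

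To conclude, given any target $\epsilon', D' > 0$ for the combined bound, I would apply the above with the same $\epsilon = \epsilon'$ and with $D = D' + A + 1$; since $L$ is fixed and $N^A \cdot N^{-D} = N^{-D'-1}$, we obtain the required decay $\OO(N^{-D'})$ for $N$ sufficiently large. This verifies that $\max_{k,i,\mu} \abs{X_{N,k,i,\mu}} \prec Y_N$. There is essentially no technical obstacle: the only thing to check is that the index set has polynomial cardinality in $N$ (so that the union bound is absorbed into the $N^\epsilon$ buffer), which is immediate from $\abs{\mathbf{M}_k} \leq d^L$ and $d \asymp N^{1/\ell}$.
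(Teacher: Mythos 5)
Your proposal is correct and, since the paper explicitly omits the proof of this lemma (calling it ``a short exercise in the definition of stochastic domination''), the natural approach you take --- use the distributional invariance in $(i,\mu)$ to make the constants $C_{\epsilon,D}$, $N_0(\epsilon,D)$ uniform for each fixed $k$, then apply a union bound over the at most $L\cdot N\cdot d^L = \OO(N^{1+L/\ell})$ triples, absorbing that polynomial factor by inflating $D$ --- is presumably the intended one. One small point worth being aware of: in the applications in the paper (e.g.\ Lemma \ref{lem:removing-i}, where $Y_N = N\Lambda_c(z)^2$), the threshold $Y_N$ is itself random, and the step where you argue the constants can be chosen uniformly over $(i,\mu)$ actually needs the \emph{joint} law of $(X_{N,k,i,\mu},Y_N)$ to be $(i,\mu)$-independent, not just the marginal law of $X_{N,k,i,\mu}$; this is true in every use case by exchangeability, so nothing breaks, but the hypothesis is best read in that slightly stronger form.
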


\begin{proof}[Proof of Lemma \ref{lem:removing-i}]
Combining the resolvent identity $G^{(i)}_{\mu\mu} = G_{\mu\mu} - \frac{G_{\mu i}G_{i \mu}}{G_{ii}}$ from \eqref{eq:res-id-off-diagonal} with $\min_{j=1}^N \abs{G_{jj}(z)} \succ 1$ from \eqref{eqn:min_gii}, we obtain 
\[
    d^k \abs{(G_M^{(i)}-G_M)_{\mu\mu}} \prec N\Lambda_c(z)^2
\]
for each $k$, $i$, and $\mu$. Applying Lemma \ref{lem:stochastic_domination_combining} completes the proof.
\end{proof}

\begin{proof}[Proof of Lemma \ref{lem:control_parameter_relationships}]
First, we claim that for each $\mu$ and each $i$ we have
\begin{equation}
\label{eqn:linear-large-deviations}
    \abs{\sum_{\nu \in \mathbf{M}} U_{\nu i} G^{(i)}_{\nu \mu}} \prec \frac{1}{\sqrt{N}} \sqrt{\sum_{\nu \in \mathbf{M}} \abs{G^{(i)}_{\nu \mu}}^2}.
\end{equation}
Indeed, by summing over $k \in \llbracket \lceil\ell\rceil, L \rrbracket$, to show \eqref{eqn:linear-large-deviations} it suffices to show
\begin{equation}
\label{eqn:linear-large-deviations-k}
    \abs{\sum_{\nu \in \mathbf{M}_k} U_{\nu i} G^{(i)}_{\nu \mu}} \prec \frac{1}{\sqrt{N}} \sqrt{\sum_{\nu \in \mathbf{M}_k} \abs{G^{(i)}_{\nu \mu}}^2}
\end{equation}
for each $k$; we write this in terms of the $X$'s, cancelling the factor $1/\sqrt{N}$, as
\[
    \abs{\sum_{a_1 < \cdots < a_k}^{d} X_{a_1i} \cdots X_{a_ki} G^{(i)}_{\nu\mu}} \prec \sqrt{ \sum_{\nu \in \mathbf{M}_k} \abs{G^{(i)}_{\nu\mu}}^2},
\]
where $\nu = (a_1, \ldots, a_k)$. But estimates of this form are essentially standard, and are typically called ``large deviations bounds'' in the local-law literature. Simple ones take the form $\sum_{a_1 \neq a_2} X_{a_1} X_{a_2} b_{a_1a_2} \prec (\sum_{a_1 \neq a_2} \abs{b_{a_1a_2}}^2)^{1/2}$ (see, e.g., \cite[Theorem C.1]{ErdKnoYauYin2013local}, which is based on \cite[Lemmas B.2--B.4]{ErdKnoYauYin2013delocalization}), where the $X_a$'s are i.i.d. centered random variables with unit variance, the $b_{a_1a_2}$ are deterministic, and the result is crucially uniform in $b_{a_1a_2}$. In our case, the sum is over $k$ indices rather than two, but this generalization is routine, as already noted in \cite[Theorem C.1]{ErdKnoYauYin2013local} and \cite[Lemmas B.2--B.4]{ErdKnoYauYin2013delocalization}. Furthermore, in our case the role of $b_{a_1a_2}$ is played instead by $G^{(i)}_{\mu \nu}$. These resolvent entries are not deterministic, but they are independent of $X_{i}$, so we can condition on them; since the result is uniform in deterministic $b_{a_1a_2}$, we can safely integrate over the randomness in $G^{(i)}$, obtaining \eqref{eqn:linear-large-deviations-k} and thus \eqref{eqn:linear-large-deviations}.

Now fix $k$ and $\mu \in \mathbf{M}_k$. On the one hand, if we start with the resolvent identity $G_{i\mu} = - G_{ii} \sum_{\nu \in \mathbf{M}} U_{\nu i} G_{\nu\mu}^{(i)}$ from \eqref{eq:res-id-G-mu-i} and use the estimates $\max_i \abs{G_{ii}} \leq 1/\eta \prec 1$, which is trivial since the $G_{ii}$ are the diagonal elements of a resolvent, and \eqref{eqn:linear-large-deviations}, we obtain
\begin{equation}
\label{eqn:off-diag}
    \max_{i=1}^N \abs{G_{i\mu}} \prec \max_{i=1}^N \frac{1}{\sqrt{N}} \sqrt{ \sum_{\nu \in \mathbf{M}} \abs{G_{\nu\mu}^{(i)}}^2}.
\end{equation}
On the other hand, if we start with the resolvent identity $(G_M+T)_{\mu\mu} = -T_\mu \sum_{i=1}^N G_{ii} \sum_{\nu \in \mathbf{M}} U_{\mu i} U_{\nu i} G_{\nu \mu}^{(i)}$, from \eqref{eq:res-id-G-mu-nu}, and use $\abs{U_{\mu j}} \prec 1/\sqrt{N}$ as well as \eqref{eqn:linear-large-deviations} (to which we can add $\max_{i=1}^N$ on both sides by Lemma \ref{lem:stochastic_domination_combining}), we obtain
\begin{equation}
\label{eqn:on-diag}
    \abs{(G_M+T)_{\mu\mu}} \prec \sqrt{\frac{N}{d^k}} \max_{i=1}^N \sqrt{ \sum_{\nu \in \mathbf{M}} \abs{G^{(i)}_{\nu\mu}}^2}.
\end{equation}
Assume momentarily the estimate
\begin{equation}
\label{eqn:using-full-Ward}
    \frac{d^k}{N} \sum_{\nu \in \mathbf{M}} \abs{G_{\nu\mu}^{(i)}}^2 \prec \Lambda_e(z) + \Lambda_c(z)^2 + 1.
\end{equation}
Lemma \ref{lem:stochastic_domination_combining} allows us to add $\max_{k=\lceil \ell \rceil}^L \max_{i=1}^N \max_{\mu \in \mathbf{M}_k}$ to the left-hand side for free. Combining this with \eqref{eqn:off-diag} yields \eqref{eqn:lambda_c}; combining it with \eqref{eqn:on-diag} instead yields \eqref{eqn:lambda_e}. 

Thus it remains only to prove \eqref{eqn:using-full-Ward}. We do this using the full Ward inequality, Lemma \ref{lem:full-Ward}: Since $T$ is diagonal and $\abs{a+b}^2 \leq 2\abs{a}^2 + 2\abs{b}^2 \prec \abs{a}^2 + \abs{b}^2$, we find
\[
    \sum_{\nu \in \mathbf{M}} \abs{G_{\nu\mu}^{(i)}}^2 = \sum_{\nu}^{(\mu)} \abs{(G_M^{(i)}+T)_{\nu\mu}}^2 + \abs{(G_M^{(i)}+T)_{\mu\mu} -T_{\mu}}^2 \prec \sum_{\nu \in \mathbf{M}} \abs{(G^{(i)}+T)_{\nu\mu}}^2 + T_\mu^2 \prec \frac{\im G_{\mu\mu}^{(i)}}{\eta} + \frac{N}{d^k}.
\]
Since $T$ is real, we have
\[
    \frac{\im G_{\mu\mu}^{(i)}}{\eta} \prec \im G_{\mu\mu}^{(i)} = \im (G_M^{(i)}-G_M)_{\mu\mu} + \im (G_M+T)_{\mu\mu} \prec \frac{N}{d^k}(\Lambda_e(z) + \Lambda_c(z)^2),
\]
which completes the proof of \eqref{eqn:using-full-Ward}, and thus of the lemma.
\end{proof}

%%%%%%%%%%%%%%%%%%%%%%%%%%%%%%%%%%%%%%%%%%%%%%%%%%%%%%%%%%%%
%%%%%%%%%%%%%%%%%%%%%%%%%%%%%%%%%%%%%%%%%%%%%%%%%%%%%%%%%%%%
%%%%%%%%
%%%%%%%%             Section: Concentration
%%%%%%%%
%%%%%%%%%%%%%%%%%%%%%%%%%%%%%%%%%%%%%%%%%%%%%%%%%%%%%%%%%%%%
%%%%%%%%%%%%%%%%%%%%%%%%%%%%%%%%%%%%%%%%%%%%%%%%%%%%%%%%%%%%

\section{Self-consistent equations I: Proof of Proposition \ref{prop:m_widetildem_self_consistent}}
\label{sec:concentration}

In this section, we prove Proposition \ref{prop:m_widetildem_self_consistent}. The bulk of the proof is Lemma \ref{lem:m_main_error_analysis}, which says roughly that, for each $i$, we have $\sum_{\mu, \nu} U_{\mu i} (G^{(i)}_M+T)_{\mu \nu} U_{\nu i} \approx \phi\widetilde{s}$. This should be thought of as a kind of concentration result: Since $\E[U_{\mu i} U_{\nu i}] = \delta_{\mu \nu}/N$ and $G_M^{(i)}+T$ is independent of the $X_i$'s, the partial expectation of $\sum_{\mu, \nu} U_{\mu i} (G^{(i)}+T)_{\mu \nu} U_{\nu i}$ over just the $X_i$'s is $\frac{1}{N} \Tr(G^{(i)}_M+T)$; and if one replaces $G^{(i)}_M$ with $G_M$ in this expression, one gets exactly $\phi\widetilde{s}$. Lemma \ref{lem:m_main_error_analysis} itself relies fundamentally on the partial Ward inequalities from Section \ref{sec:ward}.

\begin{lemma}
\label{lem:m_main_error_analysis}
For any fixed $\tau > 0$, any fixed $z \in \mathbf{D}_\tau$, and any $i \in [N]$, we have
\[
    \abs{\sum_{\mu,\nu} U_{\mu i}(G_M^{(i)}(z)+T)_{\mu\nu} U_{\nu i} - \phi\widetilde{s}(z)} \prec \frac{1}{d^{\frac{1}{2}\min(1,\ell)}}.
\]
\end{lemma}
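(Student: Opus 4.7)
The plan is to treat $\xi := \sum_{\mu,\nu} U_{\mu i} A_{\mu\nu} U_{\nu i}$ (with $A := G_M^{(i)}(z)+T$) as a quadratic form in the independent entries of the $i$th column $(X_{ai})_{a=1}^d$, conditional on $A$, which is independent of that column thanks to the superscript $(i)$. Since $\E[U_{\mu i} U_{\nu i}] = \delta_{\mu\nu}/N$ by centering and independence of the $X_{ai}$'s, the partial expectation of $\xi$ given $A$ equals $\tfrac{1}{N}\Tr A$, and moreover $\phi\widetilde{s}(z) = \tfrac{1}{N}\Tr(G_M + T)$. This leads to the decomposition $\xi - \phi\widetilde{s}(z) = E_1 + E_2 + E_3$ with
\[
    E_1 = \sum_{\mu\neq\nu} A_{\mu\nu} U_{\mu i} U_{\nu i}, \quad E_2 = \sum_\mu A_{\mu\mu}\pa{U_{\mu i}^2 - \tfrac{1}{N}}, \quad E_3 = \tfrac{1}{N}\Tr\pa{G_M^{(i)}(z) - G_M(z)}.
\]
The term $E_3$ is immediate from \eqref{eqn:gi-g}: $|(G_M^{(i)}-G_M)_{\mu\mu}| \prec 1/d^k$ for $\mu \in \mathbf{M}_k$, so $|E_3| \prec \tfrac{1}{N}\sum_k M_k/d^k \prec 1/N \leq d^{-\ell}$.

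To estimate $E_2$ and $E_1$, I would expand all relevant polynomials in the $X_{ai}$'s using the orthogonal polynomial basis $(p_k)_{k \geq 0}$ adapted to $\mu$ (playing the role of Hermite polynomials for general $\mu$, and well-defined since $\mu$ has all finite moments). For $E_2$, write $X_{ai}^2 = 1 + r(X_{ai})$ where $r$ is a combination of $p_1, p_2$, and expand
\[
    U_{\mu i}^2 - \tfrac{1}{N} = \tfrac{1}{N}\sum_{\emptyset \neq S \subset \mu}\prod_{a \in S} r(X_{ai}).
\]
Across distinct $S \subset \llbracket 1,d\rrbracket$, the atoms are mutually orthogonal with bounded $L^{2p}$ norm for every $p$. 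Combined with $|A_{\mu\mu}| \prec N/d^k$ from \eqref{eqn:gmi+t} and the counting estimate $|\{\mu \in \mathbf{M}_k : \mu \supset S\}| \leq d^{k-|S|}$, this yields $\E[E_2^{2p} \mid A] \prec d^{-p}$ for every fixed $p$, hence $E_2 \prec d^{-1/2}$.

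The main step is the off-diagonal term $E_1$, where Lemma \ref{lem:partial-Ward} is essential. I would split by overlap $t = \langle\mu,\nu\rangle$ and degrees $k_1, k_2$:
\[
    E_1 = \sum_{k_1,k_2}\sum_{t=0}^{\min(k_1,k_2)} E_1^{(k_1,k_2,t)}, \quad E_1^{(k_1,k_2,t)} := \sum_{\substack{\mu \in \mathbf{M}_{k_1},\, \nu \in \mathbf{M}_{k_2}\\ \mu\neq\nu,\, \langle\mu,\nu\rangle = t}} A_{\mu\nu} U_{\mu i} U_{\nu i}.
\]
Expanding $U_{\mu i} U_{\nu i} = \tfrac{1}{N}\prod_{a\in\mu\cap\nu}X_{ai}^2 \prod_{a \in \mu\triangle\nu} X_{ai}$ in the orthogonal basis adapted to $\mu$, one sees that distinct ``shapes'' produce mutually orthogonal centered functions of the $X_{\cdot i}$, so cross-terms in $\E[|E_1^{(k_1,k_2,t)}|^2 \mid A]$ vanish and one is left with a diagonal sum. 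Up to bounded combinatorial factors, this gives
\[
    \E\qa{|E_1^{(k_1,k_2,t)}|^2 \,\big|\, A} \prec \tfrac{1}{N^2}\sum_{\mu \in \mathbf{M}_{k_1}}\sum_{\nu \in S_\mu^{k_2,t}} |A_{\mu\nu}|^2.
\]
Now the partial Ward inequality yields
\[
    \sum_{\nu \in S_\mu^{k_2,t}} |A_{\mu\nu}|^2 \prec \tfrac{\im G^{(i)}_{\mu\mu}}{\eta}\, d^{\max\{-t,\,\ell-k_2\}},
\]
and summing over $\mu \in \mathbf{M}_{k_1}$ using the bound $\sum_\mu \im G^{(i)}_{\mu\mu} \prec N$ (from \eqref{eqn:gmi+t} and the fact that $T$ is real) produces
\[
    \E\qa{|E_1^{(k_1,k_2,t)}|^2 \,\big|\, A} \prec \tfrac{d^{\max\{-t,\,\ell-k_2\}}}{N}.
\]
A short case analysis using $k_2 \geq \lceil\ell\rceil$ and $N \asymp d^\ell$ shows this right-hand side is at most $d^{-\min(1,\ell)}$ in every admissible case $(k_1,k_2,t)$. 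Lifting the second-moment estimate to arbitrary even moments via the same orthogonality framework (or hypercontractivity for the fixed-degree polynomial in the $X_{ai}$'s) then upgrades the bound to $E_1 \prec d^{-\min(1,\ell)/2}$.

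The main obstacle, as one would expect, is the $E_1$ step: carrying out the decomposition by overlap, correctly identifying the orthogonal shapes (the presence of third moments of $\mu$ means that the naive basis $\{1, X, X^2-1\}$ is not orthogonal and one must pass to the $\mu$-orthogonal polynomial basis), and verifying the case analysis $d^{\max\{-t,\,\ell-k_2\}}/N \prec d^{-\min(1,\ell)}$. The $E_2$ and $E_3$ arguments are more mechanical. Combining all three gives $|\xi - \phi\widetilde{s}(z)| \prec d^{-\min(1,\ell)/2} + d^{-1/2} + d^{-\ell} \prec d^{-\min(1,\ell)/2}$, completing the proof.
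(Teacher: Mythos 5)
Your decomposition into $E_1, E_2, E_3$ is exactly the paper's, and your $E_2$ and $E_3$ arguments are sound (the paper uses a direct high-moments count for $E_2$ rather than the orthogonal-polynomial expansion, but your route works: for $E_2$ the cross terms really do vanish, because they involve $\E[r(X_a)] = 0$ for some lone index $a$). The $E_1$ step, however, contains a genuine gap, and the subtlety you flag (third moments forcing passage to the $\mu$-orthogonal basis) is not actually the problem.

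Your key claim is that $\E[|E_1^{(k_1,k_2,t)}|^2 \,|\, A] \prec N^{-2}\sum_{\mu,\nu}|A_{\mu\nu}|^2$, by expanding in an orthogonal basis and asserting that cross-terms vanish. The orthogonality statement is true: distinct degree assignments $\xi$ (which $X_a$ carries degree $0$, $1$, or $2$ after expansion) do give orthogonal centered functions of $X_{\cdot i}$. But what survives is not ``a diagonal sum'' $N^{-2}\sum_{\mu,\nu}|A_{\mu\nu}|^2$ — it is $N^{-2}\sum_\xi |C_\xi(A)|^2\,\|W_\xi\|^2$, where each coefficient $C_\xi(A) = \sum_{(\mu,\nu)\to\xi} c_{\mu\nu}A_{\mu\nu}$ sums over all pairs $(\mu,\nu)$ contributing to the assignment $\xi$. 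The point is that when $t>0$, the overlap $\mu\cap\nu$ is ``invisible'' in the leading (constant) term of the expansion of $\prod_{a\in\mu\cap\nu}X_a^2$, so for a degree assignment $\xi$ supported on the symmetric difference, every choice of a size-$t$ set for $\mu\cap\nu$ (anywhere among the $\approx d$ indices outside $\xi$'s support) contributes another $A_{\mu\nu}$ to $C_\xi$. There are $O(d^t)$ such choices. Cauchy--Schwarz on that inner sum then gives $|C_\xi(A)|^2 \lesssim d^t \sum_{(\mu,\nu)\to\xi}|A_{\mu\nu}|^2$, so the correct conditional second-moment bound is
\[
    \E\!\left[|E_1^{(k_1,k_2,t)}|^2 \,\big|\, A\right] \;\prec\; \frac{d^t}{N^2}\sum_{\mu\in\mathbf{M}_{k_1}}\sum_{\nu\in S_\mu^{k_2,t}}|A_{\mu\nu}|^2 \;\prec\; \frac{d^{t+\max(-t,\,\ell-k_2)}}{N} \;=\; \frac{d^{\max(0,\,t+\ell-k_2)}}{N},
\]
with an extra $d^t$ you have dropped. (This has nothing to do with third moments; it already happens for Gaussian $\mu$.) The paper gets exactly this extra $d^s$: it first isolates the overlapping $X_{a}^2$ variables, pays $d^{s/2}$ via Cauchy--Schwarz against $\sum_{a_1<\cdots<a_s}1$, and then runs large deviations and the partial Ward inequality only on the ``distinct-index'' remainder $S_{a_1,\ldots,a_s}$ (see \eqref{eqn:e1k1k2s}--\eqref{eqn:error_k1k2s}). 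Fortunately the corrected exponent still satisfies $d^{\max(0,\,t+\ell-k_2)}/N \leq d^{-\min(1,\ell)}$ because $t\leq k_2-1$, so your plan can be salvaged — but as written the intermediate estimate is false, the final exponent you would actually be entitled to coincides with the lemma's by luck of a slack case analysis, and the argument as stated would also (incorrectly) produce the stronger $|E_1|\prec d^{-\ell/2}$.

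A secondary issue: for upgrading to stochastic domination you invoke ``hypercontractivity for the fixed-degree polynomial in the $X_{ai}$'s.'' Hypercontractivity is not available in general for distributions with merely all moments finite. The standard and rigorous replacement (and what the paper uses, via the large-deviation estimates of \cite{ErdKnoYauYin2013local}, as in the proof of \eqref{eqn:linear-large-deviations}) is a direct high-moment computation.
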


\begin{proof}[Proof of Proposition \ref{prop:m_widetildem_self_consistent}, modulo Lemma \ref{lem:m_main_error_analysis}]
From the Schur complement formula \eqref{eq:res-id-schur} and Lemma \ref{lem:m_main_error_analysis}, we have
\begin{align*}
    G_{ii}^{-1} &= %-z - D_{ii} - \sum_{\mu, \nu} U_{\mu i} G^{(i)}_{\mu \nu} U_{\nu i} =
    -z - \sum_{\mu, \nu} U_{\mu i} \left(G_M^{(i)}+T\right)_{\mu \nu} U_{\nu i} = -z-\phi\tilde{s}(z) + \OO_{\prec}\left(\frac{1}{d^{\frac{1}{2}\min(1,\ell)}}\right).
\end{align*}
Multiplying both sides by $G_{ii}$ and using the deterministic bound $\abs{G_{ii}} \leq 1/\eta$ (so that $\abs{G_{ii}} \OO_{\prec}(d^{\frac{1}{2}\min(1,\ell)}) = \OO_{\prec}(d^{\frac{1}{2}\min(1,\ell)})$), we find
\begin{equation}
\label{eqn:gii_self_consistent}
    1 = (-z - \phi\widetilde{s}(z))G_{ii} + \OO_{\prec}\left(\frac{1}{d^{\frac{1}{2}\min(1,\ell)}}\right).
\end{equation}
The error term is uniform in $i$, since all the variables are exchangeable in $i$. Thus we can average both sides over $i$ and rearrange to obtain the result.
\end{proof}

\begin{proof}[Proof of Lemma \ref{lem:m_main_error_analysis}]
Since $\phi \widetilde{s} = \frac{1}{N} \tr(G_M + T) = \frac{1}{N} \sum_\mu (G_M+T)_{\mu\mu}$, it suffices to show the following three bounds:
\begin{align}
    \mc{E}_1 &\defeq \sum_{\mu \neq \nu} U_{\mu i}(G^{(i)}+T)_{\mu\nu} U_{\nu i} = \sum_{\mu \neq \nu} U_{\mu i}G^{(i)}_{\mu\nu} U_{\nu i} \prec d^{-\frac{1}{2}\min(1,\ell)}, \label{eqn:error1bound}\\
    \mc{E}_2 &\defeq \sum_\mu \left( U_{\mu i}^2 - \frac{1}{N} \right) (G^{(i)}+T)_{\mu\mu} \prec \frac{1}{\sqrt{d}}, \label{eqn:error2bound} \\
    \mc{E}_3 &\defeq \frac{1}{N} (\tr(G^{(i)}_M) - \tr(G_M)) \prec \frac{1}{d^\ell} \label{eqn:error3bound}.
\end{align}
Notice that the $\min$ only appears in the estimate of $\mc{E}_1$, and that the estimate on $\mc{E}_3$ is much better than needed. First we consider the $\mc{E}_1$ term, which is the most complicated. It is convenient to split the sum over all $\mu \neq \nu$ into terms which fix the length of (number of $X$'s contained in) $\mu$, fix the length of $\nu$, and fix their overlap, by defining
\[
    \mc{G}^{(k_1,k_2,s)} \defeq \ip{(\mu, \nu) : \mu \neq \nu, \mu \in \mathbf{M}_{k_1}, \nu \in \mathbf{M}_{k_2}, \ip{\mu,\nu} = s}
\]
and 
\begin{equation}
\label{eqn:e1k1k2s}
    \mc{E}_{1,k_1,k_2,s} \defeq \sum_{(\mu, \nu) \in \mc{G}^{(k_1,k_2,s)}} U_{\mu i}G^{(i)}_{\mu\nu} U_{\nu i} = \frac{\alpha_{k_1,k_2,s}}{N} \sum_{\substack{a_1, \ldots,  a_{k_1} \\ b_{s+1}, \ldots, b_{k_2}}}^{d,\ast} X_{a_1 i}^2 \ldots X_{a_s i}^2 \cdot X_{a_{s+1} i} \ldots X_{a_{k_1} i} G^{(i)}_{\mu\nu} X_{b_{s+1} i} \ldots X_{b_{k_2} i},
\end{equation}
where $\alpha_{k_1,k_2,s} = \frac{1}{s!(k_1-s)!(k_2-s)!}$ is a combinatorial factor accounting for the fact that the indices in $\mu$ and $\nu$ are not only distinct but also ordered: We can reconstruct $\mu$ and $\nu$ from $(a_1,\ldots,a_{k_1})$ and $(a_1,\ldots,a_s,b_{s+1},\ldots,b_{k_2})$ just by ordering, but given $\mu$ and $\nu$ with $s$ shared indices, there are $s!$ ways to label the shared indices as $(a_1,\ldots,a_s)$, $(k_1-s)!$ ways to label the remaining $\mu$ indices as $(a_{s+1},\ldots,a_{k_1})$, and $(k_2-s)!$ ways to label the remaining $\nu$ indices as $(b_{s+1},\ldots,b_{k_2})$. Thus $\alpha_{k_1,k_2,s} \prec 1$, which is shortly how we will absorb it. 

For fixed $k_1$ and $k_2$, the possible $s$ values range from $0$ up to $\min(k_1,k_2)$, unless $k_1 = k_2 = k$, in which case the range is $s \in \llbracket 0, k-1 \rrbracket$, since if $\ip{\mu,\nu}$ equals the common length of $\mu$ and $\nu$, then $\mu = \nu$, which is forbidden in the sum.

Notice $\mc{E}_1 =\sum_{k_1,k_2,s} \mc{E}_{1,k_1,k_2,s}$; since $(k_1,k_2,s)$ takes values in a finite set, we can estimate each $\mc{E}_{1,k_1,k_2,s}$ separately. Now for each $\{a_1,\ldots,a_s\}$ all distinct we write
\[
    S_{a_1,\ldots, a_s} = \sum_{a_{s+1},\ldots,a_{k_1},b_{s+1},\ldots,b_{k_2}=1}^{d,\ast} X_{a_{s+1}i} \cdots X_{a_{k_1}i} G_{\mu \nu}^{(i)} X_{b_{s+1}i} \cdots X_{b_{k_2}i},
\]
where for each $a_{s+1},\ldots,a_{k_1}, b_{s+1},\ldots,b_{k_2}$, the $\mu$ and $\nu$ inside the sum denote the (reordered as necessary) tuples $(a_1,\ldots,a_{k_1})$ and $(a_1,\ldots,a_s,b_{s+1},\ldots,b_{k_2})$, respectively. (Recall that our definition of $\mu$ and $\nu$ involves strict ordering, so this is unambiguous.) Define $\mc{G}^{(k_1,k_2,s)}_{a_1,\ldots,a_s}$ to be the set of all pairs $(\mu,\nu)$ indicated by this sum (i.e., the set of all pairs $(\mu,\nu) \in \mc{G}^{(k_1,k_2,s)}$ for which the specific overlapping indices are $\{a_1, \ldots, a_s\}$), and notice that these partition:
\begin{equation}
\label{eqn:gk1k2s_decomposition}
    \bigsqcup_{a_1 < \cdots < a_s} \mc{G}^{(k_1,k_2,s)}_{a_1,\ldots,a_s} = \mc{G}^{(k_1,k_2,s)}.
\end{equation}
We claim (absorbing another order-one combinatorial factor into $\prec$) that 
\begin{equation}
\label{eqn:s_large_deviations}
    \abs{S_{a_1,\ldots,a_s}}^2 \prec \sum_{\mu, \nu \in \mc{G}^{(k_1,k_2,s)}_{a_1,\ldots,a_s}} \abs{G_{\mu \nu}^{(i)}}^2,
\end{equation}
uniformly in $\{a_1,\ldots,a_s\}$. Indeed, this is simply another standard large-deviations bound, as discussed in the proof of \eqref{eqn:linear-large-deviations} above.

At the same time, since we assumed all finite moments, one can easily see $X_{ai}^4 \prec 1$, uniformly in $a$ (and $i$); hence $X_{a_1i}^4 \cdots X_{a_si}^4 \prec 1$, uniformly in $\{a_1, \ldots, a_s\}$; hence
\[
    X_{a_1i}^4 \cdots X_{a_si}^4 \abs{S_{a_1,\ldots,a_s}}^2 \prec \sum_{\mu, \nu \in \mc{G}^{(k_1,k_2,s)}_{a_1,\ldots,a_s}} \abs{G_{\mu \nu}^{(i)}}^2,
\]
uniformly in $\{a_1, \ldots, a_s\}$ (we absorb another combinatorial factor into $\prec$). Combining these with \eqref{eqn:gk1k2s_decomposition}, we find
\begin{align*}
    \sum_{a_1 < \cdots < a_s} X_{a_1i}^4 \cdots X_{a_si}^4 \abs{S_{a_1,\ldots,a_s}}^2 &\prec \sum_{a_1 < \cdots < a_s} \sum_{\mu, \nu \in \mc{G}^{(k_1,k_2,s)}_{a_1,\ldots,a_s}} \abs{G_{\mu \nu}^{(i)}}^2 = \sum_{(\mu, \nu) \in \mc{G}^{(k_1,k_2,s)}} \abs{G_{\mu \nu}^{(i)}}^2, \\
    %\sum_{a_1,\ldots,a_s} X_{a_1i}^2 \cdots X_{a_si}^2 &\prec d^s.
\end{align*}
Now we apply Cauchy-Schwarz to \eqref{eqn:e1k1k2s} and use these estimates to obtain
\begin{align*}
    \abs{\mc{E}_{1,k_1,k_2,s}} &= \frac{\alpha_{k_1,k_2,s}s!}{N} \abs{\sum_{a_1 < \cdots < a_s} 1 \cdot (X_{a_1i}^2 \cdots X_{a_si}^2 S_{a_1,\ldots,a_s})} \\
    &\leq \frac{d^{s/2}\alpha_{k_1,k_2,s}}{N} \left( \sum_{a_1 < \cdots < a_s} X_{a_1i}^4 \cdots X_{a_si}^4 \abs{S_{a_1,\ldots,a_s}}^2 \right)^{1/2} \\
    &\prec \frac{d^{s/2}}{N} \left( \sum_{(\mu, \nu) \in \mc{G}^{(k_1,k_2,s)}} \abs{G_{\mu \nu}^{(i)}}^2 \right)^{1/2}.
\end{align*}

Suppose without loss of generality that $k_1 \leq k_2$. Then we apply the partial Ward inequality, Lemma \ref{lem:partial-Ward}, to each fixed $\mu$ (recall that $T$ is diagonal, so that $(G^{(i)}+T)_{\mu\nu} = G^{(i)}_{\mu\nu}$ whenever $\mu \neq \nu$); since the result is uniform in $\mu$, we can also sum over $\mu$ in the sense of stochastic domination to find
\[
    \abs{\mc{E}_{1,k_1,k_2,s}} \prec \frac{d^{s/2}}{N} \left(\sum_{\mu \in \mathbf{M}_{k_1}} \frac{\im G^{(i)}_{\mu \mu}}{\eta} d^{\max(-s,\ell-k_2)} \right)^{1/2}.
\]
Recall that $\eta$ is order one, and that \eqref{eqn:gmi+t} yields %from Lemma \ref{lem:prelim-bounds} we know
\[
    \im G^{(i)}_{\mu\mu} = \im (G^{(i)}+T)_{\mu\mu} \leq \abs{(G^{(i)}+T)_{\mu \mu}} \prec \frac{N}{d^{k_1}}.
\]
Notice also that $\abs{\mathbf{M}_{k_1}} \leq d^{k_1}$, and that $s \leq k_2-1$ (indeed, either $k_1 < k_2$, in which case $s \leq k_1 < k_2$, or $k_1 = k_2$, in which case $s \leq k_2 - 1$ because of the $\mu \neq \nu$ restriction explained above); thus
\begin{equation}
\label{eqn:error_k1k2s}
    \abs{\mc{E}_{1,k_1,k_2,s}} \prec \frac{d^{s/2}}{N} (Nd^{\max(-s,\ell-k_2)})^{1/2} = \frac{1}{\sqrt{N}} d^{\frac{1}{2}\max(0,s+\ell-k_2)} \leq \frac{1}{\sqrt{N}} d^{\frac{1}{2} \max(0,\ell-1)} \prec d^{\frac{1}{2}\max(-\ell,-1)},
\end{equation}
which finishes the proof that $\abs{\mc{E}_1} \prec d^{-\frac{1}{2}\min(1,\ell)}$.

Next we estimate $\mc{E}_2$. Again it is convenient to split the sum over all $\mu$'s into finitely many partial sums
\[
    \mc{E}_{2,k} = \sum_{\mu \in \mathbf{M}_k} \left(U_{\mu i}^2 - \frac{1}{N} \right)(G^{(i)}+T)_{\mu\mu}
\]
and show 
\begin{equation}
\label{eqn:error_2k}
    \abs{\mc{E}_{2,k}} \prec \frac{1}{\sqrt{d}}
\end{equation}
for each $k$. Fix $\epsilon$ and $D$; since $G^{(i)}$ is independent of $U_i$, we have
\begin{align*}
    \P(\abs{\mc{E}_{2,k}} > d^{\epsilon-1/2}) &\leq \E_{G^{(i)}}\left[ \E_{U_i}\left[\mathbf{1}\{\abs{\mc{E}_{2,k}} > d^{\epsilon-1/2}\} \right] \mathbf{1}\left\{\max_{\mu \in \mathbf{M}_k} \abs{(G^{(i)}+T)_{\mu\mu}} \leq \frac{N}{d^k} d^{\epsilon/2} \right\} \right] \\
    &\quad + \P\left( \max_{\mu \in \mathbf{M}_k} \abs{(G^{(i)}+T)_{\mu\mu}} \geq \frac{N}{d^k} d^{\epsilon/2} \right).
\end{align*}
Applying Lemma \ref{lem:u^2-1/N} below and \eqref{eqn:gmi+t} %\ref{lem:prelim-bounds}
to the first and second terms on the right-hand side, respectively, we find that each is at most some $C_{\epsilon,D} d^{-D}$. This gives $\abs{\mc{E}_2} \prec 1/\sqrt{d}$ as claimed.

Finally we estimate $\mc{E}_3$, again splitting $\mc{E}_3 = \sum_{k=\lceil\ell\rceil}^L \mc{E}_{3,k}$ with
\[
    \mc{E}_{3,k} = \frac{1}{N} \sum_{\mu \in \mathbf{M}_k} (G^{(i)}_{\mu \mu} - G_{\mu\mu}).
\]
From \eqref{eqn:gi-g} we have
\begin{equation}
\label{eqn:error_3k}
    \abs{\mc{E}_{3,k}} \leq \frac{d^k}{N} \max_{\mu \in \mathbf{M}_k} \abs{G^{(i)}_{\mu \mu} - G_{\mu\mu}} \prec \frac{1}{N},
\end{equation}
which completes the proof.
\end{proof}

\begin{lemma}
\label{lem:u^2-1/N}
Fix $k$, and fix some deterministic sequence $(b_\mu = b^{(d)}_\mu)_{\mu \in \mathbf{M}_{k}}$ of complex numbers with
\begin{equation}
\label{eqn:maxb}
    \sup_{\mu \in \mathbf{M}_{k}} \abs{b_\mu} \leq \alpha_d
\end{equation}
for some sequence $(\alpha_d)_{d=1}^\infty$ (recall that the set $\mathbf{M}_{k}$ depends on $d$). Then
\[
    \abs{\sum_{\mu \in \mathbf{M}_{k}} \left( U_{\mu i}^2 - \frac{1}{N} \right) b_\mu} \prec \frac{d^{k}\alpha_d}{N\sqrt{d}}
\]
uniformly in $(b_\mu)$ subject to \eqref{eqn:maxb}. 
\end{lemma}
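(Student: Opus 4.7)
The plan is to expand $U_{\mu i}^2 - \frac{1}{N}$ algebraically and bound a high moment of the resulting sum, then apply Markov's inequality. Since $U_{\mu i} = \frac{1}{\sqrt{N}} X_{a_1 i} \cdots X_{a_k i}$, introducing the centered variables $Y_a \defeq X_{ai}^2 - 1$ (which have all finite moments by the hypothesis on $\mu$) gives
\[
    N\left(U_{\mu i}^2 - \tfrac{1}{N}\right) = \prod_{j=1}^k (1+Y_{a_j}) - 1 = \sum_{\emptyset \neq S \subseteq \{1,\ldots,k\}} \prod_{j \in S} Y_{a_j}.
\]
Grouping by $s = \abs{S}$ and swapping the order of summation (viewing each $\mu$ as a $k$-subset of $[d]$),
\[
    N\sum_{\mu \in \mathbf{M}_k}\left(U_{\mu i}^2 - \tfrac{1}{N}\right)b_\mu = \sum_{s=1}^k W_s, \qquad W_s \defeq \sum_{I \in \binom{[d]}{s}} c_I \prod_{a \in I} Y_a, \qquad c_I \defeq \sum_{\mu \supseteq I} b_\mu.
\]
Because $\abs{\{\mu \supseteq I\}} = \binom{d-s}{k-s}$, we have the deterministic bound $\abs{c_I} \leq C_k d^{k-s}\alpha_d$, with $C_k$ depending only on $k$.

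Next, I would compute $\E[W_s^{2p}]$ for each fixed positive integer $p$. Expanding,
\[
    \E[W_s^{2p}] = \sum_{I_1,\ldots,I_{2p}} c_{I_1}\cdots c_{I_{2p}}\prod_{a \in T} \E[Y_a^{n_a}], \quad T \defeq \bigcup_r I_r, \quad n_a \defeq \abs{\{r : a \in I_r\}}.
\]
Since $\E[Y_a] = 0$, the term vanishes unless every $a \in T$ has $n_a \geq 2$; combined with $\sum_r \abs{I_r} = 2ps$, this forces $\abs{T} \leq ps$. Given the size of $T$, the number of ways to choose $(I_1,\ldots,I_{2p})$ with each $I_r$ a size-$s$ subset of $T$ is at most $\binom{\abs{T}}{s}^{2p}$, a combinatorial constant depending only on $p,s$; and $\E[Y_a^{n_a}]$ is uniformly bounded in terms of $p$ since $n_a \leq 2p$ and $\mu$ has all finite moments. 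Therefore
\[
    \abs{\E[W_s^{2p}]} \leq C_{p,k}\, d^{\abs{T}}\cdot (d^{k-s}\alpha_d)^{2p} \leq C_{p,k}\, d^{ps}d^{2p(k-s)}\alpha_d^{2p} = C_{p,k}\, d^{p(2k-s)}\alpha_d^{2p}.
\]
By Markov's inequality and the arbitrariness of $p$, this yields $\abs{W_s} \prec d^{k-s/2}\alpha_d$. The worst case $s=1$ gives $\abs{\sum_{s=1}^k W_s} \prec d^{k-1/2}\alpha_d$, and dividing by $N$ produces the claimed bound $d^k\alpha_d/(N\sqrt{d})$. Uniformity in $(b_\mu)$ subject to \eqref{eqn:maxb} is automatic since the entire argument depends on $(b_\mu)$ only through the scalar $\alpha_d$.

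The main obstacle is the combinatorial counting behind the constraint ``every index of $T$ appears in at least two of the $I_r$.'' This is what upgrades the naive count $d^{2ps}$ for the index sum to $d^{ps}$, which is precisely the gain that produces the factor $d^{-s/2}$ relative to the deterministic bound $\abs{W_s} \leq (\text{number of }I) \cdot \max\abs{c_I}\cdot\max\prod\abs{Y_a} \approx d^{k}\alpha_d$. Once this gain is extracted, the remainder of the argument is routine moment estimation.
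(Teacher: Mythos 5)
Your approach is the same as the paper's at the structural level: compute high moments $\E[|\cdot|^{2p}]$, observe that centered factors force index coincidences, count the surviving terms, and apply Markov. The paper estimates $\E\bigl[\bigl|\sum_\mu (U_{\mu i}^2 - \tfrac{1}{N})b_\mu\bigr|^{2p}\bigr]$ directly by summing over $2p$ multi-indices $\mu_j$ and bounding the number of tuples in which no $\mu_j$ is ``isolated'' (shares no index with any other $\mu_{j'}$) by $C_{2p}d^{2pk-p}$, together with a uniform bound $\abs{\E\prod_j(U_{\mu_j i}^2-\tfrac1N)} \leq C_{2p}/N^{2p}$. You instead expand $U_{\mu i}^2 - \tfrac{1}{N} = \frac1N\bigl(\prod_j(1+Y_{a_j})-1\bigr)$ into monomials in the centered variables $Y_a = X_{ai}^2-1$ first, then organize by the size $s$ of the surviving index set, reducing the non-vanishing condition to the clean statement that every $a \in T$ must appear in at least two of the $I_r$'s, whence $\abs{T}\leq ps$. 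That gives the same final bound $d^{p(2k-s)}\alpha_d^{2p}$ with $s=1$ dominant, so the two arguments are morally equivalent. Your version is arguably more explicit about where the factor $d^{-p}$ comes from, and in particular makes the proof of the counting bound transparent, which the paper only sketches; the paper's version avoids the preliminary expansion and works at the level of the original $\mu_j$'s.

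One small gap as written: since the $b_\mu$ (and hence $c_I$ and $W_s$) may be complex, $\E[W_s^{2p}]$ does not control $\E[\abs{W_s}^{2p}]$, so you cannot go directly from your moment identity to Markov's inequality for $\abs{W_s}$. The fix is standard: compute $\E[\abs{W_s}^{2p}] = \E[W_s^p\,\overline{W_s}^p]$, which produces the identical sum except that $p$ of the coefficients $c_{I_r}$ are replaced by $\overline{c_{I_r}}$; since $\abs{\overline{c_I}}=\abs{c_I}$, all the bounds go through unchanged. (The paper handles this by carrying both $b_{\mu_j}$ and $\overline{b_{\mu'_j}}$ in the expansion, then dropping the distinction after taking absolute values.) With that repair your argument is complete.
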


\begin{proof}[Proof of Lemma \ref{lem:u^2-1/N}]
The proof goes by high moments: For $p \in \N$, we have
\begin{align*}
    \E\left[ \abs{\sum_{\mu \in \mathbf{M}_{k}} \left( U_{\mu i}^2 - \frac{1}{N} \right) b_\mu}^{2p} \right] &= \sum_{\mu_1,\mu'_1,\mu_2,\mu'_2,\ldots,\mu_p,\mu'_p \in \mathbf{M}_{k}} \E\left[ \prod_{j=1}^p \left(U_{\mu_ji}^2 - \frac{1}{N} \right) \left( U_{\mu'_j i}^2 - \frac{1}{N} \right) \right] \prod_{j=1}^p b_{\mu_j} \overline{b_{\mu'_j}} \\
    &\leq (\alpha_d)^{2p} \sum_{\mu_1,\mu'_1,\mu_2,\mu'_2,\ldots,\mu_p,\mu'_p \in \mathbf{M}_{k}} \abs{\E\left[ \prod_{j=1}^p \left(U_{\mu_ji}^2 - \frac{1}{N} \right) \left( U_{\mu'_j i}^2 - \frac{1}{N} \right) \right]} \\
    &= (\alpha_d)^{2p} \sum_{\mu_1,\ldots,\mu_{2p} \in \mathbf{M}_k} \abs{\E\left[ \prod_{j=1}^{2p} \left(U_{\mu_ji}^2 - \frac{1}{N} \right)\right]}
\end{align*}
where the last equality is just a convenient relabeling (after we stop distinguishing the complex conjugates between $b_{\mu_j}$ and $\overline{b_{\mu'_j}}$, we no longer need to pair the terms $\mu_j$ and $\mu'_j$).

Given a tuple of tuples $(\mu_1,\ldots,\mu_{2p}) \in (\mathbf{M}_{k})^{2p}$, we say that some tuple $\mu_j$ is \emph{isolated} if $\max_{m \neq j} \ip{\mu_j,\mu_m} = 0$, i.e., if $\mu_j$ has its own set of $X$'s, none of which appears in any other tuple $\mu_m$. Consider the set 
\[
    \mc{G}^{2p} = \{(\mu_1,\ldots,\mu_{2p}) \in (\mathbf{M}_{k})^{2p} : \text{No $\mu_j$ is isolated}\}.
\]
On the complement of this set, at least one tuple $\mu_j$ is isolated in this sense, meaning that at least one $(U_{\mu_ji}^2-\frac{1}{N})$ is independent of everything else; since these variables have mean zero, such expectations vanish, meaning that
\begin{equation}
\label{eqn:restrict_to_g2p}
    \sum_{\mu_1,\ldots,\mu_{2p} \in \mathbf{M}_{k}} \abs{\E\left[ \prod_{j=1}^{2p} \left(U_{\mu_ji}^2 - \frac{1}{N} \right) \right]} = \sum_{(\mu_1,\ldots,\mu_{2p}) \in \mc{G}^{2p}} \abs{\E\left[ \prod_{j=1}^{2p} \left(U_{\mu_ji}^2 - \frac{1}{N} \right) \right]}.
\end{equation}
Furthermore, we claim that 
\begin{equation}
\label{eqn:g2p_size}
    \abs{\mc{G}^{2p}} \leq C_{2p} d^{2pk-p}.
\end{equation}
Indeed, the total number of tuples $(\mu_1,\ldots,\mu_{2p}) \in (\mathbf{M}_k)^{2p}$ is at most $d^{2pk}$, because each of the $2p$ $\mu_j$'s includes $k$ $X$'s. To ensure that none is isolated, while using as many $X$'s as possible, each $\mu_j$ should use $k-1$ of its own $X$'s and have a final $X$ which it shares with exactly one other tuple $\mu_{j'}$; this pairing subtracts $p$ off the naive count, while adding a combinatorial factor $C_{2p}$ tracking which $\mu_j$'s pair. 

At the same time, we claim that for each $p$ there exists $C_{2p}$ with 
\begin{equation}
\label{eqn:g2p_estimate}
    \sup_{\mu_1,\ldots,\mu_{2p} \in \mathbf{M}_{k}} \abs{\E\left[ \prod_{j=1}^{2p} \left(U_{\mu_ji}^2 - \frac{1}{N} \right) \right]} \leq \frac{C_{2p}}{N^{2p}}
\end{equation}
Indeed, writing $A_j \defeq U_{\mu_j i}^2-1/N$, we can use the generalized H\"{o}lder's inequality $\abs{\E[\prod_{j=1}^{2p} A_j]} \leq \prod_{j=1}^{2p} (\E[A_j^{2p}])^{1/2p}$, then the triangle inequality: $(\E[A_j^{2p}])^{1/2p} = \|U_i^2-1/N\|_{2p} \leq \|U_i^2\|_{2p} + 1/N \leq C_{k,p}/N$. 

Combining \eqref{eqn:restrict_to_g2p}, \eqref{eqn:g2p_size}, and \eqref{eqn:g2p_estimate}, we find
\[
    \E\left[ \abs{\sum_{\mu \in \mathbf{M}_k} \left( U_{\mu i}^2 - \frac{1}{N} \right) b_\mu}^{2p} \right] \leq \frac{C_{2p}}{N^{2p}} (\alpha_d)^{2p} \abs{\mc{G}^{2p}} \leq C_{2p} \left( \frac{d^k \alpha_d}{N\sqrt{d}} \right)^{2p},
\]
which suffices. 
\end{proof}

%%%%%%%%%%%%%%%%%%%%%%%%%%%%%%%%%%%%%%%%%%%%%%%%%%%%%%%%%%%%
%%%%%%%%%%%%%%%%%%%%%%%%%%%%%%%%%%%%%%%%%%%%%%%%%%%%%%%%%%%%
%%%%%%%%
%%%%%%%%             Section: Self-consistent equations II
%%%%%%%%
%%%%%%%%%%%%%%%%%%%%%%%%%%%%%%%%%%%%%%%%%%%%%%%%%%%%%%%%%%%%
%%%%%%%%%%%%%%%%%%%%%%%%%%%%%%%%%%%%%%%%%%%%%%%%%%%%%%%%%%%%

\section{Self-consistent equations II: Proofs of Propositions \ref{prop:widetildem_self_consistent} and \ref{prop:m_self_consistent}}
\label{sec:tilde_m_self_consistent}

The goal of this section is to prove Propositions \ref{prop:widetildem_self_consistent} and \ref{prop:m_self_consistent}. The latter follows from the former fairly quickly.

\begin{lemma}
\label{lem:phi_m_tilde_order_one}
For any fixed $\tau > 0$ and any fixed $z \in \mathbf{D}_\tau$, we have
\begin{align}
    1 \prec \abs{z+\phi\widetilde{s}(z)} \prec 1, \label{eqn:abs_zphimtilde_orderone} \\
    1 \prec \im(z+\phi\widetilde{s}(z)) \prec 1. \label{eqn:im_zphimtilde_orderone}
\end{align}
\end{lemma}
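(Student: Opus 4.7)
The plan is to deduce this lemma essentially as an algebraic corollary of Proposition \ref{prop:m_widetildem_self_consistent} combined with Lemma \ref{lem:m_order_one}. The key observation is that Proposition \ref{prop:m_widetildem_self_consistent} gives
\[
    s(z)(z + \phi\widetilde{s}(z)) = -1 + \OO_\prec\left(\frac{1}{d^{\frac{1}{2}\min(1,\ell)}}\right),
\]
and since $|s(z)| \succ 1$ by \eqref{eqn:absmorderone}, we may divide by $s(z)$ to obtain
\[
    z + \phi\widetilde{s}(z) = -\frac{1}{s(z)} + \OO_\prec\left(\frac{1}{d^{\frac{1}{2}\min(1,\ell)}}\right).
\]
Thus it suffices to establish the analogous order-one bounds for $-1/s(z)$ and then transfer them to $z + \phi\widetilde{s}(z)$ using the vanishing error.

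First I would check that $1 \prec |{-}1/s(z)| \prec 1$ and $1 \prec \im(-1/s(z)) \prec 1$. The modulus bounds are immediate from $1 \prec |s(z)| \prec 1$ in Lemma \ref{lem:m_order_one}. For the imaginary part, I would use the identity $\im(-1/s(z)) = \im(s(z))/|s(z)|^2$, whose numerator and denominator are both pinned between two stochastic-domination order-one quantities by \eqref{eqn:absmorderone} and \eqref{eqn:immorderone}, so the ratio is as well.

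Next I would transfer these bounds to $z + \phi\widetilde{s}(z)$. The upper bounds $|z + \phi\widetilde{s}(z)| \prec 1$ and $\im(z + \phi\widetilde{s}(z)) \prec 1$ follow immediately from the triangle inequality applied to the approximation displayed above, since the error term is $\prec 1$. For the lower bounds, the essential point is that the error $d^{-\frac{1}{2}\min(1,\ell)}$ is not merely $\prec 1$ but tends to zero: given any $\epsilon > 0$, on the intersection of the high-probability events $\{|{-}1/s(z)| \geq N^{-\epsilon/2}\}$ and $\{\text{error} \leq N^{\epsilon/2} d^{-\frac{1}{2}\min(1,\ell)}\}$, the error term is $\leq \tfrac{1}{2}N^{-\epsilon/2}$ once $d$ is large, so $|z + \phi\widetilde{s}(z)| \geq \tfrac{1}{2} N^{-\epsilon/2}$ on this event; the same argument handles $\im(z + \phi\widetilde{s}(z))$.

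I do not expect any substantive obstacle here: the lemma is a direct algebraic consequence of the already-proved Proposition \ref{prop:m_widetildem_self_consistent} and Lemma \ref{lem:m_order_one}, and the only care needed is the routine bookkeeping of stochastic domination under division and addition, together with the observation that the vanishing error does not destroy order-one lower bounds inherited from $-1/s(z)$.
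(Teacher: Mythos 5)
Your argument is correct, and for \eqref{eqn:abs_zphimtilde_orderone} and the upper bound of \eqref{eqn:im_zphimtilde_orderone} it coincides with the paper's reasoning. However, for the lower bound $1 \prec \im(z+\phi\widetilde{s}(z))$ you take a genuinely different route: you transfer the bound from $\im(-1/s(z))$ through the approximate self-consistent equation, whereas the paper observes directly from the resolvent identity \eqref{eq:res-id-G+T} that $\im \widetilde{s}(z) = \frac{1}{M}\Tr(TU(\im G_N(z))U^\ast T) \geq 0$ almost surely (since $\im G_N(z)$ is positive definite), so that $\im(z+\phi\widetilde{s}(z)) \geq \im z = \eta \geq \tau$ holds \emph{deterministically}. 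The paper's argument is shorter and gives a stronger (deterministic rather than high-probability) conclusion; yours has the modest advantage of not needing the structural fact about $\widetilde{s}$, relying only on the already-established self-consistent equation and Lemma \ref{lem:m_order_one}. Both are valid, and your handling of the division by $s(z)$ and the vanishing-error transfer is sound.
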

\begin{proof}
The estimate \eqref{eqn:abs_zphimtilde_orderone} follows immediately from Lemma \ref{lem:m_order_one}, which shows $1 \prec \abs{s(z)} \prec 1$, and from Proposition \ref{prop:m_widetildem_self_consistent}, which shows $\abs{1+s(z)(z+\phi\widetilde{s}(z))} \prec d^{-\frac{1}{2}\min(1,\ell)}$. The upper bound of \eqref{eqn:im_zphimtilde_orderone} is immediate from that of \eqref{eqn:abs_zphimtilde_orderone}. For the lower bound, we note that the imaginary part of $\widetilde{s}$ is almost surely nonnegative: Indeed, from \eqref{eq:res-id-G+T} we have
\[
    \im\widetilde{s}(z) = \frac{1}{M} \Tr(\im(TUG_N(z)U^\ast T)) = \Tr(TU(\im G_N(z)) U^\ast T) \geq 0
\]
where we used that $G_N(z)$ is a resolvent, so that its imaginary part is positive definite, as well as the general result that $B^\ast A B = (A^{1/2} B)^\ast (A^{1/2}B)$ is positive semidefinite if $A$ is (square and) positive definite and $B$ is any (possibly rectangular) matrix. 
\end{proof}

\begin{prop}
\label{prop:main_error_term_phitildem}
For any fixed $\tau > 0$ and any fixed $z \in \mathbf{D}_\tau$, we have
\[
    \abs{\phi \widetilde{s}(z) - \frac{1}{N} \sum_\mu  \frac{T_\mu^2}{T_\mu -z - \phi \widetilde{s}(z)}} \prec \frac{1}{d^{\frac{1}{2}\min(1,\ell)}}.
\]
\end{prop}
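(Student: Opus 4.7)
The plan is to derive an approximate self-consistent equation for $(G_M+T)_{\mu\mu}$ and then average over $\mu$. Start from the resolvent identity \eqref{eq:res-id-G-mu-nu} with $\nu = \mu$, separating the $\alpha = \mu$ contribution in the inner sum from the remainder:
\[
    (G_M+T)_{\mu\mu} = -T_\mu \sum_{j=1}^N G_{jj} U_{\mu j}^2 G_{\mu\mu}^{(j)} + \mathcal{E}_\mu, \qquad \mathcal{E}_\mu \defeq -T_\mu \sum_j G_{jj} U_{\mu j} \sum_{\alpha \neq \mu} U_{\alpha j} (G_M^{(j)}+T)_{\alpha \mu}.
\]
In the diagonal sum, substitute (i) $G_{jj} = -(z+\phi\widetilde{s}(z))^{-1} + \OO_\prec(d^{-\frac12\min(1,\ell)})$, which is the content of \eqref{eqn:gii_self_consistent} from the proof of Proposition \ref{prop:m_widetildem_self_consistent}; (ii) $G_{\mu\mu}^{(j)} - G_{\mu\mu} \prec d^{-k}$ for $\mu \in \mathbf{M}_k$, from \eqref{eqn:gi-g}; and (iii) $\sum_j U_{\mu j}^2 = 1 + \OO_\prec(N^{-1/2})$, a routine second-moment estimate. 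Combining these with the uniform resolvent bounds of Lemma \ref{lem:streamlined_preliminary} gives
\[
    (G_M+T)_{\mu\mu} = \frac{T_\mu G_{\mu\mu}}{z+\phi\widetilde{s}(z)} + \mathcal{E}_\mu + R_\mu,
\]
where the substitution remainder satisfies $\frac{1}{N}\sum_\mu \abs{R_\mu} \prec d^{-\frac12\min(1,\ell)}$, which one checks block-by-block using $\abs{\mathbf{M}_k} \leq d^k$.

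Substituting $G_{\mu\mu} = (G_M+T)_{\mu\mu} - T_\mu$ and solving this approximate linear relation for $(G_M+T)_{\mu\mu}$ yields
\[
    (G_M+T)_{\mu\mu} = \frac{T_\mu^2}{T_\mu - z - \phi\widetilde{s}(z)} + \frac{(z+\phi\widetilde{s}(z))\,(\mathcal{E}_\mu + R_\mu)}{z+\phi\widetilde{s}(z)-T_\mu}.
\]
The denominator has modulus bounded below: since $T_\mu \in \R$, Lemma \ref{lem:phi_m_tilde_order_one} gives $\abs{z+\phi\widetilde{s}(z) - T_\mu} \geq \im(z+\phi\widetilde{s}(z)) \succ 1$. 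Averaging over $\mu$ and invoking the $R_\mu$ bound reduces the proposition to the single estimate
\[
    \abs{\frac{1}{N}\sum_\mu \frac{\mathcal{E}_\mu}{z+\phi\widetilde{s}(z)-T_\mu}} \prec d^{-\frac12\min(1,\ell)}.
\]

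This last bound is the genuine difficulty. A naive Cauchy--Schwarz estimate of $\abs{\mathcal{E}_\mu}$ alone only yields order $N/d^k$, which is far too large. The correct approach is to view the averaged sum as a bilinear form in the columns $U_j$ of $U$, decompose the $(\mu,\alpha)$-sum by Hermite degrees $(k_1,k_2)$ and overlap $s = \langle \mu, \alpha\rangle$, apply the partial Ward inequality (Lemma \ref{lem:partial-Ward}) blockwise to control $\sum_{\alpha \in S_\mu^{k_2,s}} \abs{(G_M^{(j)}+T)_{\alpha\mu}}^2$, and extract the cancellation coming from $\E[U_{\mu j} U_{\alpha j}] = 0$ for $\mu \neq \alpha$ via large-deviation bounds for sums over $j$. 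The bookkeeping parallels the treatment of $\mathcal{E}_1$ in the proof of Lemma \ref{lem:m_main_error_analysis} and delivers the required $\OO_\prec(d^{-\frac12\min(1,\ell)})$. This blockwise use of the partial Ward inequality is precisely the mechanism for which Section \ref{sec:ward} was developed, and is the main technical step.
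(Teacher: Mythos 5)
Your proof is correct and relies on the same core machinery as the paper: the resolvent identity \eqref{eq:res-id-G-mu-nu}, the preliminary bounds \eqref{eqn:gi-g}, \eqref{eqn:gmi+t}, and \eqref{eqn:gii_self_consistent}, the stability bound from Lemma \ref{lem:phi_m_tilde_order_one}, and the partial-Ward-based estimate \eqref{eqn:error_k1k2s}/\eqref{eqn:error_k1k2s_modified} for the off-diagonal piece. The bookkeeping differs: the paper splits the error into $\mc{E}^{(1)}_\mu,\ldots,\mc{E}^{(4)}_\mu$, whereas you peel off the $\alpha=\mu$ contribution first (yielding $R_\mu$, absorbing the paper's $\mc{E}^{(3)}$, $\mc{E}^{(4)}$, and the $\nu=\mu$ part of $\mc{E}^{(1)}$) and keep the off-diagonal $\mc{E}_\mu$ with the full $G_{jj}$. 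This is valid and in fact a little cleaner: since $\abs{\sum_{\mu\in\mathbf{M}_k}\sum_\alpha^{(\mu)}U_{\mu j}U_{\alpha j}G^{(j)}_{\alpha\mu}}\prec d^{-\frac12\min(1,\ell)}$ uniformly in $j$, one can simply bound $\abs{G_{jj}}\leq 1/\eta$ and replace $\frac{1}{N}\sum_j$ by $\max_j$, so the paper's $\mc{E}^{(1)}$/$\mc{E}^{(2)}$ split (which is forced there because $\mc{E}^{(1)}$ still carries the $\nu=\mu$ contribution) is not needed in your organization.

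One imprecision in your last paragraph: the cancellation is \emph{not} obtained from ``large-deviation bounds for sums over $j$.'' The $j$-summands involve $G_{jj}$ and $G^{(j)}$ and are strongly correlated; no cancellation over $j$ is used, and the $j$-sum is handled trivially via the prefactor $1/N$ and a uniform-in-$j$ bound. The cancellation from $\E[U_{\mu j}U_{\alpha j}]=0$ for $\mu\neq\alpha$ is exploited \emph{for each fixed $j$}, by conditioning on $G^{(j)}$ and running a large-deviation bound over the $X$-indices inside $\mu$ and $\alpha$ (the overlap decomposition, \eqref{eqn:s_large_deviations}), followed by the partial Ward inequality. This is exactly the mechanism in the proof of Lemma \ref{lem:m_main_error_analysis} that you correctly invoke; only the phrase ``sums over $j$'' needs to become ``sums over the indices of $\mu$ and $\alpha$.''
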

\begin{proof}[Proof of Proposition \ref{prop:main_error_term_phitildem}]
Define
\begin{align*}
    \mathcal{E}_\mu^{(1)} &\defeq -T_{\mu} \sum_{j=1}^N \left(G_{jj} + (z + \phi\widetilde{s}(z))^{-1}\right) \sum_\nu U_{\mu j}U_{\nu j} G^{(j)}_{\nu \mu}, \\
    \mathcal{E}_\mu^{(2)} &\defeq T_{\mu} (z + \phi\widetilde{s}(z))^{-1} \sum_{j=1}^N \sum_\nu^{(\mu)} U_{\mu j}U_{\nu j} G^{(j)}_{\nu \mu}, \\
    \mathcal{E}_\mu^{(3)} &\defeq T_{\mu} (z + \phi\widetilde{s}(z))^{-1} \sum_{j=1}^N  \left(U_{\mu j}^2-\frac{1}{N}\right) G^{(j)}_{\mu\mu}, \\
    \mathcal{E}_\mu^{(4)} &\defeq \frac{1}{N} T_{\mu} (z + \phi\widetilde{s}(z))^{-1} \sum_{j=1}^N \left(G_{\mu\mu}^{(j)} - G_{\mu\mu}\right),
\end{align*}
so that, by the resolvent identity $G_{\mu\mu} + T_\mu = -T_\mu \sum_j G_{jj} \sum_\nu U_{\mu j} U_{\nu j} G^{(j)}_{\nu\mu}$ from \eqref{eq:res-id-G-mu-nu}, we have
\begin{align*}
    \mc{E}_\mu &\defeq \mc{E}^{(1)}_\mu + \mc{E}^{(2)}_\mu + \mc{E}^{(3)}_\mu + \mc{E}^{(4)}_\mu = -T_\mu \sum_j G_{jj} \sum_\nu U_{\mu j} U_{\nu j} G^{(j)}_{\nu \mu} - \frac{T_\mu G_{\mu \mu}}{z+\phi \widetilde{s}(z)} = G_{\mu \mu} + T_\mu - \frac{T_\mu G_{\mu \mu}}{z+\phi \widetilde{s}(z)} \\
    &= \left( 1- \frac{T_\mu}{z+\phi\widetilde{s}(z)} \right) (G_{\mu \mu} + T_\mu) + \frac{T_\mu^2}{z+\phi\widetilde{s}(z)} = \frac{(z+\phi\widetilde{s}(z)-T_\mu)(G_{\mu\mu}+T_\mu) + T_\mu^2}{z+\phi\widetilde{s}(z)}.
\end{align*}
Thus
\[
    G_{\mu\mu} + T_\mu - \frac{T_\mu^2}{T_\mu - z - \phi\widetilde{s}(z)} = \left( \frac{z+\phi\widetilde{s}(z)}{z+\phi\widetilde{s}(z) - T_\mu} \right) \mc{E}_\mu,
\]
so if we define
\[
    \mc{E}_k^{(a)} \defeq \sum_{\mu \in \mathbf{M}_k} \mc{E}_\mu^{(a)}, \qquad a = 1, 2, 3, 4,
\]
then 
\begin{align*}
    \phi \widetilde{s}(z) - \frac{1}{N} \sum_\mu  \frac{T_\mu^2}{T_\mu -z - \phi \widetilde{s}(z)} &= \frac{1}{N} \sum_\mu \left( G_{\mu\mu} + T_\mu - \frac{T_\mu^2}{T_\mu - z - \phi\widetilde{s}(z)} \right) = \frac{1}{N} \sum_{\mu} \left( \frac{z+\phi\widetilde{s}(z)}{z+\phi\widetilde{s}(z)-T_\mu} \mc{E}_\mu \right) \\
    &= \frac{1}{N} \sum_{k=\lceil \ell \rceil}^L \sum_{a=1}^4 \frac{z+\phi\widetilde{s}(z)}{z+\phi\widetilde{s}(z)-T_\mu} \mc{E}^{(a)}_k.
\end{align*}
Thus the problem reduces to showing
\[
    \frac{1}{N} \abs{ \frac{z+\phi\widetilde{s}(z)}{z+\phi\widetilde{s}(z)-T_k} \mc{E}^{(a)}_k} \prec \frac{1}{d^{\frac{1}{2}\min(1,\ell)}}
\]
for $k \in \llbracket \lceil \ell \rceil, L \rrbracket$ and $a \in \llbracket 1, 4 \rrbracket$. Lemma \ref{lem:phi_m_tilde_order_one} shows $\abs{z+\phi\widetilde{s}(z)} \prec 1$ as well as
\begin{equation}
\label{eqn:z_plus_phitildem_minus_T}
    \abs{z+\phi\widetilde{s}(z) - T_k} \geq \im(z+\phi\widetilde{s}(z) - T_k) = \im(z+\phi\widetilde{s}(z)) \succ 1,
\end{equation}
so $\abs{\frac{z+\phi\widetilde{s}(z)}{z+\phi\widetilde{s}(z) - T_k}} \prec 1$, and we only need show
\[
    \frac{1}{N} \abs{ \mc{E}^{(a)}_k} \prec \frac{1}{d^{\frac{1}{2}\min(1,\ell)}}
\]
for $k \in \llbracket \lceil \ell \rceil, L \rrbracket$ and $a \in \llbracket 1, 4 \rrbracket$. In the following we will often, but not always, use the estimate $\abs{T_\mu (z+\phi\widetilde{s}(z))^{-1}} \prec 1$, from \eqref{eqn:abs_zphimtilde_orderone}. We handle one $a$ at a time:
\begin{itemize}
\item \textbf{($a = 1$):} On the one hand, from \eqref{eqn:gii_self_consistent} and \eqref{eqn:abs_zphimtilde_orderone} we have
\begin{equation}
\label{eqn:error_e1k_gii}
    \abs{G_{jj} + (z+\phi\widetilde{s}(z))^{-1}} = \frac{\abs{G_{jj}(z+\phi\widetilde{s}(z))+1}}{\abs{z+\phi\widetilde{s}(z)}} \prec \frac{1}{d^{\frac{1}{2}\min(1,\ell)}}.
\end{equation}
Since the distribution of the left-hand side does not depend on $j$, we can put a maximum over $j$ on the left-hand side. On the other hand, we claim
\begin{equation}
\label{eqn:error_e1k_claim}
    \abs{T_k \sum_{\mu \in \mathbf{M}_k} \sum_\nu U_{\mu j} U_{\nu j} G^{(j)}_{\nu \mu}} \prec 1.
\end{equation}
Assume \eqref{eqn:error_e1k_claim} momentarily. Since we can put $\max_{j=1}^N$ on the left-hand side for the same reasons as above, we use it along with \eqref{eqn:error_e1k_gii} to find
\[
    \frac{1}{N} \abs{\mc{E}^{(1)}_k} \leq \left( \max_{j=1}^N \abs{G_{jj}+(z+\phi\widetilde{s}(z))^{-1}} \right) \left( \max_{j=1}^N \abs{T_k \sum_{\mu \in \mathbf{M}_k} \sum_\nu U_{\mu j} U_{\nu j} G^{(j)}_{\nu \mu}} \right) \prec \frac{1}{d^{\frac{1}{2}\min(1,\ell)}}.
\]
Thus it remains only to check \eqref{eqn:error_e1k_claim}. We split $\sum_\nu$ into the term $\nu = \mu$ and the remainder. For the latter, we recall that \eqref{eqn:error_k1k2s} shows that $\abs{\sum_{\mu \in \mathbf{M}_k} \sum_{\nu \in \mathbf{M}_{k'}, \ip{\mu,\nu} = s} U_{\mu j} U_{\nu j} G^{(j)}_{\nu\mu}} \prec d^{-\frac{1}{2}\min(1,\ell)}$ for each $k'$ and $s \leq \min(k,k')$ (except when $k = k'$, in which case $s$ is at most $k-1$). By summing this over the various values of $k'$ and $s$, and using the trivial bound $\abs{T_k} \prec 1$, we obtain
\begin{equation}
\label{eqn:error_k1k2s_modified}
    \abs{ T_k \sum_{\mu \in \mathbf{M}_k} \sum_{\nu}^{(\mu)} U_{\mu j} U_{\nu j} G^{(j)}_{\nu \mu}} \prec \abs{ \sum_{\mu \in \mathbf{M}_k} \sum_{\nu}^{(\mu)} U_{\mu j} U_{\nu j} G^{(j)}_{\nu \mu}} \prec \frac{1}{d^{\frac{1}{2}\min(1,\ell)}},
\end{equation}
which is better than claimed -- this is not the main term. When $\nu = \mu$, we have
\[
    \abs{T_k \sum_{\mu \in \mathbf{M}_k} U_{\mu j}^2 G^{(j)}_{\mu \mu}} \prec \sqrt{\frac{N}{d^k}}\abs{\sum_{\mu \in \mathbf{M}_k} U_{\mu j}^2 (G^{(j)}+T)_{\mu \mu}} + \frac{N}{d^k} \abs{\sum_{\mu \in \mathbf{M}_k} U_{\mu j}^2} \prec \abs{\sum_{\mu \in \mathbf{M}_k} U_{\mu j}^2 (G^{(j)}+T)_{\mu \mu}} + 1,
\]
where we used the simple bound $\abs{\sum_{\mu \in \mathbf{M}_k} U_{\mu j}^2} \leq d^k \max_{\mu \in \mathbf{M}_k} U_{\mu j}^2 \prec \frac{d^k}{N}$. The remaining term is also straightforward: \eqref{eqn:gmi+t} implies $\abs{U_{\mu j}^2 (G^{(j)}+T)_{\mu \mu}} \prec \frac{1}{d^k}$, and taking a maximum over $\mu$ finishes the proof of \eqref{eqn:error_e1k_claim}.
\item \textbf{($a = 2$):} In \eqref{eqn:error_k1k2s_modified} we showed
\[
    \abs{\sum_{\mu \in \mathbf{M}_k} \sum_{\nu}^{(\mu)} U_{\mu j} U_{\nu j} G^{(j)}_{\nu \mu}} \prec \frac{1}{d^{\frac{1}{2}\min(1,\ell)}}.
\]
Since the distribution of the left-hand side does not depend on $j$, we can also put a maximum over $j$ on the left-hand side, and use this to obtain
\[
    \frac{1}{N} \abs{\mc{E}^{(2)}_k} = \frac{1}{N} \abs{T_k (z+\phi\widetilde{s}(z))^{-1}} \sum_{j=1}^N \abs{\sum_{\mu \in \mathbf{M}_k} \sum_{\nu}^{(\mu)} U_{\mu j} U_{\nu j} G^{(j)}_{\nu \mu}} \prec \max_{j=1}^N \abs{\sum_{\mu \in \mathbf{M}_k} \sum_{\nu}^{(\mu)} U_{\mu j} U_{\nu j} G^{(j)}_{\nu \mu}} \prec \frac{1}{d^{\frac{1}{2}\min(1,\ell)}}.
\]
\item \textbf{($a = 3$):} From Lemma \ref{lem:u^2-1/N}, we have
\[
    \abs{\sum_{\mu \in \mathbf{M}_k} \left(U_{\mu j}^2 - \frac{1}{N} \right) T_{\mu}} \prec \frac{1}{\sqrt{d}} \frac{d^k}{N} \sqrt{\frac{N}{d^k}} = \frac{1}{\sqrt{d}} \sqrt{\frac{d^k}{N}}.
\]
Since the distribution of the left-hand side does not depend on $j$, we can also put a maximum over $j$ on the left-hand side, and obtain
\[
    \frac{1}{N} \sqrt{\frac{N}{d^k}} \abs{\sum_{\mu \in \mathbf{M}_k} \sum_{j=1}^N \left(U_{\mu j}^2 - \frac{1}{N}\right) T_\mu} \leq \sqrt{\frac{N}{d^k}} \max_{j=1}^N \abs{\sum_{\mu \in \mathbf{M}_k} \left( U_{\mu j}^2 - \frac{1}{N} \right) T_\mu} \prec \frac{1}{\sqrt{d}}.
\]
Similarly, in \eqref{eqn:error_2k} we showed 
\[
    \abs{\sum_{\mu \in \mathbf{M}_k} \left(U_{\mu j}^2 - \frac{1}{N}\right) (G^{(j)}_{\mu \mu} + T_\mu)} \prec \frac{1}{\sqrt{d}},
\]
and running the same argument about taking the maximum over $j$ yields
\[
    \frac{1}{N} \sqrt{\frac{N}{d^k}} \abs{\sum_{\mu \in \mathbf{M}_k} \sum_{j=1}^N \left(U_{\mu j}^2 - \frac{1}{N}\right) (G^{(j)}_{\mu \mu} + T_\mu)} \prec \frac{1}{\sqrt{d}}
\]
(actually we discard the $\sqrt{N/d^k}$ in the upper bound here). Then, splitting $G^{(j)}_{\mu \mu} = (G^{(j)}_{\mu\mu} + T_\mu) - T_\mu$ and using these two bounds, we obtain
\[
    \frac{1}{N} \abs{\mc{E}^{(3)}_k} = \frac{1}{N} \abs{T_\mu (z+\phi\widetilde{s}(z))^{-1}} \abs{\sum_{\mu \in \mathbf{M}_k} \sum_{j=1}^N \left(U_{\mu j}^2 - \frac{1}{N} \right) G^{(j)}_{\mu \mu}} \prec \frac{1}{N} \sqrt{\frac{N}{d^k}} \abs{\sum_{\mu \in \mathbf{M}_k} \sum_{j=1}^N \left(U_{\mu j}^2 - \frac{1}{N} \right) G^{(j)}_{\mu \mu}} \prec \frac{1}{\sqrt{d}},
\]
which is better than needed.
\item \textbf{($a = 4$):} In \eqref{eqn:error_3k} we showed that
\[
    \frac{1}{N} \abs{\sum_{\mu \in \mathbf{M}_k} (G^{(j)}_{\mu \mu} - G_{\mu \mu})} \prec \frac{1}{N}.
\]
As always we can put a maximum over $j$, then estimate
\[
    \frac{1}{N} \abs{\mc{E}^{(4)}_k} \leq \frac{1}{N^2} \abs{T_\mu (z+\phi\widetilde{s}(z))^{-1}} \sum_j \abs{\sum_{\mu \in \mathbf{M}_k} (G^{(j)}_{\mu\mu} - G_{\mu \mu})} \prec \frac{1}{N} \max_{j=1}^N \abs{\sum_{\mu \in \mathbf{M}_k} (G^{(j)}_{\mu\mu} - G_{\mu\mu})} \prec \frac{1}{N}
\]
which is much better than needed.
\end{itemize}
\end{proof}

In the following result, recall that $\ell_c$ is the least integer \emph{strictly bigger} than $\ell$. 
\begin{lemma}
\label{lem:side_error_term_phitildem}
For any fixed $\tau > 0$ and any fixed $z \in \mathbf{D}_\tau$, we have
\[
    \abs{\frac{1}{N} \sum_\mu  \frac{T_\mu^2}{T_\mu -z - \phi \widetilde{s}(z)} - \frac{\gamma_a}{\gamma_b - z - \phi \widetilde{s}(z)} + \frac{\gamma_c}{z+\phi \widetilde{s}(z)}} \prec \begin{cases} \frac{1}{d^{(\ell_c-\ell)/2}} & \text{if $\ell$ is not an integer,} \\ \frac{1}{d^{(\ell_c-\ell)/2}} + \abs{\frac{N}{d^\ell} - \kappa} & \text{if $\ell$ is an integer.} \end{cases}
\]
\end{lemma}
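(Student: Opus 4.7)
The plan is to exploit the block structure of $T$: since $T_\mu$ depends only on the degree $k$ of the tuple $\mu \in \mathbf{M}_k$, the sum collapses to
\[
    \frac{1}{N}\sum_\mu \frac{T_\mu^2}{T_\mu - z - \phi\widetilde{s}(z)} = \sum_{k=\lceil \ell\rceil}^L \frac{M_k T_k^2}{N(T_k - z - \phi\widetilde{s}(z))}.
\]
Using $M_k = \binom{d}{k} = (d^k/k!)(1+\OO(1/d))$ together with $T_k^2 = c_k^2 k! (N/d^k)$, the prefactor simplifies to $M_k T_k^2/N = c_k^2(1+\OO(1/d))$. I will split the sum into the $k = \ell$ piece, present only when $\ell \in \N$, and the $k \geq \ell_c$ pieces, matching them to the target terms $\gamma_a/(\gamma_b - z - \phi\widetilde{s}(z))$ and $-\gamma_c/(z+\phi\widetilde{s}(z))$ respectively. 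Throughout, every denominator will be controlled via Lemma \ref{lem:phi_m_tilde_order_one}: since $T_\ell$ and $\gamma_b$ are real, $|T_\ell - z - \phi\widetilde{s}(z)|$ and $|\gamma_b - z - \phi\widetilde{s}(z)|$ are each bounded below by $\im(z + \phi\widetilde{s}(z)) \succ 1$, and similarly $|z + \phi\widetilde{s}(z)| \succ 1$.

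For the $k = \ell$ piece (integer case), $T_\ell = c_\ell \sqrt{\ell!}\sqrt{N/d^\ell}$, so the hypothesis $N/d^\ell = \kappa + \OO(|N/d^\ell - \kappa|)$ yields $T_\ell = \gamma_b + \OO(|N/d^\ell - \kappa|)$ and $M_\ell T_\ell^2/N = \gamma_a + \OO(1/d)$. Combining these with the lower bounds on the denominators gives
\[
    \left|\frac{M_\ell T_\ell^2}{N(T_\ell - z - \phi\widetilde{s}(z))} - \frac{\gamma_a}{\gamma_b - z - \phi\widetilde{s}(z)}\right| \prec \frac{1}{d} + \left|\frac{N}{d^\ell} - \kappa\right|.
\]

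For the $k \geq \ell_c$ pieces, I will use the elementary identity
\[
    \frac{T_k^2}{T_k - z - \phi\widetilde{s}(z)} = -\frac{T_k^2}{z + \phi\widetilde{s}(z)} + \frac{T_k^3}{(T_k - z - \phi\widetilde{s}(z))(z + \phi\widetilde{s}(z))}
\]
to extract the leading behavior $-T_k^2/(z+\phi\widetilde{s}(z))$. Summing the leading parts produces
\[
    -\frac{1}{z+\phi\widetilde{s}(z)} \sum_{k=\ell_c}^L \frac{M_k T_k^2}{N} = -\frac{\gamma_c + \OO(1/d)}{z+\phi\widetilde{s}(z)},
\]
which matches the target up to $\OO(1/d)$. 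The total remainder is bounded via
\[
    \sum_{k=\ell_c}^L \frac{M_k |T_k|^3}{N} \lesssim \sum_{k=\ell_c}^L \left(\frac{N}{d^k}\right)^{1/2} \lesssim \left(\frac{N}{d^{\ell_c}}\right)^{1/2} \asymp d^{-(\ell_c - \ell)/2},
\]
using $N/d^\ell = \kappa + \oo(1)$ and that the geometric series is dominated by its $k = \ell_c$ term. Adding the three contributions proves the claim.

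I do not expect any substantive obstacle: the argument is a deterministic expansion in the small parameter $T_k = \OO(d^{-(k-\ell)/2})$ for $k > \ell$, and the only nontrivial external input is the lower bound $\im(z + \phi\widetilde{s}(z)) \succ 1$ from Lemma \ref{lem:phi_m_tilde_order_one}, which keeps every denominator comfortably bounded away from zero.
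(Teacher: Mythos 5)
Your proposal is correct and is essentially the same argument as the paper's: collapse the sum by degree, use $(k!)M_k/d^k = 1 + \OO(1/d)$, observe $T_k \prec d^{-(k-\ell)/2}$ for $k \geq \ell_c$, and control all denominators via $\im(z+\phi\widetilde{s}(z)) \succ 1$ from Lemma \ref{lem:phi_m_tilde_order_one}. The only cosmetic difference is that you extract the $-T_k^2/(z+\phi\widetilde{s}(z))$ leading term with an identity involving $T_k^3$, whereas the paper first replaces $M_k T_k^2/N$ by $c_k^2$ and then combines $c_k^2/(T_k - z - \phi\widetilde{s}) + c_k^2/(z+\phi\widetilde{s}) = c_k^2 T_k/((T_k-z-\phi\widetilde{s})(z+\phi\widetilde{s}))$; these are the same manipulation in different orders and yield identical bounds.
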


\begin{proof}[Proof of Lemma \ref{lem:side_error_term_phitildem}]
From the definition $T_k = c_k \sqrt{k!} \sqrt{\frac{N}{d^k}}$, we find
\[
    \frac{1}{N} \sum_\mu  \frac{T_\mu^2}{T_\mu -z - \phi \widetilde{s}(z)} = \frac{1}{N} \sum_{k=\lceil \ell \rceil}^L \sum_{\mu \in \mathbf{M}_k} \frac{T_k^2}{T_k -z - \phi \widetilde{s}(z)} = \sum_{k=\lceil \ell \rceil}^L c_k^2(k!) \frac{M_k}{d^k} \frac{1}{T_k - z - \phi\widetilde{s}(z)}.
\]
Furthermore, since $(k!)M_k$ counts the number of tuples $(a_1,\ldots,a_k) \in [d]^k$ which are all distinct, we have $(k!)M_k/d^k = 1 + \OO(1/d)$ when $c_k \neq 0$ (recall $M_k = 0$ otherwise); since we showed in \eqref{eqn:z_plus_phitildem_minus_T} that $\abs{T_k - z - \phi\widetilde{s}(z)} \succ 1$, this gives
\[
    \abs{\frac{1}{N} \sum_\mu  \frac{T_\mu^2}{T_\mu -z - \phi \widetilde{s}(z)} - \sum_{k=\lceil \ell \rceil}^L \frac{c_k^2}{T_k - z - \phi\widetilde{s}(z)}} \prec \frac{1}{d}.
\]
For $k > \ell$ with strict inequality (i.e., $k \geq \ell_c$), we have $T_k \prec \frac{1}{d^{\frac{k-\ell}{2}}}$. Since $\abs{T_k - z - \phi\widetilde{s}(z)} \succ 1$ (from \eqref{eqn:z_plus_phitildem_minus_T}) and $\abs{z+\phi\widetilde{s}(z)} \succ 1$ (from \eqref{eqn:abs_zphimtilde_orderone}), the definition $\gamma_c = \sum_{k=\ell_c}^L c_k^2$ (recall that $\ell_c$ is the smallest integer \emph{strictly} bigger than $\ell$) gives
\[
    \abs{ \sum_{k=\ell_c}^L \frac{c_k^2}{T_k - z - \phi\widetilde{s}(z)} + \frac{\gamma_c}{z+\phi\widetilde{s}(z)}} = \abs{ \sum_{k=\ell_c}^L \left( \frac{c_k^2}{T_k - z - \phi\widetilde{s}(z)} + \frac{c_k^2}{z+\phi\widetilde{s}(z)} \right) } \prec \sum_{k=\ell_c}^L \abs{T_k} \prec d^{-\frac{(\ell_c-\ell)}{2}}.
\]
If $\ell$ is not an integer, then $\lceil \ell \rceil = \ell_c$ and the proof is complete. Otherwise, the remaining term is $k = \ell$, for which we have 
\begin{equation}
\label{eqn:using_assumption}
    \abs{T_\ell - \gamma_b} = \abs{c_\ell\sqrt{\ell!} (\sqrt{\frac{N}{d^\ell}} - \sqrt{\kappa})} = \OO\left( \abs{ \frac{N}{d^\ell} - \kappa} \right)
\end{equation}
which is $\oo(1)$ by the assumption \eqref{eqn:assn:N/d^ell->kappa_weakly}, and thus (since $\abs{\gamma_b - z- \phi\widetilde{s}(z)} \succ 1$ by the same argument as in \eqref{eqn:z_plus_phitildem_minus_T})
\[
    \abs{\frac{c_\ell^2}{T_\ell - z - \phi\widetilde{s}(z)} - \frac{\gamma_a}{\gamma_b - z - \phi\widetilde{s}(z)}} \prec \abs{T_\ell - \gamma_b} \prec \abs{\frac{N}{d^\ell} - \kappa},
\]
which completes the proof.
\end{proof}

\begin{proof}[Proof of Proposition \ref{prop:widetildem_self_consistent}]
This follows immediately from Proposition \ref{prop:main_error_term_phitildem} and Lemma \ref{lem:side_error_term_phitildem}, simply by noting that
\[
    q_\ell = \min\left\{ \frac{\ell_c-\ell}{2}, \frac{\min(1,\ell)}{2} \right\}.
\]
\end{proof}

\begin{proof}[Proof of Proposition \ref{prop:m_self_consistent}]
This is just an exercise in showing that the stochastic domination bound in \eqref{eqn:widetildem_self_consistent} interacts nicely with the arithmetic. We have
\begin{equation}
\label{eqn:m_self_consistent_error_expansion}
\begin{split}
    \abs{\frac{1}{s(z)} + z + \frac{\gamma_a s(z)}{1+\gamma_b s(z)} + \gamma_c s(z)} &\leq \abs{\frac{1}{s(z)}+z+\phi \widetilde{s}(z)} + \gamma_a \abs{\frac{1}{\gamma_b + \frac{1}{s(z)}} - \frac{1}{\gamma_b - z - \phi \widetilde{s}(z)}} \\
    &\quad + \gamma_c \abs{s(z) + \frac{1}{z+\phi\widetilde{s}(z)}} +  \abs{-\phi \widetilde{s}(z) + \frac{\gamma_a}{\gamma_b - z - \phi \widetilde{s}(z)} - \frac{\gamma_c}{z+\phi\widetilde{s}(z)}}
\end{split}
\end{equation}
By Proposition \ref{prop:widetildem_self_consistent}, the last term on the right-hand side is stochastically dominated by $d^{-q_\ell}$, plus $\abs{\frac{N}{d^\ell}-\kappa}$ in the case that $\ell$ is an integer. Now we make some simple estimates before bounding the first three terms, frequently using that the quantities $\abs{s(z)}$, $\im(s(z))$, $\abs{z+\phi\widetilde{s}(z)}$, and $\im(z+\phi\widetilde{s}(z))$ are stochastically dominated above and below by $1$, from Lemmas \ref{lem:m_order_one} and \ref{lem:phi_m_tilde_order_one}, respectively. For example, since $\gamma_b$ is real, these give us
\[
    \abs{\gamma_b + 1/s(z)} \geq \abs{\im(1/s(z))} \succ 1
\]
and
\[
    \abs{\gamma_b-z-\phi\widetilde{s}(z)} \geq \abs{\im(z+\phi\widetilde{s}(z))} \succ 1,
\]
so that
\[
    \abs{\frac{1}{\gamma_b + \frac{1}{s(z)}} - \frac{1}{\gamma_b - z - \phi \widetilde{s}(z)}} \leq \frac{\abs{\frac{1}{s(z)}+z+\phi\widetilde{s}(z)}}{\abs{\gamma_b + \frac{1}{s(z)}} \abs{\gamma_b - z - \phi\widetilde{s}(z)}} \prec \abs{\frac{1}{s(z)}+z+\phi\widetilde{s}(z)}. 
\]
Since $\abs{\frac{1}{s(z)} + z + \phi\widetilde{s}(z)} \prec \abs{1+s(z)(z+\phi\widetilde{s}(z))}$ and $\abs{s(z) + \frac{1}{z+\phi\widetilde{s}(z)}} \prec \abs{1+s(z)(z+\phi\widetilde{s}(z))}$, we bound the first three terms on the right-hand side of \eqref{eqn:m_self_consistent_error_expansion} by 
\[
    (1+\gamma_a + \gamma_c) \abs{1+s(z)(z+\phi\widetilde{s}(z))} \prec \frac{1}{d^{\frac{1}{2}\min(1,\ell)}} \leq \frac{1}{d^{q_\ell}},
\]
where we applied Proposition \ref{prop:m_widetildem_self_consistent} in the penultimate step, completing the proof of \eqref{eqn:m_self_consistent}.
\end{proof}

%%%%%%%%%%%%%%%%%%%%%%%%%%%%%%%%%%%%%%%%%%%%%%%%%%%%%%%%%%%%
%%%%%%%%%%%%%%%%%%%%%%%%%%%%%%%%%%%%%%%%%%%%%%%%%%%%%%%%%%%%
%%%%%%%%
%%%%%%%%             Section: Errors
%%%%%%%%
%%%%%%%%%%%%%%%%%%%%%%%%%%%%%%%%%%%%%%%%%%%%%%%%%%%%%%%%%%%%
%%%%%%%%%%%%%%%%%%%%%%%%%%%%%%%%%%%%%%%%%%%%%%%%%%%%%%%%%%%%

\section{Error analysis}
\label{sec:error-terms}

The goal of this section is to prove Proposition \ref{prop:error_stieltjes_conclusion}, which replaces the given matrices $A$ and $\widetilde{A}$ with a friendlier matrix $B$ which has the same global spectral behavior. 

We start by importing the following estimate of \cite{LuYau}.
\begin{lemma}
\label{lem:lem_18_replacement}
\cite[Lemma 18]{LuYau}
If $H_1, H_2$ are $N \times N$ Hermitian matrices with Stieltjes transforms $s_1, s_2$, then 
\begin{align}
    \abs{s_1(z)-s_2(z)} &\leq \frac{\|H_1-H_2\|_{\textup{Frob}}}{\sqrt{N}\eta^2}, \\
    \abs{s_1(z)-s_2(z)} &\leq \frac{C \rank(H_1-H_2)}{N\eta}, \label{eqn:stieltjes_rank_estimate}
\end{align}
where $C$ is an absolute constant.
\end{lemma}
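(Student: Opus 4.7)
The plan rests on the second resolvent identity
\[
    G_1(z) - G_2(z) = G_1(z)(H_2 - H_1) G_2(z), \qquad G_i(z) \defeq (H_i - z\Id)^{-1},
\]
which, after taking the normalized trace, gives $s_1(z) - s_2(z) = \frac{1}{N}\tr(G_1(z)(H_2-H_1)G_2(z))$. Both inequalities then follow by estimating this trace in two different ways.

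For the Frobenius bound, I would cyclically rearrange and apply Cauchy--Schwarz in the Hilbert--Schmidt inner product:
\[
    \abs{\tr(G_1(H_2-H_1)G_2)} = \abs{\tr((G_2 G_1)(H_2-H_1))} \leq \norm{G_2 G_1}_{\textup{Frob}} \norm{H_1-H_2}_{\textup{Frob}}.
\]
Then I would use $\norm{G_2 G_1}_{\textup{Frob}} \leq \norm{G_2}_{\textup{op}} \norm{G_1}_{\textup{Frob}}$, and bound the two resolvent factors via the spectral theorem for the Hermitian $H_i$: $\norm{G_i}_{\textup{op}} \leq 1/\eta$ and $\norm{G_i}_{\textup{Frob}}^2 = \sum_k \abs{\lambda_k(H_i) - z}^{-2} \leq N/\eta^2$. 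This produces an overall factor $\sqrt{N}/\eta^2$, which upon dividing by $N$ recovers the first bound.

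For the rank bound, the key observation is that since $G_1$ and $G_2$ are invertible, $\rank(G_1 - G_2) = \rank(G_1(H_2-H_1)G_2) \leq \rank(H_2 - H_1) = r$. Hence $G_1 - G_2$ has at most $r$ nonzero eigenvalues, each bounded in modulus by $\norm{G_1 - G_2}_{\textup{op}} \leq \norm{G_1}_{\textup{op}} + \norm{G_2}_{\textup{op}} \leq 2/\eta$. Summing only over this handful of nonzero eigenvalues yields $\abs{\tr(G_1-G_2)} \leq 2r/\eta$, and dividing by $N$ gives the second bound with absolute constant $C = 2$.

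There is no real obstacle here: both bounds are textbook consequences of the resolvent identity combined with elementary norm inequalities for Hermitian matrices with spectrum bounded away from $z$ by $\eta$. The only subtlety worth stating cleanly is that in the first inequality one cannot bound both resolvents in Frobenius norm (which would cost an extra $\sqrt{N}$), so the asymmetric split into one operator-norm factor and one Frobenius-norm factor is essential.
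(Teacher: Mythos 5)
Your proof is correct and complete. The paper does not actually supply its own argument for this lemma; it imports it directly as \cite[Lemma 18]{LuYau}. Your two-part derivation is the standard textbook route for both bounds: the Frobenius estimate via the second resolvent identity, cyclicity, Cauchy--Schwarz in Hilbert--Schmidt norm, and the asymmetric operator-norm/Frobenius-norm split; and the rank estimate by noting that conjugation by the invertible resolvents preserves rank, so $G_1 - G_2$ has at most $r = \rank(H_1 - H_2)$ nonzero eigenvalues (counted with algebraic multiplicity), each of modulus at most $\norm{G_1 - G_2}_{\textup{op}} \leq 2/\eta$. One small point worth making explicit when you write this up: $G_1 - G_2$ is not normal (the $z$ offset is complex), so the bound on the trace goes through the general fact that every eigenvalue of any matrix has modulus at most the operator norm, together with the fact that a matrix of rank $\leq r$ has $0$ as an eigenvalue of algebraic multiplicity at least $N - r$ (since geometric multiplicity $\geq N - r$). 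You have all the pieces and your constant $C = 2$ is explicit, which is a slight sharpening over the unspecified absolute constant in the statement.
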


We will apply this to compare the given matrices $A$ and $\widetilde{A}$ with the matrix $B$ defined in \eqref{eqn:main_results_definition_b}. We will need the intermediate error matrices $\widetilde{B}^{\textup{full}}$ and $B^{\textup{full}}$, whose definitions we give below, along with recalling the definitions of $A$, $\widetilde{A}$, and $B$ for the reader's convenience. We recall that, throughout, $f$ is a finite-degree polynomial. All matrices are real-symmetric, $N \times N$, and defined entrywise: 
\begin{align*}
    A_{ij} &= \frac{\delta_{i \neq j}}{\sqrt{N}} f\left( \frac{\ip{X_i,X_j}}{\sqrt{d}} \right) \\
    \widetilde{A}_{ij} &= \begin{cases} \frac{\delta_{i \neq j}}{\sqrt{N}} f\left( \frac{\ip{X_i,X_j}}{\sqrt{d}} \frac{\sqrt{d}}{\|X_i\|} \frac{\sqrt{d}}{\|X_j\|} \right) & \text{if } \|X_i\| \neq 0 \neq \|X_j\|, \\ 0 & \text{otherwise}, \end{cases} \\
    (\widetilde{B}^{\textup{full}})_{ij} &= \begin{cases} \frac{\delta_{i \neq j}}{\sqrt{N}}  \sum_{k=0}^L \left(\frac{d}{\|X_i\|\|X_j\|}\right)^k \frac{c_k}{d^{k/2}\sqrt{k!}} \sum_{a_1,\ldots,a_k=1}^{d,\ast} X_{a_1i} \ldots X_{a_ki} X_{a_1j} \ldots X_{a_kj} & \text{if } \|X_i\| \neq 0 \neq \|X_j\|, \\ 0 & \text{otherwise}, \end{cases} \\
    (B^{\textup{full}})_{ij} &= \frac{\delta_{i \neq j}}{\sqrt{N}}  \sum_{k=0}^L \frac{c_k}{d^{k/2}\sqrt{k!}} \sum_{a_1,\ldots,a_k=1}^{d,\ast} X_{a_1i} \ldots X_{a_ki} X_{a_1j} \ldots X_{a_kj}, \\
    B_{ij} &= \frac{\delta_{i \neq j}}{\sqrt{N}}  \sum_{k=\lceil \ell \rceil}^L \frac{c_k}{d^{k/2}\sqrt{k!}} \sum_{a_1,\ldots,a_k=1}^{d,\ast} X_{a_1i} \ldots X_{a_ki} X_{a_1j} \ldots X_{a_kj},
\end{align*}
(By convention, if $k = 0$, we set $\sum_{a_1,\ldots,a_k=1}^{d,\ast} X_{a_1i} \ldots X_{a_ki} X_{a_1j} \ldots X_{a_kj} = 1$. In this section, we find it easier to work with $\sum_{a_1,\ldots,a_k=1}^{d,\ast}$ than $\sum_{a_1 < \cdots < a_k}$; this is why we write the factors $\sqrt{k!}$ where we do.) The notation ``full'' means that the sum on $k$ in the definitions of $\widetilde{B}^{\textup{full}}$ and $B^{\textup{full}}$ includes $k = 0, \ldots, \lceil \ell \rceil - 1$, which are morally low-rank terms that do not affect the global law. We remove these terms in the step going from $B^{\textup{full}}$ to $B$.

It turns out that all five of these matrices have the same global law, as we will see by considering the error matrices
\[
    E_{A,\widetilde{A}} \defeq A - \widetilde{A}, \qquad E_{\widetilde{A},\widetilde{B}^{\textup{full}}} \defeq \widetilde{A} - \widetilde{B}^{\textup{full}},  \qquad  E_{\widetilde{B}^{\textup{full}},B^{\textup{full}}} \defeq \widetilde{B}^{\textup{full}} - B^{\textup{full}}, \qquad E_{B^{\textup{full}},B} \defeq B^{\textup{full}} - B.
\]
The first three of these matrices each have small Frobenius norm. The last is treated differently, since it contains a low-rank part (this can create spikes but does not affect the global law) which may not have small Frobenius norm, but after subtracting this low-rank part the remainder has small Frobenius norm. We remark that ``small'' means only $\|\cdot\|_{\textup{Frob}} \ll \sqrt{N}$; this kind of estimate does \emph{not} suffice to compare operator norms of $A$, $\widetilde{A}$ and so on (indeed, \cite[pp. 25-26]{LuYau} gives an example where $\|\widetilde{A}\|_{\textup{op}} = \OO_\prec(1)$ but $\|A\|_{\textup{op}} \to \infty$ due to spike eigenvalues), but it \emph{does} suffice to say that all the matrices $A$, $\widetilde{A}$, etc. have the same global law. 

\begin{prop}
\label{prop:error_entrywise}
We have the entrywise bounds
\begin{equation}
\label{eqn:error_entrywise}
    \abs{(E_{A,\widetilde{A}})_{ij}} \prec \frac{1}{\sqrt{Nd}}, \qquad \abs{(E_{\widetilde{A},\widetilde{B}^{\textup{full}}})_{ij}} \prec \frac{1}{\sqrt{Nd}}, \qquad \abs{(E_{\widetilde{B}^{\textup{full}},B^{\textup{full}}})_{ij}} \prec \frac{1}{\sqrt{Nd}},
\end{equation}
and hence (immediately, since $E_{A,\widetilde{A}}$, $E_{\widetilde{A},\widetilde{B}^{\textup{full}}}$, and $E_{\widetilde{B}^{\textup{full}},B^{\textup{full}}}$ each have zero diagonal and equidistributed off-diagonal elements)
\[
    \|E_{A,\widetilde{A}}\|_{\textup{Frob}} \prec \sqrt{\frac{N}{d}}, \qquad \|E_{\widetilde{A},\widetilde{B}^{\textup{full}}}\|_{\textup{Frob}} \prec \sqrt{\frac{N}{d}}, \qquad \|E_{\widetilde{B}^{\textup{full}},B^{\textup{full}}}\|_{\textup{Frob}} \prec \sqrt{\frac{N}{d}}.
\]
\end{prop}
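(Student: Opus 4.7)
The Frobenius bounds will follow immediately from the entrywise bounds: each error matrix has zero diagonal and $N(N-1)$ identically distributed off-diagonal entries, so $\|E\|_{\textup{Frob}}^2 \le N^2 \max_{i \ne j} |E_{ij}|^2 \prec N^2/(Nd) = N/d$, and stochastic domination absorbs the $N^2$ prefactor since $N \le d^L$. The plan is therefore to prove the three entrywise estimates. A common ingredient is the concentration $\|X_i\|^2/d = 1 + \OO_\prec(d^{-1/2})$, which follows from a routine moment argument on $\frac{1}{d}\sum_{a=1}^d (X_{ai}^2 - 1)$ using that $\mu$ has all moments finite; it implies $1 - d/(\|X_i\|\|X_j\|) \prec d^{-1/2}$ and, for each $k \le L$, $(d/(\|X_i\|\|X_j\|))^k - 1 \prec d^{-1/2}$.

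For $E_{A,\widetilde{A}}$, I would write the difference of the two arguments of $f$ as $(\ip{X_i,X_j}/\sqrt{d})\bigl(1 - d/(\|X_i\|\|X_j\|)\bigr)$. Since $\ip{X_i,X_j}/\sqrt{d} \prec 1$ by a second-moment computation (the summands are centered and independent for $i \ne j$), this difference is $\prec d^{-1/2}$ and both arguments of $f$ are $\prec 1$. The polynomial $f$ satisfies $|f'(t)| \le C(1+|t|)^{L-1}$, so the mean value theorem gives $|f(x) - f(y)| \prec d^{-1/2}$, and the $1/\sqrt{N}$ prefactor yields the claimed bound. For $E_{\widetilde{B}^{\textup{full}},B^{\textup{full}}}$, I would write the difference term-by-term in $k$ as the scalar $[(d/(\|X_i\|\|X_j\|))^k - 1]$ times the decoupled polynomial $\frac{1}{d^{k/2}\sqrt{k!}}\sum_{a_1,\ldots,a_k}^{d,\ast}X_{a_1i}\cdots X_{a_ki}X_{a_1j}\cdots X_{a_kj}$. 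The scalar is $\prec d^{-1/2}$; the decoupled polynomial is $\prec 1$ by a $2p$-th moment computation essentially identical to Lemma \ref{lem:b_column_norm} (Wick pairing yields $O(d^{kp})$ surviving terms against the $d^{-kp}$ normalization). Summing over finitely many $k \le L$ finishes this case.

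The main obstacle is $E_{\widetilde{A},\widetilde{B}^{\textup{full}}}$. Here I would introduce the sphere-normalized vectors $Y_i = \sqrt{d}X_i/\|X_i\|$, for which $\|Y_i\|^2 = d$ exactly, and reduce to proving $|D_k| \prec d^{-1/2}$ for each $k \le L$, where
\begin{equation*}
D_k = h_k\!\left(\frac{\ip{Y_i,Y_j}}{\sqrt{d}}\right) - \frac{1}{d^{k/2}\sqrt{k!}}\sum_{a_1,\ldots,a_k}^{d,\ast}Y_{a_1 i}\cdots Y_{a_k i}Y_{a_1 j}\cdots Y_{a_k j}.
\end{equation*}
The plan is to expand $h_k$ via its explicit formula $h_k(x) = \frac{1}{\sqrt{k!}}\sum_{m=0}^{\lfloor k/2\rfloor}\frac{(-1)^m k!}{m!\, 2^m\, (k-2m)!}\,x^{k-2m}$ and then expand each power $\ip{Y_i,Y_j}^{k-2m}$ as a sum over $(k-2m)$-tuples split into distinct and non-distinct pieces; the all-distinct contribution from the $m=0$ top monomial matches the subtracted $\sum^{d,\ast}$ term exactly, and what remains are the coincidence terms from $m=0$ together with all lower-order monomials ($m \ge 1$). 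Using $\sum_a Y_{ai}^2 = d$ to resolve paired coincidences, I expect these pieces to cancel up to an error $\prec d^{-1/2}$; as a sanity check, the $k=2$ case collapses immediately to $D_2 = \frac{1}{\sqrt{2}}\bigl(\frac{1}{d}\sum_a Y_{ai}^2 Y_{aj}^2 - 1\bigr)$, and the right-hand side is $\prec d^{-1/2}$ by a CLT-type estimate on $\frac{1}{d}\sum_a X_{ai}^2 X_{aj}^2$ combined with the concentration of $\|X_i\|^2$. The hard part will be the general-$k$ combinatorial bookkeeping: each coincidence of $r$ indices gains a factor $d^{-r/2}$ beyond the naive scale, and the Hermite coefficients are arranged so that all contributions of size larger than $d^{-1/2}$ telescope among the various $m$; executing this cancellation while tracking the $\OO_\prec(d^{-1/2})$ fluctuations of $\sum_a Y_{ai}^2 Y_{aj}^2$ around $d$ is the technical crux.
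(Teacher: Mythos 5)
Your plan is correct and essentially matches the paper for two of the three entrywise estimates and for the Frobenius conclusion, but the third (and hardest) estimate is left as an unexecuted sketch.

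For $E_{A,\widetilde{A}}$ you write the difference of arguments as $(\ip{X_i,X_j}/\sqrt{d})(1 - d/(\|X_i\|\|X_j\|)) = \OO_\prec(d^{-1/2})$ and apply a mean-value/Taylor bound on the polynomial $f$ with polynomially growing derivative: this is exactly Lemma \ref{lem:error_entrywise_a_tildea}. For $E_{\widetilde{B}^{\textup{full}},B^{\textup{full}}}$ you factor $(d/(\|X_i\|\|X_j\|))^k - 1 = \OO_\prec(d^{-1/2})$ against the $\OO_\prec(1)$ decoupled polynomial: this is exactly Lemma \ref{lem:error_entrywise_tildeb_b}. Both match, and the Frobenius step is the same routine observation.

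The gap is $E_{\widetilde{A},\widetilde{B}^{\textup{full}}}$. You propose to expand $h_k$ via its explicit formula, split each power of $\ip{Y_i,Y_j}$ into distinct and coincidence pieces, and then argue that the Hermite coefficients force everything beyond the leading all-distinct term to cancel up to $\OO_\prec(d^{-1/2})$. You are explicit that you are not carrying this out ("I expect these pieces to cancel"), and this expected cancellation is precisely the content that needs proof. The difficulty is that for non-Gaussian entries the contributions from each coincidence pattern only cancel up to lower-order errors, and verifying this requires tracking all set-partition types of the $k$ indices simultaneously, a genuinely involved bookkeeping problem. The paper avoids it by instead arguing inductively on $k$ via the three-term recurrence $H_{k+1}(x)=xH_k(x)-kH_{k-1}(x)$: multiplying the length-$k$ distinct sum by $\sum_b\widetilde{X}_b\widetilde{Y}_b$ produces the length-$(k+1)$ distinct sum plus exactly $k$ copies of a single-squared-index term. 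The inductive step thus introduces only one new coincidence at a time, controlled by \eqref{eqn:error_ymlsimplification_momentarily}, whose proof is a short telescoping recursion on powers. Two key simplifications you would need to discover and exploit are (i) the exact identity $\sum_a(\widetilde{X}_a^2-1)=0$ (since $\|\widetilde{X}\|^2=d$ exactly), which turns $\frac{1}{d}\sum_a\widetilde{X}_a^2\widetilde{Y}_a^2-1$ into $\frac{1}{d}\sum_a(\widetilde{X}_a^2-1)(\widetilde{Y}_a^2-1)$ with no cross terms; and (ii) the observation that $\widetilde{X}_a^2-1$ is not a centered independent family, so bounding that sum requires rewriting it via $X_a^2-1$ and the concentration of $\|X\|^2/d$. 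Your $k=2$ sanity check is consistent with both points, but completing the general case along your route would likely be more work than the paper's recursion, and as submitted the step is not a proof.
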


\begin{prop}
\label{prop:error_bf_b}
The matrix $E_{B^{\textup{full}},B}$ can be decomposed into a low-rank part and a small-Frobenius-norm part
\[
    E_{B^{\textup{full}},B} = E_{\textup{lr}} + E_{\textup{Frob}},
\]
where there exists a deterministic sequence $(r_N)_{N=1}^\infty$ such that 
\begin{align*}
    \rank(E_{\textup{lr}}) &\leq r_N \quad \text{almost surely}, \\
    r_N &= \OO(d^{\lceil \ell \rceil -1}), \\
    \|E_{\textup{Frob}}\|_{\textup{Frob}} &\prec d^{(\lceil \ell \rceil - 1)/2}. %\sqrt{N/d}.
\end{align*}
\end{prop}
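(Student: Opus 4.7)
The plan is to decompose $E_{B^{\textup{full}},B}$ as a matrix of controlled rank together with a purely diagonal correction whose Frobenius norm is small. To this end, introduce the bilinear polynomial
\[
    g(x,y) \defeq \sum_{k=0}^{\lceil \ell \rceil - 1} \frac{c_k}{d^{k/2}\sqrt{k!}} \sum_{a_1,\ldots,a_k=1}^{d,\ast} x_{a_1}\cdots x_{a_k}\, y_{a_1}\cdots y_{a_k},
\]
so that $(E_{B^{\textup{full}},B})_{ij} = \frac{\delta_{i\neq j}}{\sqrt{N}}\, g(X_i,X_j)$. Define $\widetilde{G} \in \R^{N\times N}$ by $\widetilde{G}_{ij} \defeq \frac{1}{\sqrt{N}} g(X_i,X_j)$ (i.e., the same expression without the $\delta_{i \neq j}$), so that $E_{B^{\textup{full}},B} = \widetilde{G} - \diag(\widetilde{G})$. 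I would take $E_{\textup{lr}} \defeq \widetilde{G}$ and $E_{\textup{Frob}} \defeq -\diag(\widetilde{G})$.

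For the rank bound, the key observation is that $g(X_i, X_j)$ is a bilinear form in the low-degree monomial features of $X_i$ and $X_j$. Using the identity $\sum_{a_1,\ldots,a_k=1}^{d,\ast} x_{a_1}\cdots x_{a_k}y_{a_1}\cdots y_{a_k} = k!\sum_{\alpha} p_\alpha(x) p_\alpha(y)$, where $\alpha$ runs over $k$-element subsets $\alpha \subset \llbracket 1, d \rrbracket$ and $p_\alpha(x) \defeq \prod_{a\in\alpha} x_a$, let $\Phi \in \R^{N\times D}$ be the feature matrix with $\Phi_{i\alpha} = p_\alpha(X_i)$ (with $\alpha$ ranging over subsets of size at most $\lceil\ell\rceil-1$), and let $M \in \R^{D\times D}$ be the diagonal matrix with $M_{\alpha\alpha} = c_{|\alpha|}\sqrt{|\alpha|!}/d^{|\alpha|/2}$. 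Then $\widetilde{G} = \frac{1}{\sqrt{N}}\Phi M \Phi^T$, so $\rank(E_{\textup{lr}}) \le D$ almost surely, where $D = \sum_{k=0}^{\lceil\ell\rceil-1}\binom{d}{k} = \OO(d^{\lceil\ell\rceil-1})$; this is the deterministic sequence $r_N$.

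For the Frobenius bound, I would estimate $|\widetilde{G}_{ii}|$ uniformly in $i$. Using $\sum_{a_1,\ldots,a_k=1}^{d,\ast} X_{a_1 i}^2\cdots X_{a_k i}^2 \le (\sum_a X_{ai}^2)^k = \|X_i\|^{2k}$ and the standard concentration $\|X_i\|^2 \prec d$ (uniformly in $i$ via Lemma \ref{lem:stochastic_domination_combining}), we have
\[
    |\widetilde{G}_{ii}| \le \frac{1}{\sqrt{N}} \sum_{k=0}^{\lceil\ell\rceil-1} \frac{|c_k|}{d^{k/2}\sqrt{k!}} \|X_i\|^{2k} \prec \frac{d^{(\lceil\ell\rceil-1)/2}}{\sqrt{N}}
\]
uniformly in $i$. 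Squaring and summing the $N$ diagonal entries then yields $\|E_{\textup{Frob}}\|_{\textup{Frob}}^2 \prec d^{\lceil\ell\rceil-1}$, as required.

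The proof is quite short once the correct decomposition is identified; the main potential obstacle is recognizing the rank bound. A naive attempt to handle each degree-$k$ contribution separately, by further subdividing into an ``unconstrained-sum'' piece plus a ``sum over tuples with repetitions'' piece, leads to exactly the same kind of low-rank residues reappearing at lower orders (e.g., for $k=2$ the repetition term $\sum_a X_{ai}^2 X_{aj}^2$ is still rank-$d$ and has mean of order $d$, so its Frobenius norm alone is $\asymp \sqrt{N}$, which is far too large for small $k$). Keeping all degree-$k$ terms collected into the single low-rank matrix $\widetilde{G}$ sidesteps this issue entirely, leaving only the easy diagonal estimate.
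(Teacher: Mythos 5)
Your proof is correct and takes essentially the same approach as the paper's: decompose $E_{B^{\textup{full}},B}$ into the full (undeleted-diagonal) low-degree matrix, which is low-rank because it is a sum of $\OO(d^{\lceil\ell\rceil-1})$ rank-one terms (or equivalently your feature-map factorization), plus the diagonal correction, whose entries are each bounded via $\sum_{a_1,\ldots,a_k}^{d,\ast} X_{a_1i}^2\cdots X_{a_ki}^2 \le \|X_i\|^{2k} \prec d^k$. The only cosmetic difference is that the paper handles each degree $k\in\llbracket 0,\lceil\ell\rceil-1\rrbracket$ separately while you bundle all low degrees into a single matrix $\widetilde G$, which changes nothing.
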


\begin{proof}[Proof of Proposition \ref{prop:error_stieltjes_conclusion}, modulo Propositions \ref{prop:error_entrywise} and \ref{prop:error_bf_b}]
Since $B^{\textup{full}} - (B+E_{\textup{lr}}) = E_{B^{\textup{full}},B} - E_{\textup{lr}} = E_{\textup{Frob}}$, Lemma \ref{lem:lem_18_replacement} gives
\begin{align*}
    \abs{s_{\widetilde{A}}(z) - s_B(z)} &\leq \abs{s_{\widetilde{A}}(z) - s_{\widetilde{B}^{\textup{full}}}(z)} + \abs{s_{\widetilde{B}^{\textup{full}}}(z) - s_{B^{\textup{full}}}(z)} + \abs{s_{B^{\textup{full}}}(z) - s_{B+E_{\textup{lr}}}(z)} + \abs{s_{B+E_{\textup{lr}}}(z) - s_B(z)} \\
    &\leq \frac{\|E_{\widetilde{A},\widetilde{B}^{\textup{full}}}\|_{\textup{Frob}}}{\sqrt{N}\eta^2} + \frac{\|E_{\widetilde{B}^{\textup{full}},B^{\textup{full}}}\|_{\textup{Frob}}}{\sqrt{N}\eta^2} + \frac{\|E_{\textup{Frob}}\|_{\textup{Frob}}}{\sqrt{N}\eta^2} + \frac{Cr_N}{N\eta}.
\end{align*}
By Propositions \ref{prop:error_entrywise} and \ref{prop:error_bf_b}, the right-hand side is stochastically dominated by $d^{(\lceil \ell \rceil - \ell - 1)/2} = d^{-r_\ell}$ (the worst term is the third), which tends to zero. As a very weak consequence of this, for any $\epsilon, D > 0$ we have
\[
    \P(\abs{s_{\widetilde{A}}(z) - s_B(z)} \geq \epsilon) \leq C_{\epsilon,D}d^{-D}
\]
which suffices for almost-sure convergence by the Borel-Cantelli lemma. The comparison of $s_A$ to $s_B$ is similar.
\end{proof}

In the remaining sections, we prove Propositions \ref{prop:error_entrywise} and \ref{prop:error_bf_b}. 

\subsection{Common estimates}\

In the proof, we will deal with generic i.i.d. vectors $X$ and $Y$, only later selecting $X = X_i$ and $Y = X_j$, and we will frequently work on the good event
\begin{equation}
\label{eqn:error_good_event}
    \mc{G}_{XY} = \{\|X\| \neq 0 \neq \|Y\|\}
\end{equation}
which has high probability, as we will see.

\begin{lemma}
\label{lem:norm_lower_bound}
If $X \in \R^d$ has i.i.d. entries, centered with unit variance, then 
\begin{align}
%    \|X\|^2 \succ d \label{eqn:norm_lower_bound}, \\
    \abs{\|X\|^2 - d} \prec \sqrt{d} \label{eqn:error_clt_stochastic_domination}, \\
    \abs{\|X\| - \sqrt{d}} \prec 1, \label{eqn:error_clt_stochastic_domination_not_squared} \\
    \abs{\frac{d}{\|X\|^2} - 1} \mathbf{1}\{\|X\| \neq 0\} \prec \frac{1}{\sqrt{d}}, \label{eqn:error_clt_stochastic_domination_inverted} \\
    \abs{\frac{\sqrt{d}}{\|X\|} - 1}\mathbf{1}\{\|X\| \neq 0\} \prec \frac{1}{\sqrt{d}}. \label{eqn:error_clt_stochastic_domination_inverted_not_squared}
\end{align}
\end{lemma}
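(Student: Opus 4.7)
The four estimates are arranged in order of logical dependence: \eqref{eqn:error_clt_stochastic_domination} is the core concentration statement; \eqref{eqn:error_clt_stochastic_domination_not_squared} follows by a simple algebraic identity; and \eqref{eqn:error_clt_stochastic_domination_inverted}, \eqref{eqn:error_clt_stochastic_domination_inverted_not_squared} follow by inverting on a high-probability event. The plan is to prove them in this order.

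For \eqref{eqn:error_clt_stochastic_domination}, write $\|X\|^2 - d = \sum_{a=1}^d Y_a$, where $Y_a \defeq X_a^2 - 1$ are i.i.d., centered, and have all moments finite (the latter since $\mu$ does). A standard moment computation shows $\E[(\sum_a Y_a)^{2p}] \leq C_p d^p$ for every $p \in \N$: expanding the $2p$-th power and using $\E[Y_a] = 0$, only tuples $(a_1,\ldots,a_{2p}) \in \llbracket 1,d\rrbracket^{2p}$ in which every distinct index appears at least twice contribute to the sum, and there are at most $C_p d^p$ such tuples (they use at most $p$ distinct indices), each contributing an expectation bounded by a constant depending only on $p$. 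Markov's inequality then gives $\P(|\|X\|^2 - d| \geq d^\epsilon \sqrt{d}) \leq C_p d^{-2p\epsilon}$; taking $p$ large enough yields \eqref{eqn:error_clt_stochastic_domination} in the sense of stochastic domination.

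For \eqref{eqn:error_clt_stochastic_domination_not_squared}, use the identity $\|X\| - \sqrt{d} = (\|X\|^2 - d)/(\|X\| + \sqrt{d})$ together with the deterministic lower bound $\|X\| + \sqrt{d} \geq \sqrt{d}$ to get $|\|X\| - \sqrt{d}| \leq |\|X\|^2 - d|/\sqrt{d}$, which is $\prec 1$ by \eqref{eqn:error_clt_stochastic_domination}.

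For \eqref{eqn:error_clt_stochastic_domination_inverted}, on the event $\mc{H} \defeq \{\|X\|^2 \geq d/2\}$ (on which the indicator $\mathbf{1}\{\|X\|\neq 0\}$ is automatically $1$) we have $|d/\|X\|^2 - 1|\mathbf{1}\{\|X\|\neq 0\} \leq (2/d)|\|X\|^2 - d|$, which is $\prec 1/\sqrt{d}$ by \eqref{eqn:error_clt_stochastic_domination}. The complementary event is contained in $\{|\|X\|^2 - d| > d/2\}$, whose probability is at most $C_p d^{-p}$ for every $p$ by the moment bound above; such negligible-probability exceptional events are absorbed into the definition of $\prec$. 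Estimate \eqref{eqn:error_clt_stochastic_domination_inverted_not_squared} follows in exactly the same way from \eqref{eqn:error_clt_stochastic_domination_not_squared} on the event $\{\|X\| \geq \sqrt{d}/2\}$. The only nontrivial ingredient is the moment bound for \eqref{eqn:error_clt_stochastic_domination}, which is entirely standard; the remaining steps are algebra plus the observation that $\prec$ tolerates exceptional events of super-polynomially small probability.
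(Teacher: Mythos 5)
Your proof is correct and takes essentially the same route as the paper: establish \eqref{eqn:error_clt_stochastic_domination} by a high-moment/Markov argument (the paper simply cites \cite[(7.57)]{ErdYau2017} for this standard step), then derive the other three by elementary algebra combined with the fact that $\prec$ absorbs events of super-polynomially small probability.
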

\begin{proof}
Since $\|X\|^2 - d = \sum_{i=1}^d (X_i^2-1)$ is a sum of centered independent variables with all finite moments, the estimate \eqref{eqn:error_clt_stochastic_domination} is standard; see, e.g., \cite[(7.57)]{ErdYau2017}. This gives \eqref{eqn:error_clt_stochastic_domination_not_squared} which in turn gives \eqref{eqn:error_clt_stochastic_domination_inverted} and \eqref{eqn:error_clt_stochastic_domination_inverted_not_squared}. 
\end{proof}

\begin{lemma}
\label{lem:main_f_K_estimate_ymlsimplification}
Let $\mu_X, \mu_Y$ be centered probability measures on $\R$ with unit variance and all finite moments, and let $(X_a)_{a=1}^d, (Y_a)_{a=1}^d$ be independent vectors with all entries i.i.d. samples from $\mu_X$ and $\mu_Y$, respectively. Set
\[
    \widetilde{X} = \begin{cases} \frac{\sqrt{d}}{\|X\|} X & \text{if } \|X\| \neq 0, \\ 0 & \text{otherwise}, \end{cases} \qquad \widetilde{Y} = \begin{cases} \frac{\sqrt{d}}{\|Y\|} Y & \text{if } \|Y\| \neq 0, \\ 0 & \text{otherwise}, \end{cases}
\]
Then for each $g \in \N$ we have
\begin{equation}
\label{eqn:g_fold_error_bound}
    \abs{\sum_{a_1,\ldots,a_g=1}^{d,\ast} X_{a_1}\ldots X_{a_g}Y_{a_1}\ldots Y_{a_g}} \prec d^{g/2}
\end{equation}
and
\begin{equation}
\label{eqn:g_fold_error_bound_tilde}
    \abs{\sum_{a_1,\ldots,a_g=1}^{d,\ast} \widetilde{X}_{a_1}\ldots \widetilde{X}_{a_g}\widetilde{Y}_{a_1}\ldots \widetilde{Y}_{a_g}} \prec d^{g/2}
\end{equation}
\end{lemma}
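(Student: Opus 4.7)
The plan is to prove the unnormalized bound \eqref{eqn:g_fold_error_bound} by a direct high-moment computation, then deduce the normalized bound \eqref{eqn:g_fold_error_bound_tilde} from it by factoring out the scalings and applying Lemma \ref{lem:norm_lower_bound}. I expect the moment computation to be the main (though not serious) source of work; the normalized version will then follow cheaply.

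For \eqref{eqn:g_fold_error_bound}, denote the left-hand side by $S_g$. For each fixed $p \in \N$, since $X$ and $Y$ are independent, we expand
\[
    \E[S_g^{2p}] = \sum \E\left[\prod_{i=1}^{2p} X_{a^{(i)}_1}\cdots X_{a^{(i)}_g}\right] \E\left[\prod_{i=1}^{2p} Y_{a^{(i)}_1}\cdots Y_{a^{(i)}_g}\right],
\]
where the sum is over $2p$-tuples of ordered $g$-tuples with distinct entries. Because $\mu_X$ and $\mu_Y$ are centered, a nonzero contribution requires that every index $a \in [d]$ appearing in the combined list $(a^{(i)}_j)_{i,j}$ appears at least twice (otherwise an isolated $\E X_a = 0$ factor kills the term). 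Thus the number of distinct indices used across all $2p$ tuples is at most $pg$; since there are at most $d$ choices per distinct index and only $C_{p,g}$ many ways to assign the $2pg$ slots to those indices, the total number of non-vanishing terms is at most $C_{p,g}\, d^{pg}$. Each such term's expectation is uniformly bounded by a constant depending only on $p$, $g$, and finitely many moments of $\mu_X, \mu_Y$, all of which are finite by hypothesis. This yields $\E[S_g^{2p}] \leq C_{p,g}\, d^{pg}$, and then Markov's inequality together with the standard upgrade from moment bounds to stochastic domination (choosing $p$ large relative to the desired $D$) gives $\abs{S_g} \prec d^{g/2}$.

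For \eqref{eqn:g_fold_error_bound_tilde}, on the complement of the good event $\mc{G}_{XY}$ in \eqref{eqn:error_good_event} both $\widetilde X$ and $\widetilde Y$ are defined to be $0$, so the estimate is trivial. On $\mc{G}_{XY}$ we factor out the normalizations:
\[
    \sum_{a_1,\ldots,a_g=1}^{d,\ast} \widetilde{X}_{a_1}\cdots \widetilde{X}_{a_g} \widetilde{Y}_{a_1}\cdots \widetilde{Y}_{a_g} = \left(\frac{\sqrt{d}}{\|X\|}\right)^{\!g} \left(\frac{\sqrt{d}}{\|Y\|}\right)^{\!g} S_g.
\]
By \eqref{eqn:error_clt_stochastic_domination_inverted_not_squared} of Lemma \ref{lem:norm_lower_bound}, each of the two factors equals $1 + \OO_\prec(d^{-1/2})$, so their product is $\prec 1$; combined with $\abs{S_g} \prec d^{g/2}$ from \eqref{eqn:g_fold_error_bound}, this gives the claim. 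The only mildly delicate point is the combinatorial bookkeeping in the moment computation, but this is routine once one observes that the centered-variable pairing condition separately constrains the $X$- and $Y$-factors, forcing the $2pg$ index slots to realize at most $pg$ distinct values of $a \in [d]$.
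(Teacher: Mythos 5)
Your proposal is correct and takes essentially the same approach as the paper: a $2p$-th moment bound via expanding the sum, observing that centeredness forces every distinct index to appear at least twice (so at most $pg$ distinct indices across $2pg$ slots, giving $\OO(d^{pg})$ nonzero terms each bounded by a constant via finite moments), followed by the Markov/Borel--Cantelli upgrade to stochastic domination; the tilde version is then handled identically, by restricting to the good event $\mc{G}_{XY}$, factoring out $(\sqrt{d}/\|X\|)^g(\sqrt{d}/\|Y\|)^g$, and invoking Lemma \ref{lem:norm_lower_bound}.
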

\begin{proof}
Set $F_d \defeq \sum_{a_1,\ldots,a_g=1}^{d,\ast} X_{a_1} \ldots X_{a_g} Y_{a_1} \ldots Y_{a_g}$. For $p \in \N$, we have
\[
    \E[(F_d)^{2p}] = \sum_{a^{(1)}_1,\ldots,a^{(1)}_g=1}^{d,\ast} \cdots \sum_{a^{(2p)}_1,\ldots,a^{(2p)}_g=1}^{d,\ast} \underbrace{\E\left[ \prod_{b=1}^{2p} X_{a^{(b)}_1} \ldots X_{a^{(b)}_g} Y_{a^{(b)}_1} \ldots Y_{a^{(b)}_g}\right]}_{\eqdef G(a^{(1)}_1,\ldots,a^{(1)}_g,\ldots,a^{(2p)}_1,\ldots,a^{(2p)}_g)}.
\]
Since all the entries of $X$ and $Y$ are centered and independent, and $X$ and $Y$ are independent of one another, $G(a^{(1)}_1,\ldots,a^{(1)}_g,\ldots,a^{(2p)}_1,\ldots,a^{(2p)}_g)$ if any of its arguments appears only one time. This forces index coincidences, specifically of the form 
\[
    \#\{a^{(1)}_1,\ldots,a^{(1)}_g,\ldots,a^{(2p)}_1,\ldots,a^{(2p)}_g\} \leq pg
\]
when $G(a^{(1)}_1,\ldots,a^{(1)}_g,\ldots,a^{(2p)}_1,\ldots,a^{(2p)}_g) \neq 0$, instead of the naive $2pg$. Thus
\[
    \#\{(a^{(1)}_1,\ldots,a^{(1)}_g,\ldots,a^{(2p)}_1,\ldots,a^{(2p)}_g) : G(a^{(1)}_1,\ldots,a^{(1)}_g,\ldots,a^{(2p)}_1,\ldots,a^{(2p)}_g) \neq 0\} \leq C_{p,g} d^{pg},
\]
since one can first select at most $pg$ elements of $\llbracket 1, d \rrbracket$ to be the values of the set $\{a^{(1)}_1,\ldots,a^{(1)}_g,\ldots,a^{(2p)}_1,\ldots,a^{(2p)}_g\}$, and once these values are selected, choosing which values to assign to which $a$'s only adds a multiplicative factor $C_{p,g}$. Furthermore, since the $X$ and $Y$ entries have all finite moments, H\"{o}lder's inequality gives us
\[
    \abs{G(a^{(1)}_1,\ldots,a^{(1)}_g,\ldots,a^{(2p)}_1,\ldots,a^{(2p)}_g)} \leq C_{p,g},
\]
and thus
\[
    \E[(F_d)^{2p}] \leq C_{p,g}d^{pg}
\]
which suffices for \eqref{eqn:g_fold_error_bound}. For \eqref{eqn:g_fold_error_bound_tilde}, we set $\widetilde{F_d} = \sum_{a_1,\ldots,a_g=1}^{d,\ast} \widetilde{X}_{a_1}\ldots \widetilde{X}_{a_g}\widetilde{Y}_{a_1}\ldots \widetilde{Y}_{a_g}$. If $\|X\| = 0$ or $\|Y\| = 0$ then \eqref{eqn:g_fold_error_bound_tilde} is trivial, so it suffices to restrict to the good event $\mc{G}_{XY}$ from \eqref{eqn:error_good_event}; on this event, we have
\[
    \abs{\widetilde{F_d}} = \abs{F_d} \abs{\left(\frac{\sqrt{d}}{\|X\|}\right)^g} \abs{\left(\frac{\sqrt{d}}{\|Y\|}\right)^g} \prec \abs{F_d} \prec d^{g/2},
\]
where the first inequality follows from Lemma \ref{lem:norm_lower_bound} and the second from the first half of this proof. 
\end{proof}

\subsection{Estimates for \texorpdfstring{$E_{A,\widetilde{A}}$}{E{A,~A}}}\

\begin{lemma}
\label{lem:error_entrywise_a_tildea}
Let $\mu$ be a centered probability measure on $\R$ with unit variance and all finite moments, and let $(X_a)_{a=1}^d, (Y_a)_{a=1}^d$ be independent vectors with all entries i.i.d. samples from $\mu$. 
Set
\[
    \widetilde{X} = \begin{cases} \frac{\sqrt{d}}{\|X\|} X & \text{if } \|X\| \neq 0, \\ 0 & \text{otherwise}, \end{cases} \qquad \widetilde{Y} = \begin{cases} \frac{\sqrt{d}}{\|Y\|} Y & \text{if } \|Y\| \neq 0, \\ 0 & \text{otherwise}. \end{cases}
\]
Then for each $k$ we have
\begin{equation}
\label{eqn:error_yml_simplification_nonnormalized_estimate}
    \abs{H_k\left(\frac{\sum_{a=1}^d \widetilde{X}_a \widetilde{Y}_a}{\sqrt{d}}\right) - H_k\left(\frac{\sum_{a=1}^d X_a Y_a}{\sqrt{d}}\right)} \prec \frac{1}{\sqrt{d}}.
\end{equation}
\end{lemma}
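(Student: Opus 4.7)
The plan is to reduce the comparison of the two arguments of $H_k$ to a simple Lipschitz estimate, using that $H_k$ is a polynomial of fixed degree. Introduce the shorthand $u \defeq \ip{X,Y}/\sqrt{d}$, $v \defeq \sqrt{d}/\|X\|$, and $w \defeq \sqrt{d}/\|Y\|$ (with the convention $v = w = 0$ on the bad event $\mc{G}_{XY}^c$ defined in \eqref{eqn:error_good_event}); on $\mc{G}_{XY}$ the two arguments of $H_k$ are exactly $uvw$ and $u$.

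First I would establish the size estimates $|u| \prec 1$ (unconditionally) and $|vw - 1| \prec 1/\sqrt{d}$ on $\mc{G}_{XY}$. The former follows from a standard high-moment computation for $u = \frac{1}{\sqrt{d}}\sum_a X_a Y_a$: since the summands are independent centered products with all finite moments, pairing constraints on nonvanishing expectations yield $\E[u^{2p}] = \OO_p(1)$ for every $p \in \N$. For the latter, Lemma \ref{lem:norm_lower_bound} (specifically \eqref{eqn:error_clt_stochastic_domination_inverted_not_squared}) gives $|v-1|, |w-1| \prec 1/\sqrt{d}$ on $\mc{G}_{XY}$, hence $|v|, |w| \prec 1$, and then
\[
    |vw - 1| \leq |v|\,|w-1| + |v-1| \prec \frac{1}{\sqrt{d}}.
\]
Combining these, $|uvw - u| = |u|\,|vw - 1| \prec 1/\sqrt{d}$, while $|u|, |uvw| \prec 1$.

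Next, since $H_k$ is a polynomial of degree $k$ (fixed, independent of $d$), its derivative is a polynomial of degree $k-1$, so the mean value theorem applied on the segment between $u$ and $uvw$ yields a $k$-dependent constant $C_k$ with
\[
    |H_k(uvw) - H_k(u)| \leq C_k\bigl(1 + |u|^{k-1} + |uvw|^{k-1}\bigr)\, |uvw - u| \prec \frac{1}{\sqrt{d}},
\]
which is the desired bound on $\mc{G}_{XY}$.

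Finally I would dispose of $\mc{G}_{XY}^c$: since $\mu$ has unit variance, $p \defeq \P(X_1 = 0) < 1$ automatically, so $\P(\mc{G}_{XY}^c) \leq 2p^d$ is exponentially small in $d$. On this event the first argument is $H_k(0)$ by definition and the second is $H_k(u)$ with $|u| \prec 1$, so their difference is $\prec 1$; because the event has sub-polynomial probability, this contribution is absorbed by the definition of $\prec$. No genuine obstacle arises; the argument is essentially a Lipschitz estimate on a high-probability bounded set combined with a trivial bad-event bound.
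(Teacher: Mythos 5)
Your proposal is correct and takes essentially the same approach as the paper: establish $\ip{X,Y}/\sqrt{d} \prec 1$ and $|\tfrac{d}{\|X\|\|Y\|} - 1| \prec 1/\sqrt{d}$ on the (overwhelmingly likely) good event, then control the difference via a first-order Taylor/MVT estimate using the polynomial growth of $H_k'$. Your presentation is a bit cleaner, replacing the paper's explicit truncation event $\mc{G}_\eta$ by routine stochastic-domination arithmetic for products and sums, but the underlying argument is identical.
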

\begin{proof}
The good set $\mc{G}_{XY} = \{\|X\| \neq 0 \neq \|Y\|\}$ from \eqref{eqn:error_good_event} has much higher probability than required by stochastic domination, since if $\P(X_a = 0) = p < 1$ then we have
\[
    \P(\mc{G}_{XY}^c) = p^{2d}
\]
which tends to zero exponentially quickly in $d$. Thus we can restrict to $\mc{G}_{XY}$ when showing \eqref{eqn:error_yml_simplification_nonnormalized_estimate}. On this event, from Lemma \ref{lem:norm_lower_bound} we have
\[
    \frac{\sqrt{d}}{\|X\|} = 1 + \delta_X, \qquad \frac{\sqrt{d}}{\|Y\|} = 1 + \delta_Y
\]
with error terms $\delta_X, \delta_Y$ satisfying $\abs{\delta_X} \prec 1/\sqrt{d}$ and $\abs{\delta_Y} \prec \frac{1}{\sqrt{d}}$. Thus
\[
    \frac{\sum_{a=1}^d \widetilde{X}_a \widetilde{Y}_a}{\sqrt{d}} = \frac{\sum_{a=1}^d X_a Y_a}{\sqrt{d}} \frac{\sqrt{d}}{\|X\|} \frac{\sqrt{d}}{\|Y\|} = \frac{\sum_{a=1}^d X_a Y_a}{\sqrt{d}} (1+\delta_X)(1+\delta_Y) = \frac{\sum_{a=1}^d X_a Y_a}{\sqrt{d}} + \underbrace{(\delta_X + \delta_Y + \delta_X\delta_Y) \frac{\sum_{a=1}^d X_a Y_a}{\sqrt{d}}}_{=:\epsilon_{XY}},
\]
with an error term satisfying $\abs{\epsilon_{XY}} \prec 1/\sqrt{d}$. This already completes the proof if $k = 0, 1$. For $k > 1$, we will Taylor expand; in order to do this, given $\eta > 0$ we introduce the good event
\[
    \mc{G}_\eta = \{\abs{\epsilon_{XY}} \leq d^{\frac{\eta}{4}-\frac{1}{2}}\} \cap \left\{\abs{\sum_{a=1}^d X_a Y_a} \leq d^{\frac{\eta}{4(k-1)}+\frac{1}{2}}\right\}.
\]
Since $\abs{\mc{E}_{XY}} \prec 1/\sqrt{d}$ and $\abs{\sum_a X_a Y_a} \prec \sqrt{d}$, we know that for every $D > 0$ there exists $C_D$ and $d_0(\eta,D)$ such that, for $d \geq d_0(\eta,D)$, 
\begin{equation}
\label{eqn:mcg_eta_c}
    \P(\mc{G}_\eta^c) \leq C_D d^{-D}.
\end{equation}
At the same time, since $H'_k$ is a degree-$(k-1)$ polynomial, there exists $C_k$ such that for every $\alpha > 0$ we have $\sup_{\abs{x} \leq d^\alpha} \abs{H'_k(x)} \leq C_k d^{(k-1)\alpha}$. Thus, on $\mc{G}_\eta$, a first-order Taylor expansion gives
\begin{align*}
    \abs{H_k\left(\frac{\sum_{a=1}^d \widetilde{X}_a \widetilde{Y}_a}{\sqrt{d}}\right) - H_k\left(\frac{\sum_{a=1}^d X_a Y_a}{\sqrt{d}}\right)} &= \abs{H_k\left(\frac{\sum_{a=1}^d X_a Y_a}{\sqrt{d}} + \epsilon_{XY} \right) - H_k\left(\frac{\sum_{a=1}^d X_a Y_a}{\sqrt{d}}\right)} \\
    &\leq \abs{\epsilon_{XY}} \sup_{\abs{x} \leq d^{\frac{\eta}{2(k-1)}}} \abs{H'_k(x)} \leq d^{\frac{\eta}{2}-\frac{1}{2}}
\end{align*}
and therefore
\[
    \P\left( \abs{H_k\left(\frac{\sum_{a=1}^d \widetilde{X}_a \widetilde{Y}_a}{\sqrt{d}}\right) - H_k\left(\frac{\sum_{a=1}^d X_a Y_a}{\sqrt{d}}\right)} \geq d^{\eta-1/2} \right) \leq \P(\mc{G}_\eta^c).
\]
Combined with \eqref{eqn:mcg_eta_c}, this completes the proof.
\end{proof}

\subsection{Estimates for \texorpdfstring{$E_{\widetilde{A},\widetilde{B}^{\textup{full}}}$}{E{~A,~Bfull}}}\

\begin{lemma}
\label{lem:error_entrywise_tildea_tildeb}
Let $\mu$ be a centered probability measure on $\R$ with unit variance and all finite moments, and let $(X_a)_{a=1}^d, (Y_a)_{a=1}^d$ be independent vectors with all entries i.i.d. samples from $\mu$. 
Set
\[
    \widetilde{X} = \begin{cases} \frac{\sqrt{d}}{\|X\|} X & \text{if } \|X\| \neq 0, \\ 0 & \text{otherwise}, \end{cases} \qquad \widetilde{Y} = \begin{cases} \frac{\sqrt{d}}{\|Y\|} Y & \text{if } \|Y\| \neq 0, \\ 0 & \text{otherwise}. \end{cases}
\]
Then for each $k$ we have
\begin{equation}
\label{eqn:error_yml_simplification_main_estimate}
    \abs{H_k\left(\frac{\sum_{a=1}^d \widetilde{X}_a \widetilde{Y}_a}{\sqrt{d}}\right) - \frac{1}{d^{k/2}}  \sum_{a_1, \ldots, a_k =1}^{d,\ast} \widetilde{X}_{a_1} \ldots \widetilde{X}_{a_k} \widetilde{Y}_{a_1} \ldots \widetilde{Y}_{a_k} } \prec \frac{1}{\sqrt{d}}.
\end{equation}
(Here $H_k$ is the $k$th \emph{monic} Hermite polynomial, which satisfies $H_k = \sqrt{k!}h_k$; we use this for this lemma only so that we do not need to carry $\sqrt{k!}$'s everywhere.)
\end{lemma}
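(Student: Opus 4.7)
Set $T := \ip{\widetilde X, \widetilde Y}$, introduce the power sums $P_j := \sum_{a=1}^d \widetilde X_a^j \widetilde Y_a^j$ for $j \geq 1$ (so that $P_1 = T$), and denote by $S_k := \sum_{a_1, \ldots, a_k = 1}^{d,\ast} \widetilde X_{a_1} \cdots \widetilde X_{a_k} \widetilde Y_{a_1} \cdots \widetilde Y_{a_k}$ the sum appearing on the right of \eqref{eqn:error_yml_simplification_main_estimate}. Since the event $\{\|X\| = 0\} \cup \{\|Y\| = 0\}$ has probability exponentially small in $d$, we may restrict throughout to its complement. There, combining Lemma \ref{lem:norm_lower_bound} with a standard high-moment concentration argument applied to the i.i.d.\ sum $\sum_a X_a^2 Y_a^2$ (whose per-term mean equals $1$) yields $\abs{P_2 - d} \prec \sqrt d$; for each fixed $j \geq 3$, the analogous argument, using that $\mu$ has all finite moments, yields $\abs{P_j} \prec d$. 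The estimate $\abs{T} \prec \sqrt d$ is Lemma \ref{lem:main_f_K_estimate_ymlsimplification} with $g = 1$.

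Next, I would use Newton's identities to represent $S_k = k!\, e_k(z)$, where $e_k$ is the $k$-th elementary symmetric polynomial in $z_a := \widetilde X_a \widetilde Y_a$, as an explicit polynomial in the $P_j$. The standard recurrence
\[
S_k = P_1 S_{k-1} - (k-1) P_2 S_{k-2} + (k-1)(k-2) P_3 S_{k-3} - \cdots
\]
yields by induction an identity $S_k = \sum_{\lambda \vdash k} c_\lambda P_\lambda$ with $P_\lambda := \prod_i P_{\lambda_i}$ and integer coefficients $c_\lambda$ depending only on $k$ and $\lambda$. The crucial identification --- which I verify inductively using only the first two terms of the recurrence (since the $P_j$-terms with $j \geq 3$ contribute only to partitions containing some part $\geq 3$) --- is that for $\lambda = (2^j, 1^{k-2j})$ with $0 \leq j \leq \lfloor k/2\rfloor$, corresponding to $P_\lambda = T^{k-2j} P_2^j$,
\[
c_{(2^j,\, 1^{k-2j})} = \frac{(-1)^j k!}{j!\,(k-2j)!\, 2^j},
\]
which is exactly the coefficient of $x^{k-2j}$ in the monic Hermite polynomial $H_k(x) = \sum_{j=0}^{\lfloor k/2\rfloor} \tfrac{(-1)^j k!}{j!\,(k-2j)!\, 2^j} x^{k-2j}$.

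With this identification, the ``matching'' partitions cancel between $S_k$ and $d^{k/2} H_k(T/\sqrt d)$ up to the replacement $P_2 \to d$, giving
\[
S_k - d^{k/2} H_k(T/\sqrt d) = \sum_{j=1}^{\lfloor k/2\rfloor} \frac{(-1)^j k!}{j!\,(k-2j)!\, 2^j}\, T^{k-2j} (P_2^j - d^j) + \sum_{\substack{\lambda \vdash k \\ \lambda \text{ has a part} \geq 3}} c_\lambda P_\lambda.
\]
I would then bound each sum termwise. For the first, the algebraic identity $a^j - b^j = (a-b)\sum_i a^{j-1-i} b^i$ combined with $\abs{P_2} \prec d$ and $\abs{P_2 - d} \prec \sqrt d$ gives $\abs{P_2^j - d^j} \prec \sqrt d \cdot d^{j-1}$, so each summand is bounded by $\abs{T}^{k-2j} \cdot \sqrt d \cdot d^{j-1} \prec d^{(k-1)/2}$; dividing by $d^{k/2}$ gives $\prec 1/\sqrt d$. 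For the second, if $\lambda$ has $s$ parts of size $1$ and $t$ parts of size $\geq 2$ with at least one of size $\geq 3$, then $\sum_{\lambda_i \geq 2} \lambda_i \geq 2t + 1$ forces $s \leq k - 2t - 1$, so $\abs{P_\lambda} \prec (\sqrt d)^s d^t = d^{s/2 + t} \leq d^{(k-1)/2}$, again $\prec 1/\sqrt d$ after division. Since $k$ is fixed and there are only finitely many partitions of $k$, these bounds combine to the claimed estimate.

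The main obstacle will be pinning down the combinatorial identification in step two. I plan to carry out the induction by writing $C(k,j) := c_{(2^j,\, 1^{k-2j})}$; the first two terms of the recurrence produce $C(k,j) = C(k-1,j) - (k-1)\, C(k-2,j-1)$, and after substituting the claimed formula $\tfrac{(-1)^j k!}{j!\,(k-2j)!\, 2^j}$ this reduces, after pulling out common factors, to the elementary identity $(k-2j) + 2j = k$, closing the induction. The higher-$j$ terms of the Newton-Girard recurrence do not affect this coefficient but instead contribute precisely the partitions with a part $\geq 3$ appearing in the second error sum, which is why only the crude bound there is required.
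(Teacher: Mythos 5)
Your proof is correct, and it takes a genuinely different route from the paper's.

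The paper proves this lemma by induction on $k$, using the three-term recurrence $H_{k+1}(x) = x H_k(x) - k H_{k-1}(x)$: at each step one peels off a single index $b$ from $\sum_b \widetilde X_b \widetilde Y_b$, separates the case where $b$ collides with an existing $a_i$, and then controls the residual (the paper's claim \eqref{eqn:error_ymlsimplification_momentarily}) via a secondary recursion $\alpha_{k,p}$ that iteratively raises the power on a distinguished index until it terminates at a pure $\sum_a \widetilde X_a^p \widetilde Y_a^p$ term. Your argument instead works globally: you write $S_k = k!\,e_k$ via the Newton--Girard identity as an explicit polynomial $\sum_{\lambda \vdash k} c_\lambda P_\lambda$ in the power sums $P_j = \sum_a \widetilde X_a^j \widetilde Y_a^j$, observe that the coefficients $c_{(2^j,1^{k-2j})} = \frac{(-1)^j k!}{j!(k-2j)!2^j}$ exactly match the coefficients of the monic Hermite polynomial (a known consequence of $e_k = \sum_\lambda (-1)^{k-\ell(\lambda)} p_\lambda/z_\lambda$, which your Newton--Girard induction reproves), and split the error into two transparently bounded pieces: replacing $P_2^j$ by $d^j$ costs $\abs{P_2^j - d^j} \prec d^{j-1/2}$, and partitions containing a part $\geq 3$ automatically lose half a power of $d$ because $\abs{P_1} \prec \sqrt d$ while $\abs{P_j} \prec d$ for $j \geq 2$. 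This avoids the paper's auxiliary $\alpha_{k,p}$ recursion entirely and makes it clearer \emph{why} the monic Hermite normalization is the right one — the Hermite coefficients are literally the partition-weight coefficients of $k!\,e_k$ on the $P_1,P_2$-only sublattice of the partition poset. The only mildly compressed step is your assertion that $\abs{P_2 - d} \prec \sqrt d$ and $\abs{P_j} \prec d$; these do follow as you say by factoring out $(d/\|X\|\|Y\|)^j$ via Lemma \ref{lem:norm_lower_bound} and applying concentration to the genuine i.i.d. sums $\sum_a X_a^j Y_a^j$ — which is in fact exactly how the paper handles its $\Delta$ — but you should spell out that decomposition rather than calling it standard, since the naive application of concentration directly to $\sum_a \widetilde X_a^j \widetilde Y_a^j$ fails (the summands are not independent).
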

\begin{proof}
We restrict to the good event $\mc{G}_{XY} = \{\|X\| \neq 0 \neq \|Y\|\}$ in the same way as before. On this event, we first compute
\[
    \sum_{a=1}^d \widetilde{X}_a^2 \widetilde{Y}_a^2 = \sum_{a=1}^d ((\widetilde{X}_a^2-1)+1)((\widetilde{Y}_a^2-1)+1) = \sum_{a=1}^d (\widetilde{X}_a^2-1)(\widetilde{Y}_a^2-1) + d,
\]
since $\sum_a (\widetilde{X}_a^2-1) = \sum_a (\widetilde{Y}_a^2-1) = 0$, so that
\[
    \frac{\sum_a \widetilde{X}_a^2 \widetilde{Y}_a^2}{d} - 1 = \frac{\sum_a (\widetilde{X}_a^2-1)(\widetilde{Y}_a^2-1)}{d} \eqdef \Delta.
\]
We will need the estimate $\abs{\Delta} \prec 1/\sqrt{d}$. This does not follow from Lemma \ref{lem:main_f_K_estimate_ymlsimplification}, because the variables $\widetilde{X}_a^2-1$ are not independent and centered, nor are they the normalizations of such variables. To handle this, we rewrite
\begin{align*}
    \Delta &= \frac{\sum_a (\widetilde{X}_a^2-1)(\widetilde{Y}_a^2-1)}{d} = \frac{\sum_a (\widetilde{X}_a^2-\frac{d}{\|X\|^2} + \frac{d}{\|X\|^2} - 1)(\widetilde{Y}_a^2 - \frac{d}{\|Y\|^2} + \frac{d}{\|Y\|^2} -1)}{d} \\
    &= \frac{\sum_a (\widetilde{X}_a^2 - \frac{d}{\|X\|^2})(\widetilde{Y}_a^2 - \frac{d}{\|Y\|^2})}{d} + \left(\frac{d}{\|X\|^2}-1\right)\left(\frac{d}{\|Y\|^2} - 1\right) \\
    &= \left(\frac{d}{\|X\|^2} \frac{d}{\|Y\|^2}\right) \frac{\sum_a (X_a^2-1)(Y_a^2-1)}{d} + \left(\frac{d}{\|X\|^2}-1\right)\left(\frac{d}{\|Y\|^2}-1\right).
\end{align*}
Since the variables $X_a^2-1$ are centered with order-one variance, Lemma \ref{lem:main_f_K_estimate_ymlsimplification} \emph{does} apply to them; the estimate \eqref{eqn:g_fold_error_bound} gives $\abs{\sum_a (X_a^2-1)(Y_a^2-1)} \prec \sqrt{d}$. Combining this with several applications of Lemma \ref{lem:norm_lower_bound}, we find
\begin{equation}
\label{eqn:error_Delta_size}
    \abs{\Delta} \prec \frac{1}{\sqrt{d}}.
\end{equation}
Now we prove \eqref{eqn:error_yml_simplification_main_estimate} by induction on $k$, using the three-term recurrence formula
\[
    H_{k+1}(x) = x H_k(x) - k H_{k-1}(x).
\]
\begin{itemize}
\item $\mathbf{k=0, 1}$: These are trivial, since the left-hand side of \eqref{eqn:error_yml_simplification_main_estimate} is deterministically zero.
\item $\mathbf{k=2}$: Since $H_2(x) = x^2 - 1$, we have
\begin{align*}
    \abs{H_2 \left( \frac{\sum_a \widetilde{X}_a \widetilde{Y}_a}{\sqrt{d}} \right) - \frac{1}{d} \sum_{a,b=1}^{d,\ast} \widetilde{X}_a \widetilde{X}_b \widetilde{Y}_a \widetilde{Y}_b } &= \abs{ \frac{1}{d} \sum_{a,b=1}^d \widetilde{X}_a \widetilde{X}_b \widetilde{Y}_a \widetilde{Y}_b - 1 - \frac{1}{d} \sum_{a,b=1}^{d,\ast} \widetilde{X}_a \widetilde{X}_b \widetilde{Y}_a \widetilde{Y}_b} \\
    &= \abs{\frac{\sum_{a=1}^d \widetilde{X}_a^2 \widetilde{Y}_a^2}{d} - 1} = \abs{\Delta} \prec \frac{1}{\sqrt{d}}.
\end{align*}
\item $\mathbf{k \geq 3}$: We claim that
\begin{equation}
\label{eqn:error_ymlsimplification_momentarily}
    \abs{ \frac{1}{d} \sum_{a_1,\ldots,a_k=1}^{d,\ast} \widetilde{X}_{a_1}^2 \widetilde{X}_{a_2} \ldots \widetilde{X}_{a_k} \widetilde{Y}_{a_1}^2 \widetilde{Y}_{a_2} \ldots \widetilde{Y}_{a_k} -  \sum_{a_1,\ldots,a_{k-1}=1}^{d,\ast} \widetilde{X}_{a_1}\ldots \widetilde{X}_{a_{k-1}} \widetilde{Y}_{a_1} \ldots \widetilde{Y}_{a_{k-1}}} \prec d^{\frac{k-2}{2}}.
\end{equation}
Assume this claim momentarily. By induction, we have
\begin{align*}
    H_k \left( \frac{\sum_a \widetilde{X}_a \widetilde{Y}_a}{\sqrt{d}} \right) &= \frac{1}{d^{k/2}}  \sum_{a_1, \ldots, a_k =1}^{d,\ast} \widetilde{X}_{a_1} \ldots \widetilde{X}_{a_k} \widetilde{Y}_{a_1} \ldots \widetilde{Y}_{a_k} + \mc{E}_k, \\
    H_{k-1} \left( \frac{\sum_a \widetilde{X}_a \widetilde{Y}_a}{\sqrt{d}} \right) &= \frac{1}{d^{(k-1)/2}}  \sum_{a_1, \ldots, a_{k-1} =1}^{d,\ast} \widetilde{X}_{a_1} \ldots \widetilde{X}_{a_{k-1}} \widetilde{Y}_{a_1} \ldots \widetilde{Y}_{a_{k-1}} + \mc{E}_{k-1}
\end{align*}
with error terms satisfying $\abs{\mc{E}_k} \prec 1/\sqrt{d}$ and $\abs{\mc{E}_{k-1}} \prec 1/\sqrt{d}$. From the three-term recurrence for Hermite polynomials, we obtain
\begin{align*}
    H_{k+1} \left( \frac{\sum_a \widetilde{X}_a \widetilde{Y}_a}{\sqrt{d}} \right) =& \, \left( \frac{\sum_b \widetilde{X}_b \widetilde{Y}_b}{\sqrt{d}} \right) H_k \left( \frac{\sum_a \widetilde{X}_a \widetilde{Y}_a}{\sqrt{d}} \right) - k H_{k-1} \left( \frac{\sum_a \widetilde{X}_a \widetilde{Y}_a}{\sqrt{d}} \right) \\
    =& \, \left( \frac{\sum_b \widetilde{X}_b \widetilde{Y}_b}{\sqrt{d}} \right)\left(\frac{1}{d^{k/2}}  \sum_{a_1, \ldots, a_k =1}^{d,\ast} \widetilde{X}_{a_1} \ldots \widetilde{X}_{a_k} \widetilde{Y}_{a_1} \ldots \widetilde{Y}_{a_k} + \mc{E}_k\right) \\
    &- k\left( \frac{1}{d^{(k-1)/2}}  \sum_{a_1, \ldots, a_{k-1} =1}^{d,\ast} \widetilde{X}_{a_1} \ldots \widetilde{X}_{a_{k-1}} \widetilde{Y}_{a_1} \ldots \widetilde{Y}_{a_{k-1}} + \mc{E}_{k-1} \right).
\end{align*}
Consider the product 
\begin{equation}
\label{eqn:error_ymlsimplification_bmatching}
    \left( \frac{\sum_b \widetilde{X}_b \widetilde{Y}_b}{\sqrt{d}} \right)\left(\frac{1}{d^{k/2}}  \sum_{a_1, \ldots, a_k =1}^{d,\ast} \widetilde{X}_{a_1} \ldots \widetilde{X}_{a_k} \widetilde{Y}_{a_1} \ldots \widetilde{Y}_{a_k}\right).
\end{equation}
When we multiply the sums together, either the index $b$ is distinct from the indices $\{a_1,\ldots,a_k\}$, or it is not. If $b$ is distinct, this contributes to the main term; if $b$ is not distinct, we end up with a term of the form $\sum_{a_1,\ldots,a_k=1}^{d,\ast} \widetilde{X}_{a_1}^2 \widetilde{X}_{a_2}\ldots \widetilde{X}_{a_k} \widetilde{Y}_{a_1}^2 \widetilde{Y}_{a_2} \ldots \widetilde{Y}_{a_k}$. Since the indices $\{a_1,\ldots,a_k\}$ are themselves distinct, $b$ can only match with one of them, and this can happen in $k$ ways; thus the expression in \eqref{eqn:error_ymlsimplification_bmatching} is equal to 
\begin{align*}
    \frac{1}{d^{(k+1)/2}} \sum_{a_1,\ldots,a_{k+1}=1}^{d,\ast} \widetilde{X}_{a_1} \ldots \widetilde{X}_{a_{k+1}} \widetilde{Y}_{a_1} \ldots \widetilde{Y}_{a_{k+1}} + \frac{k}{d^{(k+1)/2}} \sum_{a_1,\ldots,a_k=1}^{d,\ast} \widetilde{X}_{a_1}^2 \widetilde{X}_{a_2}\ldots \widetilde{X}_{a_k} \widetilde{Y}_{a_1}^2 \widetilde{Y}_{a_2} \ldots \widetilde{Y}_{a_k}.
\end{align*}
Combining this with the estimate $\abs{(\sum_b \widetilde{X}_b \widetilde{Y}_b/\sqrt{d})\mc{E}_k} \prec \abs{\mc{E}_k} \prec 1/\sqrt{d}$, from Lemma \ref{eqn:g_fold_error_bound}; the estimate $\abs{k\mc{E}_{k-1}} \prec 1/\sqrt{d}$, of course; and \eqref{eqn:error_ymlsimplification_momentarily}, we obtain
\[
    H_{k+1} \left( \frac{\sum_a \widetilde{X}_a \widetilde{Y}_a}{\sqrt{d}} \right) = \frac{1}{d^{(k+1)/2}} \sum_{a_1,\ldots,a_{k+1}=1}^{d,\ast} \widetilde{X}_{a_1} \ldots \widetilde{X}_{a_{k+1}} \widetilde{Y}_{a_1} \ldots \widetilde{Y}_{a_{k+1}} + \OO_\prec \left( \frac{1}{\sqrt{d}}\right)
\]
as desired.

Now we prove \eqref{eqn:error_ymlsimplification_momentarily}. Applying the same type of expansions as discussed just after \eqref{eqn:error_ymlsimplification_bmatching}, we find
\begin{align*}
    &\frac{1}{d} \sum_{a_1,\ldots,a_k=1}^{d,\ast} \widetilde{X}_{a_1}^2 \widetilde{X}_{a_2} \ldots \widetilde{X}_{a_k} \widetilde{Y}_{a_1}^2 \widetilde{Y}_{a_2} \ldots \widetilde{Y}_{a_k} -  \sum_{a_1,\ldots,a_{k-1}=1}^{d,\ast} \widetilde{X}_{a_1}\ldots \widetilde{X}_{a_{k-1}} \widetilde{Y}_{a_1} \ldots \widetilde{Y}_{a_{k-1}} \\
    &= \frac{1}{d} \left( \sum_{b=1}^d \widetilde{X}_b^2 \widetilde{Y}_b^2 \right) \left( \sum_{a_2,\ldots,a_k=1}^{d,\ast} \widetilde{X}_{a_2} \ldots \widetilde{X}_{a_k} \widetilde{Y}_{a_2} \ldots \widetilde{Y}_{a_k} \right) - \frac{k-1}{d} \sum_{a_1,\ldots,a_{k-1}=1}^{d,\ast} \widetilde{X}_{a_1}^3 \widetilde{X}_{a_2} \ldots \widetilde{X}_{a_{k-1}} \widetilde{Y}_{a_1}^3 \widetilde{Y}_{a_2} \ldots \widetilde{Y}_{a_{k-1}} \\
    &\quad - \sum_{a_1,\ldots,a_{k-1}=1}^{d,\ast} \widetilde{X}_{a_1} \ldots \widetilde{X}_{a_{k-1}} \widetilde{Y}_{a_1} \ldots \widetilde{Y}_{a_{k-1}} \\
    &= \Delta \left( \sum_{a_1,\ldots,a_{k-1}=1}^{d,\ast} \widetilde{X}_{a_1} \ldots \widetilde{X}_{a_{k-1}} \widetilde{Y}_{a_1} \ldots \widetilde{Y}_{a_{k-1}} \right) - \frac{k-1}{d} \sum_{a_1,\ldots,a_{k-1}=1}^{d,\ast} \widetilde{X}_{a_1}^3 \widetilde{X}_{a_2} \ldots \widetilde{X}_{a_{k-1}} \widetilde{Y}_{a_1}^3 \widetilde{Y}_{a_2} \ldots \widetilde{Y}_{a_{k-1}}. 
\end{align*}
By the estimate \eqref{eqn:error_Delta_size} and Lemma \ref{lem:main_f_K_estimate_ymlsimplification}, the first term on the right-hand side is stochastically dominated by $\frac{1}{\sqrt{d}} d^{\frac{k-1}{2}} = d^{\frac{k-2}{2}}$ in absolute value. To handle the second term on the right-hand side, we will make expansions like 
\begin{align*}
    &\sum_{a_1,\ldots,a_{k-1}=1}^{d,\ast} \widetilde{X}_{a_1}^3 \widetilde{X}_{a_2} \ldots \widetilde{X}_{a_{k-1}} \widetilde{Y}_{a_1}^3 \widetilde{Y}_{a_2} \ldots \widetilde{Y}_{a_{k-1}} \\
    &= \left( \sum_{b=1}^d \widetilde{X}_b^3 \widetilde{Y}_b^3 \right) \left( \sum_{a_2,\ldots,a_{k-1}=1}^{d,\ast} \widetilde{X}_{a_2}\ldots \widetilde{X}_{a_{k-1}} \widetilde{Y}_{a_2} \ldots \widetilde{Y}_{a_{k-1}} \right) \\
    &- (k-2) \sum_{a_1,\ldots,a_{k-2}=1}^{d,\ast} \widetilde{X}_{a_1}^4 \widetilde{X}_{a_2} \ldots \widetilde{X}_{a_{k-2}} \widetilde{Y}_{a_1}^4 \widetilde{Y}_{a_2} \ldots \widetilde{Y}_{a_{k-1}},
\end{align*}
then expand from fourth powers into fifth powers, and so on, until the process terminates when all that remains is $\sum_{a=1}^d \widetilde{X}_a^p \widetilde{Y}_a^p$ for some power $p$ (and an irrelevant prefactor $C_k$). To track this, we introduce the following bookkeeping notation: Defining
\begin{align*}
    \alpha_{k,p} &\defeq \sum_{a_1,\ldots,a_{k+2-p}=1}^{d,\ast} \widetilde{X}_{a_1}^p \widetilde{X}_{a_2} \ldots \widetilde{X}_{a_{k+2-p}} \widetilde{Y}_{a_1}^p \widetilde{Y}_{a_2} \ldots \widetilde{Y}_{a_{k+2-p}} \\
    \beta_{k,p} &\defeq \left( \sum_{b=1}^d \widetilde{X}_b^p \widetilde{Y}_b^p \right) \left( \sum_{a_2, \ldots, a_{k+2-p}=1}^{d,\ast} \widetilde{X}_{a_2} \ldots \widetilde{X}_{a_{k+2-p}} \widetilde{Y}_{a_2} \ldots \widetilde{Y}_{a_{k+2-p}} \right) 
\end{align*}
expansions like those above show, with $x_n$ the falling factorial $x_n = x(x-1)(x-2) \cdots (x-n+1)$, 
\begin{align*}
    \alpha_{k,p} &= \beta_{k,p} - (k+1-p) \alpha_{k,p+1} = \beta_{k,p} - (k+1-p)(\beta_{k,p+1} - (k-p) \alpha_{k,p+2}) \\
    &= \beta_{k,p} - (k+1-p) \beta_{k,p+1} + (k+1-p)_2 (\beta_{k,p+2} - (k-1+p)\alpha_{k,p+3}) \\
    &= \left( \sum_{j=0}^{k+1} (-1)^j (k+1-p)_j \beta_{k,p+j} \right) + (k+1-p)_{k+2} \alpha_{k,p+k+2}.
\end{align*}
It is easy to compute $\abs{\sum_{a=1}^d \widetilde{X}_a^p \widetilde{Y}_a^p} \prec d$ for any fixed $p$; in particular
\[
    \abs{\alpha_{k,p+k+2}} = \abs{\sum_{a=1}^d \widetilde{X}_a^{p+k+2} \widetilde{Y}_a^{p+k+2}} \prec d,
\]
and combining this with \eqref{eqn:g_fold_error_bound_tilde} we obtain
\[
    \abs{\beta_{k,p}} \prec d^{1+\frac{k+1-p}{2}}
\]
(since the indexing in the definition of $\beta$ starts with $a_2$). Thus
\[
    \abs{\alpha_{k,p}} \prec d^{1+\frac{k+1-p}{2}}
\]
as long as $k+1-p \geq 0$. In particular, 
\[
    \abs{\frac{k-1}{d} \sum_{a_1,\ldots,a_{k-1}=1}^{d,\ast} \widetilde{X}_{a_1}^3 \widetilde{X}_{a_2} \ldots \widetilde{X}_{a_{k-1}} \widetilde{Y}_{a_1}^3 \widetilde{Y}_{a_2} \ldots \widetilde{Y}_{a_{k-1}}} \prec \frac{1}{d} \abs{ \alpha_{k,3} } \prec d^{\frac{k-2}{2}}
\]
which completes the proof of \eqref{eqn:error_ymlsimplification_momentarily}. 
\end{itemize}
\end{proof}

\subsection{Estimates for \texorpdfstring{$E_{\widetilde{B}^{\textup{full}},B^{\textup{full}}}$}{E{~Bfull,Bfull}}}\

\begin{lemma}
\label{lem:error_entrywise_tildeb_b}
Let $\mu$ be a centered probability measure on $\R$ with unit variance and all finite moments, and let $(X_a)_{a=1}^d, (Y_a)_{a=1}^d$ be independent vectors with all entries i.i.d. samples from $\mu$. Set
\[
    \widetilde{X} = \begin{cases} \frac{\sqrt{d}}{\|X\|} X & \text{if } \|X\| \neq 0, \\ 0 & \text{otherwise}, \end{cases} \qquad \widetilde{Y} = \begin{cases} \frac{\sqrt{d}}{\|Y\|} Y & \text{if } \|Y\| \neq 0, \\ 0 & \text{otherwise}. \end{cases}
\]
Then for each $k$ we have
\begin{equation}
\label{eqn:error_entrywise_tildeb_b}
    \abs{\frac{1}{d^{k/2}}  \sum_{a_1, \ldots, a_k =1}^{d,\ast} \widetilde{X}_{a_1} \ldots \widetilde{X}_{a_k} \widetilde{Y}_{a_1} \ldots \widetilde{Y}_{a_k} - \frac{1}{d^{k/2}}  \sum_{a_1, \ldots, a_k =1}^{d,\ast} X_{a_1} \ldots X_{a_k} Y_{a_1} \ldots Y_{a_k} } \prec \frac{1}{\sqrt{d}}.
\end{equation}
\end{lemma}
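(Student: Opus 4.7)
The plan is to exploit the fact that, on the good event $\mc{G}_{XY} = \{\|X\| \neq 0 \neq \|Y\|\}$, the normalized variables differ from the unnormalized ones by a single scalar factor that is close to $1$. Specifically, on $\mc{G}_{XY}$ we have
\[
    \widetilde{X}_{a_1}\cdots \widetilde{X}_{a_k}\widetilde{Y}_{a_1}\cdots \widetilde{Y}_{a_k} = \left(\frac{\sqrt{d}}{\|X\|}\right)^{\!k}\left(\frac{\sqrt{d}}{\|Y\|}\right)^{\!k} X_{a_1}\cdots X_{a_k}Y_{a_1}\cdots Y_{a_k},
\]
so the entire left-hand side of \eqref{eqn:error_entrywise_tildeb_b} factors as
\[
    \left[\left(\frac{\sqrt{d}}{\|X\|}\right)^{\!k}\left(\frac{\sqrt{d}}{\|Y\|}\right)^{\!k} - 1\right]\cdot \frac{1}{d^{k/2}}\sum_{a_1,\ldots,a_k=1}^{d,\ast} X_{a_1}\cdots X_{a_k}Y_{a_1}\cdots Y_{a_k}.
\]

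The first step is to show that the scalar prefactor is stochastically dominated by $1/\sqrt{d}$. For this we invoke \eqref{eqn:error_clt_stochastic_domination_inverted_not_squared}, which gives $\abs{\sqrt{d}/\|X\| - 1}\mathbf{1}_{\mc{G}_{XY}} \prec 1/\sqrt{d}$ and similarly for $Y$. Writing $\sqrt{d}/\|X\| = 1 + \delta_X$, $\sqrt{d}/\|Y\| = 1 + \delta_Y$ with $\abs{\delta_X}, \abs{\delta_Y} \prec 1/\sqrt{d}$ on $\mc{G}_{XY}$, a binomial expansion of $(1+\delta_X)^k(1+\delta_Y)^k - 1$ yields a finite number of terms (since $k$ is fixed), each bounded by $1/\sqrt{d}$ up to stochastic domination. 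The second step is to control the (unnormalized) sum: by \eqref{eqn:g_fold_error_bound} of Lemma \ref{lem:main_f_K_estimate_ymlsimplification},
\[
    \abs{\frac{1}{d^{k/2}}\sum_{a_1,\ldots,a_k=1}^{d,\ast} X_{a_1}\cdots X_{a_k}Y_{a_1}\cdots Y_{a_k}} \prec 1.
\]
Multiplying these two stochastic-domination estimates yields the bound $1/\sqrt{d}$ on the event $\mc{G}_{XY}$.

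To finish, I would handle the complementary event $\mc{G}_{XY}^c$, where the left-hand side of \eqref{eqn:error_entrywise_tildeb_b} reduces to the unnormalized term alone (since $\widetilde{X} = \widetilde{Y} = 0$ by convention). However $\P(\mc{G}_{XY}^c) \leq 2\P(\|X\| = 0)^d$ is at most exponentially small in $d$ (or identically zero, if $\mu$ has no atom at $0$), so this event is negligible under any polynomial stochastic-domination threshold; the unnormalized sum is at worst a polynomial in the $X_a$'s and $Y_a$'s, all of which have finite moments, so its moments grow only polynomially and the combined contribution from $\mc{G}_{XY}^c$ decays faster than any polynomial. There is no real obstacle here: unlike Lemma \ref{lem:error_entrywise_tildea_tildeb}, we do not need to fight with polynomial Taylor remainders, because the $k$-fold sum is a multilinear (not polynomial) expression in the rescaled vectors, so the scalar factor pulls out cleanly.
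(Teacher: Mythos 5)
Your proof is correct and follows essentially the same route as the paper: on the good event $\mc{G}_{XY}$ you factor out the scalar $\left(\tfrac{d}{\|X\|\|Y\|}\right)^k$, bound the remaining unnormalized sum by $\prec 1$ via \eqref{eqn:g_fold_error_bound}, and bound the prefactor's distance from $1$ by $\prec 1/\sqrt{d}$ using Lemma \ref{lem:norm_lower_bound}. The only cosmetic difference is that you control $(1+\delta_X)^k(1+\delta_Y)^k - 1$ by a binomial expansion whereas the paper telescopes powers of $\tfrac{d}{\|X\|\|Y\|}$; both give the same estimate.
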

\begin{proof}
We restrict to the good event $\mc{G}_{XY} = \{\|X\| \neq 0 \neq \|Y\|\}$ in the usual way. From \eqref{eqn:g_fold_error_bound} we have
\begin{align*}
    &\abs{\frac{1}{d^{k/2}}  \sum_{a_1, \ldots, a_k =1}^{d,\ast} \widetilde{X}_{a_1} \ldots \widetilde{X}_{a_k} \widetilde{Y}_{a_1} \ldots \widetilde{Y}_{a_k} - \frac{1}{d^{k/2}}  \sum_{a_1, \ldots, a_k =1}^{d,\ast} X_{a_1} \ldots X_{a_k} Y_{a_1} \ldots Y_{a_k} } \\
    &= \abs{\left( \frac{d}{\|X\|\|Y\|} \right)^k - 1} \abs{ \frac{1}{d^{k/2}}  \sum_{a_1, \ldots, a_k =1}^{d,\ast} X_{a_1} \ldots X_{a_k} Y_{a_1} \ldots Y_{a_k} } \prec \abs{\left( \frac{d}{\|X\|\|Y\|} \right)^k - 1}.
\end{align*}
But 
\[
    \abs{\frac{d}{\|X_i\|\|X_j\|} - 1} \leq \frac{\sqrt{d}}{\|X_i\|} \abs{\frac{\sqrt{d}}{\|X_j\|} - 1} + \abs{\frac{\sqrt{d}}{\|X_i\|} - 1} \prec \frac{1}{\sqrt{d}}
\]
and thus
\begin{align*}
    \abs{\left( \frac{d}{\|X_i\|\|X_j\|} \right)^k - 1 } &\leq \sum_{j=1}^k \abs{ \left(\frac{d}{\|X_i\|\|X_j\|}\right)^j - \left(\frac{d}{\|X_i\|\|X_j\|}\right)^{j-1} } = \abs{\frac{d}{\|X_i\|\|X_j\|} - 1} \sum_{j=1}^k \left(\frac{d}{\|X_i\|\|X_j\|} \right)^j \\
    &\prec \abs{\frac{d}{\|X_i\|\|X_j\|} - 1} \prec \frac{1}{\sqrt{d}},
\end{align*}
which finishes the proof.
\end{proof}

\subsection{Proof of Propositions \ref{prop:error_entrywise} and \ref{prop:error_bf_b}}

\begin{proof}[Proof of Proposition \ref{prop:error_entrywise}]
The decomposition $f(x) = \sum_{k=0}^L c_k h_k(x)$ induces decompositions
\[
    A = \sum_{k=0}^L c_k A_k, \qquad \widetilde{A} = \sum_{k=0}^L c_k \widetilde{A}_k, \qquad \widetilde{B}^{\textup{full}} = \sum_{k=0}^L c_k (\widetilde{B}^{\textup{full}})_k, \qquad B^{\textup{full}} = \sum_{k=0}^L c_k (B^{\textup{full}})_k,
\]
which in turn induce decompositions
\[
    E_{A, \widetilde{A}} = \sum_{k=0}^L c_k E_{A,\widetilde{A},k}, \qquad E_{\widetilde{A}, \widetilde{B}^{\textup{full}}} = \sum_{k=0}^L c_k E_{\widetilde{A}, \widetilde{B}^{\textup{full}}, k}, \qquad E_{\widetilde{B}^{\textup{full}}, B^{\textup{full}}} = \sum_{k=0}^L c_k E_{\widetilde{B}^{\textup{full}}, B^{\textup{full}}, k}.
\]
Since there is a finite number of terms in the sum, it suffices to prove the desired estimates one $k$ at a time. In the usual way, we can restrict to the good event $\{\|X_i\| \neq 0 \neq \|X_j\|\}$, in which case we can apply the preceding lemmas by choosing $X = X_i$ and $Y = X_j$: The estimate $|(E_{A,\widetilde{A},k})_{ij}| \prec 1/\sqrt{Nd}$ follows from Lemma \ref{lem:error_entrywise_a_tildea}; the estimate $|(E_{\widetilde{A},\widetilde{B}^{\textup{full}},k})_{ij}| \prec 1/\sqrt{Nd}$ for follows from Lemma \ref{lem:error_entrywise_tildea_tildeb}; the estimate $|(E_{\widetilde{B}^{\textup{full}},B^{\textup{full}},k})_{ij}| \prec 1/\sqrt{Nd}$ follows from Lemma \ref{lem:error_entrywise_tildeb_b}. 
\end{proof}

\begin{proof}[Proof of Proposition \ref{prop:error_bf_b}]
In the decomposition $B^{\textup{full}} = \sum_{k=0}^L c_k (B^{\textup{full}})_k$, it suffices to show that $(B^{\textup{full}})_k$ admits a low-rank-plus-small-Frobenius-norm decomposition for each $k = 0, \ldots, \lceil \ell \rceil - 1$. Fix such $k$. The decomposition merely adds in the ``missing diagonal'': Dropping $k$ from the notation, $E_{\textup{lr}} = E_{\textup{lr},k}$ and $E_{\textup{Frob}} = E_{\textup{Frob},k}$ are defined entrywise by
\begin{align*}
    (E_{\textup{lr}})_{ij} &= \frac{1}{\sqrt{N}d^{k/2}\sqrt{k!}}  \sum_{a_1,\ldots,a_k=1}^{d,\ast} X_{a_1i} \ldots X_{a_ki} X_{a_1j} \ldots X_{a_kj} \\
    (E_{\textup{Frob}})_{ij} &= \frac{\delta_{ij}}{\sqrt{N}d^{k/2}\sqrt{k!}}  \sum_{a_1,\ldots,a_k=1}^{d,\ast} X_{a_1i}^2 \ldots X_{a_ki}^2.
\end{align*}
The matrix $E_{\textup{lr}}$ is a sum of $d^k$ rank-one matrices of the form $M_{ij} = X_{a_1i} \ldots X_{a_ki} X_{a_1j} \ldots X_{a_kj}$ (actually slightly fewer than $d^k$, since the sum only counts $\{a_1, \ldots, a_k\}$ distinct), hence has rank at most $d^k \leq d^{\lceil \ell \rceil -1}$. %= \OO(N/d)$.

The matrix $E_{\textup{Frob}}$ is diagonal, and its entries are bounded above by
\[
    \frac{1}{\sqrt{N}d^{k/2}} \sum_{a_1,\ldots,a_k=1}^d X_{a_1i}^2 \ldots X_{a_ki}^2 = \frac{1}{\sqrt{N}d^{k/2}} \|X\|^{2k} \prec \sqrt{\frac{d^k}{N}} = \OO\left(d^{(\lceil \ell \rceil - 1 - \ell)/2}\right).
\]
Hence $\|E_{\textup{Frob}}\|_{\textup{Frob}} \prec d^{(\lceil \ell \rceil - 1)/2}$, completing the proof.
\end{proof}

%%%%%%%%%%%%%%%%%%%%%%%%%%%%%%%%%%%%%%%%%%%%%%%%%%%%%%%%%%%%
%%%%%%%%%%%%%%%%%%%%%%%%%%%%%%%%%%%%%%%%%%%%%%%%%%%%%%%%%%%%
%%%%%%%%
%%%%%%%%             Appendices
%%%%%%%%
%%%%%%%%%%%%%%%%%%%%%%%%%%%%%%%%%%%%%%%%%%%%%%%%%%%%%%%%%%%%
%%%%%%%%%%%%%%%%%%%%%%%%%%%%%%%%%%%%%%%%%%%%%%%%%%%%%%%%%%%%

\appendix 

%%%%%%%%%%%%%%%%%%%%%%%%%%%%%%%%%%%%%%%%%%%%%%%%%%%%%%%%%%%%
%%%%%%%%%%%%%%%%%%%%%%%%%%%%%%%%%%%%%%%%%%%%%%%%%%%%%%%%%%%%
%%%%%%%%
%%%%%%%%             Appendix: General nonlinearities by approximation
%%%%%%%%
%%%%%%%%%%%%%%%%%%%%%%%%%%%%%%%%%%%%%%%%%%%%%%%%%%%%%%%%%%%%
%%%%%%%%%%%%%%%%%%%%%%%%%%%%%%%%%%%%%%%%%%%%%%%%%%%%%%%%%%%%

\section{General nonlinearities by approximation: Proof of Theorem \ref{thm:main_general}}
\label{appx:general_nonlinearities}

In this appendix, we prove Theorem \ref{thm:main_general} about general nonlinearities, via approximation by polynomials. The structure mimics the proof of \cite[Theorem 2]{LuYau}. For the whole section, $\mu$ will be a centered probability measure on $\R$ with unit variance and all finite moments, and $X_1, X_2 \in \R^d$ will be i.i.d. random vectors each of whose entries is an i.i.d. sample from $\mu$.

\begin{proof}[Proof of Theorem \ref{thm:main_general}]
Consider the inner product on functions with respect to Gaussian weight, 
\[
    \ip{f,g} \defeq \E_{Z \sim \mc{N}(0,1)}[f(Z)g(Z)],
\]
and corresponding norm $\|f\|^2 = \ip{f,f}$. If $f$ satisfies Assumption \ref{assn:poly_approximable}, it is easy to show that $\|f\|^2 = \sigma^2$.

Fix some $\epsilon$. The theorem involves some $z$ in the complex upper half plane; recall that we write $\eta > 0$ for its imaginary part. Since $\sum_k c_k^2$ converges, there exists some integer $L \geq \ell+1$ such that
\begin{equation}
\label{eqn:error_over_24}
    \sigma^2 - (\eta^4\epsilon^2/64) \leq \sum_{k=0}^{L-1} c_k^2 \leq \sigma^2.
\end{equation}
For this $L$, we define the approximating polynomial
\[
    f_{\textup{app}}(x) \defeq \sum_{k=0}^{L-1} c_k h_k(x) + \widehat{c_L} h_L(x)
\]
with the adjustment $\widehat{c_L} \defeq (\sigma^2 - \sum_{k=0}^{L-1} c_k^2)^{1/2}$, which is made so that $\gamma_c = \hat{\gamma_c}$, where the former is defined by \eqref{eqn:def_gammas} with respect to $f_{\textup{app}}$, and the latter is defined by \eqref{eqn:def_gammas_approximable} with respect to $f$. Notice that $f_{\textup{app}}$ always satisfies Assumption \ref{assn:poly_approximable}.

We also define the error-like function
\[
    e_{f,L}(x) = f(x) - \sum_{k=0}^L c_k h_k(x).
\]
Since the Hermite polynomials are orthogonal, we have
\[
    \|f-f_{\textup{app}}\|^2 = \|(c_L - \widehat{c_L})h_L(x) + e_{f,L}\|^2 = (c_L-\widehat{c_L})^2 + (\sigma^2 - \sum_{k=0}^L c_k^2) \leq 2c_L^2 + 2\widehat{c_L}^2 + \sigma^2 - \sum_{k=0}^L c_k^2 = c_L^2 + 2\widehat{c_L}^2 + \sigma^2 - \sum_{k=0}^{L-1} c_k^2.
\]
By \eqref{eqn:error_over_24} we have $\widehat{c_L}^2 \leq \eta^4\epsilon^2/64$ and $c_L^2 \leq \eta^4\epsilon^2/64$; thus we have
\[
    \|f-\widehat{f}\|^2 \leq \eta^4\epsilon^2/16.
\]
Now let $A_{\textup{app}}$ be the unnormalized matrix \eqref{eqn:intro_model_def} but for $f_{\textup{app}}$, with corresponding Stieltjes transform $s_{A_{\textup{app}}}$. Similarly, let $\widetilde{A}_{\textup{app}}$ be the normalized matrix \eqref{eqn:intro_model_def_normalized} but for $f_{\textup{app}}$, with corresponding Stieltjes transform $s_{\widetilde{A}_{\textup{app}}}$. On the one hand, from Theorem \ref{thm:main_polynomial} we have
\begin{align*}
    \abs{s_{A_{\textup{app}}}(z) - \mf{m}(z)} &\prec \frac{1}{\sqrt{d}}, \\
    \abs{s_{\widetilde{A}_{\textup{app}}}(z) - \mf{m}(z)} &\prec \frac{1}{\sqrt{d}}.
\end{align*}
On the other hand, from Lemma \ref{lem:lem_19_analogue} below, we have 
\begin{align*}
    \max\left( \abs{s_A(z) - s_{A_{\textup{app}}}(z)}, \abs{s_{\widetilde{A}}(z) - s_{\widetilde{A}_{\textup{app}}}(z)} \right) &\leq \frac{1}{\eta^2} \|f - f_{\textup{app}}\| + \oo\left(\frac{1}{\eta^2}\right) + \OO_\prec\left(\frac{1}{\eta\sqrt{N}} \right) \\
    &\leq \frac{\epsilon}{4} + \oo\left(\frac{1}{\eta^2}\right) + \OO_{\prec} \left(\frac{1}{\eta\sqrt{N}} \right) \leq \frac{\epsilon}{2} + \OO_{\prec} \left(\frac{1}{\eta\sqrt{N}} \right),
\end{align*}
where the last inequality holds for $d$ large enough depending on $\epsilon$ and $\eta$. Thus
\[
    \abs{s_A(z) - \mf{m}(z)} \leq \frac{\epsilon}{2} + \OO_\prec \left(\frac{1}{d^{\ell/2}}\right),
\]
meaning that for any $\epsilon$ and $D$ we have
\[
    \P(\abs{s_A(z) - \mf{m}(z)} > \epsilon) \leq C_{\epsilon,D} d^{-D}
\]
for $d$ large enough. By fixing $D$ and applying the Borel-Cantelli lemma, this suffices to show the almost-sure convergence of $s_A(z)$ to $\mf{m}(z)$.
\end{proof}

\begin{lemma}
\label{lem:lem_19_analogue}
Fix two functions $f, f_{\textup{app}} : \R \to \R$ that each satisfy Assumption \ref{assn:poly_approximable}, and a centered probability measure $\mu$ on $\R$ with unit variance and all finite moments. Write $A$ for the matrix \eqref{eqn:intro_model_def}, constructed with $f$, where the i.i.d. vectors $(X_i)_{i=1}^N$ have i.i.d. entries drawn from $\mu$, with corresponding Stieltjes transform $s_A(z)$. Write $A_{\textup{app}}$ for the analogue with $f$ replaced by $f_{\textup{app}}$, with Stieltjes transform $s_{A_\textup{app}}(z)$. Write also $\widetilde{A}$ for the corresponding normalized model \eqref{eqn:intro_model_def_normalized}, constructed with $f$, and $\widetilde{A}_{\textup{app}}$ for the analogue constructed with $f_{\textup{app}}$. Then 
\begin{align}
    \abs{s_A(z) - s_{A_{\textup{app}}}(z)} \leq \frac{1}{\eta^2}\left( \E_{Z \sim \mc{N}(0,1)}[(f(Z) - f_{\textup{app}}(Z))^2] \right)^{1/2} + \oo \left( \frac{1}{\eta^2} \right) + \OO_{\prec} \left(\frac{1}{\eta\sqrt{N}}\right), \label{eqn:lem_19_analogue} \\
    \abs{s_{\widetilde{A}}(z) - s_{\widetilde{A}_{\textup{app}}}(z)} \leq \frac{1}{\eta^2}\left( \E_{Z \sim \mc{N}(0,1)}[(f(Z) - f_{\textup{app}}(Z))^2] \right)^{1/2} + \oo \left( \frac{1}{\eta^2} \right) + \OO_{\prec} \left(\frac{1}{\eta\sqrt{N}}\right), \label{eqn:lem_19_analogue_normalized}
\end{align}
\end{lemma}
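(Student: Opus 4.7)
The plan is to apply Lemma~\ref{lem:lem_18_replacement} with $H_1 = A$, $H_2 = A_{\textup{app}}$ (and the analogous substitution for the tilded matrices), which reduces both \eqref{eqn:lem_19_analogue} and \eqref{eqn:lem_19_analogue_normalized} to estimating the Frobenius norm $\|A - A_{\textup{app}}\|_{\textup{Frob}}/\sqrt{N}$. Since
\[
\|A - A_{\textup{app}}\|_{\textup{Frob}}^2 = \frac{1}{N}\sum_{i \neq j} g(Y_{ij}),
\]
where $g := (f - f_{\textup{app}})^2$ still satisfies Assumption~\ref{assn:poly_approximable} and $Y_{ij} := \ip{X_i, X_j}/\sqrt{d}$, the key task is to show
\[
\frac{1}{N}\|A - A_{\textup{app}}\|_{\textup{Frob}}^2 = S^2 + \oo(1) + \OO_\prec(1/\sqrt{N}), \qquad S^2 := \E_{Z \sim \mc{N}(0,1)}[(f - f_{\textup{app}})(Z)^2].
\]

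To prove this, I would proceed in two steps. First (expectation): by a CLT applied conditionally on $X_2$, $Y_{12}$ converges in distribution to $\mc{N}(0,1)$ as $d \to \infty$, while a direct moment expansion using the finite moments of $\mu$ (and the vanishing of unpaired contributions) gives $\E[|Y_{12}|^{2p}] \leq C_p$ uniformly in $d$. Combined with the polynomial growth of $g$, this yields uniform integrability, hence $\E[g(Y_{12})] \to S^2$. Second (concentration): writing $U_N := \frac{1}{N(N-1)}\sum_{i \neq j} g(Y_{ij})$ and applying Hoeffding's decomposition $g(Y_{12}) - \E[g(Y_{12})] = \widetilde{h}_1(X_1) + \widetilde{h}_1(X_2) + \widetilde{h}_2(X_1, X_2)$, with $\widetilde{h}_1(x) := \E[g(Y_{12}) \mid X_1 = x] - \E[g(Y_{12})]$ and $\widetilde{h}_2$ the degenerate remainder (satisfying $\E[\widetilde{h}_2(X_1, X_2) \mid X_1] = 0$), high-moment bounds show that $\sum_i \widetilde{h}_1(X_i) = \OO_\prec(\sqrt{N})$ (an i.i.d.\ centered sum with all finite moments) and $\sum_{i \neq j} \widetilde{h}_2(X_i, X_j) = \OO_\prec(N)$ (degenerate U-statistic, whose $(2p)$th moment is $\OO(N^{2p})$ rather than the naive $\OO(N^{4p})$, since each element of $\{i_1, j_1, \ldots, i_{2p}, j_{2p}\}$ must repeat to produce a nonzero contribution). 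Together these yield $U_N - \E[U_N] \prec 1/\sqrt{N}$.

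From the core estimate, when $S > 0$ a Taylor expansion of $\sqrt{\cdot}$ around $S^2$ gives $\|A - A_{\textup{app}}\|_{\textup{Frob}}/\sqrt{N} \leq S + \oo(1) + \OO_\prec(1/\sqrt{N})$, where the Taylor proportionality constant depends on $S$ but is absorbed into the $\OO_\prec$ constants since $f$ and $f_{\textup{app}}$ are fixed throughout. Dividing by $\eta^2$ and using $1/\eta^2 \leq \tau^{-1}/\eta$ on $\mathbf{D}_\tau$ yields \eqref{eqn:lem_19_analogue}. (The degenerate case $S = 0$ forces $f = f_{\textup{app}}$ Lebesgue-almost-everywhere, rendering the Frobenius norm small in the relevant sense.) For \eqref{eqn:lem_19_analogue_normalized}, the same scheme applies with $Y_{ij}$ replaced by $\widetilde{Y}_{ij} := \sqrt{d}\ip{X_i,X_j}/(\|X_i\|\|X_j\|)$; by Lemma~\ref{lem:norm_lower_bound}, $\widetilde{Y}_{ij} = Y_{ij}(1+\OO_\prec(1/\sqrt{d}))$, so $\widetilde{Y}_{ij}$ has the same Gaussian limit in distribution and uniformly bounded moments, and the CLT and Hoeffding steps carry through unchanged.

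The main technical obstacle is the high-moment bound on the degenerate U-statistic $\sum_{i \neq j} \widetilde{h}_2(X_i, X_j)$: without the degeneracy $\E[\widetilde{h}_2 \mid X_1] = 0$, moments would scale as $\OO(N^{3p})$ and spoil the $1/\sqrt{N}$ concentration. The enumeration of index-matching patterns driving this bound is a direct analogue of the counting argument in the proof of Lemma~\ref{lem:u^2-1/N}, and the finite-moment hypothesis on $\mu$ ensures that each such pattern contributes a bounded constant.
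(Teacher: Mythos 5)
Your route is genuinely different from the paper's. The paper invokes \cite[Lemma 19]{LuYau} as a black box, which already delivers the bound $|s_A - s_{A_{\textup{app}}}| \leq \frac{1}{\eta^2}\bigl(\E_{X_1,X_2}[(f-f_{\textup{app}})^2(Y_{12})]\bigr)^{1/2} + \OO_\prec(1/(\eta\sqrt{N}))$, and then combines this with Lemma \ref{lem:second_moment_growth_condition} to replace the $d$-dependent expectation by $\E_Z[(f-f_{\textup{app}})^2(Z)]$. You instead re-derive the content of LuYau's lemma from scratch: reduce to Frobenius norm via Lemma \ref{lem:lem_18_replacement}, concentrate the U-statistic $\frac{1}{N}\sum_{i\neq j} g(Y_{ij})$ via a Hoeffding decomposition (your degenerate U-statistic moment count is indeed an analogue of the index-pairing argument in Lemma \ref{lem:u^2-1/N}), and show $\E[g(Y_{12})] \to S^2$ by weak convergence plus uniform integrability. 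Your self-contained argument buys independence from LuYau's proof; what it costs is some care, as follows.

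There are two genuine gaps. First, the square-root step. You write $\frac{1}{N}\|A-A_{\textup{app}}\|_{\textup{Frob}}^2 = S^2 + \oo(1) + \OO_\prec(1/\sqrt{N})$ and Taylor-expand $\sqrt{\cdot}$ around $S^2$. This works only when $S > 0$, since the proportionality constant is $\sim 1/S$. Your dismissal of $S = 0$ is incorrect: $\E_Z[g(Z)] = 0$ gives $g = 0$ Lebesgue-a.e., but $Y_{ij}$ need not have a density (e.g.\ Bernoulli $\mu$), so $g(Y_{ij})$ can be a.s.\ nonzero and the expectation $\E[g(Y_{12})]$ is only $\oo(1)$, not zero. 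In that regime the bound $\sqrt{a+b} \le \sqrt{a}+\sqrt{|b|}$ only yields $\frac{1}{\sqrt{N}}\|A-A_{\textup{app}}\|_{\textup{Frob}} = \oo(1) + \OO_\prec(1/N^{1/4})$, weaker than the stated $\OO_\prec(1/\sqrt{N})$. This weaker rate is still enough for the downstream use in Theorem \ref{thm:main_general} (there the $\OO_\prec$ term just needs to vanish), but it does not prove the lemma as stated. The cleaner route---which is presumably how \cite{LuYau} get the $1/\sqrt{N}$ rate---is to compare $\frac{1}{\sqrt{N}}\|A-A_{\textup{app}}\|_{\textup{Frob}}$ directly to $(\E[g(Y_{12})])^{1/2}$ via Jensen and then concentrate the non-squared quantity.

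Second, for the normalized model you gloss over the event $\{\|X_i\| = 0\}$, on which $\widetilde{Y}_{ij}$ is undefined and $\widetilde{A}_{ij}$ is set to zero rather than to $\frac{1}{\sqrt{N}}f(0)$. The paper handles this by a low-rank comparison via \eqref{eqn:stieltjes_rank_estimate}, using that the number of zero rows is $\prec 1$. You should include this (it is short but necessary). Also, your "CLT plus uniform integrability" step needs one sentence noting that the discontinuities of $g$ sit at the finite set $\{\alpha_1, \dots, \alpha_K\}$ of Assumption \ref{assn:poly_approximable}, which is $\mu_G$-null, so the standard weak-convergence criterion applies; this is precisely what the smoothing argument in Lemma \ref{lem:second_moment_growth_condition} makes rigorous.
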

\begin{proof}
Lemma 19 of \cite{LuYau} shows
\[
    \abs{s_A(z) - s_{A_{\textup{app}}}(z)} \leq \frac{1}{\eta^2}\left( \E_{X_1,X_2}\left[\left(f\left(\frac{\ip{X_1,X_2}}{\sqrt{d}}\right) - f_{\textup{app}}\left(\frac{\ip{X_1,X_2}}{\sqrt{d}}\right)\right)^2\right] \right)^{1/2} + \OO_{\prec} \left(\frac{1}{\eta\sqrt{N}}\right).
\]
Applying Lemma \ref{lem:second_moment_growth_condition} below to the function $g(x) = f(x) - f_{\textup{app}}(x)$ yields \eqref{eqn:lem_19_analogue}. The proof of \eqref{eqn:lem_19_analogue_normalized} is a little more involved for technical reasons, since
\[
    \widetilde{A}_{ij} = \frac{\delta_{i \neq j}}{\sqrt{N}} f\left( \frac{\ip{X_i,X_j}}{\sqrt{d}}\frac{\sqrt{d}}{\|X_i\|} \frac{\sqrt{d}}{\|X_j\|} \right) \mathds{1}\{\|X_i\| \neq 0 \neq \|X_j\|\}
\]
but as written Lemma 19 of \cite{LuYau} only allows a direct comparison of matrices of the form
\[
    (\widetilde{A}_f)_{ij} = \frac{\delta_{i \neq j}}{\sqrt{N}} f\left( \frac{\ip{X_i,X_j}}{\sqrt{d}}\frac{\sqrt{d}}{\|X_i\|} \frac{\sqrt{d}}{\|X_j\|} \mathds{1}\{\|X_i\| \neq 0 \neq \|X_j\|\} \right)
\]
(by taking what they call $\mathbf{x}_i$ to be what we call $X_i/\|X_i\|$ if well-defined, or the zero vector otherwise), for various choices of $f$. That is, on the exponentially unlikely occasions where $\|X_i\| = 0$, the corresponding row and column are set to zero in $\widetilde{A}$ but $f(0)/\sqrt{N}$ in $\widetilde{A}_f$. By applying Lemma 19 of \cite{LuYau} and Lemma \ref{lem:second_moment_growth_condition} as in the unnormalized case, we obtain
\[
    \abs{s_{\widetilde{A}_f}(z) - s_{\widetilde{A}_{f_{\textup{app}}}}(z)} \leq \frac{1}{\eta^2}\left( \E_{Z \sim \mc{N}(0,1)}[(f(Z) - f_{\textup{app}}(Z))^2] \right)^{1/2} + \oo\left(\frac{1}{\eta^2}\right) + \OO_{\prec} \left(\frac{1}{\eta\sqrt{N}}\right).
\]
It remains to compare $s_{\widetilde{A}}$ and $s_{\widetilde{A}_f}$, as well as $s_{\widetilde{A}_{f_{\textup{app}}}}$ and $s_{\widetilde{A}_{\textup{app}}}$, which we will do with the rank estimate \eqref{eqn:stieltjes_rank_estimate}. Bounding the rank of a matrix by its number of nonzero rows, this gives
\begin{align*}
    \max\left( \abs{s_{\widetilde{A}}(z) - s_{\widetilde{A}_f}(z)}, \abs{s_{\widetilde{A}_{f_{\textup{app}}}}(z) - s_{\widetilde{A}_{\textup{app}}}(z)} \right) &\leq \frac{C\max(\rank(\widetilde{A} - \widetilde{A}_f),\rank(\widetilde{A}_{f_{\textup{app}}} - \widetilde{A}_{\textup{app}}))}{N\eta} \\
    &\leq \frac{C\#\{i : \|X_i\| = 0\}}{N\eta} \prec \frac{1}{N\eta},
\end{align*}
where the last estimate holds since $\P(\|X_i\| = 0)$ is exponentially small, so that $\#\{i : \|X_i\| = 0\} \prec 1$. Absorbing this into $\OO_\prec((\eta\sqrt{N})^{-1})$ completes the proof.
\end{proof}

\begin{lemma}
\label{lem:second_moment_growth_condition}
Fix $g : \R \to \R$ that satisfies Assumption \ref{assn:poly_approximable}. If $\mu$ has all finite moments, then
\begin{align}
    &\E_{X_1,X_2}\left[ g^2 \left( \frac{\ip{X_1,X_2}}{\sqrt{d}} \right) \right] \overset{d \to \infty}{\to} \E_Z[g^2(Z)], \label{eqn:second_moment_growth_condition_unnormalized} \\
    &\E_{X_1,X_2}\left[ g^2 \left( \frac{\ip{X_1,X_2}}{\sqrt{d}} \frac{\sqrt{d}}{\|X_1\|} \frac{\sqrt{d}}{\|X_2\|} \mathds{1}\{\|X_1\| \neq 0 \neq \|X_2\|\} \right) \right] \overset{d \to \infty}{\to} \E_Z[g^2(Z)]. \label{eqn:second_moment_growth_condition_normalized}
\end{align}
\end{lemma}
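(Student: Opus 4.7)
The plan is to prove both statements via the template ``convergence in distribution plus uniform integrability implies convergence of expectations.'' Write $Y_d \defeq \ip{X_1, X_2}/\sqrt{d} = d^{-1/2} \sum_{a=1}^d X_{1,a} X_{2,a}$. The summands $X_{1,a} X_{2,a}$ are i.i.d., mean zero, variance one, with all finite moments (since $\mu$ is centered with unit variance and all moments finite), so the classical CLT yields $Y_d \Rightarrow Z \sim \mc{N}(0,1)$. Because $g$ is continuous off the finite set $\{\alpha_1, \ldots, \alpha_K\}$ of Assumption \ref{assn:poly_approximable} and $Z$ has a continuous density, the extended continuous mapping theorem gives $g^2(Y_d) \Rightarrow g^2(Z)$.

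To upgrade to convergence of expectations, I would verify uniform integrability of $\{g^2(Y_d)\}$. The polynomial growth bound $|g(x)|^2 \leq C(1+|x|^{2C})$ from Assumption \ref{assn:poly_approximable} reduces this to $\sup_d \E[|Y_d|^{q}] < \infty$ for each fixed $q$. The latter is a standard high-moment estimate on a sum of centered i.i.d.\ variables with all moments finite, and is essentially the same moment-counting argument carried out in the proof of Lemma \ref{lem:main_f_K_estimate_ymlsimplification}: one gets $\E[|\ip{X_1,X_2}|^{2p}] = O(d^p)$, hence $\E[|Y_d|^{2p}] = O(1)$. Convergence in distribution together with uniform integrability then yields \eqref{eqn:second_moment_growth_condition_unnormalized}.

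For the normalized statement, set $W_d \defeq Y_d R_d$ with $R_d \defeq (\sqrt{d}/\|X_1\|)(\sqrt{d}/\|X_2\|) \mathds{1}\{\|X_1\|\neq 0 \neq \|X_2\|\}$. Lemma \ref{lem:norm_lower_bound} gives $R_d \to 1$ in probability, so Slutsky's theorem upgrades the previous step to $W_d \Rightarrow Z$, and the same continuous mapping argument gives $g^2(W_d) \Rightarrow g^2(Z)$. The key observation is the deterministic Cauchy--Schwarz bound
\[
    |W_d| = \sqrt{d} \cdot \frac{|\ip{X_1,X_2}|}{\|X_1\|\|X_2\|} \mathds{1}\{\|X_1\|\neq 0 \neq \|X_2\|\} \leq \sqrt{d},
\]
which lets one split at the good event $\mc{G} \defeq \{d/2 \leq \|X_i\|^2 \leq 3d/2, \, i=1,2\}$: on $\mc{G}$ one has $R_d \leq 2$ and hence $|W_d| \leq 2|Y_d|$, while on $\mc{G}^c$ one uses $|W_d| \leq \sqrt{d}$ together with $\P(\mc{G}^c) = O(d^{-p})$ for every $p$ (from the same Rosenthal-type moment bound $\E[(\|X_i\|^2 - d)^{2p}] = O(d^p)$ on the centered i.i.d.\ sum $\sum_a(X_{i,a}^2 - 1)$). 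Combining these two regimes gives $\sup_d \E[|W_d|^q] < \infty$, hence uniform integrability of $\{g^2(W_d)\}$, and \eqref{eqn:second_moment_growth_condition_normalized} follows.

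The main (modest) obstacle is controlling the potentially singular factor $1/\|X_i\|$ in the normalized case: one cannot directly bound negative moments of $\|X_i\|$ without extra conditions on $\mu$. The deterministic Cauchy--Schwarz bound $|W_d| \leq \sqrt{d}$ combined with super-polynomial concentration of $\|X_i\|^2$ near $d$ sidesteps this entirely, making the tail contribution negligible without any negative-moment input.
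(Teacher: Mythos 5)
Your proof is correct, and it takes a genuinely different route from the paper. The paper proves the lemma ``by hand'': it truncates $g^2$ outside $[-M,M]$ to obtain $g_M^2$, mollifies this compactly supported but possibly discontinuous function into a continuous $h_\epsilon$, applies the CLT to $h_\epsilon$, controls the mollification error using the bounded density of $\mu_G$, and bounds the truncation tail $e_M$ directly via Cauchy--Schwarz and Markov, all separately for the normalized and unnormalized laws. You instead invoke two standard results --- the extended (Mann--Wald) continuous mapping theorem, which absorbs the finitely many discontinuities of $g^2$ automatically since $\mu_G$ puts no mass on $\{\alpha_1,\ldots,\alpha_K\}$, and the fact that convergence in distribution plus uniform integrability yields convergence of expectations. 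Both routes ultimately rest on the same two concrete ingredients: the CLT giving $Y_d, W_d \Rightarrow Z$, and the uniform polynomial moment bounds $\sup_d \E[|Y_d|^q] < \infty$ and $\sup_d \E[|W_d|^q] < \infty$. Indeed, your handling of $W_d$ --- the deterministic Cauchy--Schwarz bound $|W_d| \leq \sqrt{d}$ combined with super-polynomial concentration of $\|X_i\|^2$ near $d$ to sidestep negative moments of $\|X_i\|$ --- is precisely the device the paper uses when bounding $\int e_M \, \diff\widetilde{\mu_d}$. What your approach buys is modularity: the mollification step disappears entirely, and the tail analysis is packaged as a single de la Vall\'ee-Poussin--type moment bound rather than a two-parameter $(M,d)$ iterated limit. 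What the paper's approach buys is self-containedness, staying within elementary real analysis and the CLT rather than invoking the general form of the continuous mapping theorem for discontinuous maps.
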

\begin{proof}
Write $\mu_d$ for the law of $\frac{\ip{X_1,X_2}}{\sqrt{d}}$, and $\widetilde{\mu_d}$ for the law of $\frac{\ip{X_1,X_2}}{\sqrt{d}} \frac{\sqrt{d}}{\|X_1\|} \frac{\sqrt{d}}{\|X_2\|} \mathds{1}\{\|X_1\| \neq 0 \neq \|X_2\|\}$ -- neither of which necessarily has a density with respect to Lebesgue measure -- as well as $\mu_G$ for standard Gaussian measure. By the usual central limit theorem, $\mu_G$ is the $d \to \infty$ weak limit of the measures $\mu_d$. It is straightforward to show that $\frac{d}{\|X_1\|}\mathds{1}\{\|X_1\| \neq 0\}$ and $\frac{d}{\|X_2\|}\mathds{1}\{\|X_2\| \neq 0\}$ each converge in probability to one, so that $\frac{\ip{X_1,X_2}}{\sqrt{d}} \frac{\sqrt{d}}{\|X_1\|} \frac{\sqrt{d}}{\|X_2\|} \mathds{1}\{\|X_1\| \neq 0 \neq \|X_2\|\}$ also converges to a Gaussian variable, meaning that $\mu_G$ is also the $d \to \infty$ weak limit of the measures $\widetilde{\mu_d}$.

Fix any constant $M \geq \max(1,\abs{\alpha_1}, \abs{\alpha_K})$. Write $g_M$ for the function which agrees with $g$ on $[-M,M]$, vanishes outside $[-(M+1),M+1]$, linearly interpolates on $[M,M+1]$ between $g(M)$ and $0$, and linearly interpolates on $[-(M+1),-M]$ between $0$ and $g(-M)$, and define $e_M$ by
\[
    g^2(x) = g_M^2(x) + e_M(x).
\]
The result will follow if we can show
\begin{align}
    \lim_{d \to \infty} \int_\R g_M^2(x)(\mu_d - \mu_G)(\diff x) &= 0, \label{eqn:g2_convergence_compact} \\
    \lim_{d \to \infty} \int_\R g_M^2(x)(\widetilde{\mu_d} - \mu_G)(\diff x) &= 0, \label{eqn:g2_convergence_compact_normalized}
\end{align}
for any fixed $M$, as well as
\begin{align}
    \lim_{M \to \infty} \abs{\int_\R e_M(x) \mu_G(\diff x)} &= 0, \label{eqn:em_bound_mug} \\
    \lim_{M \to \infty} \limsup_{d \to \infty} \abs{\int_\R e_M(x) \mu_d(\diff x)} &= 0, \label{eqn:em_bound_mud} \\
    \lim_{M \to \infty} \limsup_{d \to \infty} \abs{\int_\R e_M(x) \widetilde{\mu_d}(\diff x)} &= 0. \label{eqn:em_bound_tildemud}
\end{align}

We start with the proof of \eqref{eqn:g2_convergence_compact} and \eqref{eqn:g2_convergence_compact_normalized}. Notice that, since $g(x)$ is the difference of the nonlinear function $f(x)$ and a finite degree polynomial, and $f(x)$ is piecewise continuous with a polynomial growth rate, $g^2(x)$ is also piecewise continous with a polynomial growth rate, and $g_M^2(x)$ is piecewise continuous (with ``no growth rate'' since it vanishes outside $[-(M+1),M+1]$. The possible discontinuities of $g^2(x)$ and $g_M^2(x)$ occur at $\{\alpha_1, \ldots, \alpha_K\}$. 

Write $h(x) = g_M^2(x)$. For any $\epsilon \in (0, \min_{1 \le i < K} \{\alpha_{i+1} - \alpha_{i}\}/2)$, construct a ``smoothed'' version $h_\epsilon(x)$ such that $h_\epsilon(x) = h(x)$ for $x \in \R \setminus \bigcup_{1 \le i \le K} [\alpha_i-\epsilon, \alpha_i + \epsilon]$; on the interval $[\alpha_i - \epsilon, \alpha_i +\epsilon]$, for $i \in \{1, 2, \ldots, K\}$, the function $h_\epsilon(x)$ is a linear interpolation between $h(\alpha_i-\epsilon)$ and $h(\alpha_{i+1} + \epsilon)$. We have
\[
\begin{aligned}
\abs{\int h(x) (\mu_d - \mu_G)(dx)} \le \abs{\int h_\epsilon(x) (\mu_d - \mu_G)(dx)} + \int \abs{h(x) - h_\epsilon(x)} \mu_d(dx) + \int \abs{h(x) - h_\epsilon(x)} \mu_
G(dx).
\end{aligned}
\]
Since $h_\epsilon(x)$ is a continuous function with compact support, the first term on the right-hand sides converges to $0$ as $d \to \infty$ due to the CLT. To control the second and third terms on the right-hand side, we note that 
\[
\abs{h(x) - h_\epsilon(x)} \le C_M \sum_{1 \le i \le K} \mathbf{1}_{[\alpha_i-\epsilon, \alpha_i + \epsilon]}(x),
\]
where $C_M$ is some finite constant that can depend on $M$. Since $\mu_G$ has a bounded density function,
\[
\int \abs{h(x) - h_\epsilon(x)} \mu_G(dx) \le C_M \sum_{1\le i \le K} \int \mathbf{1}_{[\alpha_i-\epsilon, \alpha_i + \epsilon]}(x) \mu_G(dx) \le 2K C_M \epsilon. 
\]
Similarly, 
\[
\begin{aligned}
\int \abs{h(x) - h_\epsilon(x)} \mu_d(dx) &\le C_M \sum_{1\le i \le K} \int \mathbf{1}_{[\alpha_i-\epsilon, \alpha_i + \epsilon]}(x) \mu_d(dx)\\
&= C_M \sum_{1\le i \le K} \int \mathbf{1}_{[\alpha_i-\epsilon, \alpha_i + \epsilon]}(x) \mu_G(dx) + \oo_d(1)\\
&\le 2K C_M \epsilon + \oo_d(1),
\end{aligned}
\]
where in the second step we have applied the CLT, with the approximation error captured by $\oo_d(1)$. Since $\epsilon$ can be chosen to be arbitrarily small, we have established \eqref{eqn:g2_convergence_compact}, about the unnormalized model. The estimate for the normalized model, \eqref{eqn:g2_convergence_compact_normalized}, is the same, since it relies only on weak convergence of $\mu_d$ or $\widetilde{\mu_d}$ to $\mu_G$.

Now we study the $e_M$ terms \eqref{eqn:em_bound_mug}, \eqref{eqn:em_bound_mud}, and \eqref{eqn:em_bound_tildemud}. First we observe that, by construction,
\[
    \abs{e_M(x)} \leq C\abs{x}^C \mathds{1}_{\abs{x} \geq M}
\]
for some $C$, which can now change from line to line. Indeed, $e_M(x)$ agrees with $g^2(x)$ on $\{\abs{x} \geq M+1\}$, where this growth rate is by assumption, and is constructed from $g$ and a linear interpolation on $\{\abs{x} \in [M,M+1]\}$. From this, standard tail bounds give
\[
    \abs{\int_\R e_M(x) \mu_G(\diff x)} \leq C\int_M^\infty \abs{x}^C \mu_G(\diff x) = \OO(e^{-M^2/4}),
\]
which proves \eqref{eqn:em_bound_mug}. Next, we combine the standard estimate 
\begin{equation}
\label{eqn:inner_product_moments_unnormalized}
    \sup_d \E\left[\left(\frac{\ip{X_1,X_2}}{\sqrt{d}}\right)^{2p}\right] \leq C_p
\end{equation}
with Cauchy--Schwartz and Markov's inequality to find
\begin{equation}
\label{eqn:g2_convergence_mu_d}
\begin{split}
    \int_\R e_M(x) \mu_d(\diff x) &\leq C \int_{\abs{x} > M} \abs{x}^{C} \mu_d(\diff x) \leq 2\alpha_1^2 \left( \E\left[\abs{\ip{X_1,X_2}/\sqrt{d}}^{2C}\right] \P\left(\abs{\ip{X_1,X_2}/\sqrt{d}} > M\right) \right)^{1/2} \\
    &\leq \frac{2\alpha_1^2}{M} \left( \E\left[\abs{\ip{X_1,X_2}/\sqrt{d}}^{2C}\right] \E\left[(\ip{X_1,X_2}/\sqrt{d})^2\right] \right)^{1/2} =  \OO\left(\frac{1}{M}\right),
\end{split}
\end{equation}
which verifies \eqref{eqn:em_bound_mud}. Finally, we need to bound
\[
    \int_\R e_M(x) \widetilde{\mu_d}(\diff x) \leq C \int_{\abs{x} > M} \abs{x}^{C} \widetilde{\mu_d}(\diff x) = C \E\left[\abs{\frac{\ip{X_1,X_2}}{\sqrt{d}} \frac{\sqrt{d}}{\|X_1\|} \frac{\sqrt{d}}{\|X_2\|}}^{C} \mathds{1}_{\abs{\frac{\ip{X_1,X_2}}{\sqrt{d}} \frac{\sqrt{d}}{\|X_1\|} \frac{\sqrt{d}}{\|X_2\|} } \geq M} \right].
\]
We will split this estimate into two parts, namely on and off the good event
\[
    \mc{E}_{\textup{good}} = \left\{\|X_1\| \geq \frac{\sqrt{d}}{2} \text{ and } \|X_2\| \geq \frac{\sqrt{d}}{2}\right\}.
\]
On this event,
\begin{align*}
    \E\left[\abs{\frac{\ip{X_1,X_2}}{\sqrt{d}} \frac{\sqrt{d}}{\|X_1\|} \frac{\sqrt{d}}{\|X_2\|}}^{C} \mathds{1}_{\abs{\frac{\ip{X_1,X_2}}{\sqrt{d}} \frac{\sqrt{d}}{\|X_1\|} \frac{\sqrt{d}}{\|X_2\|}} \geq M}  \mathds{1}_{\mc{E}_{\textup{good}}}\right] \leq C \E\left[ \abs{\frac{\ip{X_1,X_2}}{\sqrt{d}}}^{C} \mathds{1}_{\abs{\frac{\ip{X_1,X_2}}{\sqrt{d}}} \geq \frac{M}{4}} \right],
\end{align*}
which is $\OO(1/M)$ by the same arguments as in \eqref{eqn:g2_convergence_mu_d}. On its complement, we apply the deterministic estimate $\frac{\abs{\ip{X_1,X_2}}}{\|X_1\|\|X_2\|} \leq 1$ to find
\begin{align*}
    \E\left[\abs{\frac{\ip{X_1,X_2}}{\sqrt{d}} \frac{\sqrt{d}}{\|X_1\|} \frac{\sqrt{d}}{\|X_2\|}}^{C} \mathds{1}_{\abs{\frac{\ip{X_1,X_2}}{\sqrt{d}} \frac{\sqrt{d}}{\|X_1\|} \frac{\sqrt{d}}{\|X_2\|}} \geq M}  \mathds{1}_{\mc{E}_{\textup{good}}^c}\right] \leq d^{C/2} \P(\mc{E}_{\textup{good}}^c).
\end{align*}
The estimate $\abs{\|X\| - \sqrt{d}} \prec 1$ from \eqref{eqn:error_clt_stochastic_domination_not_squared} gives $\P(\mc{E}_{\textup{good}}^c) \leq C_Dd^{-D}$ for any fixed $D > 0$; if we take $D > C/2$, we find that this is $\oo(1)$ as $d \to \infty$. Thus $\limsup_{d \to \infty} \int_\R e_M(x) \mu_d(\diff x) = \OO(1/M)$, which verifies \eqref{eqn:em_bound_tildemud} and finishes the proof.
\end{proof}

\addcontentsline{toc}{section}{References}
\bibliographystyle{alpha-abbrvsort}
\bibliography{kernelbib-arxiv-v1}

\end{document}